\definecolor{Gray}{gray}{0.9}
\newcolumntype{C}[1]{>{\Centering}m{#1}}
\newcolumntype{L}[1]{>{\raggedright\arraybackslash}m{#1}}
\newcolumntype{R}[1]{>{\raggedleft}m{#1}}
\newcolumntype{a}{>{\columncolor{Gray}}c}
\renewcommand{\thefootnote}{\arabic{footnote}}
\renewcommand{\thempfootnote}{\arabic{mpfootnote}}
\renewcommand{\citenumfont}[1]{\textbf{#1}}
\def\@biblabel#1{[\textbf{#1}]}
\newcommand{\define}[1]{\emph{#1}}
\newcommand{\from}[1]{\emph{(#1)}}
\newcommand{\mth}[1]{\emph{#1}}
\newcommand{\nit}[1]{\emph{#1}}
\newcommand{\nrm}[1]{\emph{#1}}
\newcommand{\journ}[1]{\emph{#1}}
\newcommand{\case}[1]{\emph{#1}}
\newcommand{\KurZar}[1]{\textbf{#1}}
\newcommand{\inlinequote}[1]{\textit{#1\/}}
\newcommand{\monoid}[1]{\mathbf{#1}}
\newcommand{\semigp}[1]{\mathbf{#1}}
\newcommand{\opset}[1]{\mathcal{#1}}
\newcommand{\topology}[1]{\mathcal{#1}}
\newcommand{\ident}{\mathsf{id}}
\newcommand{\bksp}{\hspace{.16667em plus .08333em}}
\newcommand{\XT}{(X,\mathcal{T})}
\newcommand{\XU}[1]{\textstyle\sum_{i=1}^{#1}\XT}
\newcommand{\N}{\mathbb{N}}
\newcommand{\R}{\mathbb{R}}
\newcommand{\Kf}{K_{\!f}}
\newcommand{\Knum}{\mbox{\emph{K}\nobreakdash-number}}
\newcommand{\Kfnum}{\mbox{$K_{\!f}$\nobreakdash-number}}
\newcommand{\kf}{k_{\hspace{-.5pt}f}}
\newcommand{\knum}{\mbox{\emph{k}\nobreakdash-number}}
\newcommand{\kfnum}{\mbox{$k_{\hspace{-.5pt}f}$\nobreakdash-number}}
\newcommand{\vsp}{\raise7pt\hbox{$\phantom-$}}
\newcommand{\vsy}{\vrule height7pt width0pt depth0pt}
\newcommand{\vsz}{\vrule height11pt width0pt depth0pt}
\newcommand{\hs}{\vrule width40pt height0pt depth0pt}
\newcommand{\hc}{\hspace{-.5pt},}
\newcommand{\hd}{\hspace{-1.5pt},}
\newcommand{\specialpar}[1]{\par\hangindent=5pt \hangafter=1#1\ignorespaces}
\newlength\fx
\newcommand{\hfx}{\hspace{\fx}}
\newlength\fxf
\newcommand{\hfxf}{\hspace{\fxf}}
\newlength\fxb
\newcommand{\hfxb}{\hspace{\fxb}}
\newlength\fxA
\newcommand{\hfxA}{\hspace{\fxA}}
\newlength\cw
\newcommand{\f}{f\hfx}
\newcommand{\ff}{f\hfxf f\hfx}
\newcommand{\ffb}{f\hfxf f\hfxb b}
\newcommand{\ffi}{f\hfxf f\hfx i}
\newcommand{\fff}{f\hfxf f\hfxf f\hfx}
\newcommand{\ei}{f\hfx i}
\newcommand{\fii}{f\hfx ii}
\newcommand{\ie}{i\hfx f\hfx}
\newcommand{\be}{b\hfx f\hfx}
\newcommand{\iee}{i\hfx f\hfxf f\hfx}
\newcommand{\fif}{f\hfx i\hfx f\hfx}
\newcommand{\bif}{bi\hfx f\hfx}
\newcommand{\fb}{f\hfxb b}
\newcommand{\fib}{f\hfx ib}
\newcommand{\fibi}{f\hfx ibi}
\newcommand{\fiba}{f\hfx iba}
\newcommand{\fibf}{f\hfx ib\hfx f\hfx}
\newcommand{\fifb}{f\hfx i\hfx f\hfxb b}
\newcommand{\fifi}{f\hfx i\hfx f\hfx i}
\newcommand{\fbb}{f\hfxb bb}
\newcommand{\fbi}{f\hfxb bi}
\newcommand{\fbf}{f\hfxb b\hfx f\hfx}
\newcommand{\fbg}{f\hfxb bg}
\newcommand{\fbga}{f\hfxb bga}
\newcommand{\fo}{f\hfx o}
\newcommand{\ifo}{i\hfx f\hfx o}
\newcommand{\ifa}{i\hfx f\hfx a}
\newcommand{\ifb}{i\hfx f\hfxb b}
\newcommand{\ifi}{i\hfx f\hfx i}
\newcommand{\fg}{f\hfx g}
\newcommand{\ifg}{i\hfx f\hfx g}
\newcommand{\fa}{f\hfx a}
\newcommand{\fA}{f\hfxA A}
\newcommand{\ffA}{f\hfxf f\hfxA A}
\newcommand{\fiA}{f\hfx iA}
\newcommand{\ifA}{i\hfx f\hfxA A}
\newcommand{\fifA}{f\hfx i\hfx f\hfxA A}
\newcommand{\bifA}{bi\hfx f\hfxA A}
\newcommand{\fbA}{f\hfxb bA}
\newcommand{\fibA}{f\hfx ibA}
\newcommand{\fbiA}{f\hfxb biA}
\newcommand{\fbgA}{f\hfxb bgA}
\newcommand{\foA}{f\hfx oA}
\newcommand{\ifaA}{i\hfx f\hfx aA}
\newcommand{\af}{a\hfx f\hfx}
\newcommand{\aff}{a\hfx f\hfxf f\hfx}
\newcommand{\afi}{a\hfx f\hfx i}
\newcommand{\aif}{ai\hfx f\hfx}
\newcommand{\afif}{a\hfx f\hfx i\hfx f\hfx}
\newcommand{\abif}{abi\hfx f\hfx}
\newcommand{\afb}{a\hfx f\hfxb b}
\newcommand{\afib}{a\hfx f\hfx ib}
\newcommand{\afbi}{a\hfx f\hfxb bi}
\newcommand{\afbg}{a\hfx f\hfxb bg}
\newcommand{\afA}{a\hfx f\hfxA A}
\newcommand{\affA}{a\hfx f\hfxf f\hfxA A}
\newcommand{\afiA}{a\hfx f\hfx iA}
\newcommand{\aifA}{ai\hfx f\hfxA A}
\newcommand{\afifA}{a\hfx f\hfx i\hfx f\hfxA A}
\newcommand{\abifA}{abi\hfx f\hfxA A}
\newcommand{\afbA}{a\hfx f\hfxb bA}
\newcommand{\afibA}{a\hfx f\hfx ibA}
\newcommand{\afbiA}{a\hfx f\hfxb biA}
\newcommand{\su}{\subseteq}
\newcommand{\nsu}{\nsubseteq}
\newcommand{\sm}{\setminus}
\newcommand{\es}{\varnothing}
\newcommand{\cn}{\centernot}
\newcommand{\sym}{\mathop\triangle}
\newcommand{\KF}{\monoid{KF}}
\newcommand{\KFZ}{\monoid{KF^0}}
\newcommand{\KFG}{\monoid{KFG}}
\newcommand{\KFGZ}{\monoid{KFG^0}}
\newcommand{\K}{\monoid{K}}
\newcommand{\KZ}{\monoid{K^0}}
\newcommand{\F}{\semigp{F}}
\newcommand{\G}{\semigp{G}}
\newcommand{\FZ}{\semigp{F^0}}
\newcommand{\FG}{\semigp{FG}}
\newcommand{\GZ}{\semigp{G^0}}
\newcommand{\FGZ}{\semigp{FG^0}}
\newcommand{\Y}{\opset{O}}
\newcommand{\KFA}{\KF\hspace{-1pt}A}
\newcommand{\KFZA}{\KFZ\!A}
\newcommand{\KFGA}{\monoid{KFG}A}
\newcommand{\KFGZA}{\monoid{KFG^0}\!A}
\newcommand{\KA}{\K A}
\newcommand{\KZA}{\KZ\!A}
\newcommand{\FA}{\F A}
\newcommand{\FZA}{\FZ\!A}
\newcommand{\PK}{P_{\,\K}}
\newcommand{\PKF}{P_{\,\KF}}
\newcommand{\PKFG}{P_{\,\KFG}}
\newcommand{\PO}{P_\opset{O}}
\newcommand{\Ord}[1]{\mbox{Ord}(#1)}
\newcommand{\Ext}[1]{\mbox{Ext}(#1)}
\newcommand{\YA}{\Y\hfx A}
\newcommand{\Iopt}{I_{\mbox{\scriptsize opt}}}
\newcommand{\eqy}{\!=\!}
\newcommand{\neqy}{\!\neq\!}
\newcommand{\leqy}{\!\leq\!}
\newcommand{\ify}{\!\iff\!}
\newcommand{\imp}{\!\implies\!}
\newcommand{\eqz}{\,=\,}
\newcommand{\ifz}{\,\iff}
\newcommand{\ds}{\mathop{\dot{\cup}}}
\newcommand{\pn}{\makebox{$\phi$-number}}
\newcommand{\sn}{\makebox{$\psi$-number}}
\newcommand{\h}{\vrule width 2pt height0pt depth0pt}
\newcommand{\hz}{\vrule width 4pt height0pt depth0pt}
\newcommand{\vadj}{\vrule width0pt height10pt}
\newcommand{\p}[1]{\raisebox{-1pt}{$#1$}}
\newcommand{\ph}[1]{\phantom{#1}}
\newcommand{\e}[1]{\raisebox{-2.5pt}{#1}}
\newcommand{\com}{\smash{,}\,}
\newcommand{\y}[1]{\raisebox{-.8pt}{#1}}
\newcommand{\z}[1]{\raisebox{-1pt}{#1}}
\newcommand{\squeezeup}{\vspace{-10pt}}
\newcommand{\mystrut}{\rule[\dimexpr-\dp\strutbox-\arrayrulewidth]{0pt}{%
 \dimexpr\baselineskip+\arrayrulewidth}}
\newcommand{\citecirc}[9]{\node (#1) at (#2\textwidth,#9)
{\scriptsize\hypersetup{citecolor=black}\text{\citealp{#1}}};
\path [draw=none,fill=black,even odd rule] (#2\textwidth,#9)
circle (.3) (#2\textwidth,#9) circle (.28);
\ifodd#4\path [draw=none,fill=red!#5,even odd rule] (#2\textwidth,#9)
circle (#6) (#2\textwidth,#9) circle (.3);\fi
\path [draw=none,fill=#7,even odd rule] (#2\textwidth,#9)
circle (#8) (#2\textwidth,#9) circle (#3);}
\newtheorem{thm}{Theorem}
\newtheorem{lem}{Lemma}
\newtheorem{cor}{Corollary}
\newtheorem*{prop*}{Proposition}
\newtheorem{prop}{Proposition}
\newtheorem{dfn}{Definition}
\begin{document}
\bibliographystyle{plainnatbib}
\setlength{\bibsep}{6pt}

\renewcommand{\abstractname}{\vspace{-\baselineskip}}

\title{Boundary-Border Extensions of the Kuratowski Monoid}
\author{Mark Bowron}

\maketitle

\begin{abstract}\noindent
The Kuratowski monoid $\K$ is generated under operator composition by closure and
complement in a nonempty topological space.
It satisfies $2\leq|\K|\leq14$.
The Gaida\textendash{}Eremenko (or GE) monoid $\KF$ extends $\K$ by adding the boundary
operator.
It satisfies $4\leq|\KF|\leq34$.
We show that when $|\K|<14$ the GE~monoid is determined by $\K$.
When $|\K|=14$ if the interior of the boundary of every subset is clopen, then
$|\KF|=28$.
This defines a new type of topological space we call \define{Kuratowski disconnected}.
Otherwise $|\KF|=34$.
When applied to an arbitrary subset the GE monoid collapses in one of $70$ possible ways.
We investigate how these collapses and $\KF$ interdepend, settling two questions raised
by Gardner and Jackson \cite{2008_gardner_jackson}.
Computer experimentation played a key role in our research.
C source can be found at \url{https://www.mathtransit.com/c.php}.

\medskip\noindent
\emph{Keywords}: border, boundary, closure, complement, frontier, Hasse diagram,
interior, operator monoid, poset, semilattice.

\medskip\noindent
\emph{MSC}: 54A05, 54G05, 06F05

\medskip\noindent
\emph{Published version} \cite{2024_bowron}:
\url{https://doi.org/10.1016/j.topol.2023.108703}
\end{abstract}

\vspace{8pt}
\begin{minipage}{7in}
\section{Introduction}\label{sec:Intro}
\mbox{Kelley's} classic textbook \cite{1955_kelley} posed the following result
to generations of students as the \textsc{kuratowski closure and complement problem
\cite{1922_kuratowski,1927_zarycki}:\/}
\inlinequote{If $A$ is a subset of a topological space, then at most $14$ sets can be
constructed from $A$ by complementation and closure. There is a subset
of the real numbers \nit(with the usual topology\nit) from which $14$ different
sets can be so constructed.}
It now usually goes by the name \define{\mbox{Kuratowski} closure-complement theorem}
(or \define{problem}, or \define{$14$-set theorem}).
Modern treatments of the subject can be found in Gardner and Jackson
\cite{2008_gardner_jackson} (we refer to it/them as GJ; some familiarity with GJ is
recommended but not necessary to understand the present paper) and \mbox{Sherman's}
informative Monthly article \cite{2010_sherman}.\footnote{For more references see
\mbox{Bowron's} list at \url{https://www.mathtransit.com/cornucopia.php}.}
In this paper we study variants of \mbox{Kuratowski's} theorem involving boundary and
border.

\hspace{1.2em}
\mbox{Anatolii Gol'dberg} posed the closure-complement-boundary problem in a course on
discrete mathematics at the University of Lviv in $1972$ (personal communication,
\mbox{A.~Eremenko}, $6$~Dec~$2019$).
First-year undergraduates \mbox{Yurii Gaida} and \mbox{Alexandre Eremenko} solved it
independently.\footnote{The problem occurred to Gol'dberg while browsing Zarycki's Ph.D.
thesis~\citep{1927_zarycki} to prepare for a talk in his memory. Eremenko heard of it
through word of mouth as he was not attending Gol'dberg's course at the time.}
They showed that an arbitrary subset generates at most $34$ distinct sets and found
all inclusions that hold in general among them.
The \journ{Ukrainian Mathematical Journal} published their work
\cite{1974_gaida_eremenko} (we refer to it/them as GE).\footnote{This was also published
as a Monthly problem in 1986 \cite{1986_buchman_ferrer}.
In $1982$ Soltan \cite{1982_soltan} solved the version that assumes a general closure
operator (see GJ, Figure~2.2 and Section~4.2 for further discussion).
The closure-complement-boundary problem also appears
in~\citep{2021_canilang_cohen_graese_seong,2008_gardner_jackson,1977_kleiner,2002_luo}.
In 1927 Zarycki \cite{1927_zarycki} replaced closure in the closure-complement problem
with various other operators including boundary and border.
GE's result is for
Boolean algebras with a closure, i.e., closure algebras.
It implies the corresponding topological result~\citep{1944_mckinsey_tarski}.
Various boundary operators also appear in
\citep{1991_ahmad_khan_noiri,1941_albuquerque,2020_cohen_johnson_kral_li_soll,
2021_fuenmayor,1964_gabai,1956_gravett,1960_hammer,2000_he_zhang,2021_hoque_modak,
1971_isomichi,2015_jirasek_jiraskova,1966_joseph,2020_khodabocus_sookia,1922_kuratowski,
2019_lei_zhang,1995_moslehian_tavallaii,1980_radulescu,1984_shum,1993_shum,1996_shum,
2017_shum,1968_staley,1939_wallace,1975_yip_shum,2020_yu_lee_au,1927_zarycki}.
For a philosophical take on the boundary concept see Varzi \citep{1997_varzi}.}
\mbox{Eremenko} recently posted their previously unpublished diagram of inclusions at
\url{https://www.math.purdue.edu/~eremenko/dvi/table1.pdf}.
\end{minipage}
\vfill\eject\setcounter{footnote}{3}

\begin{table}[!t]
\renewcommand{\arraystretch}{1.4}
\renewcommand{\tabcolsep}{3pt}
\centering\small
\caption{Glossary of symbols.}
\begin{tabular}{|c|c|}\hline
$\monoid{O}$&$(2^X)^{2^X}$\\\hline
$\KZ,\FZ,\GZ$&see Table~\ref{tab:multiplication}\\\hline
$\KFZ$&$\KZ\cup\FZ$\\\hline
$\KFGZ$&$\KFZ\cup\GZ$\\\hline
$\K$&$\KZ\cup a\KZ$\\\hline
\end{tabular}\hspace{6pt}
\begin{tabular}{|c|c|}\hline
$k(A)$&$|\K A|$\\\hline
$k(\XT)$&$\max\{k(A):A\su X\}$\\\hline
$K(\XT)$&$|\K|$\\\hline
$\kf,\Kf$&use $\KF$ above\\\hline
$a$&complement\\\hline
\end{tabular}\hspace{6pt}
\begin{tabular}{|c|l|}\hline
$b$&closure\\\hline
$d$&dual\\\hline
$\f$&boundary\\\hline
$g$&border\\\hline
$i$&interior\\\hline
\end{tabular}\label{tab:notation}
\end{table}

GJ found connections between the monoid of operators generated under composition
by closure and complement and its action on individual subsets.
As we show in Section~\ref{sec:interplay}, the boundary and border operators help settle
two questions they raised in this area.
Section~\ref{sec:Theorem} addresses \mbox{Gaida} and \mbox{Eremenko's} theorem and lays
the foundation for later sections.
Quotient monoids under operator and set equality are investigated in
Sections~\ref{sec:kf_monoid} and~\ref{sec:kfa}.

\section{The Closure-Complement-Boundary Theorem}\label{sec:Theorem}

Most of the definitions below are used throughout the paper.
The first few only apply when no further information is given.
For the reader's convenience a brief glossary of symbols is provided in
Table~\ref{tab:notation}.

\subsection{Notation and terminology.}

The pair $(X,\topology{T})$ denotes an arbitrary nonempty topological space.
The symbols $A,A_j$ denote arbitrary subsets of $X$.
The set $(2^X)^{2^X}$ of all set operators on $2^X$ is denoted by $\monoid{O}$.
The symbol $\Y$ denotes an arbitrary subset of $\monoid{O}$ and $o,o_j$ arbitrary
set operators in $\monoid{O}$.

The set $\monoid{O}$ forms a monoid under
composition with the identity operator $\ident$ as the identity element and
$(o_1o_2)A:=o_1(o_2A)$ for all $o_1,o_2\in\monoid{O}$ and $A\su X$.
For $p\in\monoid{O}$ we define $p\Y:=\{po:o\in\Y\}$ and
$\Y\hspace{-1pt}p:=\{op:o\in\Y\}$.
For $A\su X$ we define $\Y\hspace{-1pt} A:=\{oA:o\in\Y\}$.

The relations $o_1=o_2$, $o_1\leq o_2$ are \define{satisfied} by $A$ if and only if
$o_1A=o_2A$, $o_1A\su o_2A$, respectively, and by $\XT$ if and only if all $A\su X$
satisfy them.

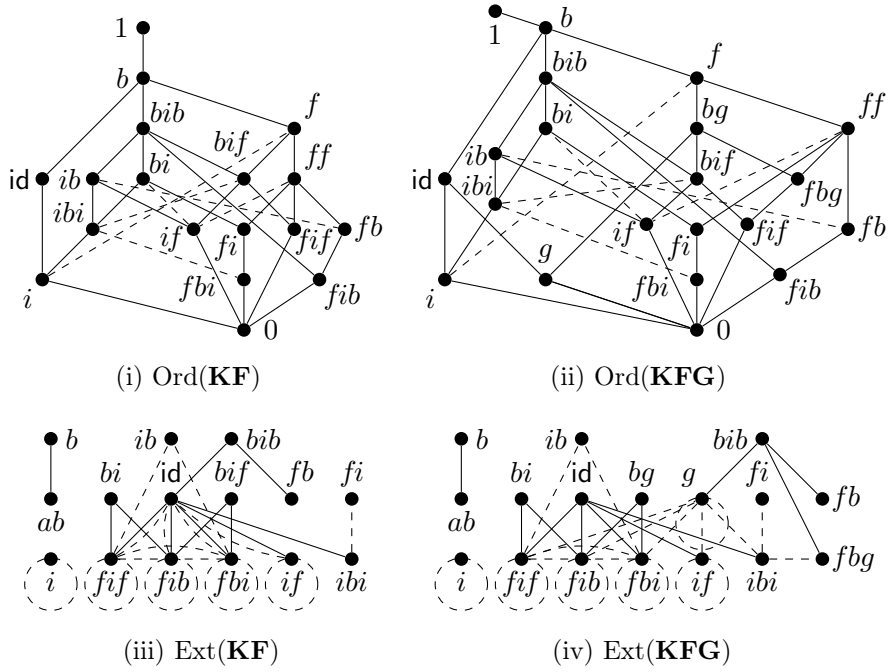
\begin{figure}[!t]
\centering
\begin{tikzpicture}[scale=.67]
	\node (1) at (-1,6) [circle,fill,scale=.5,label={[label distance=-1pt]180:$1$}] {};
	\node (b) at (-1,5) [circle,fill,scale=.5,label={[label distance=-1pt]180:$b$}] {};
	\node (bib) at (-1,4) [circle,fill,scale=.5,label={[label distance=-4.5pt]45:$bib$}] {};
	\node (ib) at (-2,3) [circle,fill,scale=.5,label={[label distance=-2pt]180:$ib$}] {};
	\node (bi) at (-1,3) [circle,fill,scale=.5,label={[label distance=-5pt]45:$bi$}] {};
	\node (ibi) at (-2,2) [circle,fill,scale=.5,label={[label distance=-4pt]135:$ibi$}] {};
	\node (i) at (-3,1) [circle,fill,scale=.5,label={[label distance=-3pt]-135:$i$}] {};
	\node (id) at (-3,3) [circle,fill,scale=.5,label={[label distance=-2pt]180:$\ident$}] {};
	\node (0) at (1,0) [circle,fill,scale=.5,label={[label distance=1pt]0:$0$}] {};
	\node (fib) at (2.5,1) [circle,fill,scale=.5,label={[label distance=-6pt]-45:$\fib$}] {};
	\node (fb) at (3,2) [circle,fill,scale=.5,label={[label distance=-3pt]0:$\fb$}] {};
	\node (ff) at (2,3) [circle,fill,scale=.5,label={[label distance=-3pt]45:$\ff$}] {};
	\node (f) at (2,4) [circle,fill,scale=.5,label={[label distance=-3pt]45:$\f$}] {};
	\node (fbi) at (1,1) [circle,fill,scale=.5,label={[label distance=4pt]180:\raisebox{-15pt}{$\fbi$}}] {};
	\node (fi) at (1,2) [circle,fill,scale=.5,label={[label distance=-5pt]-135:$\ei$}] {};
	\node (fif) at (2,2) [circle,fill,scale=.5,label={[label distance=-8pt]-45:$\fif$}] {};
	\node (bif) at (1,3) [circle,fill,scale=.5,label={[label distance=2pt]90:\hspace{-10pt}$\bif$}] {};
	\node (if) at (0,2) [circle,fill,scale=.5,label={[label distance=-2pt]180:\raisebox{-15pt}{$\ie$}}] {};
	\node (title) at (-.1,-.5) [label={[label distance=-4pt]-90:\small(i)~$\Ord\KF$}] {};
	\draw (0) -- (i) -- (id) -- (b) -- (1);
	\draw (0) -- (fib) -- (bib) -- (b);
	\draw (0) -- (fbi) -- (fi) -- (bi) -- (bib);
	\draw (0) -- (fif) -- (ff) -- (f) -- (b);
	\draw (0) -- (if) -- (bif) -- (f);
	\draw (i) -- (ibi) -- (ib) -- (bib);
	\draw (ibi) -- (bi);
	\draw (fib) -- (fb) -- (ff);
	\draw (fi) -- (ff);
	\draw (fif) -- (bif) -- (bib);
	\draw (if) -- (ib);
	\draw[dashed] (ibi) -- (bif);
	\draw[dashed] (ff) -- (if);
	\draw[dashed] (ib) -- (fb);
	\draw[dashed] (i) -- (f);
	\draw[dashed] (ibi) -- (fbi);
	\draw[dashed] (if) -- (bi);
\end{tikzpicture}
\begin{tikzpicture}[scale=.67]
	\node (1) at (-2,6.33) [circle,fill,scale=.5,label={[label distance=-1pt]-90:$1$}] {};
	\node (b) at (-1,6) [circle,fill,scale=.5,label={[label distance=-1pt]0:\raisebox{8pt}{$b$}}] {};
	\node (bib) at (-1,5) [circle,fill,scale=.5,label={[label distance=-4.5pt]45:$bib$}] {};
	\node (ib) at (-2,3.5) [circle,fill,scale=.5,label={[label distance=-3.5pt]180:\raisebox{-9pt}{$ib$}}] {};
	\node (bi) at (-1,4) [circle,fill,scale=.5,label={[label distance=-5pt]45:$bi$}] {};
	\node (ibi) at (-2,2.5) [circle,fill,scale=.5,label={[label distance=-6pt]180:\raisebox{14pt}{$ibi$}}] {};
	\node (i) at (-3,1) [circle,fill,scale=.5,label={[label distance=-3pt]-135:$i$}] {};
	\node (g) at (-1,1) [circle,fill,scale=.5,label={[label distance=0pt]90:$g$}] {};
	\node (id) at (-3,3) [circle,fill,scale=.5,label={[label distance=-2pt]180:$\ident$}] {};
	\node (0) at (2,0) [circle,fill,scale=.5,label={[label distance=1pt]0:$0$}] {};
	\node (fib) at (3.65,1.1) [circle,fill,scale=.5,label={[label distance=-6pt]-45:$\fib$}] {};
	\node (fb) at (5,2) [circle,fill,scale=.5,label={[label distance=-3pt]0:$\fb$}] {};
	\node (ff) at (5,4) [circle,fill,scale=.5,label={[label distance=-3pt]45:$\ff$}] {};
	\node (fbg) at (4,3) [circle,fill,scale=.5,label={[label distance=-7pt]-45:$\fbg$}] {};
	\node (f) at (2,5) [circle,fill,scale=.5,label={[label distance=-3pt]45:$\f$}] {};
	\node (fbi) at (2,1) [circle,fill,scale=.5,label={[label distance=4pt]180:\raisebox{-15pt}{$\fbi$}}] {};
	\node (fi) at (2,2) [circle,fill,scale=.5,label={[label distance=-5pt]-135:$\ei$}] {};
	\node (fif) at (3,2.1) [circle,fill,scale=.5,label={[label distance=-5pt]0:\raisebox{-14pt}{$\fif$}}] {};
	\node (bg) at (2,4) [circle,fill,scale=.5,label={[label distance=-5.5pt]45:$bg$}] {};
	\node (bif) at (2,3) [circle,fill,scale=.5,label={[label distance=-6.5pt]45:$\bif$}] {};
	\node (if) at (1,2.1) [circle,fill,scale=.5,label={[label distance=-2pt]180:\raisebox{-15pt}{$\ie$}}] {};
	\node (title) at (.84,-.5) [label={[label distance=-4pt]-90:\small(ii)~$\Ord\KFG$}] {};
	\draw (0) -- (i) -- (id) -- (b) -- (1);
	\draw (0) -- (g) -- (id);
	\draw (0) -- (g) -- (bg);
	\draw (fbg) -- (bg);
	\draw (0) -- (fib) -- (bib) -- (b);
	\draw (0) -- (fbi) -- (fi) -- (bi) -- (bib);
	\draw (0) -- (fif) -- (fbg) -- (ff) -- (f) -- (b);
	\draw (0) -- (if) -- (bif) -- (bg) -- (f);
	\draw (i) -- (ibi) -- (ib) -- (bib);
	\draw (ibi) -- (bi);
	\draw (fib) -- (fb) -- (ff);
	\draw (fi) -- (ff);
	\draw (fif) -- (bif) -- (bib);
	\draw (if) -- (ib);
	\draw[dashed] (ibi) -- (bif);
	\draw[dashed] (ff) -- (if);
	\draw[dashed] (ib) -- (fb);
	\draw[dashed] (i) -- (f);
	\draw[dashed] (ibi) -- (fbi);
	\draw[dashed] (if) -- (bi);
\end{tikzpicture}\medskip

\hspace{2pt}
\begin{tikzpicture}[scale=.8]
	\node (b) at (-2,2) [circle,fill,scale=.5,label={[label distance=-1pt]0:$b$}] {};
	\node (ab) at (-2,1) [circle,fill,scale=.5,label={[label distance=-2pt]-90:$ab$}] {};
	\node (bib) at (1,2) [circle,fill,scale=.5,label={[label distance=-1pt]0:$bib$}] {};
	\node (ib) at (0,2) [circle,fill,scale=.5,label={[label distance=-1pt]180:$ib$}] {};
	\node (bi) at (-1,1) [circle,fill,scale=.5,label={[label distance=0pt]90:$bi$}] {};
	\node (ibi) at (3,0) [circle,fill,scale=.5,label={[label distance=-2pt]-90:$ibi$}] {};
	\node (id) at (0,1) [circle,fill,scale=.5,label={[label distance=0pt]90:$\ident$}] {};
	\node (fib) at (0,0) [circle,fill,scale=.5,label={[label distance=-2pt]-90:$\fib$}] {};
	\node (fb) at (2,1) [circle,fill,scale=.5,label={[label distance=-2pt]90:\hspace{6pt}$\fb$}] {};
	\node (fbi) at (1,0) [circle,fill,scale=.5,label={[label distance=-2pt]-90:$\fbi$}] {};
	\node (fi) at (3,1) [circle,fill,scale=.5,label={[label distance=-2pt]90:$\ei$}] {};
	\node (fif) at (-1,0) [circle,fill,scale=.5,label={[label distance=-2pt]-90:\hspace{-1pt}$\fif$}] {};
	\node (bif) at (1,1) [circle,fill,scale=.5,label={[label distance=-2pt]90:$\bif$}] {};
	\node (if) at (2,0) [circle,fill,scale=.5,label={[label distance=-2pt]-90:$\ie$}] {};
	\node (i) at (-2,0) [circle,fill,scale=.5,label={[label distance=-2pt]-90:$i$\vphantom{$\f$}}] {};
	\node (fif_loop) at (-1,-.42) [circle,draw,dashed,inner sep=7pt] {};
	\node (fib_loop) at (0,-.42) [circle,draw,dashed,inner sep=7pt] {};
	\node (fbi_loop) at (1,-.42) [circle,draw,dashed,inner sep=7pt] {};
	\node (if_loop) at (2,-.42) [circle,draw,dashed,inner sep=7pt] {};
	\node (i_loop) at (-2,-.42) [circle,draw,dashed,inner sep=7pt] {};
	\node (title) at (.5,-1) [label={[label distance=0pt]-90:\small(iii)~$\Ext\KF$}] {};
	\draw (ab) -- (b);
	\draw (ibi) -- (id) -- (bib);
	\draw (fbi) -- (id);
	\draw (fib) -- (id);
	\draw (fif) -- (id);
	\draw (if) -- (id);
	\draw (fb) -- (bib);
	\draw (fib) -- (bi);
	\draw (fif) -- (bi);
	\draw (fbi) -- (bif);
	\draw (fib) -- (bif);
	\draw[dashed] (ibi) -- (fi);
	\draw[dashed] (fif) -- (ib);
	\draw[dashed] (fbi) -- (ib);
	\draw[dashed] (fif) -- (fib);
	\draw[dashed] (fbi) -- (fib);
	\path (fif) edge[out=20,in=160,dashed] (fbi);
	\path (fbi) edge[out=150,in=300,dashed] (id);
	\path (fib) edge[out=110,in=250,dashed] (id);
	\path (fif) edge[out=30,in=240,dashed] (id);
	\path (if) edge[out=170,in=320,dashed] (id);
\end{tikzpicture}\hspace{18pt}
\begin{tikzpicture}[scale=.8]
	\node (b) at (-2,2) [circle,fill,scale=.5,label={[label distance=-1pt]0:$b$}] {};
	\node (ab) at (-2,1) [circle,fill,scale=.5,label={[label distance=-2pt]-90:$ab$}] {};
	\node (bib) at (3,2) [circle,fill,scale=.5,label={[label distance=-1pt]180:$bib$}] {};
	\node (ib) at (0,2) [circle,fill,scale=.5,label={[label distance=-1pt]180:$ib$}] {};
	\node (bi) at (-1,1) [circle,fill,scale=.5,label={[label distance=0pt]90:$bi$}] {};
	\node (ibi) at (3,0) [circle,fill,scale=.5,label={[label distance=-2pt]-90:$ibi$}] {};
	\node (id) at (0,1) [circle,fill,scale=.5,label={[label distance=0pt]90:$\ident$}] {};
	\node (fib) at (0,0) [circle,fill,scale=.5,label={[label distance=-2pt]-90:$\fib$}] {};
	\node (fb) at (4,1) [circle,fill,scale=.5,label={[label distance=-3pt]0:$\fb$}] {};
	\node (fbg) at (4,0) [circle,fill,scale=.5,label={[label distance=-3pt]0:$\fbg$}] {};
	\node (fbi) at (1,0) [circle,fill,scale=.5,label={[label distance=-2pt]-90:$\fbi$}] {};
	\node (fi) at (3,1) [circle,fill,scale=.5,label={[label distance=-2pt]90:\hspace{-4pt}$\ei$}] {};
	\node (fif) at (-1,0) [circle,fill,scale=.5,label={[label distance=-2pt]-90:\hspace{-1pt}$\fif$}] {};
	\node (bg) at (1,1) [circle,fill,scale=.5,label={[label distance=-2pt]90:$bg$}] {};
	\node (if) at (2,0) [circle,fill,scale=.5,label={[label distance=-2pt]-90:$\ie$}] {};
	\node (g) at (2,1) [circle,fill,scale=.5,label={[label distance=-2pt]90:\hspace{-10pt}$g$}] {};
	\node (i) at (-2,0) [circle,fill,scale=.5,label={[label distance=-2pt]-90:$i$\vphantom{$\f$}}] {};
	\node (fif_loop) at (-1,-.42) [circle,draw,dashed,inner sep=7pt] {};
	\node (fib_loop) at (0,-.42) [circle,draw,dashed,inner sep=7pt] {};
	\node (fbi_loop) at (1,-.42) [circle,draw,dashed,inner sep=7pt] {};
	\node (if_loop) at (2,-.42) [circle,draw,dashed,inner sep=7pt] {};
	\node (g_loop) at (2,.59) [circle,draw,dashed,inner sep=7pt] {};
	\node (i_loop) at (-2,-.42) [circle,draw,dashed,inner sep=7pt] {};
	\node (title) at (1,-1) [label={[label distance=0pt]-90:\small(iv)~$\Ext\KFG$}] {};
	\draw (ab) -- (b);
	\draw (ibi) -- (id);
	\draw (fbi) -- (id);
	\draw (fib) -- (id);
	\draw (fif) -- (id);
	\draw (if) -- (id);
	\draw (fb) -- (bib);
	\draw (fbg) -- (bib);
	\draw (g) -- (bib);
	\draw (fib) -- (bi);
	\draw (fif) -- (bi);
	\draw (fbi) -- (bg);
	\draw (fib) -- (bg);
	\draw[dashed] (ibi) -- (fi);
	\draw[dashed] (ibi) -- (g);
	\draw[dashed] (ibi) -- (fbg);
	\draw[dashed] (fif) -- (ib);
	\draw[dashed] (fbi) -- (ib);
	\draw[dashed] (fif) -- (fib);
	\draw[dashed] (fbi) -- (fib);
	\draw[dashed] (if) -- (g);
	\draw[dashed] (fif) -- (g);
	\draw[dashed] (fbi) -- (g);
	\draw[dashed] (fib) -- (g);
	\path (fif) edge[out=20,in=160,dashed] (fbi);
\end{tikzpicture}
\caption{$\Ord{\KF},\Ord{\KFG}$ and their extenders (see Definition~\ref{dfn:extender}).\\
\vrule width-27pt height0pt depth0pt
\small\begin{tabular}{l}\\[-10pt]
Solid edges represent $(o_1,o_2)$, dashed $(o_1,ao_2)$.\\
Transitivity only applies to (i) and (ii).\\
All diagrams are up to left duality.\end{tabular}}
\label{fig:kf_monoid}
\end{figure}

The relation $\leq$ is a partial order on $\monoid{O}$.\footnote{As usual, by
\define{partial order} we mean a reflexive, transitive, asymmetric binary relation.}
The join $(\vee)$ and meet $(\wedge)$ of $\Y$ exist and satisfy $\left(\bigvee_{\!\Y}o
\right)\!A\,=\,\bigcup_{\Y}oA$ and $\left(\bigwedge_{\Y}o\right)\!A\,=\,\bigcap_{\Y}oA$.
Let $\Ord{\Y}:=\{(o_1,o_2)\in\Y\times\Y:o_1\leq o_2$ for all $\XT\}$.\footnote{$o_1\leq
o_2\mbox{ for all }\XT\iff o_1\leq o_2\mbox{ in }(\mathbb{R},\mbox{usual topology})$ by
McKinsey\textendash{}Tarski
\cite{2018_bezhanishvili_bezhanishvili_lucero-bryan_van_mill,1944_mckinsey_tarski}.}

Operators $o_1,o_2$ are \define{disjoint} if $o_1\wedge o_2$ is the
\define{zero operator} $0$ defined by $0A:=\varnothing$.\footnote{We use dashed edges to
represent disjointness in poset diagrams (see Figure~\ref{fig:kf_monoid}).}
The \define{one operator} $1$ equals $a\hspace{.5pt}0$ where
$A\longmapsto$\llap{\raisebox{1pt}{$^a$}\hspace{7pt}}\,$X\setminus A$ is the complement
operator on $2^X$.\footnote{The set $\monoid{O}$ is a Boolean algebra under
$\{\vee,\wedge\}$.
For details see Sherman \cite{2010_sherman}.}
The \define{difference} $o_1\setminus o_2$ equals $o_1\wedge ao_2$.

An operator $o$ is \define{open} if $oA$ is open for all $A\su X$.
Closed operators are defined similarly.

Let $b$ be topological closure, $i:=aba$ interior, $\f:=b\wedge ba$ boundary (aka
frontier), and $g:=\ident\wedge ba$ border.
Define
$\KZ:=\{\ident,$\bksp$b,$\bksp$i,$\bksp$bi,$\bksp$ib,$\bksp$bib,$\bksp$ibi\}$,
$\FZ:=\{0,$\bksp$\f,$\bksp$\ie,$\bksp$\fif,$\bksp$\bif,$\bksp$\ff,$\bksp$\fb,$\bksp$\ei,$\bksp$\fbi,$\bksp$\fib\}$,
$\GZ:=\{g,$\bksp$bg,$\bksp$\fbg\}$,
$\KFZ:=\KZ\cup\FZ$,
$\FGZ:=\FZ\cup\GZ$,
$\KFGZ:=\KFZ\cup\GZ$, and
$\semigp{S}:=\semigp{S^0}\cup a\semigp{S^0}$ for each $\semigp{S^0}$ above.

GJ call $\K$ the \define{Kuratowski monoid} and its elements \define{Kuratowski
operators.}
It will be shown below that $\KF$ is generated by $\{a,b,\f\,\}$, hence we call $\KF$ the
\define{Gaida\textendash{}Eremenko monoid} and its elements
\define{Gaida\textendash{}Eremenko operators}.

Kuratowski \cite{1922_kuratowski} proved that the four \define{closure axioms} $b(A_1\cup A_2)=bA_1\cup bA_2$,
$b\leq\ident$, $bb=b$, $b\es=\es$ imply $\K$ is the submonoid of $\monoid{O}$ generated
by $\{a,b\}$.\footnote{This follows from the identities $bababab=bab\iff bibi=bi\iff
ibib=ib$ (right- and left-multiply by $a$).
The literature usually credits Hammer \cite{1960_hammer} with proving that the axioms
$b\es=\es$ and $b(A_1\cup A_2)\su bA_1\cup bA_2$ are not necessary for $\{a,b\}$ to
generate $14$ distinct operators but Chittenden \cite{1941_chittenden_a} proved this
$19$ years before with $bb=b$ weakened to $bbb=b$.}

The axiom $b(A_1\cup A_2)=bA_1\cup bA_2$ implies that every $o\in\KZ$ is isotone.
It follows that we can left- or right-multiply both sides of any equation or inequality
in $\KFG$ by an operator in $\KFG$ (reversing order when necessary) with one exception:
since $\f$ and $g$ are neither isotone nor antitone, we generally cannot left-multiply
inequalities in $\KFG$ by operators in $\FG\sm\{0,1\}$.

In Sections~\ref{sec:kf_monoid}-\ref{sec:kfa} we find all quotients
$\KF/\textstyle{\sim}$ and $\KF/\textstyle{\sim}_A$ where $o_1\sim o_2\iff o_1=o_2$ and
$o_1\sim_Ao_2\iff o_1A=o_2A$.
For brevity we suppress the quotient notation and refer to equivalence classes by their
constituent operators.\footnote{\vrule width3.6pt height0pt depth0ptThis shorthand is
permissible since the projections from $\KF$ onto $\KF/\textstyle{\sim}$ and
$\KF/\textstyle{\sim}_A$ are isotonic monoid homomorphisms.
We \vrule width3.6pt height0pt depth0ptapply it to other monoids as well.}

GJ call $K(\XT):=|\K|$ the \define{K-number of} $\XT$, $k(A):=|\KA|$ the \define{k-number
of} $A$, and $k(\XT):=\max\{k(A):A\su X\}$ the \define{k-number of} $\XT$.
We denote the $\KF$ analogues by $\Kf$ and $\kf$.

Note that $k(\XT)\leq K(\XT)$.
GJ call spaces satisfying $k(\XT)=K(\XT)$ \define{full}.
If $\kf(\XT)=\Kf(\XT)$ we call $\XT$ \define{completely full}.

GJ call spaces with \Knum\ $14$ \define{Kuratowski spaces}.
Subsets with \knum\ $14$ were given the name \define{Kuratowski $14$-set} by Langford
\cite{1971_langford}.\footnote{The first $14$-set appeared in Zarycki \cite{1927_zarycki}
(Kuratowski used two complementary sets with \knum\ $12$ to prove $\Ord\K$).
Since finding a $14$-set is half the closure-complement problem, Zarycki's name could
have been added to it.
In the Closing Remarks we name a certain class of problems
\define{Kuratowski\textendash{}Zarycki} problems.}
We naturally call the $\KF$ analogues \define{Gaida\textendash{}Eremenko spaces} and
\define{$34$-sets}.

Kuratowski \cite{1922_kuratowski} calls operators in $\KZ$ \define{even} and $a\KZ$
\define{odd} due to the number of times ``$a$'' must appear in any product representing
them.
Canilang et~al\text{.} \cite{2021_canilang_cohen_graese_seong} observed that these terms
extend naturally to $\KF$ in nonempty spaces since $\KFZ\cap a\KFZ=\es$ $(o_1(\es)=\es$
and $ao_2(\es)=X$ for all $o_1,o_2\in\KFZ)$.

The \define{duals} of $o,\Y$ are $d(o):=aoa,d(\Y):=\{d(o):o\in\Y\}$.
Respectively, the \define{left dual}, \define{right dual}, and \define{dual} of the
equation $o=p$ (inequality $o\leq p$) are: $ao=ap$ $(ap\leq ao)$, $oa=pa$ $(oa\leq pa)$,
and $aoa=apa$ $(apa\leq aoa)$.

\begin{table}
\centering
\caption{Relations in $\KZA\cup\{\es,X\}$ implied by inclusions in $\KFGA$.}
\begin{tabular}{|c|c|c|}
\multicolumn{1}{c}{$o_1\!\in$}&\multicolumn{1}{c}{$o_2\!\in$}&
\multicolumn{1}{c}{$o_1A\su o_2A\implies$}\\\Xhline{2\arrayrulewidth}
\multirow{2}{*}{\vrule width0pt height14pt$\KZ$}&$a\KZ$\vrule width0pt height12pt&
\multirow{3}{*}{\vrule width0pt height14pt$iA=\es,\,biA=iA,\,A\neq X$}\\\cline{2-2}
&\multirow{2}{*}{\vrule width0pt height14pt$\FGZ$}\vrule width0pt height12pt&\\
\cline{1-1}
$a\FGZ$\vrule width0pt height12pt&&\\\hline
$a\KFGZ$\vrule width0pt height12pt&$\KFGZ$&$bA=X,\,ibA=bA,\,A\neq\es$\\\hline
\end{tabular}\label{tab:kf_akf}
\end{table}

\subsection{Gaida and Eremenko's theorem.}

The following are obvious consequences of the definition of the dual.

\begin{lem}\label{lem:eqs}
\nit{(i)}~\mth{$d(o_1o_2)=d(o_1)d(o_2)$,}\quad
\nit{(ii)}~\mth{$a(d(o))=oa$,}\quad
\nit{(iii)}~\mth{$d(d(o))=o$.}
\end{lem}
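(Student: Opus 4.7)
The plan is to observe that all three identities reduce to the single fact that the complement operator $a$ is an involution on $2^X$, i.e., $aa = \ident$, since $a(aA) = X\setminus(X\setminus A) = A$ for every $A\su X$. Once this is in hand, each of (i), (ii), (iii) follows by unfolding the definition $d(o) := aoa$ and inserting or canceling an $aa$ in the appropriate place.

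More concretely, for (i) I would compute
\[
d(o_1o_2) \;=\; a(o_1o_2)a \;=\; ao_1\,(aa)\,o_2a \;=\; (ao_1a)(ao_2a) \;=\; d(o_1)\,d(o_2),
\]
where the middle step inserts $\ident = aa$ between $o_1$ and $o_2$ and then regroups using associativity of composition in $\monoid{O}$. For (ii) I would simply write $a\,d(o) = a(aoa) = (aa)oa = oa$, and for (iii), $d(d(o)) = a(aoa)a = (aa)\,o\,(aa) = o$.

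There is essentially no obstacle here; the only point requiring care is justifying $aa = \ident$, which is immediate from the definition of the complement operator on $2^X$ given in Section~\ref{sec:Theorem}. All three parts are then one-line manipulations in the monoid $(\monoid{O},\circ)$. I would present the three calculations in sequence as a single short proof, possibly noting once at the start that $aa = \ident$ and that composition in $\monoid{O}$ is associative.
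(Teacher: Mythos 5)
Your proof is correct and is exactly the justification the paper has in mind: the paper simply declares these identities to be obvious consequences of the definition $d(o)=aoa$, and the intended verification is precisely your insertion/cancellation of $aa=\ident$ together with associativity of composition. Nothing to add.
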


\noindent
Parts~(ii)-(iii) imply the operator $a$ can be moved across $o$ in either direction by
changing $o$ to $d(o)$.
The reader is advised to memorize this rule for $o\in\KZ$.

\begin{cor}\label{cor:eqs}
The dual of an operator in \mth{$\KZ$} is obtained by interchanging \mth{$i$} and
\mth{$b$} in its product representation.
Thus \mth{$d(bib)=ibi$,} \mth{$d(\ident)=\ident$,} \mth{$d(ib)=bi$,} etc\nit.
\end{cor}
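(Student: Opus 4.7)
The plan is to reduce everything to Lemma~\ref{lem:eqs}(i) plus two base-case computations for the generators $b$ and $i$, after which the statement drops out by a trivial induction on the length of the product representation.

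First I would verify the base cases directly from the definition $d(o):=aoa$. Since $a$ is an involution, $d(\ident)=a\ident a=aa=\ident$. By the paper's definition $i:=aba$, so $d(b)=aba=i$. Applying $d$ again and using $aa=\ident$ gives $d(i)=aia=a(aba)a=b$; alternatively this is immediate from Lemma~\ref{lem:eqs}(iii). Thus on the generators $\{b,i\}$ of the submonoid of $\KZ$ excluding $\ident$, the operator $d$ acts as the swap $b\leftrightarrow i$.

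Next, observe that every element $o\in\KZ=\{\ident,b,i,bi,ib,bib,ibi\}$ is either $\ident$ or a product $o_1\cdots o_n$ with each $o_j\in\{b,i\}$. Applying Lemma~\ref{lem:eqs}(i) repeatedly gives
\[
d(o_1\cdots o_n)\,=\,d(o_1)\cdots d(o_n),
\]
and substituting the base-case values shows this is exactly the product obtained from $o_1\cdots o_n$ by interchanging every $b$ with $i$. The sample computations $d(bib)=ibi$ and $d(ib)=bi$ are then immediate instances.

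There is no real obstacle here; the only thing worth flagging is that one must invoke Lemma~\ref{lem:eqs}(i) and not merely the definition of $d$ (since a priori $a(o_1 o_2)a$ is not obviously equal to $ao_1a\cdot ao_2a$ without the intervening cancellation $aa=\ident$, which is precisely what part~(i) encodes). Once that is noted, the corollary is a one-line consequence of the base cases $d(b)=i$ and $d(i)=b$.
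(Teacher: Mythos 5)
Your proof is correct and follows the paper's own argument exactly: establish the base cases $d(b)=i$ and $d(i)=b$ from the definition $i:=aba$, then apply Lemma~\ref{lem:eqs}(i) to extend multiplicatively over product representations. The paper states this in two lines; your version merely fills in the routine details.
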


\begin{proof}
Have $d(b)=i$ and $d(i)=b$.
Apply Lemma~\ref{lem:eqs}(i).
\end{proof}

\begin{lem}\label{lem:f_cancels}
$\be=\f=\fa$.
\end{lem}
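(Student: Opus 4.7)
The plan is to split the claim into the two separate identities $\fa = \f$ and $\be = \f$, each of which follows from the lattice structure on $\monoid{O}$ combined with the closure axioms $bb = b$, $\ident \leq b$, plus $aa = \ident$. I would not evaluate on individual subsets; a purely operator-level argument is cleaner and shorter.

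For $\fa = \f$, I would expand via the definition $\f = b \wedge ba$ and use the fact that composition right-distributes over the meet in $\monoid{O}$ (since $((o_1 \wedge o_2)p)A = o_1(pA) \cap o_2(pA) = (o_1 p \wedge o_2 p)A$). Together with $aa = \ident$ this gives
\[
\fa = (b \wedge ba)a = ba \wedge baa = ba \wedge b = \f.
\]
This is really just the algebraic formulation of the fact that $A$ and $aA$ have the same boundary.

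For $\be = \f$, the direction $\f \leq \be$ is immediate because $\ident \leq b$ gives $o \leq bo$ for every operator $o$. For the reverse $\be \leq \f$, I would use isotonicity of $b$: from $\f = b \wedge ba \leq b$ and $\f \leq ba$ one obtains $\be \leq bb$ and $\be \leq bba$, and then applying idempotence $bb = b$ yields $\be \leq b \wedge ba = \f$. Combining the two inequalities gives the equality. (Conceptually, this is the statement that the boundary is closed, being the intersection of the two closed sets $bA$ and $baA$.)

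The only place one must be slightly careful is the invocation of right-distributivity of composition over meet and the fact that $b$ is isotone; both are recorded in the preceding paragraphs of the paper, so no real obstacle arises. I would expect the full proof to occupy two short display lines.
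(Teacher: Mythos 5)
Your proposal is correct and follows essentially the same route as the paper, which proves the lemma via the single chain $\be=b(b\wedge ba)=b\wedge ba=ba\wedge baa=\fa$: your computation of $\fa=\f$ is identical, and your two-inequality argument for $\be=\f$ (using $\ident\leq b$, isotonicity of $b$, and $bb=b$) merely unpacks the paper's one-step assertion $b(b\wedge ba)=b\wedge ba$, i.e.\ that the boundary, being a meet of closed operators, is closed. No gaps.
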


\begin{proof}
$\be=b(b\wedge ba)=b\wedge ba=ba\wedge baa=\fa$.
\end{proof}

The Hasse diagram of $\Ord\K$ dates back to Kuratowski \cite{1922_kuratowski}.
Its extension to $\KF$ forms the bedrock of this paper.

\begin{thm}\label{thm:partial_order}\from{Gaida and Eremenko \cite{1974_gaida_eremenko}}
Figure~\nit{\ref{fig:kf_monoid}(i)} and its left dual represent \mth{$\Ord\KF$.}
\end{thm}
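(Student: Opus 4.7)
The plan is to establish the bound $|\KF|\leq 34$ and then pin down the poset structure by combining universally valid inclusions with counterexamples in $(\R,\text{usual topology})$. I would organize the argument into three phases.

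First, I would show that $\KF = \KFZ \cup a\KFZ$ by reducing every word over $\{a,b,\f\}$ to a canonical residue. The Kuratowski identities $a^2 = \ident$, $bb = b$, and $bibi = bi$ (which imply $|\K|\leq 14$) combine with Lemma~\ref{lem:f_cancels}'s identity $\be = \f = \fa$ to absorb any $b$ on the left of $\f$ and any $a$ on the right of $\f$. Cross-checking the finite list of remaining products (such as $\ff$, $\ei$, $\fif$, $\fib$, $\fb$, $\fbi$, $\bif$, $\ie$) then shows every word lies in one of the $17$ even residues of $\KFZ$ (including $0$), or in its left-$a$ translate. This gives $|\KF|\leq 34$.

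Second, I would verify each edge in Figure~\ref{fig:kf_monoid}(i). Solid edges within $\K$ are classical (e.g., $i \leq \ident \leq b$, $ibi \leq bi \leq bib$, $ib \leq bib$). For solid edges involving $\f$, the decomposition $\f = b \wedge ba$ combined with isotony of $b$ and $i$ yields inclusions such as $\f \leq b$, $\ei \leq bi$, $\fb \leq \ff$, and $\fib \leq bib$. Each dashed edge $(o_1,o_2)$ encodes disjointness $o_1A \cap o_2A = \es$ for all $A$; the archetype is $i \wedge \f = 0$ (interior and boundary are always disjoint), and the remaining dashed edges follow by composing this basic fact with further $\K$-operators (left-composition being restricted to isotone operators, i.e.\ elements of $\K$).

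Third, I would show no additional relations hold. By the McKinsey\textendash{}Tarski footnote, universal inequalities in $\KF$ coincide with inequalities in $(\R,\text{usual topology})$, so it suffices to exhibit a subset $A \su \R$ with $\kf(A) = 34$. A suitable candidate is a mild enrichment of the classical Kuratowski $14$-set of Zarycki and Langford, augmented with features (such as isolated rationals flanked by intervals) that separate the boundary-related operators. Tabulating the $34$ images $oA$ and verifying pairwise distinctness simultaneously achieves the bound $|\KF| = 34$ and refutes every missing edge of Figure~\ref{fig:kf_monoid}(i). The left dual of the diagram, governing the order on $a\KFZ$, follows automatically since left-multiplication by $a$ is antitone.

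The principal obstacle is Phase~2: because $\f$ is neither isotone nor antitone (as warned in the paragraph preceding Table~\ref{tab:kf_akf}), inequalities in $\KF$ cannot be freely left-multiplied by $\f$, so every boundary-related edge must be proved ad hoc from the defining decomposition $\f = b \wedge ba$ rather than by formal manipulation. This case analysis, while routine, is substantial, and the author's computer verification (mentioned in the abstract) is a natural way to corroborate it.
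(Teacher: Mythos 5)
Your overall architecture is sound and is essentially an expanded, by-hand version of what the paper compresses into two sentences (citing GE for the positive edges and a computer search for completeness): Phases~1 and~2 reconstruct the content of Lemma~\ref{lem:f_cancels} and Table~\ref{tab:multiplication}, and the reduction of universal validity to $(\R,\mbox{usual topology})$ via McKinsey--Tarski is exactly the footnote the paper relies on. One small repair in Phase~2: the dashed edges all follow by \emph{right}-multiplying $i\wedge\f=0$ (equivalently $i\leq a\f$) by elements of $\KFZ$, since right-composition preserves disjointness even for non-isotone operators, whereas left-composing a disjoint pair by $b$ generally destroys disjointness; your phrase ``composing with further $\K$-operators'' should be sharpened accordingly.

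The genuine gap is in Phase~3. Pairwise distinctness of the $34$ sets $oA$ refutes every missing \emph{equality}, but $\Ord\KF$ is an order: a missing relation $(o_1,o_2)$ is refuted only by $o_1A\nsu o_2A$, which is strictly stronger than $o_1A\neq o_2A$. A GE $34$-set can perfectly well satisfy extra inclusions among incomparable operators; indeed the paper's Proposition~\ref{prop:partial_order} shows that a $34$-set $A$ witnesses $\Ord\KF$ \emph{if and only if} the nine inclusions $o_1A\su o_2A$ for $(o_1,o_2)\in\{(\fib,\fif),(\fif,\fbi),(\fbi,\fib)\}\cup(\{\fib,\fif,\fbi\}\times\{\ident,a\})$ all fail, so these must be checked separately for your chosen set. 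The remaining potential covers are excluded automatically by the $34$-set hypothesis, but that exclusion itself requires the extender analysis of Lemma~\ref{lem:poset_result} and the case-by-case argument over $S_1,\dots,S_7$, not distinctness alone. If you replace ``verifying pairwise distinctness'' with ``verifying, for every ordered pair not in the transitive closure of the diagram, that the corresponding inclusion fails for the tabulated sets'' --- a finite but much larger check, which is precisely what the paper delegates to a computer --- the argument closes.
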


\begin{proof}
GE proved all edges hold.
Computer results complete the proof.\footnote{All C programs for this paper can be found
at \url{https://www.mathtransit.com/c.php}.}
\end{proof}

\begin{lem}\label{lem:kf_akf}
All entries in Table~\nit{\ref{tab:kf_akf}} are correct\nit.
\end{lem}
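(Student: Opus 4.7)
The plan is to reduce the four rows of Table~\ref{tab:kf_akf} to short chains of inclusions built on three structural facts: (a)~$i$ is the minimum of $\KZ$ under $\leq$, (b)~$b$ is the maximum of both $\KZ$ and $\KFGZ$, and (c)~every $o\in\FGZ$ satisfies $oA\subseteq baA$. Since $baA=X\setminus iA$ (from $i=aba$), fact~(c) converts upper bounds by $\f$ into disjointness with $iA$.

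Rows~1-3 will all yield $iA=\es$. For row~1, writing $o_2=ao_2'$ with $o_2'\in\KZ$, fact~(a) gives $iA\subseteq o_1A$ and $iA\subseteq o_2'A$, while the hypothesis $o_1A\subseteq X\setminus o_2'A$ forces their intersection to be empty, so $iA=\es$. Row~2 is immediate from (a) and (c): $iA\subseteq o_1A\subseteq o_2A\subseteq X\setminus iA$. For row~3, writing $o_1=ao_1'$ with $o_1'\in\FGZ$ and complementing the hypothesis gives $X\setminus o_2A\subseteq o_1'A$; combined with (c) on both $o_1'$ and $o_2$, this sandwiches $iA\subseteq o_1'A\subseteq X\setminus iA$. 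In each of these rows, $iA=\es$ immediately yields $biA=b\es=\es=iA$, and $A\neq X$ because $iX=X\neq\es$. Row~4 is the perfect left-dual: writing $o_1=ao_1'$ with $o_1'\in\KFGZ$, fact~(b) gives $X\setminus bA\subseteq o_1A\subseteq o_2A\subseteq bA$, forcing $bA=X$; then $ibA=iX=X=bA$ and $A\neq\es$ because $b\es=\es\neq X$.

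The main obstacle is verifying fact~(c). For $o\in\FZ$, this is visible in Figure~\ref{fig:kf_monoid}(i) as the inequality $o\leq\f$, together with $\f\leq ba$ from the definition $\f=b\wedge ba$. For the three border operators I would check directly: $g=\ident\wedge ba\leq ba$; $bg$ is closed, so $bgA=b(bgA)\subseteq b(baA)=baA$; and $\fbg A=b(bgA)\cap ba(bgA)\subseteq bgA\subseteq baA$. This extends (c) to all of $\FGZ$ and completes the plan.
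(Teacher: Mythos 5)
Your proposal is correct and follows essentially the same route as the paper: each row is a short sandwich argument using the minimality of $i$ in $\KZ$, the maximality of $b$ in $\KFGZ$, and the bound $o\leq\f\leq ba=ai$ for $o\in\FGZ$, with the remaining equations and inequations falling out of $iA=\es$ (resp.\ $bA=X$) and $X\neq\es$. Your only addition is the explicit check that $g$, $bg$, and $\fbg$ lie below $ba$, which the paper leaves implicit; that is a reasonable precaution since $\Ord\KFG$ is only established later.
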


\begin{proof}
Suppose $o_1A\su o_2A$.
If $o_1\in\KZ,o_2\in a\KZ$ then $iA\su o_1A\su o_2A\su aiA$, hence $iA=\es$.
If $o_1\in\KZ,o_2\in\FGZ$ then $iA\su o_1A\su o_2A\su\fA\su aiA$.
If $o_1\in a\FGZ,o_2\in\FGZ$ then $iA\su ao_2A\su ao_1A\su\fA\su aiA$.
If $o_1\in a\KFGZ,o_2\in\KFGZ$ then $abA\su o_1A\su o_2A\su bA$, hence $bA=X$.
The inequations hold since $X\neq\es$.  
\end{proof}

\begin{cor}\label{cor:dual}
\mth{$o_1\cn\leq ao_2$} and \mth{$ao_3\cn\leq o_4$} for all \mth{$o_1,o_2\in\KZ$}
and \mth{$o_3,o_4\in\KFGZ$.}
\end{cor}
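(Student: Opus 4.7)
The plan is to reduce the corollary directly to Lemma~\ref{lem:kf_akf} by applying it to an extremal choice of $A$. Since $o_1 \not\leq ao_2$ asserts that $(o_1,ao_2) \notin \Ord{\KF}$ (and similarly for the second claim), it suffices to exhibit, for each pair, a single nonempty space and a single subset witnessing the failure of inclusion. The strategy is: assume the inclusion holds for \emph{every} $A$, apply the lemma to obtain forbidden membership conditions on $A$, then plug in $A := X$ (respectively $A := \es$) to reach a contradiction.

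For the first claim, note that if $o_1, o_2 \in \KZ$, then $ao_2 \in a\KZ$, so the pair $(o_1, ao_2)$ matches the hypothesis of the first row of Table~\ref{tab:kf_akf}. That row states that the inclusion $o_1 A \su ao_2 A$ forces $A \neq X$. Taking $A := X$ in any nonempty space therefore violates the conclusion, so the inclusion fails at $A = X$ and $o_1 \not\leq ao_2$.

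For the second claim, if $o_3, o_4 \in \KFGZ$, then $ao_3 \in a\KFGZ$, so $(ao_3, o_4)$ falls under the fourth row of Table~\ref{tab:kf_akf}, whose conclusion requires $A \neq \es$. Taking $A := \es$ in any nonempty space delivers the required contradiction, and $ao_3 \not\leq o_4$ follows.

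There is no genuine obstacle here; the only thing to check is that we are invoking the correct row of Table~\ref{tab:kf_akf} in each case and that $X$ and $\es$ are legitimate witnesses (they are, by the standing assumption that $X \neq \es$).
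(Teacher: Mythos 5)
Your proposal is correct and is essentially identical to the paper's proof, which reads in full: ``Table~\ref{tab:kf_akf} implies $o_1X\cn\su ao_2X$ and $ao_3\es\cn\su o_4\es$'' --- i.e., it evaluates at the same extremal witnesses $A=X$ and $A=\es$ and invokes the same rows of the table ($A\neq X$ and $A\neq\es$, respectively). No gap; you have merely spelled out the contradiction the paper leaves implicit.
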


\begin{proof}
Table~\ref{tab:kf_akf} implies $o_1X\cn\su ao_2X$ and $ao_3\es\cn\su o_4\es$.
\end{proof}

\begin{cor}\label{cor:maximal_family}
\nit{(i)}~\mth{$|\KFZA|=17\implies|\KFA|=34$,}\quad
\nit{(ii)}~\mth{$|\KFGZA|=20\implies|\KFGA|=40$.}
\end{cor}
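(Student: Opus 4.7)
The plan is to derive both parts directly from the bottom row of Table~\ref{tab:kf_akf}: any hypothetical element shared between the ``even'' family $\KFZA$ (resp.~$\KFGZA$) and its complement-shift $a\KFZA$ (resp.~$a\KFGZA$) would force the collapse $ibA=bA$ inside the even half itself, which the cardinality hypothesis forbids.

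First I would note that by definition $\KF=\KFZ\cup a\KFZ$, so $\KFA=\KFZA\cup a\KFZA$. Since $a$ is an involution on $2^X$, we have $|a\KFZA|=|\KFZA|=17$, giving the bound $|\KFA|\leq 34$ automatically. Equality is then equivalent to the disjointness $\KFZA\cap a\KFZA=\es$, which is all that needs to be proved for (i).

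For that disjointness, suppose for contradiction that $o_1A=ao_2A$ for some $o_1,o_2\in\KFZ$. Then $ao_2A\su o_1A$, and the inclusions $a\KFZ\su a\KFGZ$ and $\KFZ\su\KFGZ$ let me apply the bottom row of Table~\ref{tab:kf_akf} to conclude $ibA=bA$. Since $ib$ and $b$ are distinct elements of $\KFZ$, this collapse contradicts $|\KFZA|=17$, proving~(i).

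Part~(ii) is essentially identical with $\KFGZ$ replacing $\KFZ$ throughout: row~4 applies directly since $a\KFGZ\su a\KFGZ$ and $\KFGZ\su\KFGZ$ trivially, and $ib,b$ remain distinct in $\KFGZ$, so $ibA=bA$ again contradicts $|\KFGZA|=20$. There is no substantive obstacle; Lemma~\ref{lem:kf_akf} is doing all the work, and the only point to double-check is that the ``$a\KFGZ$'' and ``$\KFGZ$'' hypotheses of row~4 swallow $a\KFZ$ and $\KFZ$ in part~(i), which is immediate from $\KFGZ=\KFZ\cup\GZ$.
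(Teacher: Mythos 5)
Your proof is correct and follows essentially the same route as the paper: the paper likewise observes that $|\KFZA|=17$ forces $|a\KFZA|=17$ and then invokes Table~\ref{tab:kf_akf} (via the distinctness of the $\KZ$-images, equivalently $ibA\neq bA$) to rule out any coincidence between $\KFZA$ and $a\KFZA$. Your explicit reduction of part~(i) to the bottom row of the table through the inclusions $\KFZ\su\KFGZ$ and $a\KFZ\su a\KFGZ$ is exactly the intended mechanism.
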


\begin{proof}
(i)~Suppose $|\KFZA|=17$.
Then $|(a\KFZ)A|=17$.
Since $|\KZA|=7$, Table~\ref{tab:kf_akf} implies $o_1A\neq o_2A$ for all $o_1\in a\KFZ$,
$o_2\in\KFZ$.
Conclude $|\KFA|=34$.
The proof of (ii) is similar.
\end{proof}

\begin{lem}\label{lem:half_dist}
\nit{(i)}~\mth{$bi(A\cup B)\su b(iA\cup B)$,}\quad
\nit{(ii)}~\mth{$i(bA\cap B)\su ib(A\cap B)$.}
\end{lem}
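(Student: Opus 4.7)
The plan is to prove (i) by direct appeal to the definitions of $i$ and $b$, and to obtain (ii) by a directly parallel argument (one could alternatively derive (ii) from (i) by complementation, using the identities $ai=ba$ and $ab=ia$ that follow from Lemma~\ref{lem:eqs}).

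For (i), the first step is to observe that it suffices to establish the stronger inclusion $i(A\cup B)\su b(iA\cup B)$: applying $b$ to both sides, then using isotonicity and $bb=b$, yields the claim. To prove this stronger inclusion, I would fix $x\in i(A\cup B)$ and choose an open neighborhood $U$ of $x$ with $U\su A\cup B$. Given an arbitrary open neighborhood $V$ of $x$, the key observation is that $U\cap V$ is itself an open neighborhood of $x$ contained in $A\cup B$. Case split: if $(U\cap V)\cap B\neq\es$, then $V$ meets $B\su iA\cup B$; otherwise $U\cap V\su A$, and since $U\cap V$ is open, $U\cap V\su iA$, so $V$ meets $iA$. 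Either way $V\cap(iA\cup B)\neq\es$, whence $x\in b(iA\cup B)$.

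For (ii), I would run a parallel but slightly simpler argument. Given $x\in i(bA\cap B)$, pick an open neighborhood $U$ of $x$ with $U\su bA\cap B$. The plan is to show $U\su b(A\cap B)$ directly: for any $y\in U$ and any open neighborhood $W$ of $y$, the set $W\cap U$ is open and meets $A$ (because $y\in U\su bA$), and any witness $z\in W\cap U\cap A$ automatically lies in $U\su B$, hence in $W\cap(A\cap B)$. Since $U$ is open and contained in $b(A\cap B)$, we conclude $U\su ib(A\cap B)$, so $x\in ib(A\cap B)$.

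The substantive step in both parts is the same: the case split in~(i) and the use of $y\in bA$ in~(ii); everything else is formal. I do not anticipate a genuine obstacle — the only subtlety worth flagging is that the outer $b$ in~(i) cannot be dropped, since $i(A\cup B)\su iA\cup B$ fails in general (e.g.\ $A=\mathbb{Q}\cap[0,1]$, $B=[0,1]\sm\mathbb{Q}$ in $\R$ gives $i(A\cup B)=(0,1)$ while $iA\cup B=B$), so one really must pass through the closure on the right.
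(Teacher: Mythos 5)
Your proof is correct and rests on the same key observation as the paper's: a point of $i(A\cup B)$ with a neighborhood missing $B$ is thereby placed in $iA$ (the paper packages this as the set identity $i(A\cup B)\su iA\cup bB$, obtained from the open set $i(A\cup B)\sm bB\su A$, and then left-multiplies by $b$, whereas you argue pointwise and land directly in $b(iA\cup B)$). The only cosmetic difference is that you prove (ii) by a parallel direct argument while the paper invokes duality, which you also note would suffice; there is no gap.
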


\begin{proof}
(i)~Since $i(A\cup B)\sm bB$ is an open set contained in $A$, $i(A\cup B)\su iA\cup bB$.
Hence $bi(A\cup B)\su b(iA\cup bB)=biA\cup bB=b(iA\cup B)$.
(ii) is the dual of (i).
\end{proof}
\vfill\eject

\begin{table}
\caption{Values of $o_1o_2$ for $o_1,o_2\in\KFGZ\sm\{0,\ident\}$.}
\footnotesize
\centering
\renewcommand{\arraystretch}{1.142}
\renewcommand{\tabcolsep}{.5pt}
\begin{tabular}{|C{20pt}!{\vrule width1pt}C{14pt}|C{14pt}|C{14pt}|C{14pt}|
C{14pt}|C{14pt}!{\vrule width1pt}C{14pt}|C{14pt}|C{14pt}|C{14pt}|
C{14pt}|C{14pt}|C{14pt}|C{14pt}|C{14pt}!{\vrule width1pt}C{14pt}|C{14pt}|C{14pt}|}
\multicolumn{1}{c}{}&\multicolumn{6}{c}{$\KZ\sm\{\ident\}$}&\multicolumn{9}{c}{$\FZ\sm
\{0\}$}&\multicolumn{3}{c}{$\GZ$}\\\hline
\diagbox[width=20pt, height=20pt, linewidth=1pt]{\raisebox{0.5pt}{\hspace*{3pt}$o_1$}}
{\raisebox{1.5pt}{\hspace{-11pt}$o_2$}}&\e{$b$}&\e{$i$}&\e{$bi$}&\e{$ib$}&\e{$bib$}&
\e{$ibi$}&\e{$\f$}&\e{$\ff$}&\e{$\ei$}&\e{$\fb$}&\e{$\fbi$}&\e{$\fib$}&\e{$\fif$}&
\e{$\bif$}&\e{$\ie$}&\e{$g$}&\e{$bg$}&\e{$\fbg$}\\\Xhline{1pt}
$b$&$b$&$bi$&$bi$&$bib$&$bib$&$bi$&$\f$&$\ff$&$\ei$&$\fb$&
$\fbi$&$\fib$&$\fif$&$\bif$&$\bif$&$bg$&$bg$&$\fbg$\\\hline
$i$&$ib$&$i$&$ibi$&$ib$&$ib$&$ibi$&$\ie$&$0$&$0$&$0$&
$0$&$0$&$0$&$\ie$&$\ie$&$0$&$\ie$&$0$\\\hline
$bi$&$bib$&$bi$&$bi$&$bib$&$bib$&$bi$&$\bif$&$0$&$0$&$0$&
$0$&$0$&$0$&$\bif$&$\bif$&$0$&$\bif$&$0$\\\hline
$ib$&$ib$&$ibi$&$ibi$&$ib$&$ib$&$ibi$&$\ie$&$0$&$0$&$0$&
$0$&$0$&$0$&$\ie$&$\ie$&$\ie$&$\ie$&$0$\\\hline
$bib$&$bib$&$bi$&$bi$&$bib$&$bib$&$bi$&$\bif$&$0$&$0$&$0$&
$0$&$0$&$0$&$\bif$&$\bif$&$\bif$&$\bif$&$0$\\\hline
$ibi$&$ib$&$ibi$&$ibi$&$ib$&$ib$&$ibi$&$\ie$&$0$&$0$&$0$&
$0$&$0$&$0$&$\ie$&$\ie$&$0$&$\ie$&$0$\\\Xhline{1pt}
$\f$&$\fb$&$\ei$&$\fbi$&$\fib$&$\fib$&$\fbi$&$\ff$&$\ff$&$\ei$&$\fb$&
$\fbi$&$\fib$&$\fif$&$\fif$&$\fif$&$bg$&$\fbg$&$\fbg$\\\hline
$\ff$&$\fb$&$\ei$&$\fbi$&$\fib$&$\fib$&$\fbi$&$\ff$&$\ff$&$\ei$&$\fb$&
$\fbi$&$\fib$&$\fif$&$\fif$&$\fif$&$\fbg$&$\fbg$&$\fbg$\\\hline
$\ei$&$\fib$&$\ei$&$\fbi$&$\fib$&$\fib$&$\fbi$&$\fif$&$0$&$0$&$0$&
$0$&$0$&$0$&$\fif$&$\fif$&$0$&$\fif$&$0$\\\hline
$\fb$&$\fb$&$\fbi$&$\fbi$&$\fib$&$\fib$&$\fbi$&$\ff$&$\ff$&$\ei$&$\fb$&
$\fbi$&$\fib$&$\fif$&$\fif$&$\fif$&$\fbg$&$\fbg$&$\fbg$\\\hline
$\fbi$&$\fib$&$\fbi$&$\fbi$&$\fib$&$\fib$&$\fbi$&$\fif$&$0$&$0$&$0$&
$0$&$0$&$0$&$\fif$&$\fif$&$0$&$\fif$&$0$\\\hline
$\fib$&$\fib$&$\fbi$&$\fbi$&$\fib$&$\fib$&$\fbi$&$\fif$&$0$&$0$&$0$&
$0$&$0$&$0$&$\fif$&$\fif$&$\fif$&$\fif$&$0$\\\hline
$\fif$&$0$&$0$&$0$&$0$&$0$&$0$&$0$&$0$&$0$&$0$&
$0$&$0$&$0$&$0$&$0$&$\fif$&$0$&$0$\\\hline
$\bif$&$0$&$0$&$0$&$0$&$0$&$0$&$0$&$0$&$0$&$0$&
$0$&$0$&$0$&$0$&$0$&$\bif$&$0$&$0$\\\hline
$\ie$&$0$&$0$&$0$&$0$&$0$&$0$&$0$&$0$&$0$&$0$&
$0$&$0$&$0$&$0$&$0$&$\ie$&$0$&$0$\\\Xhline{1pt}
$g$&$\fb$&$0$&$\fbi$&$0$&$\fib$&$0$&$\ff$&$\ff$&$\ei$&$\fb$&
$\fbi$&$\fib$&$\fif$&$\fif$&$0$&$g$&$\fbg$&$\fbg$\\\hline
$bg$&$\fb$&$0$&$\fbi$&$0$&$\fib$&$0$&$\ff$&$\ff$&$\ei$&$\fb$&
$\fbi$&$\fib$&$\fif$&$\fif$&$0$&$bg$&$\fbg$&$\fbg$\\\hline
$\fbg$&$\fb$&$0$&$\fbi$&$0$&$\fib$&$0$&$\ff$&$\ff$&$\ei$&$\fb$&
$\fbi$&$\fib$&$\fif$&$\fif$&$0$&$\fbg$&$\fbg$&$\fbg$\\\hline
\end{tabular}\label{tab:multiplication}
\end{table}

\begin{lem}\label{lem:ifo}
\nit{(i)}~\mth{$\ifo=0\mbox{ for each }o\in\{b,i,\f\hspace{1pt}\}$,}\quad
\nit{(ii)}~\mth{$gi=ig=0$,}\quad
\nit{(iii)}~\mth{$\fg=bg$,}\quad
\nit{(iv)}~\mth{$ibg=\ie$.}
\end{lem}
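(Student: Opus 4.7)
The plan is to reduce each claim to the basic inclusions $i\leq\ident\leq b$, the identity $\be=\f$ from Lemma~\ref{lem:f_cancels}, and the duality rule $ia=ab$, $ba=ai$. A useful preliminary is the operator identity $\f\wedge i=0$, immediate from $\f A\cap iA=(bA\cap baA)\cap iA\su iA\cap baA=iA\cap aiA=\es$.

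For (i), I would show $\ifo A\su iboA\sm bioA$ for any $o$, then verify $ibo\leq bio$ for each $o\in\{b,i,\f\}$. Right-multiplying $\f\wedge i=0$ by $o$ gives $\fo A\su a(ioA)$, so $\ifo A\su i(a(ioA))=ab(ioA)$ by $ia=ab$; combining with $\fo A\su boA$ (from $\f\leq b$) yields $\ifo A\su iboA\sm bioA$. The case checks are immediate: $o=b$ gives $ib\leq bib$ (from $\ident\leq b$); $o=i$ gives $ibi\leq bi$ (from $i\leq\ident$); and $o=\f$ collapses via $\be=\f$ to $\ie\leq\bif$ (again $\ident\leq b$).

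For (ii), direct algebra suffices: since $ai=ba$ and $ii=i$, $b(aiA)=baA=aiA$, so $giA=iA\cap ba(iA)=iA\cap aiA=\es$; and $i(baA)=i(aiA)=ab(iA)=abiA$ gives $igA=iA\cap abiA=iA\sm biA=\es$ since $iA\su biA$. For (iii), right-multiplying $\f\leq b$ by $g$ gives $\fg\leq bg$. Conversely, $bgA\su bA$ and $bgA\su b(aiA)=baA$ (closure of the closed set $aiA$), so $bg\leq b\wedge ba=\f$; right-multiplying by $g$ and using the idempotence $gg=g$ (which holds because $i(gA)\su iA\cap abiA=iA\sm biA=\es$) completes the proof.

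The main obstacle is (iv). The easy direction $ibg\leq\ie$ comes from left-multiplying $bg\leq\f$ by the isotone operator $i$. For $\ie\leq ibg$ I would first compute $\ie A=ibA\sm biA$ using $i(bA\cap aiA)=ibA\cap abiA$. Given $x\in\ie A$, pick an open neighborhood $U\su bA\cap aiA$ of $x$. For any neighborhood $V$ of $x$, the open set $U\cap V\su bA$ contains $x\in bA$, and the elementary fact that open subsets of $bA$ satisfy $W\su b(W\cap A)$ forces $U\cap V\cap A\neq\es$; any such witness lies in $A\cap aiA=gA$. Hence $x\in bgA$, so $\ie A\su bgA$, and since $\ie A$ is open, $\ie A\su ibgA$. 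This topological step is the crux of the lemma, as it is the only place the argument escapes purely monoid-level manipulation.
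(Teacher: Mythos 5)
Your proof is correct, and it reaches the paper's conclusions by a partly different route. For (i) you give a uniform reduction $\ifo\leq ibo\sm bio$ followed by the three case checks $ibo\leq bio$, whereas the paper computes $\ifb=i(bb\wedge bab)=ib\wedge abib=0$ directly and then derives $\ifi$ and $\iee$ from it by right-multiplication and $\fa=\f$; your version is more uniform, the paper's more economical. For (ii) the two arguments coincide. For (iii) the paper expands $\fg=bg\wedge bag$ and kills the second factor via $bag=a(ig)=a0=1$, while you prove $bg\leq\f$ from the closedness of $aiA$ and then right-multiply by $g$ using the idempotence $gg=g$ (which you correctly justify from $ig=0$); both are sound, and your inequality $bg\leq\f$ has the added benefit of giving the easy half of (iv) immediately. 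For (iv) the paper gets $ibg\leq\ie$ from the decomposition $\f=g\vee ga$ of Lemma~\ref{lem:decompositions}(iv) and obtains the reverse inclusion by citing Lemma~\ref{lem:half_dist}(ii), $i(bA\cap B)\su ib(A\cap B)$, applied with $B=\fA$ (so that $A\cap\fA=gA$). Your pointwise argument for $\ie\leq ibg$ — taking $U\su bA\cap aiA$ open around $x$ and producing a witness in $U\cap V\cap A\su gA$ for every neighborhood $V$ — is essentially an inlined proof of exactly that special case of the half-distributivity lemma; it is the same mechanism the paper uses to prove Lemma~\ref{lem:half_dist} itself. So the crux is identical in substance: you have traded a reusable auxiliary lemma for a self-contained topological paragraph, at no cost in rigor.
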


\begin{proof}
(i)~$\ifb=i(bb\wedge bab)=ib\wedge ibab=ib\wedge abib=0$, $\ifi=\ifa i=\ifb a=0$,
and $\iee=\ie\hspace{.6pt}\be=0$.
(ii)~$gi=\ident(i)\wedge ai(i)=i\wedge ai=0$ and
$ig=i(\ident\wedge ai)=i\wedge iai=i\wedge abi=0$.
(iii)~$\fg=bg\wedge bag=bg\wedge aig=bg$.
(iv)~$ibg\leq i(bg\vee bga)=ib(g\vee ga)=i\be=\ie$.
By Lemma~\ref{lem:half_dist}(ii) we have $\ie=i(\f\wedge b)\leq ib(\f\wedge\ident)=ibg$.
\end{proof}

\begin{prop}\label{prop:multiplication}
All entries in Table~\nit{\ref{tab:multiplication}} are correct\nit.
\end{prop}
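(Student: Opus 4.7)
The plan is to verify the $324$ entries of Table~\ref{tab:multiplication} block by block, exploiting associativity together with the lemmas already in hand. I split the table according to whether each factor lies in $\KZ\setminus\{\ident\}$, $\FZ\setminus\{0\}$, or $\GZ$.

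The top-left $6\times 6$ block records products inside $\KZ\setminus\{\ident\}$; it is the classical Kuratowski multiplication, encoded by $bibi=bi$ and its left dual. For the blocks containing a factor in $\FZ\setminus\{0\}$, the key reduction is Lemma~\ref{lem:f_cancels}, which collapses any subword $\be$ or $\fa$ appearing in a composite. Since every operator in $\FZ\setminus\{0\}$ is a composite of copies of $\f$ and operators in $\KZ$, associativity together with Lemma~\ref{lem:f_cancels} and the Kuratowski multiplication reduces each such product to an expression of the form $q\f r$ with $q,r\in\KZ\cup\{\f\}$; the expansion $\f=b\wedge ba$ and the half-distributivity of Lemma~\ref{lem:half_dist} then evaluate it. The many zero entries---those in which the reduced word contains a subword $\ifo$ with $o\in\{b,i,\f\}$---follow directly from Lemma~\ref{lem:ifo}(i).

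The columns and rows indexed by $\GZ=\{g,bg,\fbg\}$ are handled by Lemma~\ref{lem:ifo}(ii)--(iv): $gi=ig=0$ zeroes out the entries where $i$ meets $g$, $\fg=bg$ identifies entries in the $g$ column with those in the $bg$ column whenever $\f$ stands to the left, and $ibg=\ie$ settles the mixed cases. The remaining $\GZ$ entries reduce, again by associativity, to products already computed in the $\KFZ$ part of the table.

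The principal obstacle is not any single calculation but the bookkeeping: since neither $\f$ nor $g$ is isotone or antitone, one must not left-multiply an inequality by an operator in $\FG\setminus\{0,1\}$. My strategy is to verify a handful of generating entries by hand, propagate through the rest by associativity, and cross-check the full $18\times 18$ table by computer, as was done for Theorem~\ref{thm:partial_order}.
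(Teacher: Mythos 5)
Your architecture---zero entries from Lemma~\ref{lem:ifo} together with $\{b,i,\f,g\}0=0\{b,i,\f,g\}=\{0\}$, the blocks with a $\KZ$ factor via $\be=\f=\fa$, idempotence, and associativity, and the $\GZ$ rows and columns via Lemma~\ref{lem:ifo}(ii)--(iv)---matches the paper's and is sound as far as it goes. The gap is in the $\FZ\times\FZ$ block. First, the claimed normal form is not reachable: the words underlying $\ff\cdot\ff$ and $\fb\cdot\ie$ contain, respectively, four occurrences of $\f$ and two occurrences of $\f$ separated by $bi$; neither admits any cancellation under $\be=\f=\fa$ and the Kuratowski relations, and neither has the shape $q\f r$ with $q,r\in\KZ\cup\{\f\}$. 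Second, and more importantly, even for products that do fit that shape (such as $\fbi\cdot b=\f\hspace{1pt}(bib)$ or $\f\cdot\f\cdot\f\,$), ``evaluating'' them to the stated entries amounts to proving the operator identities $\fff=\ff$, $\ffb=\fb$, $\fb\ie=\fif$, and $\fbi b=\fib$ together with their right duals---exactly the facts the paper's proof isolates and attributes to GE. These are genuine topological statements (e.g., that the boundary of a closed set is closed with empty interior), not consequences of expanding $\f=b\wedge ba$ and invoking Lemma~\ref{lem:half_dist}, which concerns $bi$ of a union and says nothing about, say, $\f(bibA)$ versus $\f(ibA)$. Your proposal neither states these identities nor supplies a route to them.

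The computer cross-check cannot absorb this. In Theorem~\ref{thm:partial_order} the machine only has to refute candidate inequalities, for which a single finite counterexample suffices; here each table entry is a universally quantified identity over all topological spaces, and agreement on all spaces up to a fixed finite cardinality does not establish it. State and prove the four identities above (or cite GE for them, as the paper does) and your block-by-block scheme closes.
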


\begin{proof}
Lemma~\ref{lem:ifo}, $\be=\f$, and $\{b,i,\f,g\}0=0\{b,i,\f,g\}=\{0\}$ imply the zero
entries.
All other nontrivial entries with $o_1\in\KZ$ and $o_2\in\KFZ$ are implied by
$i\bif=ibi\be=i\be=\ie$ and/or idempotence of $b$, $i$, $bi$, and $ib$.
The remaining nontrivial entries with $o_1,o_2\in\KFZ$ follow from $\fff=\ff$,
$\ffb=\fb$, $\fb\ie=\fif$, $\fbi b=\fib$ and their right duals (see GE for proofs).
All entries involving $g$ are easy consequences of Lemma~\ref{lem:ifo} and the results
above.
\end{proof}

\begin{cor}\label{cor:semigroups}
\mth{$\FZ$,} \mth{$\GZ$,} \mth{$\FGZ$,} and \mth{$\F$} are semigroups under
composition\nit.\footnote{In 2013 Plewik and Walczyńska \cite{2013_plewik_walczynska}
showed that $\K$ contains exactly $56$ nonisomorphic and $118$ total subsemigroups.}
\end{cor}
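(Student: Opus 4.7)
The plan is to verify closure under composition for each of the four sets; associativity holds automatically for operator composition, so closure is all that needs to be checked.

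For $\FZ$, $\GZ$, and $\FGZ$ I would simply inspect Table~\ref{tab:multiplication}. The $\FZ\times\FZ$ sub-block lies in $\FZ$ (the omitted row and column for $0$ contribute only $0$, since $0$ is absorbing on $\FZ$); the $\GZ\times\GZ$ sub-block lies in $\GZ$; and for $\FGZ$ one additionally observes that the mixed blocks $\FZ\times\GZ$ and $\GZ\times\FZ$ land in $\FZ\cup\GZ=\FGZ$. Nothing more than reading off the table is required.

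For $\F=\FZ\cup a\FZ$ the table does not directly suffice. I would isolate the key sub-claim $\FZ a\subseteq\FZ$, from which the block decomposition
\[
\F\cdot\F \;=\; \FZ\FZ \;\cup\; \FZ(a\FZ) \;\cup\; (a\FZ)\FZ \;\cup\; (a\FZ)(a\FZ)
\]
collapses into $\FZ\cup a\FZ=\F$ upon repeated application of $\FZ a\subseteq\FZ$ together with the already-established closure of $\FZ$ under composition.

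The sub-claim $\FZ a\subseteq\FZ$ is the main obstacle, and I would establish it by a short case analysis over the ten elements of $\FZ$. The only identities required are $\fa=\f$ from Lemma~\ref{lem:f_cancels} and the two equalities $ba=ai$, $ia=ab$ that follow from $i=aba$. For $o=0$ trivially $oa=0$; for the five operators whose rightmost factor is $\f$, namely $\f,\ff,\ie,\bif,\fif$, the equality $oa=o$ is immediate. For each of the remaining four operators $\fb,\ei,\fbi,\fib$ one pushes the trailing $a$ leftward through $b$ or $i$ via $ba=ai$ or $ia=ab$ and then absorbs it on the left via $\fa=\f$; these cases pair off as $\fb\leftrightarrow\ei$ and $\fib\leftrightarrow\fbi$ under right-multiplication by $a$, all landing in $\FZ$. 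I expect the difficulty here to be bookkeeping rather than anything conceptual: one needs only to confirm in each case that the product stays inside $\FZ$ rather than escaping into $a\FZ$.
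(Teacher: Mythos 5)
Your proposal is correct and follows essentially the same route as the paper: closure of $\FZ$, $\GZ$, $\FGZ$ is read off Table~\ref{tab:multiplication}, and closure of $\F$ reduces to $\FZ a\subseteq\FZ$ via $\fa=\f$. Your explicit case analysis using $ba=ai$ and $ia=ab$ is just an unpacked form of the paper's appeal to Corollary~\ref{cor:eqs} (moving $a$ across a $\KZ$ factor by dualizing it), so the two arguments coincide in substance.
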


\begin{proof}
This is evident in Table~\ref{tab:multiplication} for the first three sets.
It follows for $\F$ by Corollary~\ref{cor:eqs} and $\fa=\f$.
\end{proof}

Table~\ref{tab:multiplication} implies that $\KFZ$ and $\KFGZ$ are the submonoids
of $\monoid{O}$ generated by $\{b,i,\f\hspace{1pt}\}$ and $\{b,i,\f,g\}$,
respectively.
Since $\fa=\f$ it follows by Corollary~\ref{cor:eqs} that $\KF$ is the submonoid of
$\monoid{O}$ generated by $\{a,b,\f\hspace{1pt}\}$.
This gives us the upper bound of $34$ in the closure-complement-boundary theorem.
We note in passing that $\KFGZ\sm\{\f,\ff,\ei\}$ is the submonoid of $\monoid{O}$
generated by $\{b,i,g\}$.\footnote{In 1914 Hausdorff \cite{1914_hausdorff} introduced
the term \define{border} (also called \define{partial boundary} by Joseph
\cite{1966_joseph} and \define{rim} by Shum \cite{1969_shum,1993_shum}) and showed that
the monoid generated by $\{a,g\}$ can be infinite.
Zarycki \cite{1927_zarycki} showed that $\cdots\leq ghg\leq gh\leq g\leq\ident\leq h\leq
hg\leq hgh\leq\cdots$ where $h=aga$.}

GE's set $A=(0,1)\cup(1,2]\cup(\mathbb{Q}\cap(2,3))\cup\{4\}\cup[5,6]$ satisfies $|\KFZA|
=17$.\footnote{In $\mathbb{R}$:\raisebox{-11pt}
{$\begin{array}[c]{r@{\hspace{3pt}}c@{\hspace{3pt}}l@{\hspace{10pt}}r@{\hspace{3pt}}
c@{\hspace{3pt}}l@{\hspace{10pt}}r@{\hspace{3pt}}c@{\hspace{3pt}}l@{\hspace{10pt}}
r@{\hspace{3pt}}c@{\hspace{3pt}}l@{\hspace{10pt}}r@{\hspace{3pt}}c@{\hspace{3pt}}l}
bA&=&[0,3]\cup\{4\}\cup[5,6],&ibA&=&(0,3)\cup(5,6),&bibA&=&[0,3]\cup[5,6],&\fibA&=&\{0,3,5,6\},&\ifA&=&(2,3),\\
iA&=&(0,1)\cup(1,2)\cup(5,6),&biA&=&[0,2]\cup[5,6],&ibiA&=&(0,2)\cup(5,6),&\fbiA&=&\{0,2,5,6\},&\bifA&=&[2,3],\\
\fA&=&\{0,1\}\cup[2,3]\cup\{4,5,6\},&\ffA&=&\{0,1,2,3,4,5,6\},&\fbA&=&\{0,3,4,5,6\},&\fiA&=&\{0,1,2,5,6\},&\fifA&=&\{2,3\}.\\
\end{array}$}}
Corollary~\ref{cor:maximal_family}(i) implies $|\KFA|=34$, completing the proof of the
theorem.\footnote{The existence of a subset that distinguishes every unequal pair in $\Y$
is guaranteed by the disjoint union construction but authors traditionally supply an
example in $\mathbb{R}$.}

\begin{thm}\label{thm:ccb}\from{Gaida and Eremenko \cite{1974_gaida_eremenko}}
The monoid \mth{$\KF$} of operators generated by \mth{$\{a,b,\f\hspace{1pt}\}$} in a
given topological space has cardinality at most \mth{$34$} and \mth{$|\KFA|=34$} for some
\mth{$A$} in some space\nit.
\end{thm}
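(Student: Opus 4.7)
The plan is to split the argument into the upper bound and the witness, both of which are nearly in hand thanks to the machinery developed in this section.

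For the upper bound, I would first observe that every entry in Table~\ref{tab:multiplication} lies in $\KFZ$, and that the rows/columns for $0$ and $\ident$ are trivial. Hence $\KFZ$ is closed under composition and therefore equals the submonoid of $\monoid{O}$ generated by $\{b,i,\f\}$, as remarked immediately after Corollary~\ref{cor:semigroups}. A crude counting gives $|\KFZ|\leq|\KZ|+|\FZ|=7+10=17$. Since $\fa=\f$ (Lemma~\ref{lem:f_cancels}) and $d(\f)=\f$, Corollary~\ref{cor:eqs} shows that $a$ can be moved past any element of $\KFZ$, so $\KF=\KFZ\cup a\KFZ$ and thus $|\KF|\leq 34$.

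For realisability, I would invoke Gaida and Eremenko's subset $A=(0,1)\cup(1,2]\cup(\mathbb{Q}\cap(2,3))\cup\{4\}\cup[5,6]\subseteq\R$ already introduced in the excerpt. The job is to verify $|\KFZA|=17$, i.e., that the seventeen candidate sets listed in the footnote are pairwise distinct subsets of $\R$; once this is established, Corollary~\ref{cor:maximal_family}(i) automatically upgrades this to $|\KFA|=34$. Verification of the seventeen values is a direct calculation, for each operator one expands its definition from $a,b,\f$ and uses that $\R$ has the usual topology.

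The only step with any real content is checking that the seventeen sets computed from $A$ are genuinely distinct, and this is the main obstacle in the sense of being the only part not already automated by Proposition~\ref{prop:multiplication} and Corollary~\ref{cor:maximal_family}. In practice this amounts to inspecting the list in the footnote and noting that each set differs from every other either in its intersection with one of the intervals $(0,1)$, $(2,3)$, or $[5,6]$, or in whether it contains the isolated points $0,2,3,4,5,6$; no two of the seventeen expressions share the same pattern on those testing loci. With that distinctness confirmed, the two halves combine to give the theorem.
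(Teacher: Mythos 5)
Your proof follows the paper's argument exactly: Table~\ref{tab:multiplication} shows $\KFZ$ is the submonoid generated by $\{b,i,\f\hspace{1pt}\}$ with at most $7+10=17$ elements, the absorption $\fa=\f$ together with Corollary~\ref{cor:eqs} lets $a$ be pushed to the front so that $\KF=\KFZ\cup a\KFZ$ has at most $34$ elements, and the GE set in $\mathbb{R}$ combined with Corollary~\ref{cor:maximal_family}(i) realizes the bound. One minor slip: $d(\f)=a\f a=a\f\neq\f$ (only the right-absorption $\fa=\f$ holds), but this does not affect your argument, since $\fa=\f$ is the fact actually needed to normalize words in $\{a,b,\f\hspace{1pt}\}$.
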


\noindent
The set $A$ above also satisfies $|\KFGZA|=20$ for we have
$gA=(\mathbb{Q}\cap(2,3))\cup\{2,4,5,6\}$, $bgA=[2,3]\cup\{4,5,6\}$, and
$\fbgA=\{2,3,4,5,6\}$.
Thus $|\KFGA|=40$ by Corollary~\ref{cor:maximal_family}(ii).

\subsection{The partial orders on \texorpdfstring{\mth{$\KF$}}{KF} and
\texorpdfstring{\mth{$\KFG$}}{KFG}.}

We begin this subsection with several basic results.
The next lemma is clear.

\begin{lem}\label{lem:equivalent_inclusions}
If \mth{$o_1\leq ao_2$} and \mth{$o\leq o_1\vee o_2$} then \mth{$oA\su o_1A\iff oA\su
ao_2A$.}
\end{lem}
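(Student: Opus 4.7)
The plan is to verify the biconditional directly, treating each direction in turn and noting that both rely only on elementary Boolean manipulation of the three subsets $o_1 A$, $o_2 A$, and $oA$.

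For the forward direction, I would simply chain the hypotheses. By assumption $o_1 \leq ao_2$, which by definition gives $o_1 A \su ao_2 A$. If $oA \su o_1 A$, then transitivity of inclusion yields $oA \su ao_2 A$ immediately. This direction does not even require the second hypothesis $o \leq o_1 \vee o_2$.

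For the reverse direction, assume $oA \su ao_2 A$. The hypothesis $o \leq o_1 \vee o_2$ translates (via the characterization of join as pointwise union noted in the Notation and terminology subsection) to $oA \su o_1 A \cup o_2 A$. Intersecting this with the assumed inclusion gives
\[
oA \;\su\; (o_1 A \cup o_2 A) \cap ao_2 A \;=\; (o_1 A \cap ao_2 A) \cup (o_2 A \cap ao_2 A).
\]
The second disjunct is $\es$ because $o_2 A$ and its complement $ao_2 A$ are disjoint, so $oA \su o_1 A \cap ao_2 A \su o_1 A$, as required.

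There is no substantive obstacle here: the only subtlety is recognizing that the disjointness $o_2 A \cap ao_2 A = \es$ is what eliminates the ``$o_2 A$'' branch of the union, which is precisely the role played by the complementation in the hypothesis $o_1 \leq ao_2$. Once this is observed, both directions are one-line chases. Consistent with the author calling this lemma ``clear,'' the proof should be presentable in two or three short lines in the final text.
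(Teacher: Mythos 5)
Your proof is correct, and it is the evident argument the paper has in mind: the paper simply declares the lemma ``clear'' and omits any proof, so there is no competing approach to compare against. Both directions check out — the forward one by transitivity through $o_1A\su ao_2A$, the reverse by intersecting $oA\su o_1A\cup o_2A$ with $ao_2A$ and discarding the empty branch $o_2A\cap ao_2A$.
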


\noindent
We verified by computer that the following lemma gives all decompositions of the form
$o_1=o_2\vee o_3$ in $\KFGZ\sm\{0\}$.
It is often combined with Lemma~\ref{lem:equivalent_inclusions}.

\begin{lem}\label{lem:decompositions}
The operands of each join below are disjoint\nit.
\[
\begin{array}[c]{l@{\hspace{4pt}}l@{\hspace{4pt}}l@{\hspace{4pt}}l}
\begin{array}{r@{\hspace{3pt}}l}
\nit{(i)}&b=\f\vee i=\fb\vee ib\nit,\\
\nit{(ii)}&bib=\fib\vee ib\nit,\\
\end{array}&
\begin{array}{r@{\hspace{3pt}}l}
\nit{(iii)}&bi=\ei\vee i=\fbi\vee ibi\nit,\\
\nit{(iv)}&\f=\ff\vee\ie=g\vee ga\nit,\\
\end{array}&
\begin{array}{r@{\hspace{3pt}}l}
\nit{(v)}&\bif=\fif\vee\ie\nit,\\
\nit{(vi)}&bg=\fbg\vee\ie\nit,\\
\end{array}&
\begin{array}{r@{\hspace{3pt}}l}
\nit{(vii)}&ai=g\vee a\nit,\\
\nit{(viii)}&\ident=g\vee i\nit.\\
\end{array}\\
\end{array}
\]
\end{lem}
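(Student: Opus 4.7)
The plan is to derive every identity from a single master decomposition, $b = \f \vee i$, supplemented by three short direct computations. The master identity is immediate from the definitions: since $\f = b \wedge ba$ and $baA = aiA$, one has $\f A = bA \setminus iA$; combined with $iA \su bA$ this yields $bA = \f A \cup iA$. Disjointness $\f \wedge i = 0$ follows because $\f A \su baA = aiA$ forces $\f A \cap iA = \varnothing$.

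All identities in the list except $\f = g \vee ga$, $ai = g \vee a$, and $\ident = g \vee i$ then arise by right-multiplying $b = \f \vee i$ by a suitable operator $q$ and simplifying on the left. The simplifications use only idempotence of $b$ and $i$ together with $b\f = \f$ (Lemma~\ref{lem:f_cancels}), $i\ie = \ie$, and $ibg = \ie$ (Lemma~\ref{lem:ifo}(iv)). Taking $q \in \{b,\, i,\, ib,\, bi,\, \f,\, \ie,\, bg\}$ respectively yields $b = \fb \vee ib$, $bi = \ei \vee i$, $bib = \fib \vee ib$, $bi = \fbi \vee ibi$, $\f = \ff \vee \ie$, $\bif = \fif \vee \ie$, and $bg = \fbg \vee \ie$.

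The three remaining identities are brief direct computations. For $\ident = g \vee i$, one has $A = iA \cup (A \setminus iA) = iA \cup gA$ using $gA = A \cap aiA$. For $\f = g \vee ga$, the partition $bA = A \cup (bA \setminus A)$ combined with $\f A = bA \setminus iA$ gives $\f A = (A \setminus iA) \cup (bA \cap aA) = gA \cup gaA$. For $ai = g \vee a$, the complement identity $aiA = aA \cup (A \setminus iA) = aA \cup gA$ is immediate.

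Disjointness of each derived pair is inherited from its parent: if $o_1 \wedge o_2 = 0$ then $(o_1 q)A \cap (o_2 q)A = o_1(qA) \cap o_2(qA) = \varnothing$, so $o_1 q \wedge o_2 q = 0$. Applied to $\f \wedge i = 0$ with the seven choices of $q$, this handles every pair obtained by right-multiplication. For the three direct identities, disjointness follows from $A \cap aA = \varnothing$ (for $g \wedge ga$ and $g \wedge a$) and from $iA \cap (A \setminus iA) = \varnothing$ (for $g \wedge i$). No single step is difficult; the proof is essentially bookkeeping, and the most subtle input is the identity $ibg = \ie$ furnished by Lemma~\ref{lem:ifo}(iv).
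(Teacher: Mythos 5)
Your proof is correct and follows essentially the same route as the paper: both establish $b=\f\vee i$ as the master identity and then right-multiply by the same seven operators $b,\,ib,\,i,\,bi,\,\f,\,\ie,\,bg$, with disjointness inherited under right multiplication. The only cosmetic difference is that the paper packages the three $g$-identities (and $b=\f\vee i$ itself) via the single four-way decomposition $1=i\vee g\vee ga\vee ia$, whereas you verify them by three short direct set computations; the content is the same.
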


\begin{proof}
Part~(viii) is clear.
The decomposition $1=i\vee g\vee ga\vee ia$ implies $\f=b\wedge ba=a(ia\vee i)=g\vee ga$,
$ai=g\vee ga\vee ia=g\vee a$, and $b=aia=\f\vee i$.
To get the other decompositions right-multiply $b=\f\vee i$ by $b,$ $ib,$ $i,$ $bi,$
$\f,$ $\ie,$ $bg$.
\end{proof}

\begin{lem}\label{lem:op_relations}
\hspace{18pt}\raisebox{-.5\baselineskip}{$\begin{array}{l@{\hspace{20pt}}l
@{\hspace{20pt}}l@{\hspace{14pt}}l}
\begin{array}{r@{\hspace{3pt}}l}
\nit{(i)}&ib\sm bi=\ie\nit,\\
\nit{(ii)}&bib=\bif\vee bi\nit,\\
\end{array}&
\begin{array}{r@{\hspace{3pt}}l}
\nit{(iii)}&ib\sm ibi=ib\wedge\bif\nit,\\
\nit{(iv)}&bi\sm ib=\fb\wedge\ei=\fib\wedge\fbi\nit,\\
\end{array}&
\begin{array}{r@{\hspace{3pt}}l}
\nit{(v)}&\ff=\fb\vee\ei\nit,\\
\nit{(vi)}&b=bg\vee bi\nit.\\
\end{array}
\end{array}$}
\end{lem}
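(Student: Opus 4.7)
The plan is to prove each identity (i)--(vi) individually, combining monotonicity with four facts drawn from the definitions: (a) $\f Y = bY \setminus iY$ for every $Y \su X$; (b) $\fb A = \f A \setminus ibA$ (since $bA = \f A \cup iA$ by Lemma~\ref{lem:decompositions}(i), and $iA \su ibA$); (c) $\ei A = \f A \cap biA$ (since $\f A \cap biA = (bA \cap biA) \setminus iA = biA \setminus iA$); and (d) $\f A$ is closed, being the intersection of the closed sets $bA$ and $baA$. Lemma~\ref{lem:decompositions} and Lemma~\ref{lem:half_dist} will carry most of the load.

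For (i), move $a$ across $bi$ using Corollary~\ref{cor:eqs} to get $abi = iba$; then, since interior distributes over binary intersections, $(ib \sm bi)A = ibA \cap ibaA = i(bA \cap baA) = i(\f A) = \ie A$. For (ii), monotonicity (applied to $\f \leq b$ and $\ident \leq b$) yields $\bif, bi \leq bib$, while the reverse $bib \leq \bif \vee bi$ follows from Lemma~\ref{lem:half_dist}(i) applied to the partition $bA = \f A \cup iA$: $bibA = bi(\f A \cup iA) \su b(i\f A \cup iA) = \bif A \cup biA$. For (iv), the decomposition $bi = \ei \vee i$ from Lemma~\ref{lem:decompositions}(iii) together with $iA \su ibA$ gives $(bi \sm ib)A = \ei A \sm ibA$; expanding $\fb \wedge \ei$ and $\fib \wedge \fbi$ through (a)--(c) and using $biA \su bibA$ and $ibiA \su ibA$ shows both meets reduce to $biA \sm ibA$. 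For (v), fact (d) gives $\ff A = \f A \sm i\f A$; substituting $i\f A = ibA \sm biA$ from (i) and invoking the set identity $X \sm (Y \sm Z) = (X \sm Y) \cup (X \cap Z)$ yields $\ff A = (\f A \sm ibA) \cup (\f A \cap biA) = \fb A \cup \ei A$. For (vi), since $gA = A \sm iA$, additivity of closure gives $bA = b((A \sm iA) \cup iA) = bgA \cup biA$.

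The main obstacle is (iii). Its easy direction $ib \wedge \bif \leq ib \sm ibi$ reduces to $\bif \wedge ibi = 0$: the set $i\f A$ is open and, as a subset of $\f A$, disjoint from $iA$ by Lemma~\ref{lem:decompositions}(i), hence disjoint from $biA = \overline{iA}$; so $\bif A = \overline{i\f A}$ cannot meet any open subset of $biA$, in particular $ibiA = i(biA)$. The hard direction $ib \sm ibi \leq \bif$ demands a topological local argument. Given $x \in ibA \sm ibiA$ and an arbitrary open $V \ni x$, choose an open $U \ni x$ with $U \su bA$ (using $x \in ibA$). Because $x \notin i(biA)$, the open set $V \cap U$ is not contained in $\overline{iA}$, so some $y \in V \cap U$ admits an open neighborhood $W$ disjoint from $iA$. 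Then $V \cap U \cap W$ is open, contained in $bA \sm iA = \f A$, and hence in $i\f A$; since it contains $y$, we conclude $V \cap i\f A \neq \es$. Arbitrariness of $V$ yields $x \in \overline{i\f A} = \bif A$, completing (iii).
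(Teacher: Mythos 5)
Your proof is correct throughout, and parts (i), (ii), (v), (vi) run essentially parallel to the paper's: for the reverse inclusion in (ii) the paper left-multiplies $ib\leq\ie\vee bi$ (which is just part (i)) by $b$, where you invoke Lemma~\ref{lem:half_dist}(i) directly, and for (v) both arguments amount to writing $\ff=\f\sm\ie$ and splitting off the piece of $\f$ inside $biA$. The genuine divergence is in (iii), and to a lesser extent (iv). For (iii) the paper stays inside the operator algebra: it writes $ib\sm ibi=ib\wedge biba$, expands $biba$ via part (ii) together with $\fa=\f$, and kills the cross term $ib\wedge bia$ by complementarity, so the identity falls out of (ii) in one line. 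You instead prove $\bif\wedge ibi=0$ by an openness argument and establish $ib\sm ibi\leq\bif$ by a pointwise neighborhood chase; that chase (producing a nonempty open subset of $\f A$, hence of $\ie A$, inside every neighborhood of $x$) is sound, but it redoes from scratch work that (ii) already encodes. What it buys is a self-contained proof that makes the topological content of the identity visible, which the algebraic derivation hides. For (iv) the paper computes the two meets as operator expressions, $\fb\wedge\ei=(b\wedge aib)\wedge(bi\wedge ai)=bi\wedge aib=\fib\wedge\fbi$, using $\Ord\KZ$ and duality, while you evaluate everything as explicit set differences via your facts (a)--(c); the content is identical and your version is arguably easier to verify by hand. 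Nothing is missing.
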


\begin{proof}
(i)~$ib\sm bi=ib\wedge iba=i(b\wedge ba)=\ie$.
(ii)~Left-multiply $ib\leq (ib\sm bi)\vee bi=\ie\vee bi$ by $b$.
(iii)~By (ii), $ib\wedge biba=abia\wedge(b\ifa\vee bia)=ib\wedge\bif$.
(iv)~By $\Ord\KZ$, $ai=bai$, and Lemmas~\ref{lem:eqs}(ii) and
\ref{lem:decompositions}(ii)-(iii), $\fb\wedge\ei=(b\wedge aib)\wedge(bi\wedge ai)=bi
\wedge aib=(bib\wedge aib)\wedge(bi\wedge aibi)$ $=$ $\fib\wedge\fbi$.
(v)~By (i) and Lemmas~\ref{lem:equivalent_inclusions} and~\ref{lem:decompositions}, $\ff=
\f\sm\ie=\f\wedge(aib\vee bi)=\fb\vee\ei$.
(vi)~$b=b(\ident)=b(g\vee i)=bg\vee bi$.
\end{proof}

\begin{prop}\label{prop:order_implications}
Let \mth{$(\pi_1,\pi_2,\pi_3)$} be any permutation of \mth{$\fib,$} \mth{$\fif,$}
\mth{$\fbi$.}
Then \nit{(i)~$\pi_1\sm\pi_3=\pi_2\sm\pi_3$.} Also\nit,
\[
\begin{array}[c]{@{}l@{\hspace{8pt}}l}
\begin{array}{@{}r@{\hspace{3pt}}l}
\nit{(ii)}&\fbi\sm\fib=(ib\wedge bi)\sm ibi\nit,\\
\nit{(iii)}&\fib\sm\fbi=bib\sm(ib\vee bi)\nit,\\
\nit{(iv)}&\pi_1\leq\pi_2\vee\pi_3\nit,\\
\end{array}&
\begin{array}{r@{\hspace{3pt}}l}
\nit{(v)}&\pi_1A\su\pi_3A\iff\pi_2A\su\pi_3A\nit,\\
\nit{(vi)}&\pi_1A=\es\implies\pi_2A=\pi_3A\nit,\\
\nit{(vii)}&\pi_1A\su a\pi_3A\implies\pi_1A\su\pi_2A\nit.\\
\end{array}
\end{array}
\]
\end{prop}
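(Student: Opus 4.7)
The plan is to prove (iv) first---where the real topological content lives---and then derive (i), (v), (vi), (vii) formally, with (ii) and (iii) dispatched by direct set algebra. I begin by expressing each of the three boundary operators as a set difference. Lemma~\ref{lem:decompositions}(ii) and~(iii) give $\fib A = bibA \sm ibA$ and $\fbi A = biA \sm ibiA$. Lemma~\ref{lem:op_relations}(i) identifies $\ie A$ with $ibA \sm biA$, which is the intersection of the open set $ibA$ with the complement of the closed set $biA$, hence open; Lemma~\ref{lem:decompositions}(v) then yields $\fif A = \bif A \sm \ie A = b(\ie A) \sm \ie A$.

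For (ii), expanding $\fbi A \sm \fib A = (biA \sm ibiA) \cap ((X \sm bibA) \cup ibA)$, the term with $X \sm bibA$ vanishes since $biA \su bibA$, leaving $(ibA \cap biA) \sm ibiA$. For (iii), expanding $\fib A \sm \fbi A = (bibA \sm ibA) \cap ((X \sm biA) \cup ibiA)$, the term with $ibiA$ vanishes since $ibiA \su ibA$, leaving $bibA \sm (ibA \cup biA)$.

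The main obstacle is (iv): showing $\pi_1 \leq \pi_2 \vee \pi_3$ for every permutation, which splits into three case analyses. For $\fif \leq \fib \vee \fbi$: any $x \in \fif A$ lies in $b(ibA) = bibA$; if $x \notin ibA$ then $x \in \fib A$, otherwise $x \in ibA \cap biA$ (since $x \notin \ie A$), and no neighborhood of $x$ can lie inside $biA$ (else it would miss $\ie A$, contradicting $x \in b(\ie A)$), so $x \notin ibiA$ and $x \in \fbi A$. For $\fib \leq \fbi \vee \fif$: any $x \in \fib A$ either lies in $biA$---giving $x \notin ibiA$ via $ibiA \su ibA$ and hence $x \in \fbi A$---or admits a neighborhood $W$ disjoint from the closed set $biA$, in which case for every neighborhood $V$ of $x$ the open set $V \cap W$ meets $ibA$ (as $x \in bibA$) while avoiding $biA$, so $V$ hits $\ie A$ and $x \in b(\ie A) \sm \ie A = \fif A$. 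The case $\fbi \leq \fib \vee \fif$ is similar: given $x \in \fbi A$, either $x \notin ibA$ (so $x \in \fib A$) or $x \in ibA$, and then the open neighborhood $V \cap ibA$ of $x$ cannot lie in $biA$ (else $x \in ibiA$), forcing $V$ to meet $\ie A$ and placing $x$ in $\fif A$.

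Parts (i), (v), (vi), (vii) follow formally from (iv). For (i), $\pi_1 \leq \pi_2 \vee \pi_3$ implies $\pi_1 \wedge a\pi_3 \leq \pi_2 \wedge a\pi_3$, i.e., $\pi_1 \sm \pi_3 \leq \pi_2 \sm \pi_3$; swapping $\pi_1$ and $\pi_2$ gives the reverse inclusion. Part (v) is immediate: $\pi_1 A \su \pi_3 A \iff (\pi_1 \sm \pi_3)A = \es \iff (\pi_2 \sm \pi_3)A = \es \iff \pi_2 A \su \pi_3 A$. For (vi), (iv) supplies both $\pi_2 \leq \pi_1 \vee \pi_3$ and $\pi_3 \leq \pi_1 \vee \pi_2$, so $\pi_1 A = \es$ forces $\pi_2 A \su \pi_3 A$ and $\pi_3 A \su \pi_2 A$. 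For (vii), the hypothesis $\pi_1 A \su a\pi_3 A$ means $\pi_1 A \cap \pi_3 A = \es$, which combined with $\pi_1 A \su \pi_2 A \cup \pi_3 A$ from (iv) forces $\pi_1 A \su \pi_2 A$.
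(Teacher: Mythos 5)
Your proof is correct, but it inverts the logical structure of the paper's argument and replaces its operator-algebraic core with a pointwise topological one. The paper establishes (i)--(iii) first, purely inside the operator calculus: it computes $\fif\sm\fib=\bif\wedge(ib\wedge bi)$ via Lemma~\ref{lem:decompositions}, gets the reverse bound from Lemma~\ref{lem:op_relations}(ii), handles the remaining permutations by right duality and a short formal deduction, and then obtains (iv) as a one-line consequence of (i) via $\pi_1=(\pi_1\wedge\pi_2)\vee(\pi_1\sm\pi_2)=(\pi_1\wedge\pi_2)\vee(\pi_3\sm\pi_2)\leq\pi_2\vee\pi_3$. You instead prove (iv) directly by three neighborhood arguments built on the representations $\fib=bib\sm ib$, $\fbi=bi\sm ibi$, $\fif=b(\ie\hspace{1pt})\sm\ie$ (all of which I checked; the key steps, e.g.\ that a neighborhood of a point of $b(\ie A)$ contained in $biA$ would miss $\ie A$, are sound), and then recover (i) from (iv) by distributing $\wedge\,a\pi_3$ across the join --- legitimate since the operator lattice is Boolean. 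Your (ii) and (iii) are done by direct expansion of set differences rather than the paper's chain of identities, which is arguably more transparent. The trade-off: the paper's route is shorter and reuses its lemma infrastructure (and gets two of the three permutations in (i) essentially for free via duality), while yours is self-contained, makes the geometric content of the mutual covering $\pi_1\leq\pi_2\vee\pi_3$ explicit, and costs three separate case analyses. Your derivations of (v)--(vii) match the paper's in spirit and are correct.
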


\begin{proof}
(i)--(iii)~Lemma~\ref{lem:decompositions} implies $\fif\sm\fib=\fif\wedge ib=(\bif\sm\ie
\hspace{1pt})\wedge ib=\bif\wedge(ib\wedge bi)\leq(ib\wedge bi)\sm ibi=ib\wedge\fbi=\fbi
\sm\fib$.
Conversely, Lemma~\ref{lem:op_relations}(ii) implies $(ib\wedge bi)\sm ibi\leq\bif
\wedge(ib\wedge bi)$.
This proves (ii) and the case $\pi_3=\fib$ in (i).
(iii) and $\pi_3=\fbi$ follow by right duality.
Suppose $\pi_3=\fif$.
The proven cases imply $\fbi\sm\fib\leq\fif$ and $\fib\sm\fbi\leq\fif$.
Thus $\fbi\sm\fif\leq\fib$ and $\fib\sm\fif\leq\fbi$, giving us the result.
(iv)~By (i), $\pi_1=(\pi_1\wedge\pi_2)\vee
(\pi_1\sm\pi_2)=(\pi_1\wedge\pi_2)\vee(\pi_3\sm\pi_2)\leq\pi_2\vee\pi_3$.
Clearly $\mbox{(i)}\imp\mbox{(v)}\imp\mbox{(vi)}$ and $\mbox{(iv)}\imp\mbox{(vii)}$.
\end{proof}

\noindent
Parts~(ii) and~(iii) imply the following corollary.

\begin{cor}\label{cor:equivalences}
\nit{(i)~$biA\cap ibA\su ibiA\iff\fbiA\su\fibA$,}\quad
\nit{(ii)~$bibA\su biA\cup ibA\iff\fibA\su\fbiA$.}
\end{cor}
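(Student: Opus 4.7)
The plan is to read off each equivalence directly from the corresponding identity in Proposition~\ref{prop:order_implications}, using the routine fact that operator meets, joins, and differences evaluate pointwise on subsets. Specifically, for any $p,q\in\monoid{O}$ and any $A\su X$ we have $(p\wedge q)A=pA\cap qA$, $(p\vee q)A=pA\cup qA$, and $(p\sm q)A=(p\wedge aq)A=pA\sm qA$, hence $pA\su qA\iff(p\sm q)A=\es$.

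For part~(i), I would rewrite the left-hand side as $(bi\wedge ib)A\su ibiA$, which by the observation above is equivalent to $\bigl((bi\wedge ib)\sm ibi\bigr)A=\es$. Proposition~\ref{prop:order_implications}(ii) gives the operator identity $(bi\wedge ib)\sm ibi=\fbi\sm\fib$, so the condition becomes $(\fbi\sm\fib)A=\es$, i.e., $\fbiA\su\fibA$.

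For part~(ii), I would similarly rewrite $bibA\su biA\cup ibA$ as $bibA\su(bi\vee ib)A$, equivalently $\bigl(bib\sm(bi\vee ib)\bigr)A=\es$. Proposition~\ref{prop:order_implications}(iii) identifies this operator with $\fib\sm\fbi$, so the condition becomes $(\fib\sm\fbi)A=\es$, i.e., $\fibA\su\fbiA$.

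There is essentially no obstacle: the entire content is already packaged in Proposition~\ref{prop:order_implications}(ii)--(iii), and the corollary amounts to translating those two operator identities from the ``for all $\XT$'' setting to the ``for a fixed $A$'' setting via the evaluation rules for $\wedge$, $\vee$, and $\sm$. The only care needed is to correctly match which direction of inclusion on the $\f$-side corresponds to which side of the equivalence, which is determined by which of $\fib$, $\fbi$ appears as the minuend in the relevant identity.
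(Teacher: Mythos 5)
Your proposal is correct and is exactly the paper's argument: the paper states only that Proposition~\ref{prop:order_implications}(ii)--(iii) imply the corollary, and your pointwise evaluation of the operator identities $(ib\wedge bi)\sm ibi=\fbi\sm\fib$ and $bib\sm(ib\vee bi)=\fib\sm\fbi$ at a fixed $A$ is precisely the intended justification. The direction-matching is also handled correctly in both parts.
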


\begin{lem}\label{lem:subset_implications}The following hold for all \mth{$A\su X$}\nit.
\hspace{-224pt}\raisebox{-2.5\baselineskip}{$\begin{array}[c]{r@{\hspace{3pt}}lcr@{\hspace{3pt}}l}
\nit{(i)}&\ifA\su A\iff\ifA=\es\iff\ifA\su aA\nit,&\quad&\nit{(iv)}&oA\su biA\implies oA\su\fbiA\mbox{ for }o\in\{\fb,\fib,\bif,\fif\hspace{.6pt}\}\nit,\\
\nit{(ii)}&\fbA\su bibA\iff bibA=bA\nit,&&\nit{(v)}&oA\su\fbA\implies oA\su\fibA\mbox{ for }o\in\{\ei,\fbi,\bif,\fif\hspace{.6pt}\}\nit,\\
\nit{(iii)}&oA\su ibA\implies oA\su\fiA\mbox{ for }o\in\{\ff,\fif\hspace{.6pt}\}\nit,&&\nit{(vi)}&oA\su \bifA\implies oA\su\fifA\mbox{ for }o\in\{\ff,\ei,\fb,\fbi,\fib\}\nit.\\
\end{array}$}
\end{lem}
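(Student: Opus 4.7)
The plan is to exploit the Hasse diagram Figure~\ref{fig:kf_monoid}(i) together with the decompositions of Lemmas~\ref{lem:decompositions} and~\ref{lem:op_relations} so that each implication reduces to a disjointness assertion readable from a (solid or dashed) edge. For~(i), the key identity is $i\fa=\ie$ as operators, which is immediate from Lemma~\ref{lem:f_cancels}. If $\ifA\su A$, then $\ifA$ is open and contained in $A$, so $\ifA\su iA$; since also $\ifA\su\fA\su baA=aiA$, I obtain $\ifA\su iA\cap aiA=\es$. The second equivalence reduces to the first by replacing $A$ with $aA$ and using $i\f(aA)=\ifA$. For~(ii), right-multiplying $b=\f\vee i$ from Lemma~\ref{lem:decompositions}(i) by $b$ yields $bA=\fbA\cup ibA$; since $ibA\su bibA$, the hypothesis $\fbA\su bibA$ upgrades to $bA\su bibA$, and the reverse inclusion is automatic.

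Parts~(iii)--(vi) follow a uniform template: express the putative containing set as a disjoint union or set-difference via Lemma~\ref{lem:decompositions} or Lemma~\ref{lem:op_relations}, then for each listed $o$ verify that the ``extra piece'' is disjoint from $oA$. Concretely, (iii) uses $\ff=\fb\vee\ei$ together with $\fb\wedge ib=0$ (a dashed edge) and $\ei\leq ib$; (iv) uses $\fbi=bi\sm ibi$, each listed $o$ satisfying $o\wedge ibi=0$ (the first two via $o\wedge ib=0$, the last two via the dashed edge $\bif\wedge ibi=0$ together with $\fif\leq\bif$); (v) uses $\fib=bib\sm ib$, each listed $o$ satisfying $o\leq bib$ in $\Ord\KF$, so the hypothesis $oA\su\fbA=bA\sm ibA$ supplies $oA\cap ibA=\es$ and yields $oA\su bibA\sm ibA=\fibA$; (vi) uses $\fif=\bif\sm\ie$ and $\ie=ib\sm bi$, each listed $o$ satisfying $oA\cap\ie A=\es$ because $o$ either lies below $bi$ or is disjoint from $ib$.

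The main nuisance will be~(vi) at $o=\ff$: neither $\ff\leq bi$ nor $\ff\wedge ib=0$ holds, so I would split $\ff A=\fb A\cup\ei A$ and handle each piece separately, using $\fb\wedge ib=0$ on one side and $\ei\leq bi$ together with $\ie A\su ibA\sm biA$ on the other. Everywhere else the required disjointness is a single edge of Figure~\ref{fig:kf_monoid}(i) or an immediate consequence of one.
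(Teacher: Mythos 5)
Your proposal is correct and is essentially the paper's proof with the details of ``apply Lemma~\ref{lem:equivalent_inclusions}'' written out: parts (i)--(ii) are identical, and (iii)--(vi) run on the same disjoint-decomposition template the paper invokes. Two remarks. First, in (iii) you assert $\ei\leq ib$, which is false: $\ei\leq bi$, and $bi,ib$ are incomparable (for GE's $34$-set, $0\in\fiA$ but $0\notin ibA$). Fortunately nothing in your argument uses it --- $o\leq\ff=\fb\vee\ei$ together with $\fb\wedge ib=0$ already forces $oA\su\fiA$; the paper instead first deduces $oA\su biA$ from $o\wedge(ib\sm bi)=0$ and then splits $bi=\ei\vee i$, so your route through (iii) is the one genuinely different step. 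Second, the ``nuisance'' you anticipate in (vi) is illusory: all five listed operators lie below $\ff$, and the dashed edge joining $\ff$ and $\ie$ in Figure~\ref{fig:kf_monoid}(i) gives $o\wedge\ie=0$ in one stroke, so no case split at $o=\ff$ is needed (your workaround does amount to a correct re-derivation of that disjointness).
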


\begin{proof}
(i)~Left-multiply by $i$ to get $\ifA\su A\implies\ifA\su iA\implies\ifA=\es\implies\ifA
\su A$ then substitute $aA$ for $A$.
(ii)~Apply $b=\fb\vee ib$.
(iii)~Since $o\wedge(ib\sm bi)=0$ we get $oA\su biA$.
The result follows by Lemma~\ref{lem:equivalent_inclusions}.
(iv)--(vi)~Apply Lemma~\ref{lem:equivalent_inclusions}.
\end{proof}

\begin{dfn}\label{dfn:extender}
Let \mth{$P$} be a partial order on a set \mth{$S$.}
An ordered pair \mth{$(x,y)$} in \mth{$(S\times S)\sm P$} is a \define{potential cover}
of \mth{$P$} if and only if \mth{$P\cup\{(x,y)\}$} is a partial order.
We call the set of potential covers the \define{extender} of \mth{$P$,} denoted
\mth{$\Ext P$.}
For brevity we define \mth{$\Ext\Y:=\Ext{\Ord\Y}$} for each \mth{$\Y\su\monoid{O}$.}
\end{dfn}

\begin{lem}\label{lem:poset_result}
Suppose \mth{$P\su Q$} are partial orders on a finite set \mth{$S$.}
Then \[P=Q\iff\nit{Ext}(P)\cap Q=\es.\]
\end{lem}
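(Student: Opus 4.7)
The forward direction is immediate: by definition $\Ext P\cap P=\es$, so $P=Q$ forces $\Ext P\cap Q=\es$. For the converse I plan to argue the contrapositive, assuming $P\subsetneq Q$ and exhibiting a pair in $\Ext P\cap Q$.

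The idea is to work inside $R:=Q\sm P$, which is finite, nonempty, and (because $P$ is already reflexive) contains no pair of the form $(z,z)$. On $R$ I define a relation $\preceq$ by
\[(x,y)\preceq(a,b)\ify(a,x)\in P\text{ and }(y,b)\in P.\]
Reflexivity, transitivity, and antisymmetry of $\preceq$ are direct consequences of the same properties of $P$, so $\preceq$ is a partial order on $R$ and admits a maximal element $(x,y)$. The claim will be that $P\cup\{(x,y)\}$ is itself a partial order, which places $(x,y)$ in $\Ext P\cap Q$ as required.

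To verify the claim, reflexivity is inherited from $P$. For antisymmetry the only potential obstruction is $(y,x)\in P$, but $P\su Q$, $(x,y)\in Q$, and $x\neq y$ together force $(y,x)\notin Q$ by antisymmetry of $Q$. The crux is transitivity: since $P$ is already transitive, the only new inferences created by adjoining $(x,y)$ are pairs $(a,y)$ obtained from $(a,x)\in P$ and pairs $(x,b)$ obtained from $(y,b)\in P$, both of which automatically lie in $Q$ by transitivity of $Q$. The task is to show they already lie in $P$.

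This last step is the heart of the argument and is exactly what motivates the definition of $\preceq$. If $(x,b)$ lay in $R$, the pairs $(x,x),(y,b)\in P$ would give $(x,y)\preceq(x,b)$ with $(x,b)\neq(x,y)$ (since $b\neq y$), contradicting maximality of $(x,y)$; the case $(a,y)\in R$ is symmetric. Hence $(a,y),(x,b)\in P$ as needed, $P\cup\{(x,y)\}$ is transitive, and the proof is complete.
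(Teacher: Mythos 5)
Your proof is correct, but it takes a different route from the paper. The paper disposes of the nontrivial direction in one line by citing Lemma~2.1 of Culberson and Rawlins, which says that the poset of all partial orders on a finite set, ordered by inclusion, is graded by cardinality; gradedness immediately yields a one-element extension of $P$ inside $Q$ whenever $P\subsetneq Q$. What you have done is supply a self-contained proof of exactly that covering step: your auxiliary order $(x,y)\preceq(a,b)\iff(a,x)\in P\mbox{ and }(y,b)\in P$ on $Q\sm P$ is the right device, a $\preceq$-maximal pair is precisely one whose adjunction creates no new transitive consequences outside $P\cup\{(x,y)\}$, and your case analysis (antisymmetry of the extension via antisymmetry of $Q$; transitivity via maximality, with the degenerate compositions $b=y$ and $a=x$ correctly absorbed by the new pair itself) is complete. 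The trade-off is the usual one: the paper's citation is shorter and records where the general structural fact lives, while your argument makes the lemma independent of an external reference and in fact reproves the only piece of the gradedness result that the lemma needs. One cosmetic remark: the paper's footnote defines a partial order as reflexive, transitive, and ``asymmetric,'' which is surely intended to mean antisymmetric; your proof correctly uses antisymmetry of $P$ and $Q$, which is the reading required for the lemma to make sense.
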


\begin{proof}
This holds since the poset under set inclusion of all partial orders on $S$ is graded by
cardinality (see Lemma~2.1 in Culberson and Rawlins \cite{1988_culberson_rawlins}).
\end{proof}

\noindent
The next proposition implies GE's $34$-set in $\mathbb{R}$ satisfies $\Ord\KF$.
It is sharp up to Proposition~\ref{prop:order_implications}(v).

\begin{prop}\label{prop:partial_order}
A GE \mth{$34$}-set \mth{$A$} satisfies \mth{$\Ord\KF$} if and only if
\mth{$o_1A \nsu o_2A$} for all \mth{$(o_1,o_2)\in\{(\fib,\fif)$,} \mth{$(\fif,\fbi)$},
\mth{$(\fbi,\fib)\}\cup(\{\fib,\fif,\fbi\}\times\{\ident,a\})$.}
\end{prop}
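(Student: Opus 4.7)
The plan is to combine Lemma~\ref{lem:poset_result} with the structural results established earlier in this section.  Since $|\KFA|\eqy 34$, no two operators in $\KF$ agree on $A$, so the relation $\preceq_A$ on $\KF$ defined by $o_1\preceq_A o_2\ify o_1A\su o_2A$ is antisymmetric, hence a partial order that necessarily contains $\Ord\KF$.  The statement ``$A$ satisfies $\Ord\KF$'' amounts to the equality $\preceq_A\eqz\Ord\KF$, and Lemma~\ref{lem:poset_result} recasts this as: no pair of $\Ext\KF$ lies in $\preceq_A$.  Because $\preceq_A$ is closed under left duality $(o_1A\su o_2A\ify ao_2A\su ao_1A)$ and Figure~\ref{fig:kf_monoid}(iii) together with its left dual exhausts $\Ext\KF$, the proposition reduces to showing that failure of the nine listed pairs on $A$ is equivalent to failure on $A$ of every drawn edge of Figure~\ref{fig:kf_monoid}(iii).

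The forward direction is immediate, since each of the nine listed pairs appears as a drawn edge.  For the converse I would sort the drawn edges into two groups.  The \emph{automatic} group consists of edges whose validity on $A$ would force a $\K$- or $\KF$-level collapse and so contradict $|\KFA|\eqy 34$: $(ab,b)$ forces $bA\eqy X$ by Table~\ref{tab:kf_akf}, whence $abA\eqy\es$; $(\ident,bib)$ and $(\fb,bib)$ each give $bA\eqy bibA$ (the first by left-multiplying by $b$ using $b\cdot bib\eqy bib$ from Table~\ref{tab:multiplication}, the second by Lemma~\ref{lem:subset_implications}(ii)); $(ibi,\ident)$ and the dashed $(ibi,a\ei)$ each force $iA\eqy ibiA$ (openness of $ibiA$ together with the disjoint decomposition $bi\eqy\ei\vee i$ of Lemma~\ref{lem:decompositions}(iii)); $(\ie,\ident)$ and the dashed $(\ie,a)$ give $\ie A\eqy\es$ by Lemma~\ref{lem:subset_implications}(i); and the five dashed self-loops each assert $oA\eqy\es$ for some $o\in\{i,\ie,\fif,\fib,\fbi\}$.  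The \emph{reducible} group consists of edges that each imply one of the nine listed pairs.  The edges $(\fib,bi)$ and $(\fif,bi)$ reduce via Lemma~\ref{lem:subset_implications}(iv) to inclusions into $\fbi A$; $(\fbi,\bif)$ and $(\fib,\bif)$ via (vi) to inclusions into $\fif A$; $(\fif,aib)$ and $(\fbi,aib)$ via the disjoint decomposition $bib\eqy\fib\vee ib$ of Lemma~\ref{lem:decompositions}(ii) to inclusions into $\fib A$; in each case Proposition~\ref{prop:order_implications}(v) recasts the resulting inclusion into one of the three listed triangle pairs.  The remaining triangle dashed edges $(\fif,a\fib),(\fbi,a\fib),(\fif,a\fbi)$ succumb to Proposition~\ref{prop:order_implications}(i): disjointness $\pi_1A\cap\pi_3A\eqy\es$ combined with $\pi_1\sm\pi_3\eqy\pi_2\sm\pi_3$ gives $\pi_1A\su\pi_2A$, once more a triangle pair after (v).  Finally $(\fbi,\ident),(\fib,\ident),(\fif,\ident)$ and the dashed paths $(\fbi,a),(\fib,a),(\fif,a)$ are the six remaining listed pairs themselves.

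The main obstacle is the bookkeeping of the case analysis; no individual reduction is technically deep.  Proposition~\ref{prop:order_implications}(v) is the compressor that bundles the six possible triangle inclusions down to the three listed ones, which is precisely what ``sharp up to Proposition~\ref{prop:order_implications}(v)'' means; Lemma~\ref{lem:subset_implications}(iv)--(vi) funnels the edges climbing toward $bi,\bif,ib$ back into the triangle; and the 34-set hypothesis absorbs every extender edge that would otherwise force a $\K$- or $\KF$-level equality.
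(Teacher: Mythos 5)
Your proposal is correct and follows essentially the same route as the paper: both reduce the claim via Lemma~\ref{lem:poset_result} to showing every pair in $\Ext\KF$ (Figure~\ref{fig:kf_monoid}(iii), up to left duality) fails on $A$, and both dispatch the edges with the same ingredients --- the $34$-set hypothesis absorbing the collapse-forcing edges, Lemma~\ref{lem:subset_implications} and Lemma~\ref{lem:decompositions} funneling the remaining edges into the $\{\fib,\fif,\fbi\}$ triangle, and Proposition~\ref{prop:order_implications}(v)/(vii) compressing those into the three listed pairs. Your grouping into ``automatic'' versus ``reducible'' edges is just a reorganization of the paper's $S_1,\dots,S_7$, and the couple of spots where you cite a different lemma (e.g.\ deriving the $S_5$ reductions from Proposition~\ref{prop:order_implications}(i) rather than quoting (vii) directly) are equivalent.
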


\begin{proof}
Suppose $A$ is a GE $34$-set.
The ``only if'' holds by Theorem~\ref{thm:partial_order}.
Conversely, suppose $A$ satisfies the condition.
By Figure~\ref{fig:kf_monoid}(iii), up to left duality we have $\Ext\KF=
\bigcup_{j=1}^7S_j$ where
$S_1=\{(o,ao):o\in\{i,\ie,\fib,\fif,\fbi\}\}\cup\{(ab,b)\}$,
$S_2=\{(\ident,bib),$ $(ibi,\ident)\}$,
$S_3=\{(\ie,\ident),$ $(\ie,a),$ $(\fb,bib)\}$,
$S_4=\{(\fif,aib),$ $(\fbi,aib),$ $(\ei,aibi)\}$,
$S_5=\{(\fib,\afbi),$ $(\fif,\afib),$ $(\fbi,\afif\hspace{1pt})\}$,
$S_6=\{(\fib,bi),$ $(\fif,bi),$ $(\fib,\bif\hspace{1pt}),$ $(\fbi,\bif\hspace{1pt})\}$, and
$S_7=\{\fib,\fif,\fbi\}\times\{\ident,a\}$.
Let $(o_1,o_2)\in\Ext\KF$.
Claim $o_1A\nsu o_2A$.
This holds for $S_1$ since $o_1A\su o_2A\implies o_1A=\es$.
It holds for $S_2$ since $A\su bibA\implies bibA=bA$ (left-multiply by $b$) and $ibiA\su
A\implies ibiA=iA$ (left-multiply by $i$).
It holds for $S_3$ by Lemma~\ref{lem:subset_implications}(i)--(ii), $S_4$ by
Lemmas~\ref{lem:equivalent_inclusions}-\ref{lem:decompositions},
Proposition~\ref{prop:order_implications}(v), and $\Ord\KF$, $S_5$ by
Proposition~\ref{prop:order_implications}(vii), and $S_6$ by
Lemma~\ref{lem:subset_implications} parts (iv),(vi) and
Proposition~\ref{prop:order_implications}(v).
The hypothesis covers $S_7$.
The result follows by Theorem~\ref{thm:partial_order} and Lemma~\ref{lem:poset_result}.
\end{proof}

\begin{prop}\label{prop:kfgz_ordering}
Figure~\nit{\ref{fig:kf_monoid}(ii)} and its left dual represent \mth{$\Ord\KFG$.}
\end{prop}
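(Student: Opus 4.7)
The plan is to follow the blueprint of Proposition \ref{prop:partial_order}, extending the argument with the three new operators $g$, $bg$, $\fbg$ of $\KFGZ\setminus\KFZ$ and their left duals.

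First I would check that every edge in Figure \ref{fig:kf_monoid}(ii) holds in every space. Edges inherited from Figure \ref{fig:kf_monoid}(i) are covered by Theorem \ref{thm:partial_order}. The new solid edges all involve $g$, $bg$, or $\fbg$, and each follows from identities already on hand in Lemmas \ref{lem:f_cancels}, \ref{lem:ifo}, \ref{lem:decompositions}, and \ref{lem:op_relations}: $g\leq\ident$ from Lemma \ref{lem:decompositions}(viii); $g\leq bg$ from $\ident\leq b$ right-multiplied by $g$; $bg\leq\f$ from $g\leq\f$ (Lemma \ref{lem:decompositions}(iv)) together with $b\f=\f$ (Lemma \ref{lem:f_cancels}); $\fbg\leq bg$ from the disjoint decomposition $bg=\fbg\vee\ie$ (Lemma \ref{lem:decompositions}(vi)); and $\fbg\leq\ff$ from $\ff=\f\sm\ie$ combined with $\fbg\leq\f$ and the disjointness $\fbg\wedge\ie=0$. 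The dashed (disjointness) edges reduce to $gi=ig=0$ (Lemma \ref{lem:ifo}(ii)) and consequences thereof obtained by multiplication.

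Second, to rule out any further relations I would reuse the GE $34$-set $A\subseteq\mathbb{R}$ exhibited just after Theorem \ref{thm:ccb}. It was shown there that $|\KFGA|=40$, so $A$ distinguishes every class consistent with the claimed order. Applying Lemma \ref{lem:poset_result} to $P$ read off from Figure \ref{fig:kf_monoid}(ii) (and its left dual) against the true $\Ord\KFG$, it suffices to verify that no potential cover in $\Ext\KFG$ (Figure \ref{fig:kf_monoid}(iv)) holds for $A$. Potential covers already in $\Ext\KF$ are handled verbatim by Proposition \ref{prop:partial_order}. The new potential covers — those containing at least one of $g$, $bg$, $\fbg$ — can largely be reduced to $\KF$-potential covers via the disjoint decompositions $bg=\fbg\vee\ie$ and $\f=g\vee ga$ together with Lemma \ref{lem:equivalent_inclusions}; the small residue, including the self-disjointness pair $(g,ag)$ and the covers linking $g$ to the $ibi$- and $bi$-layer, is ruled out either by Lemma \ref{lem:ifo} (e.g., $gi=0$ forces $gA\nsu iA$ whenever $gA\neq\es$) or by direct inspection of the listed set values of $A$.

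The main obstacle is the bookkeeping in the second step: enumerating every potential cover in $\Ext\KFG\setminus\Ext\KF$ and pairing each with a reason to fail for $A$. As with Theorem \ref{thm:partial_order}, a short computer enumeration is the cleanest way to certify that no hidden relation survives.
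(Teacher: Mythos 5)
Your derivation of the new edges is essentially the paper's: the paper left-multiplies $g\leq\f$ by $b$ to get $bg\leq\f$, reads $\fbg\leq bg$ off the decomposition $bg=\fbg\vee\ie$, and gets $\fbg\leq\ff$ from $\fbg=bg\sm ibg\leq\f\sm\ie=\ff$, exactly as you do. Two of the new edges are missing from your list, though: $\bif\leq bg$ (left-multiply $\ie=ibg\leq bg$ by $b$, using Lemma~\ref{lem:ifo}(iv)) and $\fif\leq\fbg$ (right-multiply $\fib\leq\fb$ by $g$, using $\fib g=\fif$ and $\fb g=\fbg$ from Table~\ref{tab:multiplication}). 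These are easy, but your blanket claim that "each follows from identities already on hand" does not discharge them. Also, a small slip: the fact that $gA\nsu iA$ when $gA\neq\es$ comes from the \emph{meet} identity $g\wedge i=0$ (Lemma~\ref{lem:decompositions}(viii)), not from the \emph{composition} identity $gi=0$ of Lemma~\ref{lem:ifo}(ii); the two are different statements.

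The genuine gap is in your completeness step. You propose to certify, via Lemma~\ref{lem:poset_result}, that no element of $\Ext\KFG$ holds by checking them all against the single GE $34$-set $A\su\mathbb{R}$. But that set does \emph{not} fail every new potential cover. For example, $\Ext\KFG$ contains the disjointness cover corresponding to $g\leq aibi$ (the dashed edge from $ibi$ to $g$ in Figure~\ref{fig:kf_monoid}(iv)), and for GE's set one has $gA=(\mathbb{Q}\cap(2,3))\cup\{2,4,5,6\}$ and $ibiA=(0,2)\cup(5,6)$, which are disjoint — so $gA\su aibiA$ holds, and $A$ cannot witness that this cover fails in general. Indeed Table~\ref{tab:kfg_orderings} records $g\leq aibi$ (and $\fbg\leq aibi$) as inequalities satisfied by some GE spaces and not others; the paper's own counterexample in $\mathbb{R}$ is the different set $\mathbb{R}\sm\{1/n:n\geq1\}$. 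So ruling out the remaining covers requires a family of witnesses ranging over several spaces and subsets, not one seed set, and this is precisely why the paper's proof simply states that the absence of further edges was verified by computer. Your final fallback to a computer enumeration is the correct move, but the intermediate claim that the listed values of $A$ suffice for "direct inspection" is false as stated.
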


\begin{proof}
All three edges with $g$ as an endpoint are clear.
Left-multiply $\ie=ibg\leq bg$ and $g\leq f$ by $b$ to get $\bif\leq bg\leq f$.
Right-multiply $\fib\leq\fb\leq b$ by $g$ to get $\fif\leq\fbg\leq bg$.
Have $\fbg=bg\sm ibg\leq\f\sm\ie=\ff$.
The other edges hold by Theorem~\ref{thm:partial_order}.
We verified by computer that no further edges hold.
\end{proof}

\subsection{Basic relationships.}

This subsection lays the foundation for Section~\ref{sec:kfa}.

\begin{lem}\label{lem:equations_kf}
In Figure~\nit{\ref{fig:kf_monoid}(i),} each of the six edge equations in
\mth{$(\KZ\sm\{\ident\})A$} is equivalent to one of the nine edge equations in
\mth{$(\FZ\sm\{\f,\bif\hspace{.6pt}\})A$.}
Among the latter\nit, only \mth{$\fifA=\es$} fails to imply any equation in \mth{$\KZA$.}
Specifically\nit, we have

\bigskip\noindent
\makebox[36pt][r]{\nit{(i)}}~\mth{$bibA=biA\iff ibiA=ibA\iff\ifA=\es$,}\\
\makebox[36pt][r]{\nit{(ii)}}~\mth{$bibA=ibA\iff\fibA=\es$,}\\
\makebox[36pt][r]{\nit{(iii)}}~\mth{$ibiA=biA\iff\fbiA=\es$,}\\
\makebox[36pt][r]{\nit{(iv)}}~\mth{$bA=ibA\implies\ffA=\fiA\implies\fbA=\fibA\iff bibA=bA$,}\\
\makebox[36pt][r]{\nit{(v)}}~\mth{$iA=biA\implies\ffA=\fbA\implies\fiA=\fbiA\iff ibiA=iA$,}\\
\makebox[36pt][r]{\nit{(vi)}}~\mth{$\ffA=\fifA\implies(bibA=bA\mbox{ and }ibiA=iA)$.}

\bigskip\noindent
The following five non-edge equations in \mth{$\{\ei,\fb,\fib,\fbi,\fif\hspace{.6pt}\}A$}
each imply at least one edge equation in \mth{$(\KZ\sm\{\ident\})A$.}

\noindent
\makebox[36pt][r]{\nit{(vii)}}\vrule width0pt height16pt depth0pt~\mth{$\fbA=(\fbiA
\mbox{ or }\fifA)\implies bibA=bA$,}\\
\makebox[36pt][r]{\nit{(viii)}}~\mth{$\fiA=(\fibA\mbox{ or }\fifA)\implies ibiA=iA$,}\\
\makebox[36pt][r]{\nit{(ix)}}~\mth{$\fbA=\fiA\implies(bibA=bA\mbox{ and }ibiA=iA)$.}

\medskip\noindent
We also have

\medskip\noindent
\makebox[36pt][r]{\nit{(x)}}~\mth{$\ffA\neq X$,}\\
\makebox[36pt][r]{\nit{(xi)}}~\mth{$bA=X\iff iA=\afA\iff biA=\aifA\iff ibiA=\abifA$,}\\
\makebox[36pt][r]{\nit{(xii)}}~\mth{$\fbgA=\es\implies\fbgA=\fifA\iff bgA=\bifA\implies bibA=bA$.}
\end{lem}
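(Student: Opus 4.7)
The approach rests on one recurring principle: whenever $p = q \vee r$ with disjoint summands $q, r$ (so $qA \cap rA = \es$) and $r \leq p$ (so $rA \su pA$ always), we get $pA = rA \iff qA = \es$. Most of the needed decompositions are already catalogued in Lemma~\ref{lem:decompositions}, and disjointness comes from the dashed edges of Figure~\ref{fig:kf_monoid}(i). I combine this with the operator identities in Lemma~\ref{lem:op_relations} and the order-implications in Lemma~\ref{lem:subset_implications}.

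For parts (i)--(iii), the decompositions $bib = \fib \vee ib$ and $bi = \fbi \vee ibi$ give (ii) and (iii) at once, using $\fib \wedge ib = 0$ (from $\fib \leq \fb$ and the dashed edge $\fb \wedge ib = 0$) together with the dashed edge $\fbi \wedge ibi = 0$. Part (i) needs one extra move: since $biA$ is closed and $ibA$ is open, both $bibA = biA$ and $ibiA = ibA$ reduce to $ibA \su biA$, which equals $\ifA = \es$ by Lemma~\ref{lem:op_relations}(i). Parts (iv)--(v) are chains handled the same way: $bA = ibA \iff \fbA = \es$ (from disjoint $b = \fb \vee ib$) forces $\ffA = \fiA$ via $\ff = \fb \vee \ei$; then Lemma~\ref{lem:op_relations}(iv), $\fb \wedge \ei = \fib \wedge \fbi$, combined with $\fbA \su \ei A$ yields $\fbA = \fbA \cap \ei A = \fibA \cap \fbiA \su \fibA$, hence $\fbA = \fibA$; and $\fbA = \fibA \iff bibA = bA$ falls out of the same disjoint decompositions. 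Part (v) is the mirror image via $bi = \ei \vee i$ and $bi = \fbi \vee ibi$.

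Part (vi) is the main obstacle. My plan is to set $B := \fA$; since $B$ is closed, $\f B = B \sm iB$ and $\f(iB) = b(iB) \sm iB$, so $\ffA = \fifA$ translates to $B = b(iB)$, i.e.\ $\fA = \bifA$. Then $\fbA \su \fA = \bifA \su bibA$ (the last via $\bif \leq bib$), and Lemma~\ref{lem:subset_implications}(ii) gives $bibA = bA$. To obtain $ibiA = iA$, I substitute $aA$ for $A$: because $\fa = \f$ the hypothesis is invariant, and applying the first conclusion to $aA$ yields $bib(aA) = b(aA)$, which unpacks via $b(aA) = X \sm iA$ and $bib(aA) = X \sm ibiA$ to $ibiA = iA$. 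The delicate point is the $B = \fA$ reduction; without it, the composition $\fif$ is hard to tie directly to edge equations in $\KZ A$.

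The remaining parts are quick. Parts (vii)--(ix) follow from (iv)--(v) via Lemma~\ref{lem:subset_implications}(v)--(vi): e.g.\ $\fbA = \fbiA$ yields $\fbiA \su \fbA \implies \fbiA \su \fibA$, hence $\fbA = \fibA$ and (iv) fires; (ix) is immediate since $\fbA = \fiA$ forces $\ffA = \fbA = \fiA$, triggering both (iv) and (v). Part (x) splits on $\fA = X$: either $\ffA = \fA \cap b(a\fA) = \es$, or $\ffA \su \fA \subsetneq X$. Part (xi) starts from the always-disjoint $bA = iA \cup \fA$, so $iA = \afA \iff bA = X$; the other two conditions propagate via $\ie = ib \sm bi$ and $\bif = b\ie$, each collapsing to $ibA = X \iff bA = X$. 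Part (xii) uses $\fif \leq \fbg$ in Figure~\ref{fig:kf_monoid}(ii) for the first implication, the disjoint decompositions $bg = \fbg \vee \ie$ and $\bif = \fif \vee \ie$ (Lemma~\ref{lem:decompositions}(v)--(vi)) for the equivalence, and the one-line argument $bA = bgA \cup biA = \bifA \cup biA \su bibA$ (using $b = bg \vee bi$ from Lemma~\ref{lem:op_relations}(vi)) for the last implication.
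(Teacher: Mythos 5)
Your proof is correct and rests on the same machinery as the paper's: the disjoint decompositions of Lemma~\ref{lem:decompositions}, the identities of Lemma~\ref{lem:op_relations}, and Lemma~\ref{lem:subset_implications} drive every part in both arguments. The only deviations are local and equivalent in substance — in (vi) you obtain $\fA=\bifA$ by a direct computation with $B=\fA$ and then get $ibiA=iA$ by substituting $aA$, where the paper reads $\fA=\bifA$ off the decompositions $\f=\ff\vee\ie$ and $\bif=\fif\vee\ie$ and uses the sandwich $ibiA\sm iA\su\fiA\su\bifA\su aibiA$; and in (xi) you verify each equivalence against $bA=X$ directly instead of left-multiplying $iA=\afA$ by $b$ and $i$.
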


\begin{proof}
(i)~Left-multiply $bibA=biA$ by $i$ and $ibiA=ibA$ by $b$ to get the first equivalence.
The second one holds since $\ifA=\es\implies ibA\su biA\implies ibA=ibiA\imp\ifA\su
ibA\su biA\imp\ifA=\es$.
(ii)~Apply $bib=\fib\vee ib$.
(iii) is the dual of (ii).
(iv)~The equivalence holds by Lemma~\ref{lem:decompositions}(i)--(ii).
The implications hold by Lemmas~\ref{lem:decompositions}(i),
\ref{lem:op_relations}(iii), and~\ref{lem:subset_implications}(ii).
(v) is the dual of (iv).
(vi)~Have $\fA=\bifA$ by Lemma~\ref{lem:decompositions}(iv)--(v).
It follows that $bA\sm bibA\su\fbA\su\bifA\su bibA$ and
$ibiA\sm iA\su\fiA\su\bifA\su aibiA$.
(vii)~Apply Lemma~\ref{lem:subset_implications}(ii).
(viii) is the dual of (vii).
(ix)~$\ffA=\fiA=\fbA$ by Lemma~\ref{lem:op_relations}(iii).
Apply (iv) and (v).
(x)~$\ffA=X$ implies the contradiction $\es=i\ff A=iX=X$.
(xi)~Each implies $bA=X$ since $o_1A=ao_2A\implies X=o_1A\cup o_2A\su bA$ for all $o_1,o_2
\in\KFZ$.
Apply $b=\f\vee i$ to get $bA= X\implies iA=\afA$.
Left-multiply by $b$ and $i$ to complete the proof.
(xii)~The equivalence holds by Lemma~\ref{lem:decompositions}(v)--(vi).
The implications hold by $\Ord\KFG$ and
Lemma~\ref{lem:op_relations} parts (i) and (v).
\end{proof}

\begin{dfn}\label{dfn:collapses}
We call any set \mth{$C\su\{\{o_1,o_2\}:o_1,o_2\in\Y\}$} a \define{collapse} of
\mth{$\Y$.}
A subset \mth{$A$} \define{satisfies} \mth{$C$ [}\define{on} \mth{$\Y$]}\footnote{We
often leave out ``on $\Y$\hspace{.8pt}'' when the context is clear.} if and only if for
all \mth{$o_1,o_2\in\Y$,} \mth{$o_1A=o_2A\iff\{o_1,o_2\}\in C$.}
For \mth{$\XT$} to \define{satisfy} \mth{$C$,} replace \mth{$o_1A=o_2A$} with
\mth{$o_1=o_2$.}
Satisfaction of a partial order \mth{$P$} on \mth{$\Y$} by a space or subset is defined
similarly.

If some subset in some topological space satisfies a collapse \mth{$C$} on \mth{$\Y$} we
call \mth{$C$} a \define{local collapse of} \mth{$\Y$.}
If some space satisfies \mth{$C$} on \mth{$\Y$} we call \mth{$C$} a \define{global
collapse of} \mth{$\Y$.}
Local and global orderings are defined similarly.\footnote{Previous authors have used
various names for collapses and orderings.
The term ``collapse'' doubtlessly derives from the graphical collapsing that occurs when
comparable nodes merge in a Hasse diagram.
GJ refer to local collapses specifically as \define{equivalence patterns}.
McCluskey et al\text{.} \cite{2007_mccluskey_mcintyre_watson} call local orderings
\define{properties} and global orderings \define{universals}.
Staiger and Wagner \cite{2010_staiger_wagner} call local orderings of \mth{$\KZ$}
\define{Kuratowski lattices}.}
We call a collapse \define{closed} if it contains the pair \mth{$\{\ident,b\}$} and
\define{open} if it contains \mth{$\{\ident,i\}$.}
\end{dfn}

Every global collapse or partial order is local but the converse is not true in general.

\begin{lem}\label{lem:fza_collapse}
The collapse of \mth{$\FZ$} that \mth{$A$} satisfies is determined by the collapse
\mth{$\{\{o_1,o_2\}\in C_1\cup C_2\cup C_3: o_1A=o_2A\}$} where \mth{$C_1$} is the set of
all nine edge pairs in \mth{$\FZ\sm\{\f,\bif\hspace{.6pt}\}$} in
Figure~\nit{\ref{fig:kf_monoid}(i),} \mth{$C_2=\{\{\fb,\ei\},$} \mth{$\{\fb,\fbi\},$}
\mth{$\{\fb,\fif\hspace{.6pt}\},$} \mth{$\{\ei,\fib\},$}
\mth{$\{\ei,\fif\hspace{.6pt}\}\}$,} and \mth{$C_3=\{\{\fib,\fbi\},$}
\mth{$\{\fbi,\fif\hspace{.6pt}\},$} \mth{$\{\fif,\fib\}\}$.}
\end{lem}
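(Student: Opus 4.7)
The plan is to verify, for each of the $\binom{10}{2}=45$ unordered pairs $\{o_1,o_2\}$ of distinct elements of $\FZ$, that the truth value of $o_1A=o_2A$ is a Boolean function of the $17$ equalities $\{o_3A=o_4A:\{o_3,o_4\}\in C_1\cup C_2\cup C_3\}$. The $17$ key pairs are handled trivially; the remaining $28$ split naturally into $17$ pairs involving $\f$ or $\bif$ and $11$ pairs among the eight elements $\{0,\ie,\fif,\ff,\fb,\ei,\fbi,\fib\}$.

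First I would eliminate $\f$ and $\bif$ using the disjoint decompositions $\f=\ff\vee\ie$ and $\bif=\fif\vee\ie$ of Lemma~\ref{lem:decompositions}(iv)--(v), together with the fact that the operator $\ie$ is disjoint from every element of $\{\ff,\fb,\ei,\fif,\fbi,\fib\}$. This disjointness is read off the dashed edges joining $\ff$ to $\ie$, $\ie$ to $bi$, and $ib$ to $\fb$ in Figure~\ref{fig:kf_monoid}(i), combined with $\fif\leq\ff$, $\fbi,\ei\leq bi$, and $\fib\leq\fb$. Under this reduction each equality involving $\f$ or $\bif$ becomes a conjunction of ``$\ie A=\es$'' (i.e.\ $\{0,\ie\}\in C_1$) with an equality $\ff A=oA$ or $\fif A=oA$ among the remaining eight operators.

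Next I would dispatch the $11$ non-key pairs among $\{0,\ie,\fif,\ff,\fb,\ei,\fbi,\fib\}$ in three families. For the six pairs $\{\ie,o\}$ with $o\in\{\fif,\ff,\fb,\ei,\fbi,\fib\}$, disjointness of $\ie$ and $o$ forces $\ie A=oA\iff\ie A=\es=oA$, so each such equality reduces to ``$\{0,\ie\}$ and $\{0,o\}$ both collapse''. For $\{0,\fb\},\{0,\ei\},\{0,\ff\}$: using $\fib\leq\fb$, the equation $\fb A=\es$ is equivalent to the conjunction $\{0,\fib\},\{\fib,\fb\}\in C_1$ both collapsing; symmetrically for $\ei A=\es$; and Lemma~\ref{lem:op_relations}(v) gives $\ff A=\es\iff\fb A=\es\text{ and }\ei A=\es$. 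Finally, for $\{\ff,\fib\}$ and $\{\ff,\fbi\}$ the chains $\fib\leq\fb\leq\ff$ and $\fbi\leq\ei\leq\ff$ sandwich each equality into a conjunction of $C_1$ pairs.

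The main obstacle is bookkeeping rather than depth: one must carefully trace each of the $28$ non-key pairs back to a Boolean combination of the $17$ key equalities. No identity beyond Lemmas~\ref{lem:decompositions}--\ref{lem:op_relations} and the dashed-edge disjointness pattern of Figure~\ref{fig:kf_monoid}(i) is required.
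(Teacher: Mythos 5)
Your overall route is the paper's: kill $\f$ and $\bif$ with the disjoint decompositions $\f=\ff\vee\ie$ and $\bif=\fif\vee\ie$ plus $\ie\wedge\ff=0$, then check that every non-key pair among the remaining eight operators is either a chain of $C_1$ edges or forced empty by disjointness, the genuinely incomparable pairs being exactly $C_2\cup C_3$. Your enumeration ($17+11=28$ non-key pairs) is complete, and your treatment of the $11$ pairs among $\{0,\ie,\fif,\ff,\fb,\ei,\fbi,\fib\}$ is correct.

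The one step that fails as written is the blanket claim that \emph{each} equality involving $\f$ or $\bif$ becomes a conjunction of ``$\ifA=\es$'' with an equality in $\ff$ or $\fif$. That is true when the other operator is one of $0,\fif,\ff,\fb,\ei,\fbi,\fib$, but not for the three pairs where the $\ie$-component appears on both sides or is itself the other operand: $\fA=\bifA$ is equivalent to $\ffA=\fifA$ with \emph{no} condition on $\ifA$ (the $\ie$-parts cancel), and $\fA=\ifA$, $\bifA=\ifA$ are equivalent to $\ffA=\es$ and $\fifA=\es$ respectively, again without requiring $\ifA=\es$. Your recipe would wrongly conjoin $\ifA=\es$ in these cases and so would certify a false Boolean function for them. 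These are precisely the three equivalences the paper states explicitly ($\fA=\bifA\iff\ffA=\fifA$, $\fA=\ffA\iff\ifA=\es$, $\bifA=\ifA\iff\fifA=\es$); each still reduces to $C_1$ equations, so the lemma is unaffected and the repair is immediate, but the cases must be separated.
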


\begin{proof}
Since $\ie\wedge\ff=0$ it follows that every equation in $\FZA$ involving an operator
in $\{0,\ie,\bif,\f\hspace{1pt}\}$ is equivalent to some combination of edge equations in
$\FZA$.
Every such combination is determined by the nine edge equations in
$(\FZ\sm\{\bif,\f\hspace{1pt}\})A$ since $\fA=\bifA\iff\ffA=\fifA$, $\fA=\ffA\iff\bifA=
\fifA\iff\ifA=\es$, and $\bifA=\ifA\iff\fifA=\es$ by
Lemma~\ref{lem:decompositions}(iv)--(v).
Every incomparable pair in $\FZ\sm\{0,\ie,\bif,\f\hspace{1pt}\}$ is in $C_2\cup C_3$.
\end{proof}

Kleiner \cite{1977_kleiner} published a follow-up to GE in which the next two results
were stated without proof.
The first is an equivalent restatement of the original.

\begin{table}
\caption{The six space types based on $\K$
(\cite{1982_chagrov},\,\cite{2008_gardner_jackson}) where
$S=\{o\,\in\,\{ib,bi,b\}{:}\ o{\eqz}bib\}$.}
\centering\small
\begin{tabular}{r|c|c|c|c|c|c|}
\multicolumn{1}{c}{}&\multicolumn{1}{c}{Kuratowski}&\multicolumn{1}{c}{ED}&
\multicolumn{1}{c}{OU}&\multicolumn{1}{c}{EO}&\multicolumn{1}{c}{partition}&
\multicolumn{1}{c}{discrete}\\\Xcline{2-7}{2\arrayrulewidth}\vsz
$K(\XT)$&\vsz$14$&$10$&$10$&$8$&$6$&$2$\\\cline{2-7}
$S$&\vsz$\es$&$\{ib\}$&$\{bi\}$&$\{ib,bi\}$&$\{ib,b\}$&$\{ib,bi,b\}$\\\cline{2-7}
\end{tabular}
\label{tab:six_Kuratowski_monoids}
\end{table}

\begin{prop}\label{prop:kleiner}\from{Kleiner \cite{1977_kleiner}}
A Kuratowski \nit{$14$-}set \nit{$A$} is a GE \mth{$34$-}set if and only if at most one
of the following inclusions holds\nit: \mth{$\fbiA\su\fibA$,} \mth{$\fibA\su\fifA$,}
\mth{$\fifA\su\fbiA$.}
\end{prop}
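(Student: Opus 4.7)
The plan is to reduce the $34$-set condition to a count of $|\FZA|$ and then use Lemma~\ref{lem:fza_collapse} together with Lemma~\ref{lem:equations_kf} to pin down exactly which collapses of $\FZA$ can occur when $A$ is a Kuratowski $14$-set. By Corollary~\ref{cor:maximal_family}(i), $A$ is a GE $34$-set iff $|\KFZA| = 17$. I would begin by observing that for a $14$-set, $\KZA$ and $\FZA$ are disjoint: any equality $pA = oA$ with $p \in \KZ$ and $o \in \FZ$ would give $pA \su oA$, which by row~$2$ of Table~\ref{tab:kf_akf} forces $iA = \es$ and hence $biA = iA$, contradicting $|\KZA| = 7$. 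Thus $|\KFZA| = 7 + |\FZA|$ and the problem reduces to whether $|\FZA| = 10$.

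Next, by Lemma~\ref{lem:fza_collapse} the collapse of $\FZA$ is determined by which pairs in $C_1 \cup C_2 \cup C_3$ hold. I would use Lemma~\ref{lem:equations_kf} to show that for a $14$-set no pair in $C_1 \cup C_2$ can hold unless it forces a $C_3$ pair: parts~(i)--(vi) of that lemma handle eight of the nine edge equations in $C_1$, each of which forces a $\KZ$ edge equation (impossible for a $14$-set); parts~(vii)--(ix) handle all five pairs in $C_2$ in the same way; and the remaining edge equation $\fif A = \es$, by Proposition~\ref{prop:order_implications}(vi), forces $\fib A = \fbi A$, which is already a $C_3$ pair. Consequently $|\FZA| = 10$ iff no $C_3$ pair holds, i.e.\ iff no two of $\fib A,\fbi A,\fif A$ are equal.

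Finally I would translate ``no two equal'' into Kleiner's inclusion condition using Proposition~\ref{prop:order_implications}(v), which asserts that for each fixed $\pi_3 \in \{\fib,\fbi,\fif\}$ the inclusions $\pi_1 A \su \pi_3 A$ and $\pi_2 A \su \pi_3 A$ are equivalent. A short check then gives
\begin{align*}
\fib A = \fbi A &\iff (\fbi A \su \fib A) \wedge (\fif A \su \fbi A), \\
\fif A = \fib A &\iff (\fbi A \su \fib A) \wedge (\fib A \su \fif A), \\
\fbi A = \fif A &\iff (\fib A \su \fif A) \wedge (\fif A \su \fbi A),
\end{align*}
so each of the three possible equalities is the conjunction of two of the three listed inclusions. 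Hence ``no two equal'' is precisely ``at most one of the three listed inclusions holds,'' completing the proof. The main obstacle I anticipate is the bookkeeping in the middle step: one must check systematically against Lemma~\ref{lem:equations_kf} that every $C_1 \cup C_2$ pair is either the harmless $\fif A = \es$ (which reduces into $C_3$) or breaks the $14$-set assumption, with no hidden case slipping through.
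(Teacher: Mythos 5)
Your proposal is correct and follows essentially the same route as the paper's proof: reduce via Corollary~\ref{cor:maximal_family} to showing $|\FZA|=10$, use Lemmas~\ref{lem:equations_kf} and~\ref{lem:fza_collapse} with $|\KZA|=7$ to eliminate all $C_1\cup C_2$ collapses except $\fifA=\es$ (which Proposition~\ref{prop:order_implications}(vi) folds into $C_3$), and translate ``$|\{\fib,\fbi,\fif\}A|=3$'' into Kleiner's inclusion condition via Proposition~\ref{prop:order_implications}(v). Your write-up simply makes explicit the bookkeeping that the paper's terser proof leaves implicit.
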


\begin{proof}Let $\Y=\{\fbi,\fif,\fib\}$. If two of the inclusions hold then $|\YA|<3$
by Proposition~\ref{prop:order_implications}(v).
Conversely suppose $|\YA|=3$.
Since $|\KZA|=7$, Proposition~\ref{prop:order_implications}(vi) and
Lemmas~\ref{lem:equations_kf}-\ref{lem:fza_collapse} imply $|\FZA|=10$.
By Corollary~\ref{cor:maximal_family} we conclude $|\KFA|=34$.
\end{proof}

\begin{cor}\label{cor:kleiner}\from{Kleiner \cite{1977_kleiner}}
If \mth{$A$} is a Kuratowski \mth{$14$-}set and \mth{$biA\su ibA$} then \mth{$A$} is a GE
\mth{$34$-}set\nit.
\end{cor}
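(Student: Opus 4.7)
The plan is to reduce to Proposition~\ref{prop:kleiner}, so it suffices to show that at most one of the three inclusions $\fbiA\su\fibA$, $\fibA\su\fifA$, $\fifA\su\fbiA$ can hold. The crux is that the hypothesis $biA\su ibA$ forces $\fibA$ and $\fbiA$ to be disjoint.

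First I would apply Lemma~\ref{lem:op_relations}(iv) evaluated at $A$: this yields $\fibA\cap\fbiA=(bi\sm ib)A=biA\sm ibA$, which is empty by hypothesis. Next, since $A$ is a Kuratowski $14$-set, the seven operators of $\KZ$ separate $A$; in particular $bibA\neq ibA$ and $ibiA\neq biA$. Converting these nondegeneracies via Lemma~\ref{lem:equations_kf}(ii)--(iii) yields $\fibA\neq\es$ and $\fbiA\neq\es$.

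Combining disjointness with nonemptiness, neither $\fibA\su\fbiA$ nor $\fbiA\su\fibA$ can hold, since either containment would force a nonempty set to lie in its own complement. The failure of $\fbiA\su\fibA$ already rules out the first of the three forbidden inclusions. To rule out the third, I would invoke Proposition~\ref{prop:order_implications}(v) with $\pi_1=\fib$, $\pi_2=\fif$, $\pi_3=\fbi$: this translates $\fibA\nsu\fbiA$ into $\fifA\nsu\fbiA$. Only the middle inclusion $\fibA\su\fifA$ is left as a candidate, so at most one of the three inclusions holds and Proposition~\ref{prop:kleiner} delivers the conclusion.

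The main obstacle is identifying Lemma~\ref{lem:op_relations}(iv) as the bridge from the Kuratowski-level hypothesis $biA\su ibA$ to the frontier-level disjointness $\fibA\cap\fbiA=\es$. Once that is spotted, the $14$-set hypothesis is precisely what promotes disjointness to mutual incomparability of $\fibA$ and $\fbiA$, and Proposition~\ref{prop:order_implications}(v) supplies the third failed inclusion essentially for free.
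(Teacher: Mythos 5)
Your proof is correct. It takes the same overall skeleton as the paper --- reduce to Proposition~\ref{prop:kleiner} by ruling out two of the three inclusions, namely $\fbiA\su\fibA$ and (via Proposition~\ref{prop:order_implications}(v)) $\fifA\su\fbiA$ --- but the two non-inclusions are obtained by a different mechanism. The paper's proof feeds the hypothesis into Corollary~\ref{cor:equivalences}: since $biA\su ibA$ gives $ibA\cap biA=biA$ and $ibA\cup biA=ibA$, the $14$-set separations $biA\nsu ibiA$ and $bibA\nsu ibA$ translate directly into $\fbiA\nsu\fibA$ and $\fibA\nsu\fbiA$. You instead use Lemma~\ref{lem:op_relations}(iv) to get $\fibA\cap\fbiA=biA\sm ibA=\es$, and Lemma~\ref{lem:equations_kf}(ii)--(iii) to get $\fibA\neq\es\neq\fbiA$ from the $14$-set hypothesis, whence neither can contain the other. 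The two routes are logically close cousins (Corollary~\ref{cor:equivalences} itself descends from Proposition~\ref{prop:order_implications}(ii)--(iii)), but yours has the advantage of making the counting explicit and of isolating a clean structural fact --- disjoint nonempty sets are mutually incomparable --- where the paper's one-line proof leaves the application of Proposition~\ref{prop:order_implications}(v) and the ``at most one inclusion'' bookkeeping entirely to the reader.
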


\begin{proof}
Apply Corollary~\ref{cor:equivalences} and Proposition~\ref{prop:kleiner} to
$ibA\cap biA=biA\nsu ibiA$ and $bibA\nsu ibA=ibA\cup biA$.
\end{proof}

\noindent
Kleiner's paper concludes by exhibiting a GE $34$-set $A$ in $\mathbb{R}$ with the novel
property that all $34$ sets in $\KFA$ are pairwise nonhomeomorphic.

\section{The Gaida\textendash{}Eremenko Monoid}\label{sec:kf_monoid}

In this section we find all global collapses and orderings of $\KF$ and $\KFG$.

\subsection{The monoid \texorpdfstring{\mth{$\KF$}}{KF}.}

GJ proved (see Theorems~2.1 and~2.10) that $\KZ$ has:
(i)~six global collapses, each of which extends uniquely to $\K$, and
(ii)~$30$ local collapses, one of which extends to two local collapses of $\K$ and the
other $29$ of which extend uniquely to $\K$ (see Tables~\ref{tab:six_Kuratowski_monoids}
and~\ref{tab:subset_types}).

It is surprising the local collapses of $\KZ$ were not studied exhaustively until the
$2000$s.
Chapman \cite{1962_chapman_a} studied a coarser equivalence on $2^X$.
In the widely-read \journ{American Mathematical Monthly}, Chapman \cite{1962_chapman_b}
gave necessary and sufficient conditions for each of the $\binom{7}{2}$ possible
equations in $\KZA$.\footnote{Chapman wrote both papers as an undergraduate student
(personal communication, H.~W.~Gould, $1999$).}
With hindsight, the question jumps out: Which combinations of these equations occur?
It evidently escaped notice for over $40$ years.
In the next section we show that they extend to $62$ local collapses of $\KFZ$ and $70$
local collapses of $\KF$.

McCluskey et al\text{.} \cite{2007_mccluskey_mcintyre_watson} proved that $\KZ$ has six
global and $49$ local orderings.
The latter result was also found independently by Staiger and Wagner
\cite{2010_staiger_wagner}.\footnote{Their unpublished $2010$ conference paper was
uploaded to ResearchGate in late $2021$ by Staiger.
It appears that it and the papers by McCluskey et~al\text{.} $(2007)$ and GJ $(2008)$
were all written independently of one another.}

It turns out that $\K$, $\KFZ$, and $\KF$ have $66$, $274$, and $496$ local orderings,
respectively (see Theorem~\ref{thm:kfa_inclusions}).

Calling $|\K|$ the \define{Kuratowski number} of $\XT$, Chagrov \cite{1982_chagrov} was
the first author to exhibit the six global collapses of $\K$.\footnote{Chagrov omitted
proof that no further ones exist (for the proof see GJ).
Though there is no citation to confirm it,  Kleiner \cite{1977_kleiner} may have inspired
Chagrov's work in this area.
Langford \cite{1971_langford} showed that $A$ is a Kuratowski $14$-set if
and only if five independent inclusions in $\KZA$ all fail.
Kleiner considered the five cases when each holds individually for all $A\su X$.
They reduce to these three: (i)~partition or discrete, (ii)~OU, EO, or discrete,
(iii)~ED, EO, or discrete.
For each, Kleiner gave a correct upper bound for $\kf(\XT)$ and an example claimed to
realize this bound, but only one was entirely correct.
The family of closures of singletons in the OU example is misidentified as a base, which
instead yields an EO space, and the English version of the journal forgets to mention the
set $\{a,b\,'\!,c\}$ that generates $20$ distinct subsets.
The ED example fails because it is not a topological space.
A subset of $\mathbb{R}^2$ is defined to be closed if and only if it is of the form
$U\cup V$ where $U$ is finite and $V$ is any collection of lines parallel to the
$x$-axis.
The set $\bigcap_{n=1}^\infty(\{(n,n)\}\cup\{(x,m):x\in\mathbb{R},m\in\mathbb{N}^+,m\neq
n\})=\{(n,n):n\in\mathbb{N}^+\}$ is not closed under this definition.
Leaving $V$ out gives us the ED space $(\mathbb{R}^2,$ cofinite topology) but it only has
\kfnum\ $4$.}
Neither Chagrov nor GJ named all six space types per se; for brevity we do
so using Table~\ref{tab:six_Kuratowski_monoids}.

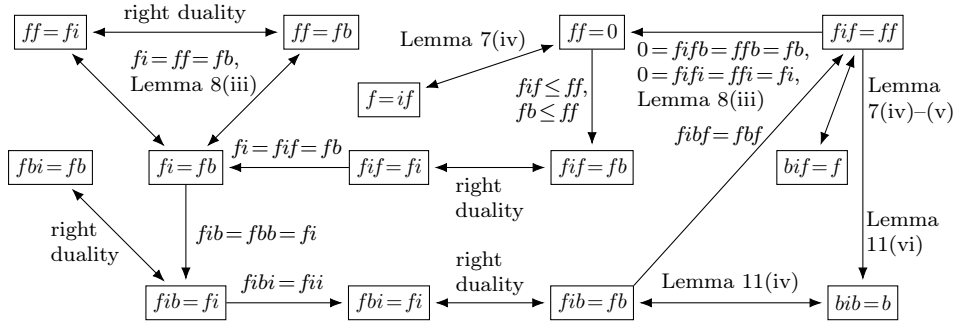
\begin{figure}
\small
\centering
\begin{tikzpicture}
[auto,
 block/.style={rectangle,draw=black,minimum height=1.6em, text centered,scale=.8},
 line/.style ={draw},
 doublearrow/.style={{Latex[sep=2pt]}-{Latex[sep=2pt]}},
 myarrow/.style={-{Latex[sep=2pt]}},scale=.9]
	\node[block,anchor=south] (1) at (-3,4) {$\f\eqy\ie$};
	\node[block,anchor=south] (2) at (0,5) {$\ff\eqy0$};
	\node[block,anchor=south] (3) at (-3,3) {$\fif\eqy\ei$};
	\node[block,anchor=south] (4) at (0,3) {$\fif\eqy\fb$};
	\node[block,anchor=south] (5) at (-6,3) {$\ei\eqy\fb$};
	\node[block,anchor=south] (6) at (-8,5) {$\ff\eqy\ei$};
	\node[block,anchor=south] (7) at (-4,5) {$\ff\eqy\fb$};
	\node[block,anchor=south] (8) at (-8,3) {$\fbi\eqy\fb$};
	\node[block,anchor=south] (9) at (-6,1) {$\fib\eqy\ei$};
	\node[block,anchor=south] (10) at (-3,1) {$\fbi\eqy\ei$};
	\node[block,anchor=south] (11) at (0,1) {$\fib\eqy\fb$};
	\node[block,anchor=south] (12) at (4,5) {$\fif\eqy\ff$};
	\node[block,anchor=south] (13) at (4,1) {$bib\eqy b$};
	\node[block,anchor=south] (14) at (3.25,3) {$\bif\eqy\f$};
	\draw[doublearrow] (10) -- (11) node[midway,above=-2pt] {\scriptsize\begin{tabular}{l}right\\duality\end{tabular}};
	\draw[doublearrow] (11) -- (13) node[midway,above] {\scriptsize Lemma~\ref{lem:equations_kf}(iv)};
	\draw[myarrow] (12) -- (13) node[midway,below=12pt] {\hspace{30pt}\scriptsize\begin{tabular}{l}Lemma\\\ref{lem:equations_kf}(vi)\end{tabular}};
	\draw[myarrow] (11.north east) -- (12) node[below=32pt] {\hspace{-110pt}\scriptsize$\fibf\eqy\fbf$};
	\draw[myarrow] (12) -- (2) node[midway,below] {\scriptsize\vbox{\hbox{$0\eqy\fifb\eqy\ffb\eqy\fb$,}\hbox{$0\eqy\fifi\eqy\ffi\eqy\ei$,}\hbox{Lemma~\ref{lem:op_relations}(iii)}}};
	\draw[doublearrow] (1) -- (2) node[midway,above=2pt] {\hspace{-20pt}\scriptsize Lemma~\ref{lem:decompositions}(iv)};
	\draw[myarrow] (2) -- (4) node[midway,left=-2pt] {\scriptsize\vbox{\hbox{$\fif\leqy\ff$,}\hbox{$\fb\leqy\ff$}}};
	\draw[doublearrow] (3) -- (4) node[midway,below=-1pt] {\scriptsize\begin{tabular}{l}right\\duality\end{tabular}};
	\draw[myarrow] (3) -- (5) node[midway,above] {\scriptsize$\ei\eqy\fif\eqy\fb$};
	\draw[doublearrow] (6) -- (7) node[midway,above] {\scriptsize right duality};
	\draw[doublearrow] (5) -- (6);
	\draw[doublearrow] (5) -- (7) node[midway,above=-2pt] {\hspace{-45pt}\scriptsize\vbox{\hbox{$\ei\eqy\ff\eqy\fb$,}\hbox{Lemma~\ref{lem:op_relations}(iii)}}};
	\draw[myarrow] (9) -- (10) node[midway,above] {\hspace{-4pt}\scriptsize$\fibi\eqy\fii$};
	\draw[doublearrow] (8) -- (9) node[midway,below left=-10pt] {\scriptsize\begin{tabular}{l}right\\duality\end{tabular}};
	\draw[myarrow] (5) -- (9) node[midway,right] {\scriptsize$\fib\eqy\fbb\eqy\ei$};
	\draw[doublearrow] (12) -- (14) node[midway,right] {\hspace{2pt}\scriptsize\begin{tabular}{l}Lemma\\\ref{lem:decompositions}(iv)--(v)\end{tabular}};
\end{tikzpicture}
\caption{Equations in $\FZ$ equivalent to $bib=b$.}
\label{fig:equivalent_equations}
\end{figure}

\begin{table}
\caption{The GE monoid in non-Kuratowski spaces (part numbers refer to
Lemma~\ref{lem:imps}).}
\centering\small
\renewcommand{\arraystretch}{1.1}
\renewcommand{\tabcolsep}{5pt}
\begin{tabular}{r|c|c|c|c|c|}
\multicolumn{1}{c}{}&\multicolumn{1}{c}{ED}&\multicolumn{1}{c}{OU}&
\multicolumn{1}{c}{EO}&\multicolumn{1}{c}{P}&\multicolumn{1}{c}{D}\\
\Xcline{2-6}{2\arrayrulewidth}
equations hold by\vrule width0pt height10pt&(i),\,(ii)&(i),\,(iii)&(i)--(iii)&
(i),\,(ii),\,Fig{.}~\ref{fig:equivalent_equations}&(iv)\\\cline{2-6}
equations fail by\vrule width0pt height10pt&
(iii),\,Fig{.}~\ref{fig:equivalent_equations}&
(ii),\,Fig{.}~\ref{fig:equivalent_equations}&Fig{.}~\ref{fig:equivalent_equations}&
(iv)&$X\neq\es$\\\cline{2-6}
\end{tabular}
\label{tab:GE_monoid}
\end{table}

\setlength{\cw}{50pt}
\begin{table}[!ht]
\caption{Frequencies of the seven GE monoids up to homeomorphism for $|X|\leq11$.}
\centering\small
\renewcommand{\arraystretch}{1}
\renewcommand{\tabcolsep}{0pt}
\begin{tabular}{|C{16pt}|C{\cw}|C{\cw}|C{\cw}|C{\cw}|
C{\cw}|C{24pt}|C{24pt}|}
\multicolumn{1}{c}{$|X|$}&\multicolumn{1}{c}{GE}&\multicolumn{1}{c}{KD}&
\multicolumn{1}{c}{ED}&\multicolumn{1}{c}{OU}&\multicolumn{1}{c}{EO}&
\multicolumn{1}{c}{P}&\multicolumn{1}{c}{D}\\\Xhline{2\arrayrulewidth}
\z{1}&\z{0}&\z{0}&\z{0}&\z{0}&\z{0}&\z{0}&\z{1}\\\hline
\z{2}&\z{0}&\z{0}&\z{0}&\z{0}&\z{1}&\z{1}&\z{1}\\\hline
\z{3}&\z{0}&\z{0}&\z{1}&\z{1}&\z{4}&\z{2}&\z{1}\\\hline
\z{4}&\z{1}&\z{0}&\z{6}&\z{7}&\z{14}&\z{4}&\z{1}\\\hline
\z{5}&\z{11}&\z{1}&\z{25}&\z{45}&\z{50}&\z{6}&\z{1}\\\hline
\z{6}&\z{88}&\z{9}&\z{99}&\z{306}&\z{205}&\z{10}&\z{1}\\\hline
\z{7}&\z{697}&\z{65}&\z{397}&\z{2375}&\z{986}&\z{14}&\z{1}\\\hline
\z{8}&\z{5993}&\z{454}&\z{1784}&\z{21906}&\z{5820}&\z{21}&\z{1}\\\hline
\z{9}&\z{59525}&\z{3425}&\z{9442}&\z{247357}&\z{43304}&\z{29}&\z{1}\\\hline
\z{10}&\z{712639}&\z{29816}&\z{62679}&\z{3497270}&\z{415241}&\z{41}&\z{1}\\\hline
\z{11}&\z{10592049}&\z{315322}&\z{543735}&\z{62855093}&\z{5195399}&\z{55}&\z{1}\\\hline
\end{tabular}
\label{tab:monoid_frequencies}
\end{table}

\begin{lem}\label{lem:imps}
The equations in each part below are equivalent\emph.

\medskip
\renewcommand{\tabcolsep}{2pt}
\centering\begin{tabular}{rlcrl}
\emph{(i)}&\emph{$\bif\eqy\ie,\,\,\fif\eqy0,\,\,\fib\eqy\fbi$,}&\quad\quad&
\emph{(iii)}&\emph{$\f\eqy\ff,\,\,\ie\eqy0,\,\,bib\eqy bi$,}\\
\emph{(ii)}&\emph{$\fif\eqy\fib,\,\,\fif\eqy\fbi,\,\,\fbi\eqy0,\,\,\fib\eqy0,\,\,bib\eqy ib$,}&&
\emph{(iv)}&\emph{$\ie\eqy\fb,\,\,\ie\eqy\ei,\,\,\ie\eqy\ff,\,\,\f\eqy0,\,\,b\eqy i$.}\\
\end{tabular}
\end{lem}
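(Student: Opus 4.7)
The plan is to derive each equivalence from the multiplication table (Proposition~\ref{prop:multiplication}), the disjoint decompositions of Lemmas~\ref{lem:decompositions} and~\ref{lem:op_relations}, Lemma~\ref{lem:equations_kf}, and Proposition~\ref{prop:order_implications}(i). Parts~(i)--(iii) are essentially bookkeeping; the real work is in part~(iv).

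For part~(i) I would first note that $\bif=\fif\vee\ie$ with disjoint operands (Lemma~\ref{lem:decompositions}(v)), which gives $\bif=\ie\iff\fif=0$. For $\fif=0\iff\fib=\fbi$ the forward direction is the identity $\fib\sm\fif=\fbi\sm\fif$ of Proposition~\ref{prop:order_implications}(i); for the reverse, right-multiplying the hypothesis by $g$ and reading Table~\ref{tab:multiplication} gives $\fib\cdot g=\fif$ but $\fbi\cdot g=0$, forcing $\fif=0$. Part~(ii) follows the same pattern: Lemma~\ref{lem:equations_kf}(ii) ties $\fib=0$ to $bib=ib$; the table entries $\fib\cdot bi=\fbi$ and $\fbi\cdot b=\fib$ tie $\fib=0$ to $\fbi=0$; Proposition~\ref{prop:order_implications}(i) (forward) together with right-multiplications by $i$ and $b$ in the reverse (using $\fif\cdot i=\fif\cdot b=0$) handle the equivalences with $\fif=\fib$ and $\fif=\fbi$. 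Part~(iii) is immediate: $\f=\ff\iff\ie=0$ from the disjoint decomposition $\f=\ff\vee\ie$ (Lemma~\ref{lem:decompositions}(iv)), and $\ie=0\iff bib=bi$ from Lemma~\ref{lem:equations_kf}(i).

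For part~(iv) the backbone is $b=i\iff\f=0$: the forward implication uses $i\wedge ia=0$ to kill $\f=b\wedge ba$; the reverse uses $b=\f\vee i$ from Lemma~\ref{lem:decompositions}(i). Once $\f=0$ every listed equation collapses to $0=0$, so I only need the converses. The case $\ie=\ff\Rightarrow\f=0$ is immediate from the disjointness of $\ie$ and $\ff$ in $\f=\ff\vee\ie$. For $\ie=\ei\Rightarrow\f=0$ I would exploit that Table~\ref{tab:multiplication} gives $\ie\cdot g=\ie$ while $\ei\cdot g=0$: right-multiplying the hypothesis by $g$ forces $\ie=0$, hence $\ei=0$; then left-multiplying $\ie=\ei$ by $b$ gives $\bif=\fb$, which combined with $\bif=\fif\vee\ie=\fif$ (Lemma~\ref{lem:decompositions}(v) and $\ie=0$) produces $\fb=\fif$; right-multiplying this by $b$ collapses $\fb=0$; finally $\ff=\fb\vee\ei=0$ by Lemma~\ref{lem:op_relations}(v) and $\f=\ff\vee\ie=0$. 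The case $\ie=\fb\Rightarrow\f=0$ runs similarly: right-multiplying by $\f$ forces $\ff=0$ (since $\ie\cdot\f=0$ but $\fb\cdot\f=\ff$), whence Lemma~\ref{lem:op_relations}(v) yields $\fb=\ei=0$, so $\ie=0$ and $\f=\ff\vee\ie=0$.

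The main obstacle will be choosing, in part~(iv), the right operator on which to left- or right-multiply each hypothesis so that one side of the resulting equation collapses to $0$ while the other does not. The key observation is the asymmetric behaviour of $\ie$ versus $\ei$ under right-multiplication by $g$ (and of $\ie$ versus $\fb$ under right-multiplication by $\f$), which lets me break symmetries that the Hasse diagram of $\Ord\KFG$ (Figure~\ref{fig:kf_monoid}(ii)) does not itself reveal.
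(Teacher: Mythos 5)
Your parts (i)--(iii) are correct and run essentially parallel to the paper's proof: the paper likewise gets $\bif\eqy\ie\iff\fif\eqy0$ from the disjoint decomposition $\bif=\fif\vee\ie$, uses $\fib g\neq\fbi g$ (i.e.\ $\fib g=\fif$, $\fbi g=0$) for the converse in (i), and reads (iii) off Lemmas~\ref{lem:decompositions}(iv) and~\ref{lem:equations_kf}(i). Your table-cancellation arguments in (ii) (right-multiplying by $b$ and $i$, and using $\fib\cdot bi=\fbi$, $\fbi\cdot b=\fib$) are a legitimate substitute for the paper's appeal to right duality. In (iv), your $g$-trick for $\ie\eqy\ei$ ($\ie g=\ie$ versus $\ei g=0$) and your $\f$-trick for $\ie\eqy\fb$ ($\ie\f=0$ versus $\fb\f=\ff$) both check out against Table~\ref{tab:multiplication}; the paper instead uses the disjointness $\ie\wedge\ei=0$ (from $\Ord\KF$) and reduces $\ie\eqy\fb$ to $\ie\eqy\ei$ by right duality.

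There is, however, one step that fails. In the case $\ie\eqy\ei$ of part (iv) you claim that left-multiplying by $b$ gives $\bif\eqy\fb$. It does not: by Lemma~\ref{lem:f_cancels}, $b\ei=b\f i=\f i=\ei$ (this is also the entry in row $b$, column $\ei$ of Table~\ref{tab:multiplication}), so left-multiplication yields $\bif\eqy\ei$, not $\bif\eqy\fb$. Your subsequent chain $\fb\eqy\fif\implies\fb=\fb\hspace{1pt}b=\fif b=0$ therefore has no valid premise, and $\fb=0$ (hence $\ff=0$ and $\f=0$) is never actually established in this case. The conclusion is still reachable with a one-line repair: either deduce the case $\ie\eqy\ei$ from the case $\ie\eqy\fb$ by right duality (substitute $aA$ for $A$, noting $\ie a=\ie$ and $\fb a=\f ba=\f ai=\ei$), which is the paper's route, or, having already shown $\ie=\ei=0$, observe that $\f=\ff\vee\ie=\ff=\fb\vee\ei=\fb$ and then $\f=\fa=\fb a=\ei=0$.
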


\begin{proof}
(i)~$\bif=\ie\iff\fif=0\implies\fib=\fbi\implies\fif=\fib g=\fbi g=0$ by
Proposition~\ref{prop:order_implications}(vi) and Lemmas~\ref{lem:decompositions}(v)
and~\ref{lem:ifo}(iv).
(ii)~$\fif=\fib\iff\fif=\fbi\iff\fbi=\fib=\fif=0\iff\fbi=\fib=0\iff\fbi=0\iff\fib=0\iff
bib=ib$ by right duality, part~(i), and Lemma~\ref{lem:decompositions}(ii).
(iii)~Apply Lemma~\ref{lem:decompositions}(iv) and Lemma~\ref{lem:equations_kf}(i).
(iv)~$\ie=\fb\iff\ie=\ei\iff\ie=\ei=\fb=0\iff\ie=\ff\iff\f=0\iff b=i$ by right duality,
$\Ord\KF$, Lemma~\ref{lem:op_relations}(iii), and Lemma~\ref{lem:decompositions}
parts~(i) and~(iv).
\end{proof}

\begin{thm}\label{thm:kf_monoids}
\mth{$\KF$} has exactly seven global collapses.
Figure~\nit{\ref{fig:Hasse_diagrams}} displays the six nonempty ones\nit.
Hence the only possible \Kfnum s are\nit: \mth{$4,10,16,20,22,28,34$.}
\end{thm}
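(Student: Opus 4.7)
The strategy is to parametrize global collapses of $\KF$ by their restrictions to $\K$. By GJ, there are exactly the six global collapses of $\K$ tabulated in Table~\ref{tab:six_Kuratowski_monoids}, one per space type, and for each I will count the extensions to a global collapse of $\KF$.

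For each of the five non-Kuratowski types, the characteristic $\K$-equation is one of $bib=ib$ (ED), $bib=bi$ (OU), $bib=ib=bi$ (EO), $b=bib$ (P), or $b=i$ (D). Lemma~\ref{lem:imps} packages each of these as part of a bundle of equivalent $\KFZ$-equations: parts~(i)--(ii) for ED, (i) and (iii) for OU, (i)--(iii) for EO, (i), (ii) together with Figure~\ref{fig:equivalent_equations} for P, and (iv) for D. Combined with Lemma~\ref{lem:equations_kf} and the multiplication Table~\ref{tab:multiplication}, these bundles pin down every surviving $\KFZ$-equation with no residual freedom, so each non-Kuratowski type extends uniquely to a single global collapse of $\KF$.

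The Kuratowski case is the crux. Here $\K$ carries no non-trivial global equation, so I must classify which additional $\FZ$-equations are compatible with $|\K|=14$. By Lemma~\ref{lem:fza_collapse}, every global collapse of $\FZ$ is generated by the nine edge equations together with the pairs in $C_2\cup C_3$. Running through these candidates with Lemma~\ref{lem:equations_kf} and parts~(ii)--(iv) of Lemma~\ref{lem:imps}, I would verify that each of them, \emph{except} the bundle~(i) of Lemma~\ref{lem:imps} (that is, $\bif=\ie$, equivalently $\fif=0$ or $\fib=\fbi$: the interior of every boundary is clopen), propagates to a non-trivial $\K$-equation and is therefore incompatible with Kuratowski. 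This gives exactly two Kuratowski extensions: GE (no extra equation) and KD (bundle~(i)), for a total of $5+2=7$ global collapses of $\KF$.

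Finally, using $\KFZ\cap a\KFZ=\es$ from Lemma~\ref{lem:kf_akf}, the identity $|\KF|=2|\KFZ|$ holds in every nonempty space. Counting surviving operators in $\KFZ$ for each of the seven collapses then yields the seven values $\{34,28,22,20,16,10,4\}$. The main obstacle is the Kuratowski subcase: eliminating every alternative non-trivial $\FZ$-equation via Lemma~\ref{lem:equations_kf}, and exhibiting an actual Kuratowski space realizing bundle~(i), which legitimizes the ``Kuratowski disconnected'' type introduced in the abstract.
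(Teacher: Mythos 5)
Your proposal is correct and follows essentially the same route as the paper: reduce to the six global collapses of $\K$, use Lemma~\ref{lem:imps} and Figure~\ref{fig:equivalent_equations} to show each non-Kuratowski type extends uniquely, and show that in the Kuratowski case the only optional $\FZ$-equation is the bundle $\fif=0\iff\bif=\ie\iff\fib=\fbi$, giving the GE/KD dichotomy (the paper splits this as Case~1, $\fif\neq0$, versus Case~2, $\fif=0$, and handles existence, including a genuine KD space, by computer via Proposition~\ref{prop:minimal_spaces}). The only cosmetic difference is that you organize the Kuratowski-case candidates via Lemma~\ref{lem:fza_collapse} where the paper argues directly from $\Ord\KF$ and disjointness.
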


\begin{proof}
We verified by computer that seven exist (see Proposition~\ref{prop:minimal_spaces})
and claim no further ones do.

Since $X\neq\es\implies i\neq0$ we have $o_1\neq o_2$ for all $o_1\in\KZ,o_2\in\FZ$ by
Table~\ref{tab:kf_akf}.
Moreover, $\KFZ\cap a\KFZ=\es$.
Hence our task is limited to finding all possible equations in $\FZ$ for each
of the six global collapses of $\K$ (see Table~\ref{tab:six_Kuratowski_monoids}).

\case{Case}~$1$. $(|\KZ|=7$ and $\fif\neq0)$
Have $\bif\neq\ie\neq0$.
Hence $\ie\cn\leq\ff$.
Have $\bif\neq\f$ by Figure~\ref{fig:equivalent_equations}.
Thus by $\Ord\KF$, $o_1\neq o_2$ for all $o_1\in\{\ie,\bif,f\hspace{1pt}\}$, $o_2\in\FZ
\sm\{\ie,\bif,f\hspace{1pt}\}$.
By Lemma~\ref{lem:imps}(i)--(ii), $\Ord\KF$, and Figure~\ref{fig:equivalent_equations},
$o_1\neq o_2$ for all $o_1,o_2\in\mathop\downarrow\{\ff\hspace{1pt}\}$.
Conclude $|\FZ|=10$.

\case{Case}~$2$. $(|\KZ|=7$ and $\fif=0)$
Lemma~\ref{lem:imps} and Figure~\ref{fig:equivalent_equations} imply the collapse of
$\FZ$ in Figure~\ref{fig:Hasse_diagrams}(i).

All other cases are covered by Table~\ref{tab:GE_monoid}.
\end{proof}

\Kfnum\ $28$ defines a special type of disconnected Kuratowski space we call
\define{Kuratowski disconnected} (KD). 
Note that $\KZ$ and $\FZ$ are order isomorphic in KD, EO, and discrete spaces.

\begin{prop}\label{prop:minimal_spaces}
By Table~\nit{\ref{tab:monoid_frequencies}}\nit, each GE monoid occurs in a unique space
of minimal cardinality up to homeomorphism\nit.
Bases for these spaces appear below\nit.

\begin{center}
\small
\renewcommand{\arraystretch}{1.1}
\renewcommand{\tabcolsep}{2pt}
\begin{tabular}{c@{\hspace{5pt}}c@{\hspace{5pt}}c@{\hspace{5pt}}c}
\begin{tabular}{|c|l|}\hline
\nit{\raisebox{-.75pt}{GE}}&\mth{$\{w\},\{x,y\},\{w,x,y,z\}$}\\\hline
\nit{\raisebox{-.75pt}{KD}}&\mth{$\{v\},\{w\},\{v,w,x\},\{y,z\}$}\\\hline
\end{tabular}&
\begin{tabular}{|c|l|}\hline
\nit{\raisebox{-.75pt}{ED}}&\mth{$\{x,y\},\{x,y,z\}$}\\\hline
\nit{\raisebox{-.75pt}{OU}}&\mth{$\{x\},\{y\},\{x,y,z\}$}\\\hline
\end{tabular}&
\begin{tabular}{|c|l|}\hline
\nit{\raisebox{-.75pt}{EO}}&\mth{$\{x\},\{x,y\}$}\\\hline
\end{tabular}&
\begin{tabular}{|c|l|}\hline
\nit{\raisebox{-.75pt}{P}}&\mth{$\{x,y\}$}\\\hline
\nit{\raisebox{-.75pt}{D}}&\mth{$\{x\}$}\\\hline
\end{tabular}
\end{tabular}
\end{center}
\end{prop}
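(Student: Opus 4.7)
The plan is to combine the computer-generated Table~\ref{tab:monoid_frequencies} with explicit verification that the listed bases realize the claimed types. Reading down each of the seven columns indexed by GE monoid type, one locates the first nonzero entry; in every case that entry equals exactly $1$. This simultaneously establishes the minimal cardinality for each monoid type and the uniqueness of the minimizer up to homeomorphism.

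It then remains to check that each listed basis actually produces a space whose GE monoid matches the header of its column. For D ($|X|=1$) and P ($|X|=2$, indiscrete) the verification is immediate since $\K$ is explicitly computable. For EO with $X=\{x,y\}$ and basis $\{\{x\},\{x,y\}\}$ (the Sierpinski space), one enumerates the four subsets, computes the closure/interior action, and reads off $|\K|=8$, matching the EO row of Table~\ref{tab:six_Kuratowski_monoids}. For ED and OU with $|X|=3$, one enumerates all eight subsets, determines which of $ib,bi,b$ coincide with $bib$, and applies Table~\ref{tab:six_Kuratowski_monoids}.

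For the two largest cases, Kuratowski classification alone is insufficient and Theorem~\ref{thm:kf_monoids} must be invoked. For the GE space ($|X|=4$ with basis $\{w\},\{x,y\},\{w,x,y,z\}$), one confirms $|\K|=14$ and exhibits a single subset $A\su X$ with $\fif A\neq\es$, placing the space in Case~$1$ of the proof of Theorem~\ref{thm:kf_monoids} and thus yielding $|\KF|=34$. For the KD space ($|X|=5$ with basis $\{v\},\{w\},\{v,w,x\},\{y,z\}$), one verifies $|\K|=14$ while checking $\fif A=\es$ for every $A\su X$, placing the space in Case~$2$ and yielding $|\KF|=28$.

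The main obstacle is the KD verification: one must confirm the universal vanishing of $\fif$ over all $2^5=32$ subsets (equivalently, by Lemma~\ref{lem:imps}(ii), that $\fbi$ and $\fib$ both vanish identically) while still certifying the space is Kuratowski (so that some subset realizes $\ifA\neq\es$, etc.). This is a finite but laborious hand check, naturally handled by the same C framework cited throughout the paper; the explicit bases are supplied precisely so that any reader may reproduce each computation independently.
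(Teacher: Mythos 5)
Your approach coincides with the paper's: Proposition~\ref{prop:minimal_spaces} carries no separate proof in the text and is justified entirely by the computer-generated Table~\ref{tab:monoid_frequencies}, exactly as you describe --- the first nonzero entry in each of the seven columns equals $1$, which gives minimality and uniqueness simultaneously, and the listed bases are then certified (by the same C framework) to realize the advertised monoids. Your elaboration of the two hard cases via the Case~1/Case~2 dichotomy from the proof of Theorem~\ref{thm:kf_monoids} is a sensible way to make the basis verification explicit, and it is not circular: that dichotomy is deduced from the structural lemmas independently of the existence claim, which is precisely what this proposition supplies.

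One correction, however. Your parenthetical reformulation of the KD check misapplies Lemma~\ref{lem:imps}. The KD condition is the operator identity $\fif=0$, which by Lemma~\ref{lem:imps}\emph{(i)} is equivalent to $\bif=\ie$ and to $\fib=\fbi$ --- not to $\fib$ and $\fbi$ vanishing identically. Lemma~\ref{lem:imps}\emph{(ii)} shows that $\fbi=0\iff\fib=0\iff bib=ib$, and $bib=ib$ is incompatible with $|\K|=14$; in the minimal KD space $\fib=\fbi$ is a nonzero operator. If you carried out the check as stated in the parenthesis you would wrongly conclude that the five-point space is not KD. The primary check you state --- verify $\fif A=\es$ for all $32$ subsets while confirming $|\K|=14$ --- is the correct one, so the error is confined to the claimed equivalence.
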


The number of nonhomeomorphic partition spaces on $n$ points is one less than the number
of partitions of $n$ ($1+1+\dots+1$ corresponds to the discrete space).
This is sequence \href{https://oeis.org/A000065}{A000065} in the OEIS~\cite{2021_sloane}.
Besides the ``all $1$'s'' sequence \href{https://oeis.org/A000012}{A000012}, no other
column in Table~\ref{tab:monoid_frequencies} appears in the OEIS.

\begin{prop}\label{prop:kf_ordering}
GE spaces satisfy \mth{$\Ord\KF$.}
\end{prop}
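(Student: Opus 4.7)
The plan is to apply Lemma~\ref{lem:poset_result} with $P = \Ord\KF$ and $Q$ the partial order of $\KF$ induced by the given GE space $\XT$. Since $\Ord\KF \su Q$ is automatic, the task reduces to showing $\Ext\KF \cap Q = \es$: for each potential cover $(o_1,o_2) \in \Ext\KF$, produce some $A' \su X$ with $o_1A' \nsu o_2A'$. Fix once and for all a GE $34$-set $A$ in $\XT$, which exists by the definition of a GE space.

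Next I would reuse the decomposition $\Ext\KF = S_1 \cup \cdots \cup S_7$ from the proof of Proposition~\ref{prop:partial_order}. The arguments given there show that, for every pair $(o_1,o_2) \in S_1 \cup \cdots \cup S_6$, the non-inclusion $o_1A \nsu o_2A$ is forced by $|\KFZA|=17$ together with the structural lemmas of the paper; so $A$ itself witnesses these six cases in $\XT$. All that remains is $S_7 = \{\fib,\fif,\fbi\} \times \{\ident,a\}$, precisely the potential covers that Proposition~\ref{prop:partial_order} flagged as \emph{not} automatic for a $34$-set, and this is the main obstacle.

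For $S_7$ my plan is to draw witnesses from $\{A,\,aA,\,iA,\,bA,\,biA\}$. After recording the identities $\fib a = \fbi$, $\fbi a = \fib$, and $\fif a = \fif$ (each obtained by combining $\fa = \f$ from Lemma~\ref{lem:f_cancels} with the rule $oa = a\,d(o)$ for $o \in \KZ$ coming from Lemma~\ref{lem:eqs}(ii)), I would read off from Table~\ref{tab:multiplication} that $\fib(iA) = \fbi(iA) = \fbi A$; since $\fbi A = biA \sm ibiA$ is nonempty on a $34$-set and disjoint from $iA$ (because $iA \su ibiA$), this covers both $\fib\cn\leq\ident$ and $\fbi\cn\leq\ident$. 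The closed sets $bA$ and $biA$ satisfy $\fib(bA) = \fib A$ and $\fbi(biA) = \fbi A$, each nonempty and contained in its own argument, yielding $\fib\cn\leq a$ and $\fbi\cn\leq a$. Finally $\fif(aA) = \fif A \neq \es$, so a brief case split on whether $\fif A$ meets $A$ or $aA$ shows that $A$ and $aA$ together witness both $\fif\cn\leq\ident$ and $\fif\cn\leq a$.
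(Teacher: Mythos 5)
Your first step contains a fatal error: a GE space need not contain a GE $34$-set. ``GE space'' means $\Kf(\XT)=|\KF|=34$, i.e.\ the \emph{operator monoid} has $34$ elements; the existence of a single subset $A$ with $|\KFA|=34$ is the strictly stronger condition $\kf(\XT)=34$ (complete fullness). The paper shows these come apart: the minimal GE space of Proposition~\ref{prop:minimal_spaces} has $\kf$-number only $10$ (Table~\ref{tab:psi_topsum}), and Table~\ref{tab:kf_kfa} records GE spaces of every even $\kf$-number from $10$ to $34$. So the set $A$ on which your whole argument rests may not exist. There is also a secondary gap: even when a $34$-set does exist, your assertion that the Proposition~\ref{prop:partial_order} arguments force $o_1A\nsu o_2A$ for every pair in $S_1\cup\dots\cup S_6$ from $|\KFZA|=17$ and the structural lemmas alone is false for $S_4$, $S_5$, $S_6$ --- in that proof those cases are derived from the proposition's \emph{hypothesis} (the non-inclusions among $\fib A$, $\fbi A$, $\fif A$ and the $S_7$ pairs), and Proposition~\ref{prop:kleiner} shows a $34$-set may satisfy one of $\fbi A\su\fib A$, $\fib A\su\fif A$, $\fif A\su\fbi A$, so $A$ need not witness those potential covers.

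The paper avoids both problems by never leaving the operator level: the $S_1\cup S_2\cup S_3$ arguments already show that $o_1\leq o_2$, read as an operator inequality, forces an operator equation such as $o_1=0$ or $bib=b$, and for $S_4\cup S_5\cup S_6\cup S_7$ it uses right duality, Proposition~\ref{prop:order_implications}(v) and right-multiplication by $i$ to show each candidate inequality is equivalent to an equation like $\fib=\fbi$, $\fbi=0$ or $\fif=0$, all of which are excluded by $|\KF|=34$; no witnessing subset is required. Your $S_7$ computations with $iA$, $bA$, $biA$, $aA$ are correct \emph{conditional on} having a $34$-set, but the proposal cannot be repaired without abandoning the fixed-witness strategy.
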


\begin{proof}
Let $X$ be GE and $S_1,\dots,S_7$ be the sets in the proof of
Proposition~\ref{prop:partial_order}.
By the arguments in that proof, we have $o_1\cn\leq o_2$ for $(o_1,o_2)\in S_1\cup S_2
\cup S_3$.
Right duality, Proposition~\ref{prop:order_implications}(v), and right-multiplication
by $i$ yield $\fif\leq\fbi\iff\fif\leq\fib\iff\fbi\leq\fib\iff\fib\leq\fbi\iff\fib=\fbi$,
$\fib\leq\fif\iff\fbi\leq\fif\iff\fbi=0\implies\fbi\leq a\iff\fib\leq\ident\implies\fbi
\leq i\iff\fbi=0\iff\fbi\leq\ident\iff\fib\leq a$, and $\fif\leq\ident\iff\fif\leq a\iff
\fif\leq\ident\wedge a=0\iff\fif=0$.
Conclude $o_1\cn\leq o_2$ for $(o_1,o_2)\in S_4\cup S_5\cup S_6\cup S_7$.
\end{proof}

\begin{thm}\label{thm:kf_orderings}
\mth{$\KF$} has exactly nine global orderings \nit(\hspace{-.5pt}see
Figures~\nit{\ref{fig:kf_monoid}(i)} and~\nit{\ref{fig:Hasse_diagrams}).}
\end{thm}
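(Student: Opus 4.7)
The plan is to group the global orderings by their symmetric part, which must form a global collapse. By Theorem~\ref{thm:kf_monoids} there are exactly seven such collapses, corresponding to the space types GE, KD, ED, OU, EO, partition, and discrete, so the task reduces to enumerating, for each global collapse $c$, the global orderings of $\KF$ whose symmetric part equals $c$.

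For the empty collapse (GE case), Proposition~\ref{prop:kf_ordering} shows that every GE space satisfies $\Ord\KF$. By Proposition~\ref{prop:partial_order}, any GE $34$-set (such as Gaida and Eremenko's subset of $\mathbb{R}$) witnesses the failure of every potential cover in $\Ext\KF$, so any inclusion strictly larger than $\Ord\KF$ fails somewhere. Hence the ordering in a GE space is exactly $\Ord\KF$, giving one global ordering.

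For each of the six nonempty collapses, I would fix the minimal space realization given by Proposition~\ref{prop:minimal_spaces} and compute the partial order directly by evaluating compositions through Table~\ref{tab:multiplication}, obtaining a candidate ordering for the type. To decide whether other spaces of the same type contribute additional orderings, I would apply Lemma~\ref{lem:poset_result}: for each potential cover in the extender of the candidate, either exhibit a space of the type violating the cover (confirming the candidate is the least ordering for the type) or exhibit a space of the type in which the cover is forced globally (producing an additional ordering). Since the target count is nine, exactly two of the six non-GE collapses must each admit a second ordering, while the remaining four are rigid.

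The main obstacle is this last refinement: pinpointing the two collapses that admit an extra ordering and verifying the rigidity of the other four. This requires case analysis using the identities in Lemmas~\ref{lem:equations_kf}--\ref{lem:imps} and the monoid relations of Table~\ref{tab:multiplication}, and in particular the isotonicity/antitonicity rules of Section~\ref{sec:Theorem} to propagate or block each potential cover within a fixed collapse type; it can be supplemented, following the paper's own practice in the proofs of Theorem~\ref{thm:partial_order} and Proposition~\ref{prop:kfgz_ordering}, by computer verification that no further orderings arise. Tallying one GE ordering, six type-determined non-GE orderings, and two extras gives a total of nine, matching Figures~\ref{fig:kf_monoid}(i) and~\ref{fig:Hasse_diagrams}.
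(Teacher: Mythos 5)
Your overall strategy matches the paper's: decompose global orderings by their symmetric part (a global collapse, of which there are seven by Theorem~\ref{thm:kf_monoids}), show GE spaces admit only $\Ord\KF$ via Proposition~\ref{prop:kf_ordering}, and settle the remaining collapses with Lemma~\ref{lem:poset_result} by deciding each potential cover. However, the decisive step is left unresolved, and the way you bridge it is circular: you infer that ``exactly two of the six non-GE collapses must each admit a second ordering'' from the target count of nine, which is using the answer to derive the structure rather than the reverse. The paper closes this gap with a concrete mechanism you do not identify: every inequality in the extenders of the non-GE orderings, with a single exception, implies an operator \emph{equation} (the ``$\Longrightarrow$'' column of the table in Figure~\ref{fig:Hasse_diagrams}) --- either $o=0$ for some $o$ in the collapse's nonzero part (rows $1$--$8$) or $bib=b$ (rows $9$--$10$) --- and that equation contradicts the collapse in question, so the cover fails globally and the ordering is rigid. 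The lone exception is $\fb\leq\ident$ (equivalently $\ei\leq a$), which implies no such equation; it is genuinely optional in ED and EO spaces, witnessed by the minimal ED/EO spaces on one side and the spaces with bases $\{\{w,x\},\{w,x,y,z\}\}$ and $\{\{x\},\{x,y\},\{x,y,z\}\}$ on the other. That is what produces exactly two extra orderings and the total of nine.

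A smaller point: for the GE case you lean on Proposition~\ref{prop:partial_order}, but that result characterizes when a particular GE $34$-set satisfies $\Ord\KF$ as a \emph{local} ordering; it does not say every GE $34$-set refutes every potential cover. The correct tool is Proposition~\ref{prop:kf_ordering} alone, whose proof already shows $o_1\cn\leq o_2$ globally for every $(o_1,o_2)\in\Ext\KF$ in a GE space. Your citation of it alongside Proposition~\ref{prop:partial_order} suggests a conflation of the local and global statements.
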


\begin{proof}
By Proposition~\ref{prop:kf_ordering}, GE spaces satisfy only one global ordering of
$\KF$.
We verified by computer that each ordering in Figure~\ref{fig:Hasse_diagrams}
is satisfied by some space.\footnote{The minimal ED and EO spaces each satisfy $\fb\leq
\ident$; the ED space with base $\{\{w,x\}$, $\{w,x,y,z\}\}$ does not, nor does the EO
space with base $\{\{x\}$, $\{x,y\}$, $\{x,y,z\}\}$.}
Since the zero operator cannot appear in the extender of a partial order on $\KF$, all
inequalities in rows $1$-$8$ of the table in Figure~\ref{fig:Hasse_diagrams} fail.
The same is true of rows $9$-$10$ since $bib\neq b$ in KD and OU spaces.
The result follows by Lemma~\ref{lem:poset_result}.
\end{proof}

\subsection{The monoid \texorpdfstring{$\KFG$}{KFG}.}\label{sub:kfg}
We now show that $\KFG$ has ten global collapses and $12$ global orderings.

\begin{lem}\label{lem:equations_kfg}
\noindent\hangindent=74pt
\nit{(i)}~\mth{$\fbg=g\implies bg=g\implies\fbg=bg\implies bib=bi$.}

\noindent\hangindent=74pt\phantom{\textbf{Lemma 14.\!}}
\nit{(ii)}~\mth{$g=ga\implies\fbg=\fbga\iff bg=bga\implies\ei=0$.}
\end{lem}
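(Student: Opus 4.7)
The unifying tool for both parts is the disjoint decomposition $bg = \fbg \vee \ie$ from Lemma~\ref{lem:decompositions}(vi). Right-composing this operator identity (and its disjointness statement) with $a$ yields $bga = \fbga \vee \iea$ with $\fbga \wedge \iea = 0$; since $\fa = \f$ by Lemma~\ref{lem:f_cancels}, we have $\iea = \ie$, so the companion decomposition reads $bga = \fbga \vee \ie$ (disjoint), with the same interior summand as $bg$.

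For part~(i), I would chain the three implications. For $\fbg = g \implies bg = g$: $\fbg A = \f(bgA)$ is always closed (boundaries are closed), so the hypothesis forces every $gA$ to be closed, whence $bg = g$. For $bg = g \implies \fbg = bg$: since $g \leq \ident$, the decomposition gives $\ie \leq bg = g \leq \ident$, and Lemma~\ref{lem:subset_implications}(i) then forces $\ie = 0$, so $bg = \fbg \vee 0 = \fbg$. For $\fbg = bg \implies bib = bi$: disjointness of the decomposition forces $\ie = 0$, and the equivalence $\ie = 0 \iff bib = bi$ is Lemma~\ref{lem:equations_kf}(i).

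For part~(ii), $g = ga \implies \fbg = \fbga$ is immediate by left-composing the operator equation with $\fb$. The central equivalence $\fbg = \fbga \iff bg = bga$ is a direct consequence of the two disjoint decompositions $bg = \fbg \vee \ie$ and $bga = \fbga \vee \ie$: since the common $\ie$ summand is disjoint from the two $\f$-level summands, equality of the joins is equivalent to equality of the $\f$-level summands. Finally, for $bg = bga \implies \ei = 0$: using $\f = g \vee ga$ from Lemma~\ref{lem:decompositions}(iv) and the distributivity of $b$ over $\vee$ (Kuratowski's union axiom), we get $\f = b\f = bg \vee bga$, so $bg = bga$ forces $bg = \f$; right-composing with $i$ and consulting Table~\ref{tab:multiplication} yields $\ei = \f i = bgi = 0$.

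The main obstacle is establishing the companion decomposition $bga = \fbga \vee \ie$, which the text does not state explicitly; once this (and its disjointness) is verified via the $\fa = \f$ identity, all remaining implications in both parts reduce to routine applications of the decomposition lemmas, Lemma~\ref{lem:equations_kf}(i), and the multiplication table.
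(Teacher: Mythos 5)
Your proof is correct. Part (ii) is essentially the paper's own argument: the paper likewise reduces the central equivalence to the disjoint decomposition $bg=\fbg\vee\ie$ of Lemma~\ref{lem:decompositions}(vi) and obtains $\ei=0$ from $bg=bg\vee bga=\f$ followed by right composition with $i$; your explicit verification that $bga=\fbga\vee\ifa=\fbga\vee\ie$ with disjoint operands (via $\fa=\f$ and the fact that right composition with $a$ preserves joins and disjointness) merely fills in a step the paper leaves tacit when it says ``apply $bg=\fbg\vee\ie$.'' The genuine divergence is in part (i). The paper proves $\fbg=g\implies bg=g$ by left-multiplying by $b$, proves $bg=g\implies\fbg=bg$ by left-multiplying by $\f$ and invoking $\fg=bg$ from Lemma~\ref{lem:ifo}(iii), and proves $\fbg=bg\implies bib=bi$ by the computation $bib=bi(\fbg\vee bi)\leq b(\ifb g\vee bi)=bi$ resting on Lemmas~\ref{lem:half_dist} and~\ref{lem:ifo}(i). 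You instead funnel the last two implications through the single observation that each hypothesis forces $\ie=0$ --- from $\ie\leq bg=g\leq\ident$ and Lemma~\ref{lem:subset_implications}(i) in one case, and from disjointness of $\fbg$ and $\ie$ in the other --- and then cite $\ie=0\iff bib=bi$ from Lemma~\ref{lem:equations_kf}(i). This is more uniform, dispenses with the half-distributivity computation entirely, and as a bonus exposes the two-way equivalence $\fbg=bg\iff\ie=0\iff bib=bi$, of which the paper's one-directional calculation records only one half.
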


\begin{proof}
(i)~Left-multiply $g=\fbg$ by $b$ to get $bg=\fbg=g$.
Left-multiply by $\f$ to get $\fbg=bg$.
By Lemma~\ref{lem:half_dist}(ii), $bib=bi(bg\vee bi)=bi(\fbg\vee bi)\leq b(\ifb g\vee bi)
=bi$.
(ii)~Apply $bg=\fbg\vee\ie$ and $bg=bga\implies bg=bg\vee bga=\f$.
\end{proof}

The next lemma holds by Lemma~\ref{lem:op_relations}(i).

\begin{lem}\label{lem:kd_bib}
In non-GE spaces\nit, \mth{$bib=bi\vee\ie$,} hence \mth{$iA=\es\implies bibA=\ifA$.}
\end{lem}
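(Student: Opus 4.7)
My plan is to reduce the operator identity to an order-theoretic inequality and then verify it by a short case split over the non-GE global collapses of $\KF$. Lemma~\ref{lem:op_relations}(i) gives $\ie = ib \sm bi$, so $bi$ and $\ie$ are disjoint in the Boolean algebra $\monoid{O}$; distributivity then yields $bi \vee \ie = bi \vee (ib \sm bi) = bi \vee ib$. Thus the claim $bib = bi \vee \ie$ is equivalent to $bib = bi \vee ib$. Since $bi, ib \leq bib$ always holds by $\Ord{\K}$, only the $\leq$ direction needs work, and by Corollary~\ref{cor:equivalences}(ii) this is equivalent to $\fib \leq \fbi$ at the operator level.

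I would then establish $\fib \leq \fbi$ for each of the six non-GE space types listed in Theorem~\ref{thm:kf_monoids}. In KD, the defining identity $\fif = 0$ gives $\fib = \fbi$ via Lemma~\ref{lem:imps}(i). Every non-Kuratowski type (ED, OU, EO, P, D) satisfies $ib = bib$ or $bi = bib$ by Table~\ref{tab:six_Kuratowski_monoids}: if $ib = bib$, then $\fib = bib \sm ib = 0$ from the disjoint decomposition in Lemma~\ref{lem:decompositions}(ii); if $bi = bib$, then $\fib \leq bib = bi$, whence $\fib \leq \fbi$ by Lemma~\ref{lem:subset_implications}(iv).

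The ``hence'' clause is immediate: if $iA = \es$, then $biA = b\es = \es$, so $bibA = (bi \vee \ie)A = \ie A = \ifA$.

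The main obstacle, such as it is, is the lack of a single uniform argument for $\fib \leq \fbi$; the enabling relation differs across the non-GE types, so a case split appears unavoidable. The substantive content, however, is concentrated in the two-step reduction (Lemma~\ref{lem:op_relations}(i) together with Corollary~\ref{cor:equivalences}(ii)), after which each case is dispatched in a single line.
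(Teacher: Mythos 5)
Your proof is correct and follows essentially the same route as the paper: the paper's entire proof is the one-line remark that the lemma ``holds by Lemma~\ref{lem:op_relations}(i)'', i.e.\ it combines $ib\sm bi=\ie$ with the fact (already established via Theorem~\ref{thm:kf_monoids}) that $\fib=\fbi$, hence $bib=bi\vee ib$, in every non-GE global collapse. You simply make explicit the reduction $bi\vee\ie=bi\vee ib$ and re-verify $\fib\leq\fbi$ type by type, which is a sound elaboration of the same argument.
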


\begin{lem}\label{lem:ed_eo_lemma}
If \mth{$\XT$} is ED or EO then \mth{$\fb\leq\fbg$.}
\end{lem}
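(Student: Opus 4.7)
The plan is to reduce $\fb \leq \fbg$ to a short set-theoretic comparison, using the following two explicit formulas for the operators as the starting point:
\[
\fb A \eqz bA \setminus ibA, \qquad \fbg A \eqz bgA \setminus \ifA.
\]
The first comes directly from $\f = b \wedge ba$, since $ba \cdot b = a \cdot ib$, so $\fb A = bbA \wedge ba(bA) = bA \setminus ibA$. The second uses $\fbg A = bgA \setminus i(bgA)$ together with $ibg = \ie$ from Lemma~\ref{lem:ifo}(iv).

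Next I would invoke the decomposition $b = bg \vee bi$ (Lemma~\ref{lem:op_relations}(vi)) to write $bA = bgA \cup biA$, so that
\[
\fb A \eqz (bgA \cup biA) \setminus ibA.
\]
The role of the ED or EO hypothesis is exactly to kill the $biA$ summand. In EO we have $bi = ib$, so $biA \su ibA$ trivially. In ED we have $bib = ib$, and combining $bi \leq bib$ (which holds in every space, since $iA\su ibA$ by monotonicity of $i$ and then applying $b$) gives $biA \su bibA = ibA$. Either way $biA \su ibA$, so the formula collapses to $\fb A = bgA \setminus ibA$.

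To finish, I only need $\ifA \su ibA$: this is immediate from $\f \leq b$ (an edge in $\Ord\KF$, Figure~\ref{fig:kf_monoid}(i)) and monotonicity of $i$. Hence
\[
\fb A \eqz bgA \setminus ibA \su bgA \setminus \ifA \eqz \fbg A,
\]
and the conclusion $\fb \leq \fbg$ follows. There is no real obstacle here; the only step that might look delicate is the passage $biA \su ibA$ in ED spaces, which is where the hypothesis is used in an essential way—outside of ED/EO there is no general containment between $bi$ and $ib$, and $biA \setminus ibA = \fibA \cap \fbiA$ (Lemma~\ref{lem:op_relations}(iv)) would in general contribute extra points to $\fb A$ that need not lie in $\fbg A$.
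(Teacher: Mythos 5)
Your proof is correct and is essentially the paper's argument in a slightly different packaging: both hinge on the decomposition $b=bg\vee bi$ together with the ED/EO collapse $bib=ib$ (hence $bi\leq ib$), the paper writing $b=\fbg\vee\ie\vee bi=\fbg\vee ib$ and you equivalently writing $\fb=b\sm ib=(bg\vee bi)\sm ib=bg\sm ib\leq bg\sm\ie=\fbg$. The only cosmetic difference is that you use $\fbg=bg\sm\ie$ and $\ie\leq ib$ directly where the paper invokes $bg=\fbg\vee\ie$ and Lemma~\ref{lem:kd_bib}.
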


\begin{figure}
\centering\footnotesize
\raisebox{8pt}{
}
\label{fig:Hasse_diagrams}
\end{figure}

\begin{proof}
By Lemma~\ref{lem:kd_bib}, $b=bg\vee bi=\fbg\vee\ie\vee bi=\fbg\vee ib$ in such spaces.
\end{proof}

\begin{thm}\label{thm:kfg_monoids}
\mth{$\KFG$} has exactly ten global collapses \nit(\hspace{-.5pt}see
Figure~\nit{\ref{fig:kfg_monoids}).}
\end{thm}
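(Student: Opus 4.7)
The plan is to mirror the proof of Theorem \ref{thm:kf_monoids} by restricting to each of the seven global collapses of $\KF$ and then enumerating the possible behaviors of the three additional operators $g, bg, \fbg$ (and their complements). Since $\KFGZ \cap a\KFGZ = \es$ in any nonempty space (as $o(\es) = \es$ and $ao(\es) = X$ for all $o \in \KFGZ$), each global $\KFG$-collapse is determined by its restriction to $\KFGZ$, and this restriction projects onto a global $\KFZ$-collapse. So, building on Theorem \ref{thm:kf_monoids}, for each of its seven cases I would ask which identifications among $\GZ \cup \KFZ$ are forced and how many genuine extensions arise.

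For the GE case, Proposition \ref{prop:kfgz_ordering} shows $g, bg, \fbg$ occupy three distinct new nodes of $\Ord\KFG$, and the GE $34$-set $A \su \mathbb{R}$ exhibited after Theorem \ref{thm:ccb} together with Corollary \ref{cor:maximal_family}(ii) gives $|\KFG| = 40$, a unique extension. In the six remaining cases I would invoke Lemmas \ref{lem:equations_kfg}, \ref{lem:kd_bib}, and \ref{lem:ed_eo_lemma} as the structural inputs: Lemma \ref{lem:kd_bib} forces $bib = bi \vee \ie$ in all non-GE spaces; Lemma \ref{lem:equations_kfg}(i) shows $\fbg = g$ forces $bib = bi$, which rules out this identification in KD and EO (where $bib \neq bi$) but permits it in OU and partition spaces; Lemma \ref{lem:ed_eo_lemma} forces $\fb \leq \fbg$ in ED and EO, which combined with the global identifications $\ff = \fb$ (EO) and $\f = \ff$ (ED) from Lemma \ref{lem:equations_kf}(iv)--(v) and the Hasse diagrams of Figure \ref{fig:Hasse_diagrams} pins $\fbg$ into an already-present equivalence class. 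In partition and discrete spaces the remaining freedom vanishes, since $\f = 0$ or stronger relations collapse all of $\GZ$ along with $\FZ$. Careful bookkeeping through these cases produces exactly ten candidate global collapses of $\KFG$.

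To finish, I would confirm existence of each of the ten candidates by the same computer enumeration used for Theorem \ref{thm:kf_monoids}; the minimal bases in Proposition \ref{prop:minimal_spaces} give natural starting points for the realizing finite spaces. The main obstacle is the careful ED/EO analysis, where Lemma \ref{lem:ed_eo_lemma} provides only the inequality $\fb \leq \fbg$: promoting it to a complete determination of the $\GZ$-subcollapse requires combining it with the non-GE identity $bib = bi \vee \ie$ and the specific $\KF$-equations of the type to rule out further splits. Modulo this, the remaining case analysis is routine bookkeeping using Table \ref{tab:multiplication} and $\Ord\KFG$, and the ten candidates enumerated in Figure \ref{fig:kfg_monoids} exhaust the possibilities.
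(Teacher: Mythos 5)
Your skeleton is the paper's own --- fix one of the seven global collapses of $\KF$ and decide which identifications involving $\GZ$ are forced or optional --- but two concrete gaps would sink the execution. First, you never dispose of the general candidate equations $o=p$ with $o\in\GZ$ and $p\in\KFZ$ (for instance $g=\ie$, $bg=\bif$, $\fbg=\fif$, $g=\ei$). The paper's key device is to right-multiply such an equation by $i$: since $gi=0$ by Lemma~\ref{lem:ifo}(ii), this forces $pi=0$, hence $p\in\{0,\ie,\fif,\bif\hspace{.6pt}\}$ by Table~\ref{tab:multiplication}; each such $p$ satisfies $pa=p$, so $o=oa$, and Lemma~\ref{lem:equations_kfg}(ii) then yields $\ei=0$, which fails when $|\KZ|=7$. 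This single computation eliminates \emph{every} identification of $\GZ$ with $\KFZ$ in GE and KD spaces (and all but those involving $\fb$ in ED and EO). Your GE case instead rests on one $40$-element example, which shows the maximal collapse is realized but not that every GE space refuses further identifications, so uniqueness of the GE (and KD) extension is not established, and the ED/OU/EO lists are not shown complete.

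Second, your EO analysis contains a factual error that would change the count. EO spaces satisfy $bib=bi$ (Table~\ref{tab:six_Kuratowski_monoids} gives $S=\{ib,bi\}$, so $ib=bi=bib$), hence Lemma~\ref{lem:equations_kfg}(i) does \emph{not} exclude $\fbg=g$ there. On the contrary, the paper's Case~$4$ shows the chain $\fb=g\iff bg=g\iff\fb=bg\iff\fb=\fbg$ is exactly the optional equation separating EO~$1$ from EO~$2$, and establishing that equivalence takes two nontrivial claims beyond Lemma~\ref{lem:ed_eo_lemma}. Grouping EO with KD, and describing $\fbg$ in ED/EO as ``pinned into an already-present class'' rather than optionally identified with $\fb$, would yield one EO collapse instead of two. (Relatedly, $\f=\ff$ is equivalent to $bib=bi$ by Lemma~\ref{lem:imps}(iii) and therefore fails in ED spaces, so that identity cannot be used as you propose.) The correct tally is one extension each for GE, KD, partition, and discrete, and two each for ED, OU, and EO.
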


\begin{proof}
We verified by computer that ten exist and claim no further ones do.
\case{Case}~$1$.~$(|\KZ|=7)$
Suppose $o=p$ for some $o\in\GZ$ and $p\in\KFZ$.
Right-multiply by $i$ to get $pi=0$.
Thus $p\in\{0,\ie,\fif,\bif\hspace{.6pt}\}$.
But then $o=oa$, contradicting $\ei\neq0$ by Lemma~\ref{lem:equations_kfg}(ii).
Conclude $o\neq p$ for all $o\in\GZ$ and $p\in\KFZ$.
Lemma~\ref{lem:equations_kfg}(i) implies $|\GZ|=3$.
\case{Case}~$2$.~(ED~$1,2$)
The Case~$1$ proof yields $|\GZ|=3$ and $o\neq p$ for all $o\in\GZ$ and
$p\in\KFZ\sm\{\fb\}$.
Have $g\neq\fb$ since $g=\fb\implies bg=\fb=g$ and $bg\neq\fb$ since $bg=\fb\implies\ie=
ibg=\ifb=0$.
Thus $\fbg=\fb$ is the only optional equation in $\KFGZ$.
\case{Case}~$3$.~(OU~$1,2$)
The Case~$1$ proof yields $o\neq p$ for all $o\in\GZ$ and $p\in\KFZ$.
Right-multiply $\ff=\f$ by $g$ to get $\fbg=bg$.\footnote{The minimal OU space satisfies
$g=bg$; the OU space with base $\{\{w\}$, $\{x\}$, $\{w,x,y,z\}\}$ does not.
For ED and EO examples see Theorem~\ref{thm:kf_orderings}.}
\case{Case}~$4$.~(EO~$1,2$)
The Case~$1$ proof yields $o\neq p$ for all $o\in\GZ$ and $p\in\KFZ\sm\{\fb\}$.
Lemma~\ref{lem:ed_eo_lemma} and $\ff=\f$ imply $\fb\leq\fbg=bg$.
Claim~$1$: $\fb=bg\implies bg=g$.
Suppose $\fb=bg$.
Then $\ei=\fb a=bga$.
Hence $bg\wedge ga\leq bg\wedge bga=\fb\wedge\ei=0$.
Since $bg\leq\f=g\vee ga$, conclude $bg=g$.
Claim~$2$: $\fb=g\iff bg=g$.
$(\Rightarrow)$~Left-multiply by $b$ and apply Claim~$1$.
$(\Leftarrow)$~Since $bi=ib$ we have $\fb\leq b=bg\vee ib$.
Hence $\fb\leq bg=g$.
Conversely $g\leq\f=(bg\vee bi)\wedge(bga\vee bia)=(g\vee ib)\wedge(ga\vee aib)=(g\wedge
aib)\vee(ib\wedge ga)$.
Since $g\wedge ga=0$ we conclude $g\leq g\wedge aib\leq\fb$.
Together with $\fbg=bg$, Claims~$1$ and~$2$ imply $\fb=g\iff bg=g\iff\fb=bg\iff\fb=\fbg$.
\case{Case}~$5$.~(partition)
Apply $\Ord\KFG$ and Lemma~\ref{lem:equations_kfg}(i).
\case{Case}~$6$.~(discrete)
Apply $\f=0$.
\end{proof}

\begin{lem}\label{lem:inequalities}
\raisebox{-\baselineskip}{\hspace{-11pt}$\begin{array}[c]{r@{\hspace{3pt}}l}
\nit{(i)}&\fb\leq\ident\implies\fbg\leq\ident\iff\fbg\leq\fb\implies\fbg\leq aibi\iff bg
\leq aibi\iff g\leq aibi\nit.\\
\nit{(ii)}&\mbox{If }\XT\mbox{ is not GE then }\fbg\leq\fb\iff\fbg\leq aibi\nit.\\
\nit{(iii)}&\mbox{If }\XT\mbox{ is ED or EO then }\fb\leq\ident\iff\fbg\leq\afi\nit.\\
\end{array}$}
\end{lem}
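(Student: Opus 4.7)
The plan is to treat (i) as a chain and traverse each link in order, then derive (ii) and (iii) by specializing to the structural lemmas available for non-GE and ED/EO spaces.

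For (i), the implication $\fb\leq\ident\implies\fbg\leq\ident$ is immediate by right-multiplying both sides by $g\in\KFG$: Table~\ref{tab:multiplication} gives $\fb\cdot g=\fbg$ and $\ident\cdot g=g\leq\ident$. For the equivalence $\fbg\leq\ident\iff\fbg\leq\fb$, I would use the identity $\fbg\cdot g=\fbg$ from Table~\ref{tab:multiplication}: substituting $A\to gA$ in $\fbg\leq\ident$ yields $\fbg A=\fbg(gA)\subseteq gA$, hence $\fbg\leq g$. Combining the decomposition $\fbg A=bgA\setminus\ie A$ (from Lemma~\ref{lem:decompositions}(vi), $bg=\fbg\vee\ie$ disjoint, with $ibg=\ie$ from Lemma~\ref{lem:ifo}(iv)) and the identity $\ie=ib\setminus bi$ (Lemma~\ref{lem:op_relations}(i)), one passes from disjointness of $\fbg A$ with $iA$ to disjointness with $ibA$, which is $\fbg\leq\fb$; the reverse is symmetric. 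For $\fbg\leq\fb\implies\fbg\leq aibi$, transitivity via $\fb\leq aibi$ suffices: $\fb\wedge ib=0$ (dashed edge in Figure~\ref{fig:kf_monoid}(i)) and $ibi\leq ib$ give $\fb\wedge ibi=0$. For $\fbg\leq aibi\iff bg\leq aibi$, the decomposition $bg=\fbg\vee\ie$ disjoint reduces matters to $\ie\leq aibi$, which follows from $\ie=ib\setminus bi$ and $ibi\leq bi$. Finally, $bg\leq aibi\iff g\leq aibi$: forward from $g\leq bg$; backward because $X\setminus ibi A$ is closed (complement of an open set), so $gA\subseteq X\setminus ibi A$ implies $bgA\subseteq X\setminus ibi A$.

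For (ii), the forward direction is (i). For the backward direction, Lemma~\ref{lem:kd_bib} gives $bib=bi\vee\ie$ in non-GE spaces; passing to duals via Lemma~\ref{lem:eqs} and Corollary~\ref{cor:eqs} one finds $aibi=aib\vee\ie$. The condition $\fbg\leq aibi$ then unfolds to $\fbg A\cap ibA\subseteq\ie A$, and since $\fbg\wedge\ie=0$ always (from $\fbg=bg\setminus\ie$) this forces $\fbg A\cap ibA=\emptyset$, i.e., $\fbg\leq\fb$.

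For (iii), I would exploit Lemma~\ref{lem:ed_eo_lemma} ($\fb\leq\fbg$ in ED/EO) and the fact that $\fib=0$ in these spaces, which holds because $bib\in\{ib,bi\}$ (Table~\ref{tab:six_Kuratowski_monoids}) combined with the disjoint decomposition $bib=\fib\vee ib$ (Lemma~\ref{lem:decompositions}(ii)). Forward: $\fb\leq\ident\implies\fbg\leq\fb$ by (i); with $\fb\leq\fbg$ this forces $\fbg=\fb$; then $\fbg\wedge\ei=\fb\wedge\ei=\fib\wedge\fbi=0$ by Lemma~\ref{lem:op_relations}(iv), so $\fbg\leq\afi$. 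Backward: contrapositively, using $\ei=\fba$ (so $\fb\leq\ident$ has right dual $\ei\leq a$) and $\fb\leq\fbg$ in ED/EO, a failure of $\fb\leq\ident$ yields a subset witnessing $\fbg\not\leq\afi$.

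The main obstacle is the equivalence $\fbg\leq\ident\iff\fbg\leq\fb$ in (i), since these express \emph{a priori} different disjointness conditions for $\fbg A$ (with $aA$ versus with $ibA$). Its proof is not purely formal symbol-pushing: it hinges on the topological identity $\fbg A=bgA\setminus\ie A$ (the boundary of a closed set equals the set minus its interior), the fact that a border $gA$ has empty interior (making $\fbg\wedge i=0$ automatic and enabling $\fbg\leq\ident\iff\fbg\leq g$), and a careful chase through $ibA=\ie A\cup(ibA\cap biA)$. I expect this step to absorb most of the work.
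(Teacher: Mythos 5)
The weak point is exactly the step you flag as ``the main obstacle'': the equivalence $\fbg\leq\ident\iff\fbg\leq\fb$ in (i). Your sketch for it does not go through. First, $\fbg\leq\ident$ is not ``disjointness of $\fbg A$ with $iA$'' --- that disjointness is automatic (as you note, $gA\su aiA$ already gives $\fbg\wedge i=0$); the real content is $\fbg\leq g$, i.e., $\fbg A\su A\sm iA$, and $A\sm iA$ can meet $ibA$ (take $A=\mathbb{Q}$), so no pointwise chase through $ibA=\ie A\cup(ibA\cap biA)$ carries one containment into the other. Second, the two directions are not symmetric, and neither is local to a single subset. The paper proves $(\Rightarrow)$ by decomposing $\f\leq(\fbga\vee\ie\hspace{1pt})\vee\fb$ and killing the first two joinands with $\fbg\wedge\ie=0$ and $\fbg\leq\ident\implies\fbga\leq a$ (the hypothesis applied to $aA$, not to $A$). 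The direction $(\Leftarrow)$ is genuinely global: from $\fif\leq\fbg\leq\fb$ one first derives, via Lemma~\ref{lem:subset_implications}(v), Proposition~\ref{prop:order_implications}(v), right duality, and Lemma~\ref{lem:imps}, that $\fbi=\fib$, hence $\fif=0$ and $\fib=\fb\wedge\ei$; only then does $\fbg\wedge a=\fbg\wedge ga\leq\fbg\wedge\fbga\leq\fb\wedge\ei=\fib$ give $(\fbg\wedge a)g\leq\fif=0$, i.e., $\fbg\leq g$. The hypothesis must be invoked at other subsets to force $\fif=0$ in the whole space before anything can be said about the given one; this idea is absent from your proposal.

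Your part (ii) is correct and in fact slightly cleaner than the paper's: right-multiplying Lemma~\ref{lem:kd_bib} by $a$ gives $aibi=aib\vee\ie$ in non-GE spaces, and $\fbg\wedge\ie=0$ then turns $\fbg\leq aibi$ directly into $\fbg\leq b\wedge aib=\fb$, whereas the paper routes through $biba=\fbi\vee abi$ and $\fbi=\fib$. The backward direction of (iii), however, also has a gap: since $\fb\wedge\ei=\fib\wedge\fbi=0$ in ED and EO spaces, the inequality $\fb\leq\fbg$ can never supply a point of $\fbg B\cap\ei B$ (nor does passing to $aA$ help, since one then needs $\fbga A\cap\fb A\neq\es$ and again $\ei\wedge\fb=0$), so your contrapositive witness does not materialize. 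The missing ingredient is $\fbg\leq\ff=\fb\vee\ei$ (from $\Ord\KFG$ and Lemma~\ref{lem:op_relations}(v)): together with $\fbg\leq\afi$, i.e., $\fbg\wedge\ei=0$, this forces $\fbg\leq\fb$, whence $\fbg=\fb$ by Lemma~\ref{lem:ed_eo_lemma} and $\fb\leq\ident$ by (i).
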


\begin{proof}
(i)~Right-multiply $\fb\leq\ident$ by $g$ to get $\fbg\leq\ident$.
$(\Rightarrow)$~Have $\fbg\leq\f=b\wedge(bga\vee bia)\leq b(bga)\vee\fb=(\fbga\vee\ie
\hspace{1pt})\vee\fb$.
The result follows since $\fbg\wedge\ie=0$ and $\fbg\leq\ident\implies\fbga\leq a$.
$(\Leftarrow)$~By Lemmas~\ref{lem:op_relations}(iv) and~\ref{lem:subset_implications}(v),
Proposition~\ref{prop:order_implications}(v), and right duality,
$\fif\leq\fbg\leq\fb\implies\fif\leq\fib\implies\fbi\leq\fib\iff\fbi=\fib\implies\fib=
\fib\wedge\fbi=\fb\wedge\ei$.
Thus $\fbg\wedge a=\fbg\wedge ga\leq\fbg\wedge bga=\fbg\wedge\fbga\leq\fb
\wedge\ei=\fib$.
Since $\fib=\fbi\iff\fif=0$ by Lemma~\ref{lem:imps}, conclude $\fbg\wedge ag=(\fbg\wedge
a)g\leq\fib g=\fif=0$.
Thus $\fbg\leq g\leq\ident$, giving us the equivalence.
Clearly $\fbg\leq\fb\implies\fbg\leq aibi\iff bg\ (=\fbg\vee\ie\hspace{1pt})\leq aibi
\implies g\leq aibi$.
Since $aibi$ is closed, $g\leq aibi\implies bg\leq aibi$.
(ii)~Have $(\Rightarrow)$ by (i).
$(\Leftarrow)$ Since $\XT$ is not GE, $\fbg\leq biba=\fiba\vee iba=\fbi\vee abi=\fib\vee
abi\leq\fb\vee abi$.
But $\fbg\wedge abi\leq(b\sm\ie\hspace{1pt})\wedge abi=b\wedge(bi\vee aib)\wedge abi=
(bi\vee\fb)\wedge abi\leq\fb$.
The result follows.
(iii)~By (i), $\fb\leq\ident\implies\fbg\leq\fb\leq\afi$.
Since $\fbg\leq\ff=\fb\vee\ei$, we conclude $\fbg\leq\afi\implies\fbg\leq\fb\implies\fbg
=\fb\implies\fb=\fbg\leq\ident$ by Lemma~\ref{lem:ed_eo_lemma} and (i).
\end{proof}

\begin{thm}\label{thm:kfg_inequalities}
\mth{$\KFG$} has exactly \mth{$12$} global orderings \nit(\hspace{-.5pt}see
Figures~\nit{\ref{fig:kf_monoid}(ii)} and~\nit{\ref{fig:kfg_monoids}).}
\end{thm}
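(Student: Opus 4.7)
The plan is to mirror the proof of Theorem~\ref{thm:kf_orderings}. By Proposition~\ref{prop:kfgz_ordering}, every GE space satisfies the unique ordering $\Ord\KFG$ displayed in Figure~\ref{fig:kf_monoid}(ii), accounting for one of the $12$ global orderings. For the remaining $11$ I would work through the eight global orderings of $\KF$ that occur in non-GE spaces (Figure~\ref{fig:Hasse_diagrams}) and determine, for each, how many distinct global orderings of $\KFG$ extend it. The total across non-GE types must come to $11$.

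For each non-GE case I would first read off the forced equalities among $g$, $bg$, $\fbg$ and the other operators from the global collapses listed in Theorem~\ref{thm:kfg_monoids}, then use Lemma~\ref{lem:inequalities} to lock in the forced inequalities. The chain $\fb\leq\ident\implies\fbg\leq\ident\iff\fbg\leq\fb\implies\fbg\leq aibi\iff bg\leq aibi\iff g\leq aibi$ from part~(i), together with the non-GE characterization $\fbg\leq\fb\iff\fbg\leq aibi$ of part~(ii) and the ED/EO equivalence $\fb\leq\ident\iff\fbg\leq\afi$ of part~(iii), tightly constrain when new potential covers involving $g$, $bg$, $\fbg$ can be added. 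In the partition, discrete, KD, OU, and the two ED and two EO cases covered by Theorem~\ref{thm:kfg_monoids}, these constraints force essentially the already-enumerated extensions, and the split between the two variants of ED (and of EO, and of OU) is governed exactly by whether $\fb\leq\ident$ (or the $\fbg\leq\ident$ analogue) holds.

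Next I would verify by computer, as footnote~17 of Theorem~\ref{thm:kf_orderings} does for $\KF$, that each of the $11$ candidate non-GE orderings is realized in some concrete topological space, supplying bases analogous to the ones given there. To show no further orderings exist I would invoke Lemma~\ref{lem:poset_result}: for each realized ordering $P\su\Ord\KFG$, it suffices to exhibit a space in which every pair in $\Ext(P)$ fails. As in Theorem~\ref{thm:kf_orderings}, any potential cover with the zero operator as lower endpoint is immediately excluded (a partial order cannot contain it), which handles the bulk of the extender whenever one of $\fbi,\fib,\fif,\ie,\bif,\ldots$ collapses to $0$; the remaining few potential covers are ruled out by the inequalities above together with Lemma~\ref{lem:inequalities}(ii)--(iii).

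The main obstacle will be the bookkeeping of $\Ext\KFG$ itself. Passing from $\KF$ to $\KFG$ introduces new potential covers of the form $(g,o)$, $(bg,o)$, $(\fbg,o)$ and their left duals $(o,ag)$, $(o,abg)$, $(o,a\fbg)$ that have no analogue in Figure~\ref{fig:kf_monoid}(iii); unravelling their interdependence through Lemma~\ref{lem:inequalities} — particularly the two-way implications in part~(i) and the non-GE equivalence in part~(ii) — is the principal technical challenge. Once those forced implications are in place the surviving cases split cleanly into the $11$ non-GE diagrams of Figure~\ref{fig:kfg_monoids}, yielding $12$ in total with the GE ordering.
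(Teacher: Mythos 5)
Your machinery is the paper's machinery: verify realizability by computer, certify maximality via Lemma~\ref{lem:poset_result}, discard extender pairs whose lower endpoint is the zero operator, and use Lemma~\ref{lem:inequalities} to show the surviving optional inequalities occur in logically equivalent bundles so that they cannot split an ordering further. On all of that you and the paper agree.

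However, your opening accounting contains a genuine error. You claim that Proposition~\ref{prop:kfgz_ordering} shows every GE space satisfies the single ordering $\Ord\KFG$, so that GE spaces contribute exactly one of the $12$ orderings and the remaining $11$ must be distributed among the eight non-GE orderings of $\KF$. Proposition~\ref{prop:kfgz_ordering} only identifies $\Ord\KFG$ as the set of inequalities valid in \emph{all} spaces; it does not show that an individual GE space satisfies nothing beyond it. For $\KF$ that stronger statement is exactly what Proposition~\ref{prop:kf_ordering} supplies, but no $\KFG$ analogue is available, and in fact it is false: the inequality $g\leq aibi$ (equivalently $bg\leq aibi$ or $\fbg\leq aibi$, by Lemma~\ref{lem:inequalities}(i)) is \emph{optional} in GE spaces --- the minimal GE space satisfies it while $\mathbb{R}$ with the usual topology does not (consider $A=\mathbb{R}\sm\{1/n:n=1,2,\dots\}$). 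Hence GE spaces account for \emph{two} of the twelve global orderings (the ones labelled GE~$1$ and GE~$2$), and the non-GE spaces account for only ten; your plan of extracting eleven orderings from the eight non-GE $\KF$ orderings would come up one short or force a miscount elsewhere. The repair is to treat the GE case on the same footing as the others: determine which elements of the relevant extenders are optional in GE spaces, show directly that $\fib\leq bi$ and $\fbi\leq aib$ fail there (since $\fib\leq bi\iff\fbi\leq aib\iff\fbi\leq\fib\implies\fbi=\fib$, which contradicts $|\FZ|=10$), and then apply Lemma~\ref{lem:inequalities}(i) to conclude that the remaining optional inequalities are mutually equivalent, yielding exactly two GE orderings.
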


\begin{proof}
We verified by computer that each ordering in Figures~\ref{fig:kf_monoid}(ii)
and~\ref{fig:kfg_monoids} is satisfied by some space.\footnote{The minimal GE space
satisfies $bg\leq aibi$; $\mathbb{R}$ under the usual topology does not (consider $A=
\mathbb{R}\sm\{1/n:n=1,2,\dots\})$.
The minimal KD and OU spaces each satisfy $\fbg\leq\ident$; the KD space with base
$\{\{u\}$, $\{v\}$, $\{u,w\}$, $\{u,v,w,x\}$, $\{y,z\}\}$ does not, nor does the OU space
with base $\{\{w\}$, $\{x\}$, $\{w,x,y,z\}\}$.}
Since $\fib\leq bi\iff\fbi\leq aib\iff\fbi\leq\fib\implies\fbi=\fib$, the inequalities
$\fib\leq bi$ and $\fbi\leq aib$ fail in GE spaces.
All other inequalities in the $12$ extenders besides those in
Table~\ref{tab:kfg_orderings} fail by the $\KFG$ analogue of the
Theorem~\ref{thm:kf_orderings} argument.
By Lemma~\ref{lem:poset_result} it remains only to show that for each space type, the
inequalities in Table~\ref{tab:kfg_orderings} are equivalent.
This holds by Lemma~\ref{lem:inequalities}.
\end{proof}

As GJ point out (see Figure~$2.3$), a natural partial order exists on the six Kuratowski
monoids by setting $\Y_1\leq\Y_2$ if and only if there exists a monoid homomorphism from
$\Y_2$ onto $\Y_1$.
The $\KFG$ analogue appears in Figure~\ref{fig:kfg_projections}.

\begin{figure}
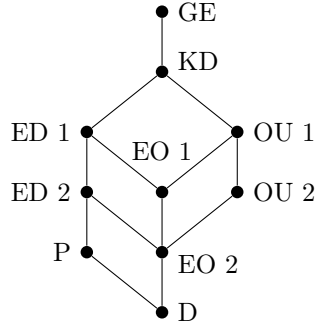

\footnotesize
\centering

\caption{The ten $\KFG$ monoids ordered by monoid homomorphism.}
\label{fig:kfg_projections}
\end{figure}

\section{The Family \texorpdfstring{$\KFA$}{KFA}}\label{sec:kfa}

In this section we find all local collapses and orderings of $\KF$.\footnote{Local
collapses and orderings of $\KFGZ$ are beyond our scope.
We verified by computer that for $n=2,\dots,11$ there are respectively $5$, $12$, $26$,
$47$, $72$, $106$, $129$, $134$, $134$, $134$ local collapses and $5$, $12$, $28$, $61$,
$131$, $262$, $459$, $614$, $657$, $666$ local orderings of $\KFGZ$ over all $\XT$ such
that $|X|=n$.
These numbers suggest (imply?) that $\KFGZ$ has exactly $134$ local collapses but they
are inconclusive for local orderings.}

\subsection{Equations in \texorpdfstring{\mth{$\KFA$}}{KFA}.}

There are too many local collapses to name, so we number them instead.

\begin{dfn}\label{dfn:phi_number}
Let \define{$\phi A$ $(\psi A)$} be the number of the collapse of \nit{$\KZ$ $(\KF)$}
in Table~\nit{\ref{tab:subset_types}} that \mth{$A$} satisfies\nit.
These so-called \define{\pn s $($\sn s$)$} also refer to their associated collapses\nit.
\end{dfn}

\begin{thm}\label{thm:set_types}
\mth{$\KF$} has exactly \mth{$70$} local collapses \nit(\hspace{-.5pt}see
Table~\nit{\ref{tab:subset_types}).}
\end{thm}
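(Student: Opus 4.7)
The plan is to build the local collapses of $\KF$ from the ground up, leveraging the known local collapses of $\KZ$ (by GJ) together with the structural lemmas already established. First I would use Lemma~\ref{lem:fza_collapse} to reduce the collapse of $\FZ$ satisfied by $A$ to its intersection with $C_1\cup C_2\cup C_3$. Lemma~\ref{lem:equations_kf} then supplies the critical bridge: each of the six edge equations in $(\KZ\sm\{\ident\})A$ is equivalent to one of the nine edge equations in $C_1=\FZ\sm\{\f,\bif\}$, with $\fifA=\es$ being the only edge equation in $C_1$ that does not by itself force any equation in $\KZA$. Parts~(vii)--(ix) of the same lemma control which $C_2$ equations can coexist with a given $\KZ$-collapse.

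Next I would run case analysis over the $30$ local collapses of $\KZ$. For each, I would determine which edge equations in $C_1$ are forced, forbidden, or optional (using Lemma~\ref{lem:equations_kf}), then which $C_2$ equations are still admissible (via parts (vii)--(ix)), and finally how the three pairs in $C_3$ behave, using Proposition~\ref{prop:order_implications}(v)--(vi) and Lemma~\ref{lem:imps}(i)--(ii) to handle the tight interdependence between $\fif$, $\fib$, $\fbi$. This should reproduce the $62$ local collapses of $\KFZ$ that are referenced earlier in the paper. Realizability of each candidate is witnessed either by the minimal examples of Proposition~\ref{prop:minimal_spaces} (already tabulated for the global collapses) or by explicit subsets constructed in the style of GE's $34$-set.

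Finally I would extend from $\KFZ$ to $\KF$. By Table~\ref{tab:kf_akf}, any equality $o_1A=o_2A$ with $o_1\in\KFZ$ and $o_2\in a\KFZ$ forces $iA=\es$ or $bA=X$, and by Corollary~\ref{cor:maximal_family}(i), once $|\KFZA|=17$ no such equality is possible. So new collapses arising in the passage from $\KFZ$ to $\KF$ can occur only at those $\KFZ$-collapses with $|\KFZA|<17$ where $iA=\es$ or $bA=X$ is compatible, and I would enumerate these degenerate extensions directly from the tables. This accounts for the jump from $62$ to $70$. The full list is then displayed in Table~\ref{tab:subset_types}, with each entry confirmed by the computer programs advertised in the introduction.

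The main obstacle will be the combinatorial bookkeeping: the $30$ starting $\KZ$-collapses interact in subtle ways with the three sets $C_1,C_2,C_3$ of $\FZ$-pairs, and one must argue both that every entry in Table~\ref{tab:subset_types} is realized by some subset in some space and that no admissible configuration has been overlooked. In practice this is exactly the point at which the computer experimentation mentioned in the abstract and in the proof of Theorem~\ref{thm:partial_order} becomes indispensable, so my plan is to reduce the problem to a finite verification ruled by Lemmas~\ref{lem:equations_kf}--\ref{lem:fza_collapse} and Table~\ref{tab:kf_akf}, and delegate the final enumeration to code.
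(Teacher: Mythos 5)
Your plan is essentially the paper's own proof: it starts from GJ's $30$ local collapses of $\KZ$, uses Lemma~\ref{lem:fza_collapse} and Lemma~\ref{lem:equations_kf} (with Proposition~\ref{prop:order_implications}(v)--(vi)) to pin down the induced collapse of $\FZ$ case by case, and handles the passage from $\KFZ$ to $\KF$ via Table~\ref{tab:kf_akf} and the degenerate cases $bA=X$, $iA=\es$. The only real divergences are cosmetic: the paper carries out the hardest bookkeeping (the $\pn$s $1$, $2$, $6$, where $\fib$, $\fbi$, $\fif$ interact) by hand rather than by code, and it certifies realizability of all $70$ collapses with the single space of Proposition~\ref{prop:all_psi} rather than the minimal spaces of Proposition~\ref{prop:minimal_spaces} (which only witness the global collapses).
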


\begin{proof}
Let $A\su X$.
By Theorem~2.10 in GJ, $A$ satisfies one of the $30$ collapses of $\KZ$ in
Table~\ref{tab:subset_types}.
As the authors point out, each extends uniquely to a local collapse of $\K$ by adding
left duals, with one exception: When $bA$ and $iA$ are both clopen and unequal, the equation
$bA=aiA$ $(\!{\iff}(bA=X\mbox{ and }iA=\es))$ may or may not hold.
These $31$ local collapses contain every equation in $\K$ a subset can satisfy.
Hence we need only find equations $o_1A=o_2A$ that involve at least one operator in
$\semigp{F}$.
We can assume without loss of generality that $o_1\in\KFZ$ and $o_2\in\F$.
Note that $(bA\neq X\mbox{ and }iA\neq\es)\implies o_1,o_2\in\FZ$ by
Table~\ref{tab:kf_akf} under this assumption.

We need only consider $19$ of the $30$ local collapses of $\KZ$ since there are
$11$ dual pairs.
Let $\Phi_1=\{4,$ $7,$ $11,$ $12,$ $13,$ $18,$ $20,$ $24,$ $26\}$
and $\Phi_2=\{9,$ $14,$ $16,$ $22,$ $25,$ $28,$ $30\}$.
By Lemma~\ref{lem:fza_collapse}, for $\phi A\in\Phi_1\cup\Phi_2$ the collapse of $\FZ$
that $A$ satisfies is determined by Proposition~\ref{prop:order_implications}(v)--(vi)
and Lemmas~\ref{lem:op_relations}(iii), \ref{lem:subset_implications}(v),
and~\ref{lem:equations_kf} (see Table~\ref{tab:subset_types}).
This completes the proof for $\phi A\in\Phi_1$ since $bA\neq X$ and $iA\neq\es$.

Suppose $\phi A\in\Phi_2$.
The proof is done for the case $(bA\neq X$ and $iA\neq\es)$.

Suppose $bA=X$.
Clearly $\phi A=30\implies\psi A=69$ and $iA=\es\implies\psi A=61$.
Suppose $iA\neq\es$.
Table~\ref{tab:subset_types} covers $o_1,o_2\in\FZ$ and Table~\ref{tab:kf_akf} eliminates
the cases $o_1\in\KZ,\,o_2\in\FZ$ and $o_1\in\FZ,\,o_2\in a\FZ$.
In the case that remains, $o_1\in\KZ$ and $o_2\in a\FZ$.
By Lemma~\ref{lem:equations_kf}(xi) we can assume $o_1=\ident$.

\case{Case}~$1$.~$(\phi A\in\{9,14,16,25\})$
Suppose $A=o_2A$.
Table~\ref{tab:multiplication} implies $\fa o_2A\in\{\es$, $\ffA$, $\fiA$, $\fbA$,
$\fbiA$, $\fibA$, $\fifA\}$.
Hence $\fA\neq\fa o_2A$ by Table~\ref{tab:subset_types}.
Since this contradicts $\fA=\fo_2A=\fa o_2A$ we conclude $A\neq o_2A$.
\case{Case}~$2$.~$(\phi A\in\{22,28\})$
By Table~\ref{tab:subset_types}, $a\FZA=\{\afA,X\}$.
Suppose $\phi A=22$.
Since $A$ is neither open nor equal to $X$, $A\neq o_2A$.
If $\phi A=28$ then $A=iA=\afA$ by Lemma~\ref{lem:equations_kf}(xi).

In the only remaining case we have $bA\neq X$ and $iA=\es$.
Since $iA=\es\implies bi=i$ we get $\phi A\cn\in\{9$, $14$, $16$, $22$, $28\}$.
Clearly $\phi A=25\implies\psi A=60$ and $\phi A=30\implies\psi A=70$.
This completes the proof for $\phi A\in\Phi_2$.

Since $bA=X\implies ibA=bA$ and $iA=\es\implies biA=iA$, it remains only to find all
possible equations $o_1A=o_2A$ for $o_1,o_2\in\FZ$ when $\phi A\in\{1,2,6\}$.

Let $\mathcal{E}_nA$ stand for: ``The set $A$ satisfies exactly $n$ of the equations
$\fib=\fbi$, $\fib=\fif$, $\fbi=\fif$.''
Note that $\mathcal{E}_2A$ is impossible.
Let $C_1$, $C_2$, $C_3$ be the sets defined in Lemma~\ref{lem:fza_collapse}.

\case{Case}~$1$.~$(\phi A=1)$
By Lemma~\ref{lem:equations_kf}, $\fif=0$ is the only equation in $C_1\cup C_2$ that $A$
can satisfy.
By Proposition~\ref{prop:order_implications}(vi), $\fifA=\es\implies\fibA=\fbiA$.
Thus $\mathcal{E}_0A\implies\psi A=1$.
If $\mathcal{E}_1A$ then $\fibA=\fifA\implies\psi A=3$, $\fbiA=\fifA\implies\psi A=4$,
and $\fibA=\fbiA$ implies $\psi A=2$ when $\fifA\neq\es$, $\psi A=6$ when $\fifA=\es$.
Clearly $\mathcal{E}_3A\implies\psi A=5$.

\case{Case}~$2$.~$(\phi A=2)$
$\mathcal{E}_0A$ implies $\psi A=7$ when $\ffA\neq\fiA$ and $\psi A=9$ when $\ffA=\fiA$.
Suppose $\mathcal{E}_1A$.
Since $\fibA=\fifA\implies\fbiA\su\fibA$ by Proposition~\ref{prop:order_implications}(v)
and $\ffA=\fiA\implies\fibA=\fbA\su\fiA\implies\fibA\su\fbiA$ by
Lemma~\ref{lem:subset_implications}(iv) it follows that $\fibA=\fifA\implies\psi A=8$.
By Proposition~\ref{prop:order_implications}(v) and Lemma~\ref{lem:op_relations}(iii),
$\fbiA=(\fibA\mbox{ or }\fifA)\implies\fbA=\fibA\su\fbiA\su\fiA\implies\ffA=\fiA$.
It follows that $\fbiA=\fifA\implies\psi A=10$ and $\fibA=\fbiA$ implies $\psi A=11$ when
$\fifA\neq\es$, $\psi A=13$ when $\fifA=\es$.
Clearly $\mathcal{E}_3A\implies\psi A=12$.

\case{Case}~$3$.~$(\phi A=6)$
Let $o\in\{\fb,\ei,\fib,\fbi\}$.
Claim $(\fbA=\fibA$ and $\fiA=\fbiA$ and $oA=\fifA)\implies\ffA=\fifA$.
The hypothesis implies $\fibA\sm\fifA=\fbiA\sm\fifA=\es$ by
Proposition~\ref{prop:order_implications}(i).
Hence $\ffA\sm\fifA=(\fbA\cup\fiA)\sm\fifA=(\fibA\cup\fbiA)\sm\fifA=\es$ by
Lemma~\ref{lem:op_relations}(iii), giving us the claim.
Lemma~\ref{lem:op_relations}(iii) also implies $(\ffA\neq\fbA\mbox{ or }\ffA\neq\fiA)
\implies\fbA\neq\fiA$.
Thus when $\ffA\supsetneq\fifA\supsetneq\es$, the four combinations of $\ff=\fb$ and
$\ff=\ei$ being satisfied or not produce \sn s $23$-$26$.
If $\ffA=\fifA$, then $\fifA\supseteq\fbiA\neq\es$ by
Lemma~\ref{lem:equations_kf}(iii); in this case the four combinations produce \sn s
$27$-$30$.
Finally, $\fifA=\es\implies\psi A=31$ by Proposition~\ref{prop:order_implications}(vi) and
Lemma~\ref{lem:op_relations}(iii).

A space in which all $70$ \sn s occur is given in the next section (see
Proposition~\ref{prop:all_psi}).
\end{proof}

\begin{cor}\label{cor:kfz_collapses}
\mth{$\KFZ$} has exactly \mth{$62$} local collapses.
\end{cor}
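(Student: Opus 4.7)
The plan is to derive this from Theorem~\ref{thm:set_types}. Each local collapse of $\KF$ restricts to a local collapse of $\KFZ$ by forgetting all equivalences that involve an operator in $a\KFZ$. First I would note that this restriction map is surjective onto the set of local collapses of $\KFZ$: any subset $A$ realizing a local collapse of $\KFZ$ automatically realizes some local collapse of $\KF$ whose restriction is the given $\KFZ$-collapse. Hence the number of local collapses of $\KFZ$ equals the number of distinct restrictions of the $70$ local collapses of $\KF$.

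Next I would analyze when two $\psi$-numbers restrict to the same $\KFZ$-collapse. This happens exactly when the two full collapses of $\KF$ differ only in equations of the form $o_1=ao_2$ with $o_1,o_2\in\KFZ$. By Table~\ref{tab:kf_akf}, any such cross-equation forces either $bA=X$ (with its consequences $iA=\afA$, $biA=\aifA$, $ibiA=\abifA$) or, dually, $iA=\es$. Within $\KFZ$ these two conditions manifest only through the equations they force among $\KFZ$-operators (for example, $bA=X\imp bA=ibA=bibA$, and $iA=\es\imp iA=biA=ibiA$), together with any consequences pulled back through $\FZ$ via Table~\ref{tab:multiplication}.

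The main work, and where the number $8$ enters, is to enumerate the extra $\psi$-numbers produced in the proof of Theorem~\ref{thm:set_types} precisely when $bA=X$ or $iA=\es$ (or both) while the induced $\KFZ$-collapse duplicates one already listed. These come from the branches in that proof for $\phi A\in\Phi_2=\{9,14,16,22,25,28,30\}$ that split according to $bA=X$ and $iA=\es$, plus the singular cases $\psi A\in\{60,61,69,70\}$ singled out explicitly there. The hard step will be verifying that the extra $\psi$-classes pair up cleanly (as opposed to forming larger merges) and that no collision occurs within the $\Phi_1$ part of Table~\ref{tab:subset_types}, where $bA\neq X$ and $iA\neq\es$; both are direct checks against Table~\ref{tab:subset_types} and Lemma~\ref{lem:equations_kf}(xi) and its dual.

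Carrying out that case-by-case identification yields $8$ pairs of $\psi$-numbers whose $\KFZ$-restrictions coincide, so the restriction map from $70$ local collapses of $\KF$ has $62$ images, completing the count. This enumeration is finite and can be cross-checked by the same type of computer run that established Theorem~\ref{thm:set_types}.
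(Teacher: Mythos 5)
Your proposal is correct and follows essentially the same route as the paper: both count the $70$ local collapses of $\KF$ from Theorem~\ref{thm:set_types} and then read off from Table~\ref{tab:subset_types} which $\psi$-numbers become identical once the equations involving $a\KFZ$ are discarded, finding exactly $8$ merged pairs ($\{34,35\}$, $\{41,42\}$, $\{45,46\}$, $\{53,54\}$, $\{58,59\}$, $\{60,61\}$, $\{64,65\}$, $\{68,69\}$), all arising from the $\Phi_2$ branches, with \pn~$25$ contributing two of them. The paper states this more tersely as ``only the seven \pn s in $\Phi_2$ extend to multiple \sn s with equal $\KFZ$-collapses, and exactly one of them (\pn~$25$) extends twice in this way,'' but the underlying bookkeeping is the same.
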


\begin{proof}
By Table~\ref{tab:subset_types} the only \pn s that extend to multiple \sn s with equal
collapses of $\KFZ$ are the seven in the set $\Phi_2$ above.
Since exactly one of them (\pn\ $25)$ extends twice in this way, the result follows.
\end{proof}

\begin{cor}\label{cor:other_columns}
Columns~\mth{$7$-$12$} are correct in Table~\nit{\ref{tab:subset_types}.}
\end{cor}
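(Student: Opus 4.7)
The plan is to verify columns 7--12 of Table~\ref{tab:subset_types} by extracting the relevant data from the case analysis already performed in the proof of Theorem~\ref{thm:set_types}. Each of the $70$ rows encodes a specific local collapse of $\KF$, and columns 7--12 record quantities (cardinalities such as $|\KA|$, $|\FA|$, $|\KFA|$, and related counts) that are determined by this collapse together with the status of the side conditions $bA = X$ and $iA = \es$.

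First, I would observe that the collapse of $\KZ$ (indexed by $\phi A$) and the accompanying collapse of $\FZ$ (produced in the proof of Theorem~\ref{thm:set_types} via Lemma~\ref{lem:fza_collapse} together with Proposition~\ref{prop:order_implications}(v)--(vi), Lemma~\ref{lem:op_relations}(iii), Lemma~\ref{lem:subset_implications}(v), and Lemma~\ref{lem:equations_kf}) together fix $|\KZA|$ and $|\FZA|$ and then $|\KFZA|$ by a direct count of equivalence classes of the equations $o_1A=o_2A$ with $o_1,o_2\in\KFZ$ that have been pinned down in each of the $70$ cases.

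Second, I would pass from $\KFZ$-values to $\KF$-values via $|\KFA|=2|\KFZA|-|\KFZA\cap a\KFZA|$, using the fact that complementation yields a bijection $\KFZA\to a\KFZA$ since $X\neq\es$. By Lemma~\ref{lem:kf_akf} and Lemma~\ref{lem:equations_kf}(xi), the intersection $\KFZA\cap a\KFZA$ is nonempty only when $bA=X$ or $iA=\es$, conditions tied to the exceptional \pn s $\phi A\in\{22,25,28,29,30\}$ up to left duality; in those cases Lemma~\ref{lem:equations_kf}(xi) enumerates the identifications $bA=X\iff iA=\afA\iff biA=\aifA\iff ibiA=\abifA$ (and dually for $iA=\es$), which determines $|\KFA|$ exactly. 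Applying duality and Corollary~\ref{cor:maximal_family} reduces the remaining cases to pure counting.

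The main obstacle is simply the volume of bookkeeping: $70$ rows times six columns is $420$ entries to verify. However, no new ideas are required beyond Lemma~\ref{lem:equations_kf}, Proposition~\ref{prop:order_implications}, Lemma~\ref{lem:op_relations}, Lemma~\ref{lem:subset_implications}, and Table~\ref{tab:multiplication}, each of which was already used to establish the $70$ \sn s. The corollary therefore reduces to a finite, mechanical verification that the tabulated values coincide with those derived in the proof of Theorem~\ref{thm:set_types}, and this can be cross-checked against the accompanying C programs referenced earlier.
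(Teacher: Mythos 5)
Your proposal rests on a misreading of what columns 7--12 of Table~\ref{tab:subset_types} actually contain. Only columns 7 and 8 are cardinalities ($k$ and $\kf$); columns 9 through 12 are $\psi(bA)$, $\psi(iA)$, $\psi(\fA)$, and $\psi(gA)$ --- the \sn s of the closure, interior, boundary, and border of $A$. Your entire argument (counting equivalence classes of equations in $\KFZA$, passing to $\KFA$ via the complementation bijection and Lemma~\ref{lem:kf_akf}, handling the exceptional cases $bA=X$ or $iA=\es$ through Lemma~\ref{lem:equations_kf}(xi)) addresses only the two cardinality columns, and even there it is more circuitous than necessary: the paper simply reads $k=14-2e$ and $\kf=34-2e_f$ off the number of equal signs in each row, as explained in the footnote to the table.

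The substantive content of the corollary, which your proposal does not touch, is showing that $\psi A$ \emph{determines} $\psi(bA)$, $\psi(iA)$, $\psi(\fA)$, and (up to an explicitly resolved ambiguity) $\psi(gA)$. This requires a different kind of argument: e.g., $\phi(bA)$ is governed by whether $\{bib,b\}$ and $\{bib,ib\}$ lie in the collapse of $A$ (since $bi(bA)=b(bA)\iff bibA=bA$, etc.), and $\psi(bA)$ then requires additionally tracking $ibA=\es$, $bA=X$, and $A=\es$; the $\psi(\fA)$ column uses $bi(\fA)=b(\fA)\iff\bifA=\fA$ and its companions; and the $\psi(gA)$ column needs $ig=0$, $\ifg=\ie$, Lemma~\ref{lem:equations_kf}(xii), and a case analysis that still leaves rows where two values genuinely occur (the table entries such as ``$37,44$''), which the paper settles by computer. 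None of this is reducible to the bookkeeping of family cardinalities you describe, so the proof as proposed would fail to establish four of the six columns.
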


\begin{proof}
Except for \sn\ $61$, $k=14-2e$ where $e$ is the number of equal signs in the collapse of
$\KZ$.
Similarly $\kf=34-2e_{\hspace{-.5pt}f}$ where $e_{\hspace{-.5pt}f}$ is $e$ plus the
number of equal signs in the remainder (see the footnote below
Table~\ref{tab:subset_types}).
Since $i=aba$ the value of $\psi i$ is determined by columns $5$ $(\psi a)$ and $9$
$(\psi b)$.

The four closed \pn s are characterized by their intersection with the set
$\{\{bi,b\},\{bi,i\}\}$: $21$ neither, $26$ $bi=b$, $29$ $bi=i$, $30$ both.
Thus, since $bi(bA)=b(bA)\iff bibA=bA$ and $bi(bA)=i(bA)\iff bibA=ibA$, the value of
$\phi(bA)$ is determined by the intersection of $\phi A$ with $\{\{bib,b\},\{bib,ib\}\}$.
If $\phi(bA)\in\{21,26\}$ we get $\psi(bA)$ immediately.
Otherwise, since $i(bA)=\es\ify ibA=\es$, $b(bA)=X\ify bA=X$, and $bA=\es\ify A=\es$, the
value of $\psi(bA)$ is determined by $\phi(bA)$ and the intersection of $\psi(A)$ with
$\{\{ib,0\},\{b,1\},\{\ident,0\}\}$.

The value of $\psi(\fA)$ is determined similarly since $bi(\fA)=b(\fA)\ify \bifA=\fA$,
$bi(\fA)=i(\fA)\ify \bifA=\ifA$, $i(\fA)=\es\ify\ifA=\es$, $b(\fA)=X\ify\fA=X$, and
$b(\fA)=\es\ify\fA=\es$.

\bgroup
\small
\renewcommand{\arraystretch}{.958}
\renewcommand{\tabcolsep}{1.2pt}
\begin{longtable}{|c|c|c|c|c|c|c|c|c|c|c|c|}
\caption{The $70$ local collapses of $\KF$.*}
\label{tab:subset_types}\\\hline
$\phi$&$\phi a$&collapse of $\KZ$\vrule width0pt height10pt depth5pt&$\psi$&
$\psi a$&remainder&$k$&$\kf$&$\psi b$&$\psi i$&$\psi\!\f$&$\psi g$\\
\Xcline{1-12}{2\arrayrulewidth}
\endfirsthead
\caption*{Table~\ref{tab:subset_types} (cont.): The $70$ local
collapses of $\KF$.}\\\hline
$\phi$&$\phi a$&collapse of $\KZ$\vrule width0pt height10pt depth5pt&$\psi$&
$\psi a$&remainder&$k$&$\kf$&$\psi b$&$\psi i$&$\psi\!\f$&$\psi g$\\
\Xcline{1-12}{2\arrayrulewidth}
\endhead
\multirow{6}{*}{\z{$1$}}&\multirow{6}{*}{\z{$1$}}&
\multirow{6}{*}{\z{$\es$}}&\z{$1$}&\z{$1$}&\z{$\es$}&
\multirow{6}{*}{\z{$14$}}&\z{$34$}&\multirow{6}{*}{\z{$52$}}&\multirow{6}{*}{\z{$51$}}&
\multirow{5}{*}{\z{$52$}}&\multirow{5}{*}{\z{$37$}}\\\cline{4-6}\cline{8-8}
&&&\z{$2$}&\z{$2$}&$\fib\eqy\fbi$&&\z{$32$}&&&&\\\cline{4-6}\cline{8-8}
&&&\z{$3$}&\z{$4$}&$\fib\eqy\fif$&&\z{$32$}&&&&\\\cline{4-6}\cline{8-8}
&&&\z{$4$}&\z{$3$}&$\fbi\eqy\fif$&&\z{$32$}&&&&\\\cline{4-6}\cline{8-8}
&&&\z{$5$}&\z{$5$}&$\fib\eqy\fbi\eqy\fif$&&\z{$30$}&&&&\\\cline{4-6}\cline{8-8}
\cline{11-12}
&&&\z{$6$}&\z{$6$}&$\fib\eqy\fbi,\,\fif\eqy0,\,\bif\eqy\ie$&&\z{$28$}&&&\z{$66$}&\z{$48$}
\\\hline
\multirow{8}{*}{\z{$2$}}&\multirow{8}{*}{\z{$3$}}&
\multirow{8}{*}{$bib\eqy b$}&\z{$7$}&\z{$14$}&$\fb\eqy\fib$&\multirow{8}{*}{\z{$12$}}&
\z{$30$}&
\multirow{8}{*}{\z{$62$}}&\multirow{8}{*}{\z{$51$}}&\multirow{6}{*}{\z{$52$}}&
\multirow{6}{*}{\z{$37,44$}}
\\\cline{4-6}\cline{8-8}
&&&\z{$8$}&\z{$15$}&$\fb\eqy\fib\eqy\fif$&&\z{$28$}&&&&\\\cline{4-6}\cline{8-8}
&&&\z{$9$}&\z{$16$}&$\fb\eqy\fib,\,\ff\eqy\ei$&&\z{$28$}&&&&\\\cline{4-6}\cline{8-8}
&&&\z{$10$}&\z{$17$}&$\fb\eqy\fib,\,\ff\eqy\ei,\,\fbi\eqy\fif$&&\z{$26$}&&&&\\\cline{4-6}
\cline{8-8}
&&&\z{$11$}&\z{$18$}&$\fb\eqy\fib\eqy\fbi,\,\ff\eqy\ei$&&\z{$26$}&&&&\\\cline{4-6}
\cline{8-8}
&&&\z{$12$}&\z{$19$}&$\fb\eqy\fib\eqy\fbi\eqy\fif,\,\ff\eqy\ei$&&\z{$24$}&&&&\\
\cline{4-6}
\cline{8-8}\cline{11-12}
&&&\z{$13$}&\z{$20$}&\makecell{$\fb\eqy\fib\eqy\fbi,\,\ff\eqy\ei,\,\fif\eqy0$,
\vspace{-1pt}\\$\bif\eqy\ie$}&&
\z{$22$}&&&\z{$66$}&\z{$48,60$}\\\hline
\multirow{8}{*}{\z{$3$}}&\multirow{8}{*}{\z{$2$}}&
\multirow{8}{*}{$ibi\eqy i$}&
\z{$14$}&\z{$7$}&$\ei\eqy\fbi$&\multirow{8}{*}{\z{$12$}}&\z{$30$}&
\multirow{8}{*}{\z{$52$}}&
\multirow{8}{*}{\z{$63$}}&\multirow{6}{*}{\z{$52$}}&\multirow{6}{*}{\z{$37$}}\\
\cline{4-6}\cline{8-8}
&&&\z{$15$}&\z{$8$}&$\ei\eqy\fbi\eqy\fif$&&\z{$28$}&&&&\\\cline{4-6}\cline{8-8}
&&&\z{$16$}&\z{$9$}&$\ei\eqy\fbi,\,\ff\eqy\fb$&&\z{$28$}&&&&\\\cline{4-6}\cline{8-8}
&&&\z{$17$}&\z{$10$}&$\ei\eqy\fbi,\,\ff\eqy\fb,\,\fib\eqy\fif$&&\z{$26$}&&&&\\\cline{4-6}
\cline{8-8}
&&&\z{$18$}&\z{$11$}&$\ei\eqy\fbi\eqy\fib,\,\ff\eqy\fb$&&\z{$26$}&&&&\\\cline{4-6}
\cline{8-8}
&&&\z{$19$}&\z{$12$}&$\ei\eqy\fbi\eqy\fib\eqy\fif,\,\ff\eqy\fb$&&\z{$24$}&&&&\\
\cline{4-6}\cline{8-8}\cline{11-12}
&&&\z{$20$}&\z{$13$}&\makecell{$\ei\eqy\fbi\eqy\fib,\,\ff\eqy\fb,\,\fif\eqy0$,
\vspace{-1pt}\\$\bif\eqy\ie$}&&
\z{$22$}&&&\z{$66$}&\z{$48$}\\\hline
\z{$4$}&\z{$5$}&\z{$bib\eqy ib$}&\z{$21$}&\z{$22$}&$\fib\eqy0,\,\fbi\eqy\fif$&\z{$12$}&
\z{$28$}&\z{$66$}&\z{$51$}&\z{$52$}&\z{$37$}\\\hline
\z{$5$}&\z{$4$}&\z{$ibi\eqy bi$}&\z{$22$}&\z{$21$}&$\fbi\eqy0,\,\fib\eqy\fif$&\z{$12$}&
\z{$28$}&\z{$52$}&\z{$64$}&\z{$52$}&\z{$37$}\\\hline
\multirow{10}{*}{\z{$6$}}&\multirow{10}{*}{\z{$6$}}&
\multirow{10}{*}{$bib\eqy b,\,ibi\eqy i$}&
\z{$23$}&\z{$23$}&$\fb\eqy\fib,\,\ei\eqy\fbi$&\multirow{10}{*}{\z{$10$}}&\z{$26$}&
\multirow{10}{*}{\z{$62$}}&
\multirow{10}{*}{\z{$63$}}&\multirow{4}{*}{\z{$52$}}&\multirow{4}{*}{\z{$37,44$}}
\\\cline{4-6}\cline{8-8}
&&&\z{$24$}&\z{$25$}&$\ff\eqy\fb\eqy\fib,\,\ei\eqy\fbi$&&\z{$24$}&&&&\\\cline{4-6}
\cline{8-8}
&&&\z{$25$}&\z{$24$}&$\ff\eqy\ei\eqy\fbi,\,\fb\eqy\fib$&&\z{$24$}&&&&\\\cline{4-6}
\cline{8-8}
&&&\z{$26$}&\z{$26$}&$\ff\eqy\fb\eqy\fib\eqy\ei\eqy\fbi$&&\z{$22$}&&&&
\\\cline{4-6}\cline{8-8}\cline{11-12}
&&&\z{$27$}&\z{$27$}&$\fb\eqy\fib,\,\ei\eqy\fbi,\,\ff\eqy\fif,\,\f\eqy\bif$&&\z{$22$}&&&
\multirow{4}{*}{\z{$62$}}&\multirow{4}{*}{\z{$44$}}\\\cline{4-6}\cline{8-8}
&&&\z{$28$}&\z{$29$}&$\ei\eqy\fbi,\,\ff\eqy\fb\eqy\fib\eqy\fif,\,\f\eqy\bif$&&\z{$20$}&&
&&\\\cline{4-6}\cline{8-8}
&&&\z{$29$}&\z{$28$}&$\fb\eqy\fib,\,\ff\eqy\ei\eqy\fbi\eqy\fif,\,\f\eqy\bif$&&\z{$20$}&&
&&\\\cline{4-6}\cline{8-8}
&&&\z{$30$}&\z{$30$}&$\ff\eqy\fb\eqy\ei\eqy\fib\eqy\fbi\eqy\fif,\,\f\eqy\bif$&&\z{$18$}&
&&&\\\cline{4-6}\cline{8-8}\cline{11-12}
&&&\z{$31$}&\z{$31$}&\makecell{$\ff\eqy\fb\eqy\fib\eqy\ei\eqy\fbi,\,\fif\eqy0$,
\vspace{-1pt}\\$\bif\eqy\ie$}&&
\z{$18$}&&&\z{$66$}&\z{$48,60$}\\\hline
\z{$7$}&\z{$8$}&\z{$bib\eqy ib,\,ibi\eqy i$}&
\z{$32$}&\z{$33$}&$\ei\eqy\fbi\eqy\fif,\,\fib\eqy0$&\z{$10$}&\z{$24$}&\z{$66$}&\z{$63$}&
\z{$52$}&\z{$37$}\\\hline
\z{$8$}&\z{$7$}&\z{$ibi\eqy bi,\,bib\eqy b$}&
\z{$33$}&\z{$32$}&$\fb\eqy\fib\eqy\fif,\,\fbi\eqy0$&\z{$10$}&\z{$24$}&\z{$62$}&\z{$64$}&
\z{$52$}&\z{$37,44$}\\\hline
\multirow{3}{*}{\z{$9$}}&\multirow{3}{*}{\z{$10$}}&
\multirow{3}{*}{\z{$bib\eqy ib\eqy b$}}&
\z{$34$}\vrule width0pt height9pt depth3pt&\z{$36$}&$\fbi\eqy\fif,\,\ff\eqy\ei,\,\fb\eqy\fib\eqy0$&
\multirow{3}{*}{\z{$10$}}&\z{$22$}&\z{$68$}&
\multirow{3}{*}{\z{$51$}}&\multirow{3}{*}{\z{$52$}}&\multirow{3}{*}{\z{$37,44$}}
\\\cline{4-6}\cline{8-9}
&&&\z{$35$}&\z{$37$}&\makecell{$(34)$,\,$b\eqy1,\,i\eqy\af$,\vspace{-1pt}\\
$bi\eqy\aif,\,ibi\eqy\abif$}&&\z{$14$}&\z{$69$}&&&\\\hline
\multirow{2}{*}{\z{$10$}}&\multirow{2}{*}{\z{$9$}}&
\multirow{2}{*}{$ibi\eqy bi\eqy i$}&
\z{$36$}&\z{$34$}&$\fib\eqy\fif,\,\ff\eqy\fb,\,\ei\eqy\fbi\eqy0$&
\multirow{2}{*}{\z{$10$}}&\z{$22$}&
\multirow{2}{*}{\z{$52$}}&\z{$68$}&\multirow{2}{*}{\z{$52$}}&\multirow{2}{*}{\z{$37$}}
\\\cline{4-6}\cline{8-8}\cline{10-10}
&&&\z{$37$}&\z{$35$}&$(36)$,\,$b\eqy f,\,i\eqy0,\,ib\eqy\ie,\,bib\eqy\bif$&&\z{$14$}&&
\z{$70$}&&\\\hline
\z{$11$}&\z{$11$}&\z{$bib\eqy ib,\,ibi\eqy bi$}&
\z{$38$}&\z{$38$}&$\fib\eqy\fbi\eqy\fif\eqy0,\,\bif\eqy\ie$&\z{$10$}&\z{$22$}&\z{$66$}&
\z{$64$}&\z{$66$}&\z{$48$}\\\hline
\z{$12$}&\z{$12$}&\z{$bib\eqy bi,\,ibi\eqy ib$}&\z{$39$}&\z{$39$}&$\fib\eqy\fbi,\,\bif
\eqy\fif\eqy\ie\eqy0,\,\f\eqy\ff$&\z{$10$}&\z{$20$}&\z{$52$}&\z{$51$}&\z{$67$}&
\z{$56,67$}\\\hline
\z{$13$}&\z{$13$}&\z{$bib\eqy bi\eqy ibi\eqy ib$}&\z{$40$}&\z{$40$}&$\fib\eqy\fbi\eqy\bif
\eqy\fif\eqy\ie\eqy0,\,\f\eqy\ff$&\z{$8$}&\z{$16$}&\z{$66$}&\z{$64$}&\z{$67$}&\z{$56,67$}
\\\hline
\multirow{2}{*}{\z{$14$}}&\multirow{2}{*}{\z{$15$}}&
\multirow{2}{*}{\makecell{$bib\eqy ib\eqy b$,\vspace{-2pt}\\
$ibi\eqy i$}}&
\z{$41$}&\z{$43$}&\makecell{$\ff\eqy\ei\eqy\fbi\eqy\fif,\,\fb\eqy\fib\eqy0$,\vspace{-1pt}
\\$\f\eqy\bif$}&
\multirow{2}{*}{\z{$8$}}&\z{$16$}&
\z{$68$}&\multirow{2}{*}{\z{$63$}}&\multirow{2}{*}{\z{$62$}}&\multirow{2}{*}{\z{$44$}}
\\\cline{4-6}\cline{8-9}
&&&\z{$42$}&\z{$44$}&$(41)$,\,$b\eqy1,\,i\eqy\af,\,bi\eqy\aif$&&
\z{$10$}&\z{$69$}&&&\\\hline
\multirow{2}{*}{\z{$15$}}&\multirow{2}{*}{\z{$14$}}&
\multirow{2}{*}{\makecell{$ibi\eqy bi\eqy i$,\vspace{-2pt}\\
$bib\eqy b$}}&
\z{$43$}&\z{$41$}&\makecell{$\ff\eqy\fb\eqy\fib\eqy\fif,\,\ei\eqy\fbi\eqy0$,\vspace{-1pt}
\\$\f\eqy\bif$}&
\multirow{2}{*}{\z{$8$}}&\z{$16$}&
\multirow{2}{*}{\z{$62$}}&\z{$68$}&\multirow{2}{*}{\z{$62$}}&\multirow{2}{*}{\z{$44$}}
\\\cline{4-6}\cline{8-8}\cline{10-10}
&&&\z{$44$}&\z{$42$}&$(43)$,\,$b\eqy f,\,i\eqy0,\,ib\eqy\ie$&&\z{$10$}&&\z{$70$}&
&\\\hline\pagebreak
\multirow{2}{*}{\z{$16$}}&\multirow{2}{*}{\z{$17$}}&
\multirow{2}{*}{\makecell{$bib\eqy ib\eqy b$,\vspace{-2pt}\\
$ibi\eqy bi$}}&
\z{$45$}&\z{$47$}&\makecell{$\fb\eqy\fib\eqy\fbi\eqy\fif\eqy0,\,\ff\eqy\ei$,
\vspace{-1pt}\\$\bif\eqy\ie$}&
\multirow{2}{*}{\z{$8$}}&\z{$16$}&\z{$68$}&\multirow{2}{*}{\z{$64$}}&
\multirow{2}{*}{\z{$66$}}&
\multirow{2}{*}{\z{$48,60$}}\\\cline{4-6}\cline{8-9}
&&&\z{$46$}&\z{$48$}&$(45)$,\,$b\eqy1,\,i\eqy\af,\,bi\eqy\aif$&&\z{$10$}&\z{$69$}
&&&\\\hline
\multirow{2}{*}{\z{$17$}}&\multirow{2}{*}{\z{$16$}}&
\multirow{2}{*}{\makecell{$ibi\eqy bi\eqy i$,\vspace{-2pt}\\
$bib\eqy ib$}}&
\z{$47$}&\z{$45$}&\makecell{$\ei\eqy\fib\eqy\fbi\eqy\fif\eqy0,\,\ff\eqy\fb$,
\vspace{-1pt}\\$\bif\eqy\ie$}&
\multirow{2}{*}{\z{$8$}}&\z{$16$}&\multirow{2}{*}{\z{$66$}}&\z{$68$}&
\multirow{2}{*}{\z{$66$}}&
\multirow{2}{*}{\z{$48$}}\\\cline{4-6}\cline{8-8}\cline{10-10}
&&&\z{$48$}&\z{$46$}&$(47)$,\,$b\eqy f,\,i\eqy0,\,ib\eqy\ie$&&\z{$10$}&&\z{$70$}&&\\
\hline
\z{$18$}&\z{$19$}&\makecell{$bib\eqy bi\eqy b$,\vspace{-2pt}\\
$ibi\eqy ib$}&\z{$49$}&\z{$50$}&
\makecell{$\fb\eqy\fib\eqy\fbi,\,\bif\eqy\fif\eqy\ie\eqy0$,\vspace{-1pt}\\
$\f\eqy\ff\eqy\ei$}&\z{$8$}&\z{$14$}&
\z{$62$}&\z{$51$}&\z{$67$}&\z{$56,67$}\\\hline
\z{$19$}&\z{$18$}&\makecell{$ibi\eqy ib\eqy i$,\vspace{-2pt}\\
$bib\eqy bi$}&\z{$50$}&\z{$49$}&
\makecell{$\ei\eqy\fib\eqy\fbi,\,\bif\eqy\fif\eqy\ie\eqy0$,\vspace{-1pt}\\
$\f\eqy\ff\eqy\fb$}&\z{$8$}&\z{$14$}&
\z{$52$}&\z{$63$}&\z{$67$}&\z{$56,67$}\\\hline
\z{$20$}&\z{$21$}&$(18),\,\ident\eqy i$&\z{$51$}&\z{$52$}&$(49)$&\z{$6$}&\z{$12$}&
\z{$62$}&\z{$51$}&\z{$67$}&\z{$70$}\\\hline
\z{$21$}&\z{$20$}&$(19),\,\ident\eqy b$&\z{$52$}&\z{$51$}&$(50)$&\z{$6$}&\z{$12$}&
\z{$52$}&\z{$63$}&\z{$67$}&\z{$67$}\\\hline
\multirow{2}{*}{\z{$22$}}&\multirow{2}{*}{\z{$23$}}&
\multirow{2}{*}{\makecell{$bib\eqy bi\eqy$\vspace{-2pt}\\
$b\eqy ibi\eqy ib$}}&
\z{$53$}&\z{$55$}&$(49)$,\,$\fb\eqy0$&
\multirow{2}{*}{\z{$6$}}&\z{$10$}&\z{$68$}&\z{$64$}&\multirow{2}{*}{\z{$67$}}&
\multirow{2}{*}{\z{$56,67$}}\\\cline{4-6}\cline{8-10}
&&&\z{$54$}&\z{$56$}&$(53),\,b\eqy1,\,i\eqy\af$&&\z{$6$}&\z{$69$}&\z{$65$}&&\\\hline
\multirow{2}{*}{\z{$23$}}&\multirow{2}{*}{\z{$22$}}&
\multirow{2}{*}{\makecell{$ibi\eqy ib\eqy$\vspace{-2pt}\\
$i\eqy bib\eqy bi$}}&
\z{$55$}&\z{$53$}&$(50)$,\,$\ei\eqy0$&
\multirow{2}{*}{\z{$6$}}&\z{$10$}&\z{$66$}&\z{$68$}&\multirow{2}{*}{\z{$67$}}&
\multirow{2}{*}{\z{$56$}}\\\cline{4-6}\cline{8-10}
&&&\z{$56$}&\z{$54$}&$(55),\,b\eqy f,\,i\eqy0$&&\z{$6$}&\z{$67$}&\z{$70$}&&\\\hline
\z{$24$}&\z{$24$}&\makecell{$bib\eqy bi\eqy b$,\vspace{-2pt}\\
$ibi\eqy ib\eqy i$}&\z{$57$}&\z{$57$}&
\makecell{$\f\eqy\ff\eqy\fb\eqy\ei\eqy\fib\eqy\fbi$,\vspace{-1pt}\\
$\bif\eqy\fif\eqy\ie\eqy0$}&\z{$6$}&\z{$10$}&
\z{$62$}&\z{$63$}&\z{$67$}&\z{$56,67$}\\\hline
\multirow{4}{*}{\z{$25$}}&\multirow{4}{*}{\z{$25$}}&
\multirow{4}{*}{\makecell{$bib\eqy ib\eqy b$,\vspace{-2pt}\\
$ibi\eqy bi\eqy i$}}&
\z{$58$}&\z{$58$}&\makecell{$\ff\eqy\fb\eqy\ei\eqy\fib\eqy\fbi\eqy\fif\eqy0$,
\vspace{-1pt}\\$\f\eqy\bif\eqy\ie$}&
\multirow{3}{*}{\z{$6$}}&\z{$10$}&\z{$68$}&\multirow{2}{*}{\z{$68$}}&
\multirow{3}{*}{\z{$68$}}&
\multirow{3}{*}{\z{$60$}}\\\cline{4-6}\cline{8-9}
&&&\z{$59$}&\z{$60$}&$(58),\,b\eqy1,\,i\eqy\af$&&\z{$6$}&\z{$69$}&&&\\
\cline{4-6}\cline{8-10}
&&&\z{$60$}&\z{$59$}&$(58),\,b\eqy f,\,i\eqy0$&&\z{$6$}&\z{$68$}&
\multirow{2}{*}{\z{$70$}}&&\\\cline{4-9}\cline{11-12}
&&&\z{$61$}&\z{$61$}&$(60),\,f\eqy1$&\z{$4$}&\z{$4$}&\z{$69$}&&\z{$69$}&\z{$61$}\\\hline
\z{$26$}&\z{$27$}&$(24),\,\ident\eqy b$&\z{$62$}&\z{$63$}&$(57)$&\z{$4$}&\z{$8$}&
\z{$62$}&\z{$63$}&\z{$67$}&\z{$67$}\\\hline
\z{$27$}&\z{$26$}&$(24),\,\ident\eqy i$&\z{$63$}&\z{$62$}&$(57)$&\z{$4$}&\z{$8$}&
\z{$62$}&\z{$63$}&\z{$67$}&\z{$70$}\\\hline
\multirow{2}{*}{\z{$28$}}&\multirow{2}{*}{\z{$29$}}&
\multirow{2}{*}{$(22),\,\ident\eqy i$}&\z{$64$}&\z{$66$}&$(53)$&
\multirow{2}{*}{\z{$4$}}&\z{$8$}&\z{$68$}&\z{$64$}&\multirow{2}{*}{\z{$67$}}&
\multirow{2}{*}{\z{$70$}}\\\cline{4-6}\cline{8-10}
&&&\z{$65$}&\z{$67$}&$(53),\,b\eqy1,\,i\eqy\af$&&\z{$4$}&\z{$69$}&\z{$65$}&&\\\hline
\multirow{2}{*}{\z{$29$}}&\multirow{2}{*}{\z{$28$}}&
\multirow{2}{*}{$(23),\,\ident\eqy b$}&
\z{$66$}&\z{$64$}&$(55)$&
\multirow{2}{*}{\z{$4$}}&\z{$8$}&\z{$66$}&\z{$68$}&\multirow{2}{*}{\z{$67$}}&
\multirow{2}{*}{\z{$67$}}\\\cline{4-6}\cline{8-10}
&&&\z{$67$}&\z{$65$}&$(55),\,b\eqy f,\,i\eqy0$&&\z{$4$}&\z{$67$}&\z{$70$}&&\\\hline
\multirow{3}{*}{\z{$30$}}&\multirow{3}{*}{\z{$30$}}&
\multirow{3}{*}{\makecell{$\ident\eqy bib\eqy bi\eqy$\vspace{-2pt}\\
$b\eqy ibi\eqy ib\eqy i$}}&
\z{$68$}&\z{$68$}&$(58),\,\f\eqy0$&
\multirow{3}{*}{\z{$2$}}&\z{$4$}&\z{$68$}&\z{$68$}&\multirow{3}{*}{\z{$70$}}&
\multirow{3}{*}{\z{$70$}}\\\cline{4-6}\cline{8-10}
&&&\z{$69$}&\z{$70$}&$(68),\,b\eqy1$&&\z{$2$}&\z{$69$}&\z{$69$}&&\\
\cline{4-6}\cline{8-10}
&&&\z{$70$}&\z{$69$}&$(68),\,b\eqy0$&&\z{$2$}&\z{$70$}&\z{$70$}&&\\\hline
\multicolumn{12}{c}{\hspace{0pt}\begin{minipage}[t]{380pt}\specialpar{\footnotesize
\vrule width0pt height11pt*Let $o\in\KFZ$. If $oA=pA$ for some $p\in\KF$ besides $o$,
then $o=p$ appears for at least one such $p$.
Since $o$ and $ao$ never both appear, it follows that $\kf(A)$ equals $34$ minus twice
the number of equal signs that appear.
Implied equations $oA=pA$ hold either because $o=o'$ and $o'=p$ both appear for some
$o'\in\KZ$ or because $o=ao'$ and $o'=ap$ both appear for some $o'\in\FZ$.
The notation ``$(x)$'' either stands for \pn\ $x$ or the remainder in \sn\ $x$.}
\end{minipage}}\\
\end{longtable}
\egroup

\begin{table}[!ht]
\centering
\renewcommand{\arraystretch}{1}
\renewcommand{\tabcolsep}{3pt}
\caption{The smallest $|X|$ such that $A\su X$ exists with $\psi A=n$, up to duality.}
\begin{tabular}{|c|c|c|c|c|}
\multicolumn{1}{c}{$|X|$}&\multicolumn{1}{c}{$\psi$}&\multicolumn{1}{c}{}&
\multicolumn{1}{c}{$|X|$}&\multicolumn{1}{c}{$\psi$}
\\\cline{1-2}\cline{4-5}
$1$\vadj&$69$&&$5$&$30,31,35,39,41,45$\\\cline{1-2}\cline{4-5}
$2$\vadj&$61,65,68$&&$6$&$12,13,24,26,27,28,32,34,38$\\\cline{1-2}\cline{4-5}
$3$\vadj&$54,59,62,64$&&$7$&$5,6,7,8,9,10,11,21,23$\\\cline{1-2}\cline{4-5}
$4$\vadj&$40,42,46,49,51,53,57,58$&&$8$&$1,2,3$\\\cline{1-2}\cline{4-5}
\end{tabular}\label{tab:minimal_psi}
\end{table}

\bgroup
\begin{figure}[!b]
\centering\small
\begin{tikzpicture}
[auto,scale=.8,
 block/.style={rectangle,draw=black,align=center,minimum width={30pt},
 minimum height={16pt},scale=.8},
 topblock/.style={rectangle,draw=black,line width=1pt,align=center,minimum width={30pt},
 minimum height={16pt},scale=.8},
 wideblock/.style={rectangle,draw=black,align=center,minimum height={16pt},scale=.8},
 line/.style={draw}]
	\node[topblock,anchor=south] (1) at (3,0) {$1$};
	\node[topblock,anchor=south] (2) at (0,0) {$2$};
	\node[topblock,anchor=south] (3) at (4.5,0) {$3\com4$};
	\node[topblock,anchor=south] (5) at (6,0) {$5$};
	\node[topblock,anchor=south] (6) at (1.5,0) {$6$};
	\node[block,anchor=south] (7) at (3,.8) {$7\com14$};
	\node[block,anchor=south] (8) at (4.5,.8) {$8\com15$};
	\node[topblock,anchor=south] (9) at (9,0) {$9\com16$};
	\node[block,anchor=south] (10) at (5.7,2.5) {$10\com17$};
	\node[block,anchor=south] (11) at (0,1) {$11\com18$};
	\node[block,anchor=south] (12) at (7.3,.85) {$12\com19$};
	\node[block,anchor=south] (13) at (1.3,1.5) {$13\com20$};
	\node[topblock,anchor=south] (21) at (7.5,0) {$21\com22$};
	\node[topblock,anchor=south] (23) at (10.5,0) {$23$};
	\node[block,anchor=south] (24) at (8.6,.85) {$24\com25$};
	\node[block,anchor=south] (26) at (0,2) {$26$};
	\node[topblock,anchor=south] (27) at (8.7,3.5) {$27$};
	\node[block,anchor=south] (28) at (3.5,2.95) {$28\com29$};
	\node[block,anchor=south] (30) at (10,3.5) {$30$};
	\node[block,anchor=south] (31) at (7.2,2) {$31$};
	\node[block,anchor=south] (32) at (12,.9) {$32\com33$};
	\node[block,anchor=south] (34) at (10.5,.9) {$34\com36$};
	\node[block,anchor=south] (35) at (6,4.4) {$35\com37$};
	\node[topblock,anchor=south] (38) at (12,0) {$38$};
	\node[block,anchor=south] (39) at (3,2.2) {$39$};
	\node[block,anchor=south] (40) at (13.5,1.7) {$40$};
	\node[block,anchor=south] (41) at (12,1.7) {$41\com43$};
	\node[block,anchor=south] (42) at (10,5) {$42\com44$};
	\node[block,anchor=south] (45) at (10.5,2.5) {$45\com47$};
	\node[block,anchor=south] (46) at (12,3.3) {$46\com48$};
	\node[wideblock,anchor=south] (49) at (3,5) {$49\com50\com51\com52$};
	\node[topblock,anchor=south] (53) at (13.5,3.3) {$53\com55$};
	\node[block,anchor=south] (54) at (13.5,5) {$54\com56$};
	\node[block,anchor=south] (57) at (6,5.5) {$57$};
	\node[block,anchor=south] (58) at (11.25,4.2) {$58$};
	\node[block,anchor=south] (59) at (12,6.3) {$59\com60$};
	\node[topblock,anchor=south] (61) at (6,7) {$61$};
	\node[block,anchor=south] (62) at (7.5,6.3) {$62\com63$};
	\node[block,anchor=south] (64) at (12.75,4.2) {$64\com66$};
	\node[block,anchor=south] (65) at (9,7) {$65\com67$};
	\node[block,anchor=south] (68) at (10.5,7) {$68$};
	\node[block,anchor=south] (69) at (7.5,7.5) {$69\com70$};
	\node[anchor=south] (label) at (-.1,6.4) [label={[label distance=-5pt]0:(see
	Proposition~\ref{prop:always_together})}] {};
	\node (tail) at (2.6,5.9) {};
	\draw [->, transform canvas={xshift=-20pt,yshift=12pt}] (tail) -- ($(tail)!22pt!(49)$);
	\draw[line] (1.90) -- (7.-90);
	\draw[line] (2.90) -- (11.-90);
	\draw[line] (2.north east) -- (39.-90);
	\draw[line] (3.90) -- (8.-90);
	\draw[line] (3.north east) -- (10.-90);
	\draw[line] (5.north east) -- (12.south west);
	\draw[line] (5.50) -- (39.0);
	\draw[line] (6.124) -- (13.-90);
	\draw[line] (6.90) -- (39.-90);
	\draw[line] (7.north east) -- (35.south west);
	\draw[line] (7.90) -- (39.-90);
	\draw[line] (7.north east) -- (57.south west);
	\draw[line] (8.90) -- (28.-90);
	\draw[line] (8.90) -- (35.south west);
	\draw[line] (8.90) -- (39.-90);
	\draw[line] (9.135) -- (24.-70);
	\draw[line] (10.180) -- (28.0);
	\draw[line] (10.44) -- (35.-90);
	\draw[line] (11.90) -- (26.-90);
	\draw[line] (12.north east) -- (30.south west);
	\draw[line] (12.north west) -- (35.-90);
	\draw[line] (13.0) -- (31.180);
	\draw[line] (13.90) -- (49.210);
	\draw[line] (21.north east) -- (32.south west);
	\draw[line] (21.north east) -- (34.south west);
	\draw[line] (21.north west) -- (39.0);
	\draw[line] (23.north west) -- (35.south east);
	\draw[line] (23.north west) -- (57.south east);
	\draw[line] (24.90) -- (35.south east);
	\draw[line] (24.90) -- (57.south east);
	\draw[line] (26.north east) -- (35.180);
	\draw[line] (26.north east) -- (57.south west);
	\draw[line] (27.north east) -- (42.-90);
	\draw[line] (27.200) -- (49.210);
	\draw[line] (27.north west) -- (57.south east);
	\draw[line] (28.0) -- (42.180);
	\draw[line] (28.90) -- (49.210);
	\draw[line] (28.90) -- (57.south west);
	\draw[line] (30.90) -- (42.-90);
	\draw[line] (31.18) -- (46.174);
	\draw[line] (31.60) -- (62.-120);
	\draw[line] (32.north west) -- (35.south east);
	\draw[line] (32.north east) -- (40.south west);
	\draw[line] (32.90) -- (41.-90);
	\draw[line] (34.north west) -- (35.south east);
	\draw[line] (34.north east) -- (41.south west);
	\draw[line] (35.0) -- (42.180);
	\draw[line] (35.180) -- (49.south east);
	\draw[line] (38.north east) -- (40.-90);
	\draw[line] (38.north west) -- (45.south east);
	\draw[line] (39.110) -- (49.210);
	\draw[line] (40.north west) -- (64.-90);
	\draw[line] (41.90) -- (42.-90);
	\draw[line] (41.north east) -- (64.-90);
	\draw[line] (42.north west) -- (62.south east);
	\draw[line] (45.north east) -- (46.south west);
	\draw[line] (45.90) -- (58.-90);
	\draw[line] (46.90) -- (59.-90);
	\draw[line] (46.90) -- (64.-90);
	\draw[line] (49.90) -- (62.180);
	\draw[line] (53.90) -- (54.-90);
	\draw[line] (53.90) -- (64.-90);
	\draw[line] (54.180) -- (65.-90);
	\draw[line] (57.north east) -- (62.south west);
	\draw[line] (58.90) -- (59.-90);
	\draw[line] (59.180) -- (68.-90);
	\draw[line] (61.north east) -- (69.180);
	\draw[line] (62.north east) -- (65.south west);
	\draw[line] (64.north west) -- (65.-90);
	\draw[line] (64.north west) -- (68.-90);
	\draw[line] (65.north west) -- (69.0);
	\draw[line] (68.north west) -- (69.20);
\end{tikzpicture}
\caption{The relation $(A\su X\mbox{ satisfies }\psi A=m)\implies(B\su X\mbox{ exists
with }\psi B=n)$.}
\label{fig:psi-implications}
\end{figure}

\begin{table}[!hb]
\caption{Evidence supporting Figure~\ref{fig:psi-implications} in all spaces of
cardinality $\leq11$.}
\centering\small
\renewcommand{\arraystretch}{.9}
\renewcommand{\tabcolsep}{2pt}
\renewcommand{\vadj}{\vrule width0pt height8pt}
\begin{tabular}[t]{|c|c|c|c|c|c|c|c|c|c|c|}
\multicolumn{2}{c}{}&\multicolumn{1}{c}{for some}&\multicolumn{8}{c}{}\\
\multicolumn{1}{c}{$\psi A$}&\multicolumn{1}{c}{$\psi B$}&\multicolumn{1}{c}{$B$ among:}&
\multicolumn{1}{c}{}&\multicolumn{1}{c}{$\psi A$}&\multicolumn{1}{c}{$\psi B$}&
\multicolumn{1}{c}{$B$}&\multicolumn{1}{c}{}&\multicolumn{1}{c}{$\psi A$}&
\multicolumn{1}{c}{$\psi B$}&\multicolumn{1}{c}{$B$}\\
\Xcline{1-3}{2\arrayrulewidth}\Xcline{5-7}{2\arrayrulewidth}
\Xcline{9-11}{2\arrayrulewidth}
\multirow{2}{*}{\y{$1$}}&\multirow{2}{*}{\y{$7$}}&$A\sm\ffA$,&&\y{$13$}&
\y{$31$}&
$A\cup biA$&&\y{$34$}&\y{$41$}&$A\cup biA$\vrule width0pt height9pt depth0pt\\
\cline{5-7}\cline{9-11}
&&$aA\sm\ffA$&&\y{$13$}&\y{$51$}\vadj&$iA$&&\y{$35$}&\y{$44$}&
Proposition~\ref{prop:GE_characterization}\\\cline{1-3}\cline{5-7}\cline{9-11}
\y{$2$}&\y{$11$}&$A\cap ibA$&&\y{$21$}&\y{$32$}\vadj&$A\cup biA$&&\y{$35$}&\y{$51$}&$iA$\\
\cline{1-3}\cline{5-7}\cline{9-11}
\y{$2$}&\y{$39$}&$A\cup\ifA$&&\y{$21$}&\y{$34$}\vadj&$A\cap ibA$&&\y{$38$}&\y{$40$}&$A\cup
\ifA$\\\cline{1-3}\cline{5-7}\cline{9-11}
\y{$3$}&\y{$8$}&$A\cap ibA$&&\y{$21$}&\y{$39$}\vadj&$iA\cup\fbA$&&\y{$38$}&\y{$45$}&$A\cap
ibA$\\\cline{1-3}\cline{5-7}\cline{9-11}
\y{$3$}&\y{$10$}&$(A\cap\ifA)\cup iaA$&&\y{$23$}&\y{$35$}\vadj&$(A\cap ibA)\cup iaA$&&
\y{$39$}&\y{$51$}&$iA$\\\cline{1-3}\cline{5-7}\cline{9-11}
\y{$5$}&\y{$12$}&$A\cap ibA$&&\y{$23$}&\y{$57$}\vadj&$ibA\cup biA$&&\y{$40$}&\y{$64$}&$iA$
\\\cline{1-3}\cline{5-7}\cline{9-11}
\y{$5$}&\y{$39$}&$A\cup\ifA$&&\y{$24$}&\y{$35$}\vadj&$(A\cap ibA)\cup iaA$&&\y{$41$}&
\y{$44$}&Proposition~\ref{prop:GE_characterization}\\\cline{1-3}\cline{5-7}\cline{9-11}
\y{$6$}&\y{$13$}&$A\cap ibA$&&\y{$24$}&\y{$57$}\vadj&$ibA\cup biA$&&\y{$41$}&\y{$64$}&
$iA\cup\ifA$\\\cline{1-3}\cline{5-7}\cline{9-11}
\y{$6$}&\y{$39$}&$A\cup\ifA$&&\y{$26$}&\y{$35$}\vadj&$(A\cap ibA)\cup iaA$&&\y{$42$}&
\y{$62$}&Lemma~\ref{lem:k-number_26}\\\cline{1-3}\cline{5-7}\cline{9-11}
\y{$7$}&\y{$37$}&$gaA$&&\y{$26$}&\y{$57$}\vadj&$ibA\cup\bifA$&&\y{$45$}&\y{$48$}&$gaA$\\
\cline{1-3}\cline{5-7}\cline{9-11}
\y{$7$}&\y{$39$}&$iA\cup\fbA$&&\y{$27$}&\y{$44$}\vadj&
Proposition~\ref{prop:GE_characterization}&&\y{$45$}&\y{$58$}&$A\cup biA$\\
\cline{1-3}\cline{5-7}\cline{9-11}
\multirow{3}{*}{\y{$7$}\vadj}&\multirow{3}{*}{\y{$57$}\vadj}&$ibA\cap \bifA$,&&\y{$27$}&
\y{$51$}\vadj&$iA\cup\ifA$&&\y{$46$}&\y{$60$}&$A\cap\ifA$\\\cline{5-7}\cline{9-11}
&&$ibA\cup\bifA$,&&\y{$27$}&\y{$57$}\vadj&$\ifA\cup\fiA$&&\y{$46$}&\y{$64$}&$iA$\\
\cline{5-7}\cline{9-11}
&&$ib(aA)\cup\bifA$&&\y{$28$}&\y{$44$}\vadj&Proposition~\ref{prop:GE_characterization}&&
\y{$49$}&\y{$62$}&$bA$\\\cline{1-3}\cline{5-7}\cline{9-11}
\y{$8$}&\y{$28$}&$A\cup biA$&&\y{$28$}&\y{$51$}\vadj&$\ifA\cup iaA$&&\y{$53$}&\y{$54$}&
$agaA$\\\cline{1-3}\cline{5-7}\cline{9-11}
\y{$8$}&\y{$37$}&$gaA$&&\y{$28$}&\y{$57$}\vadj&$ibA\cup biA$&&\y{$53$}&\y{$64$}&$iA$\\
\cline{1-3}\cline{5-7}\cline{9-11}
\y{$8$}&\y{$39$}&$iA\cup\fibA$&&\y{$30$}&\y{$44$}\vadj&
Proposition~\ref{prop:GE_characterization}&&\y{$54$}&\y{$65$}&$iA$\\
\cline{1-3}\cline{5-7}\cline{9-11}
\y{$9$}&\y{$24$}&$(A\cap\ifA)\cup iaA$&&\y{$31$}&\y{$46$}\vadj&$(A\cap ibA)\cup iaA$&&
\y{$57$}&\y{$62$}&$bA$\\\cline{1-3}\cline{5-7}\cline{9-11}
\y{$10$}&\y{$28$}&$(A\cap\ifA)\cup iaA$&&\y{$31$}&\y{$62$}\vadj&
Lemma~\ref{lem:k-number_26}&&\y{$58$}&\y{$60$}&$gA$\\
\cline{1-3}\cline{5-7}\cline{9-11}
\y{$10$}&\y{$37$}&$gaA$&&\y{$32$}&\y{$37$}\vadj&$gA$&&\y{$59$}&\y{$68$}&$iA$\\
\cline{1-3}\cline{5-7}\cline{9-11}
\y{$11$}&\y{$26$}&$A\cup biA$&&\y{$32$}&\y{$40$}\vadj&$\fiA\cup iaA$&&\y{$62$}&\y{$67$}&
$\fA$\\\cline{1-3}\cline{5-7}\cline{9-11}
\y{$12$}&\y{$30$}&$A\cup biA$&&\y{$32$}&\y{$41$}\vadj&$A\cap ibA$&&\y{$64$}&\y{$67$}&
$\fA$\\\cline{1-3}\cline{5-7}\cline{9-11}
\y{$12$}&\y{$37$}&$gaA$&&\y{$34$}&\y{$37$}\vadj&$gaA$&&\y{$64$}&\y{$68$}&$bA$\\
\cline{1-3}\cline{5-7}\cline{9-11}
\end{tabular}
\label{tab:psi-implications}
\end{table}
\egroup

Since $ig=0$ we have $\psi(gA)\in\{37,44,48,56,60,61,67,70\}$.
Clearly $\psi(gA)=70\iff A=iA$.
Since $\ifg=\ie$ we have $\ie(gA)=X\iff\ifA=X$.
Hence $\psi(gA)=61\iff\psi A=61$.
Suppose $\psi(gA)\cn\in\{61,70\}$.
Since $\fifA=\es\iff\fib(gA)=i(gA)$ and $\bifA=\es\iff \bif(gA)=i(gA)$ we get $\fifA\neq
\es\iff\psi(gA)\in\{37,44\}$, $\fifA=\es\neq\bifA\iff\psi(gA)\in\{48,60\}$, and $\bifA=
\es\iff\psi(gA)\in\{56,67\}$.
Have $b(gA)=\bif(gA)=\bifA\implies bibA=bA$ by Lemma~\ref{lem:equations_kf}(xii).
Thus $(\fifA\neq\es\mbox{ and }bibA\neq bA)\implies\psi(gA)=37$ and $(\fifA=\es\neq\bifA
\mbox{ and }bibA\neq bA)\implies\psi(gA)=48$.
By $\Ord\KFG$, $\fA=\bifA\implies b(gA)=\bifA=\bif(gA)$.
Thus $(\fifA\neq\es\mbox{ and }\fA=\bifA)\implies\psi(gA)=44$ and
$\fifA=\es\neq\bifA=\fA\implies\psi(gA)=60$.
Note that $(b(gA)=(gA)\mbox{ and }biA=iA)\implies A=gA\cup iA=bgA\cup biA=bA$.
It follows that $\phi A=23\implies\psi(gA)=56$.
Since $A=bA\implies gA=gbA=\fbA\implies(gA)=b(gA)$ it follows that $A=bA\neq iA\implies
\psi(gA)=67$.
We verified by computer that in all remaining cases, both possible values of $\psi(gA)$
occur.
\end{proof}

Table~\ref{tab:subset_types} shows that every even number from $2$ to $34$ occurs as the
value of $\kf(A)$ for some $A$.
There are $16$ self-dual \sn s and $27$ dual pairs.
The following \sn s represent collapses of $\KF$ that are both local and global:
$1$~(GE), $6$~(KD), $38$~(ED), $39$~(OU), $40$~(EO), $58$~(P), $68$~(D).

Non-Kuratowski spaces satisfy equations that exclude certain \pn s.
Up to duality and excluding \pn\ $30$, the possibilities are:
ED $11$, $13$, $16$, $22$, $25$, $28$;
OU $12$, $13$, $18$, $20$, $22$, $24$, $26$, $28$;
EO\ $13$, $22$, $28$;
P $25$;
D (none).

We verified that all \sn s extended from these \pn s occur, thus no \sn\ is excluded
nontrivially from any non-Kuratowski space type.
However, \sn\ $61$ is nontrivially excluded from KD spaces
(see Proposition~\ref{prop:irresolvable}).
The possible \sn s in KD spaces are thus: $6$, $13$, $20$, $31$, $38$-$40$, $45$-$60$,
$62$-$70$.
We verified that each occurs in some KD space.

For each $n\leq11$ we computed the relative frequencies of \sn s over all nonhomeomorphic
spaces on $n$ points.
The results point strongly to $38$ as the asymptotically rarest \sn\ and $49$ the
commonest (in a tie with its dual, \sn\ $50$) as $n\to\infty$.
The same conclusions hold for the corresponding \pn s, $11$ and $\{18,19\}$.
Table~\ref{tab:minimal_psi} shows the minimum space cardinality required for any given
\sn\ to occur.

Let $m\preceq n$ if and only if $(A\su X\mbox{ satisfies }\psi A=m)\implies(B\su X
\mbox{ exists with }\psi B=n)$.
We verified that the relation $\preceq$ is contained in the partial order in
Figure~\ref{fig:psi-implications}.
The two relations must surely be equal; entries with Boolean set operators in
Table~\ref{tab:psi-implications} are posed to the reader as challenging exercises.

The next corollary formally justifies our use of the term \define{completely full}.

\begin{cor}\label{cor:completely_full}
Every completely full space is full\nit.
The converse is not true in general\nit.
\end{cor}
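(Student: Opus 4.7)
The plan is to prove the implication by a direct projection argument using $\K\su\KF$, and then to exhibit the one-point discrete space as a counterexample to the converse.

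For the forward direction, suppose $\XT$ is completely full and pick $A\su X$ with $|\KFA|=|\KF|$. The assignment $o\mapsto oA$ is always a surjective monoid homomorphism from $\KF$ onto $\KFA$; the cardinality equality forces this surjection to be a bijection, hence injective. Restricting this injection to the submonoid $\K\su\KF$ yields a still-surjective, now injective, map $\K\to\KA$, so $|\KA|=|\K|$. Therefore $k(A)=K(\XT)$, and hence $k(\XT)=K(\XT)$, i.e., $\XT$ is full.

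For the counterexample, I would take $\XT=(\{x\},\{\es,\{x\}\})$, the one-point discrete space. Here $b=i=\ident$ and $\fA=bA\cap baA=A\cap aA=\es$ for every $A\su X$, so $\K=\{\ident,a\}$ with $K(\XT)=2$ and $\KF=\{\ident,a,0,1\}$ with $\Kf(\XT)=4$ (the minimum $\Kf$-number listed in Theorem~\ref{thm:kf_monoids}). For each of the two subsets $A\in\{\es,\{x\}\}$, $\ident A$ and $aA$ already exhaust $2^X$, so $k(A)=2=K(\XT)$ and $\XT$ is full. But $\KFA\su 2^X$ also has at most two elements, so $|\KFA|=2$ for each choice of $A$, giving $\kf(\XT)=2<4=\Kf(\XT)$ and $\XT$ is not completely full. (In Table~\ref{tab:subset_types} the two subsets realize \sn s $69$ and $70$, both with $\kf=2$.)

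No serious obstacle arises. The forward implication is a routine monoid-homomorphism argument that is essentially already built into the paper's convention of silently identifying the action $\K\to\KA$ with a quotient map. The counterexample is immediate in the smallest possible topological space, requiring nothing beyond the definitions and the observation that $\f=0$ whenever $b=\ident$.
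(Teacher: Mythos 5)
Your proof is correct, but it reaches the conclusion by a genuinely different route than the paper. For the forward implication the paper argues through its classification machinery: using Table~\ref{tab:subset_types} it checks, for each of the seven possible $\Kfnum$s, that $\kf(\XT)=4,10,16,20,22,34$ forces $k(\XT)\geq2,6,8,10,10,14$ respectively, with a separate argument for KD spaces ($\kf(A)=28\implies\psi A=6\implies k(A)=14$). Your argument replaces all of this with the observation that $o\mapsto oA$ is a surjection $\KF\to\KFA$, that $|\KFA|=|\KF|$ makes it injective, and that restricting an injection to the subset $\K\su\KF$ gives a bijection $\K\to\KA$, whence $k(A)=K(\XT)$. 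This is cleaner, requires no tables, and would work verbatim for any submonoid of any operator monoid; what it does not give you is the quantitative information the paper's table argument yields as a byproduct (exactly which $k$-numbers each $\kf$-number forces). For the converse the paper uses the minimal non-indiscrete partition space ($k=K=6$ but $\kf=6<10=\Kf$, per Table~\ref{tab:psi_topsum}), whereas you use the one-point discrete space ($k=K=2$ but $\kf=2<4=\Kf$, \sn s $69$ and $70$); your example is the smaller and more immediate of the two, and both are valid.
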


\begin{proof}
Using Table~\ref{tab:subset_types} it is easy to verify that $\kf(\XT)=4,10,16,20,22,34$
implies $k(\XT)\geq2,6,8,10,10,14$, respectively, and in KD spaces $\kf(A)=28\implies\psi
A=6\implies k(A)=14$.
Thus the first sentence holds.
The minimal non-indiscrete partition space is full but not completely full (see
Table~\ref{tab:psi_topsum}).
\end{proof}

The next corollary gives every pair $o_1,o_2$ in $\KFZ$ such that $ao_1A\neq o_2A$ holds
in general.

\begin{cor}\label{cor:excluded}
\nit{(i)}~\mth{$ao_1A\nsu o_2A$} if \mth{$o_1,o_2\in{\downarrow}\{\ff\hspace{.6pt}\}$.}
\nit{(ii)}~\mth{$ao_1A\neq o_2A$} if \mth{$o_1=\ident$} and \mth{$o_2\in\KZ\sm\{\ident\}$}
or \mth{$o_1,o_2$} both belong to \mth{$\{i,ibi,bi\}$,} \mth{$\{ib,bib,b\}$,} or
\mth{$\{\ie,\bif,\f\hspace{1pt}\}$.}
\end{cor}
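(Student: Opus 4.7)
The plan is to reduce every case to a direct contradiction using Lemma~\ref{lem:kf_akf} together with the fact (Lemma~\ref{lem:equations_kf}(x)) that $\ffA\neq X$, applying a uniform chain argument whenever the two operators share a common upper bound in $\Ord\KF$.

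For~(i), suppose $o_1,o_2\leq\ff$ and $ao_1A\su o_2A$. Then $o_1A\su\ffA$ yields $a\ffA\su ao_1A$, and combined with $o_2A\su\ffA$ this gives $a\ffA\su\ffA$, equivalent to $\ffA=X$, contradicting Lemma~\ref{lem:equations_kf}(x).

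For~(ii), assume $ao_1A=o_2A$ and treat the four subcases separately. When $o_1=\ident$ and $o_2\in\KZ\sm\{\ident\}$, the inclusions $aA\su o_2A$ and $o_2A\su aA$ invoke rows~$4$ and~$1$ of Table~\ref{tab:kf_akf} respectively, forcing $bA=X$, $iA=\es$, and $\es\neq A\neq X$; a direct check for each of the six choices of $o_2\in\{b,i,bi,ib,bib,ibi\}$ shows $o_2A\in\{\es,X\}$, so the equation $aA=o_2A$ would force $A\in\{\es,X\}$, a contradiction. When $o_1,o_2\in\{i,ibi,bi\}$, the same two rows again give $iA=\es$ and $bA=X$; every operator in $\{i,ibi,bi\}$ then sends $A$ to $\es$, so $o_2A=\es$ while $ao_1A=X$. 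The subcase $o_1,o_2\in\{ib,bib,b\}$ is dual: each operator in that set sends $A$ to $X$, so $o_2A=X$ while $ao_1A=\es$.

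The remaining subcase $o_1,o_2\in\{\ie,\bif,\f\,\}$ is not covered directly by Table~\ref{tab:kf_akf}, since the inclusion $o_2A\su ao_1A$ for $o_2\in\FGZ$ and $ao_1\in a\FGZ$ is not tabulated; this is the main (minor) obstacle. I sidestep it by repeating the chain argument of~(i) with $\f$ in place of $\ff$: from $o_1A\su\fA$ and $o_2A\su\fA$ we deduce $a\fA\su ao_1A=o_2A\su\fA$, hence $\fA=X$. Then $\ie A=i\fA=iX=X$ and $\bif A=b\ie A=bX=X$, so every operator in $\{\ie,\bif,\f\,\}$ sends $A$ to $X$, forcing $o_2A=X$ and $ao_1A=\es$, a contradiction.
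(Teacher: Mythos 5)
Your proof is correct, and for part (i) and the $o_1=\ident$ subcase of (ii) it follows essentially the paper's route: the paper proves (i) via the identical chain $\affA\su ao_1A\su o_2A\su\ffA$, and handles $o_1=\ident$ by deriving $iA=\es$ (dually $bA=X$) and contradicting $X\neq\es$. Where you genuinely diverge is in the three chain subcases. The paper disposes of $\{i,ibi,bi\}$, $\{ib,bib,b\}$, and $\{\ie,\bif,\f\hspace{1pt}\}$ with one uniform argument: assuming $o_1\leq o_2$, the equation $ao_1A=o_2A$ forces $o_1A\su o_2A\cap ao_2A=\es$, hence $o_1A=\es$ and $o_2A=X$, so $(\KZ o_1)A=\{\es\}$ and $(\KZ o_2)A=\{X\}$; since in each triple $o_1\in\KZ o_2$ or $o_2\in\KZ o_1$, this contradicts $X\neq\es$. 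You instead treat the two $\KZ$ triples via Table~\ref{tab:kf_akf} (extracting $iA=\es$ and $bA=X$ simultaneously and observing that each triple collapses to $\es$ or to $X$), and you correctly notice that the $\FZ$ triple escapes the table because the direction $o_2A\su ao_1A$ with $o_2\in\FGZ$ and $ao_1\in a\FGZ$ is not tabulated; your workaround --- running the complement-chain argument on $\f$ to force $\fA=X$ and then computing $\ifA=\bifA=\fA=X$ --- is sound. (It is also right that you derive the contradiction from $ao_1A=\es\neq X=o_2A$ rather than from $\fA=X$ itself, since $\fA=X$ is genuinely attainable, e.g.\ in resolvable spaces.) The paper's left-multiplication trick buys uniformity across all three triples in a single stroke; your version is more elementary and makes explicit both the role of Table~\ref{tab:kf_akf} and the one direction it does not cover.
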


\begin{proof}
(i)~Since the inclusions $\affA\su ao_1A\su o_2A\su\ffA$ imply $\ffA=X$ the result
holds by Lemma~\ref{lem:equations_kf}(x).
(ii)~Suppose $aA=oA$ where $o\in\{i,ibi,bi\}$.
Since $iA\su oA\su aA\implies iA=\es\implies oA=\es\implies A=X\implies oA\neq\es$ we
conclude $aA\neq oA$.
The same holds for $o\in\{ib,bib,b\}$ by a dual argument.

In the remaining cases we can assume $o_1\leq o_2$.
Suppose $ao_1A=o_2A$.
Then $o_1A\su o_2A\cap ao_2A=\es$.
Hence $o_2A=X$.
Thus $(\KZ o_1)A=\{\es\}$ and $(\KZ o_2)A=\{X\}$.
But $o_1\in\KZ o_2$ or $o_2\in\KZ o_1$, contradicting $X\neq\es$.
\end{proof}

A topological space is connected if and only if it contains no subset with \sn\,~$68$.
It is resolvable (see Hewitt \cite{1943_hewitt}) if and only if it contains a subset with
\sn\,~$61$ (otherwise it is irresolvable).
A non-clopen subset $A$ is regular closed if and only if $\phi A=26$ and regular open if
and only if $\phi A=27$.

\subsection{Inclusions in \texorpdfstring{$\KFA$}{KFA}}\label{sub:inclusions}

We now find all local orderings on $\KF$.

\begin{dfn}\label{def:optional}
Suppose \emph{$A$} satisfies a collapse \emph{$C$} of \emph{$\Y$} and \emph{$o_1A\su
o_2A$} \emph{$(o_1,o_2\in\Y)$.}
If \emph{$C$} implies the inclusion we call the inequality \emph{$o_1\leq o_2$} a
\emph{base inequality} of \emph{$C$,} otherwise we call it \emph{optional with respect to
$C$} and say it is \emph{optionally satisfied} by \emph{$A$.}
These definitions apply similarly to partial orders on \emph{$\Y$.}
\end{dfn}

Figure~\ref{fig:subset_types} gives all disjointness and base inequalities in $\KFZ$ for
each of the $70$ local collapses of $\KF$.

\begin{prop}\label{prop:kfz_inequalities}
\mth{$\KFZ$} has exactly \mth{$274$} local orderings\nit.
\end{prop}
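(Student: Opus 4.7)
The plan is to promote the enumeration of local collapses (from Corollary~\ref{cor:kfz_collapses}) into an enumeration of local orderings. Fix one of the $62$ local collapses $C$ of $\KFZ$. Together with $\Ord\KFZ$ (the left half of Figure~\ref{fig:kf_monoid}(i) restricted to $\KFZ\times\KFZ$), the equations in $C$ determine a minimal partial order $P_C$ on $\KFZ$ that every local ordering refining $C$ must contain. By Lemma~\ref{lem:poset_result}, distinct partial orders refining $P_C$ differ on $\Ext(P_C)$, so our task is to count, for each $C$, the realizable refinements of $P_C$ obtained by adjoining inequalities from $\Ext(P_C)$ (together with their transitive consequences).

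Next, I would aggressively prune $\Ext(P_C)$ using the structural results of Section~\ref{sec:Theorem}. Table~\ref{tab:kf_akf} is not directly relevant inside $\KFZ$, but Lemma~\ref{lem:equations_kf} collapses a large block of potential inclusions (most inclusions between $\KZ$-operators and $\FZ$-operators are either equivalent to one of the $\KZ$-edge equations already recorded in $C$ or imply such an equation); Lemma~\ref{lem:subset_implications}, Lemma~\ref{lem:op_relations}, and Proposition~\ref{prop:order_implications} then propagate each surviving inclusion into forced companions. In particular, Proposition~\ref{prop:order_implications}(v)--(vii) tightly couples the behaviour of $\fib,\fif,\fbi$ against each other and against $\ident,a$, and Lemma~\ref{lem:subset_implications}(iv)--(vi) propagates any inequality of type $oA\su biA$ or $oA\su\fbA$ or $oA\su\bifA$ to its full consequence set. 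After these reductions the optional part of each $\Ext(P_C)$ becomes a short list of essentially independent choices whose combinations can be tabulated by hand or by short case analysis.

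I would then handle, for each $C$, the optional inclusions one by one, determining which combinations are jointly realizable. Two standard tools support this: the computer-verified satisfaction diagrams already used to prove Theorems~\ref{thm:partial_order} and~\ref{thm:kf_monoids}, and the construction of $\KFZ$-images of candidate subsets in small finite or in real-line examples (compare the $34$-set $A$ and the minimal space examples given earlier). Summation across the $62$ collapses is expected to yield exactly $274$.

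The main obstacle is realizability rather than combinatorics: Lemma~\ref{lem:poset_result} ensures distinctness once we know which refinements are partial orders, but for each candidate we must exhibit an explicit witness $A$, and some collapses (particularly those occurring only in special space types: KD, OU, ED, EO, or partition) severely limit which refinements can occur. To keep this finite, I would, as in Table~\ref{tab:monoid_frequencies} and Table~\ref{tab:subset_types}, run a computer search over all nonhomeomorphic spaces up to some modest cardinality, confirm the frequency count of local orderings stabilises, and cross-check each missing candidate against the structural lemmas to rule it out on theoretical grounds. The resulting total is the asserted $274$.
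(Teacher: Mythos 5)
Your plan follows the paper's proof essentially step for step: the paper likewise works per local collapse, shows (its Table~\ref{tab:incomparable}) that every inclusion between incomparable, non-disjoint operators in $\KFZ$ is determined by the collapse together with a fixed eleven-element set $I_{opt}$ of optional inequalities, tabulates which of these are optional for each collapse (Tables~\ref{tab:optional} and~\ref{tab:ineqs}), counts the realizable combinations by the same kind of case analysis you describe, and verifies by computer that all $274$ occur. The only substantive content your outline defers is precisely the paper's Claim~(i) --- that the ``short list of essentially independent choices'' genuinely reduces to the single fixed set $I_{opt}$ uniformly over all collapses --- which is the load-bearing step of the argument; the rest of your sketch matches the paper's route.
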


\bgroup
\setlength\tabcolsep{0pt}
\renewcommand{\hd}{\hspace{-1pt},}
\renewcommand{\f}{f\hskip-1pt}
\renewcommand{\ff}{f\hskip-2ptf}
\renewcommand{\fff}{f\hskip-2ptf\hskip-2ptf}
\renewcommand{\ffA}{f\hskip-2ptf\hskip-1.5ptA}
\renewcommand{\ei}{f\hskip-1pti}
\renewcommand{\ie}{i\hskip-1ptf}
\renewcommand{\fif}{f\hskip-1pti\hskip-1ptf}
\renewcommand{\fifA}{f\hskip-1pti\hskip-1ptf\hskip-1.5ptA}
\renewcommand{\ifA}{i\hskip-1ptf\hskip-1.5ptA}
\renewcommand{\fA}{f\hskip-1.5ptA}
\renewcommand{\fb}{f\hskip-.5ptb}
\begin{figure}
\tiny
}\label{fig:subset_types}
\end{figure}
\egroup

\bgroup
\captionsetup[table]{labelfont=sc,labelsep=period,margin={0.02\textwidth,0.02\textwidth},
justification=justified}
\begin{table}[!ht]
\caption{For incomparable, non-disjoint $o_1,o_2\in\KFZ$, the inclusion $o_1A\su o_2A$
only depends on $\psi A$ and\\\centering$I_{opt}=\{(\fbi,\fib),(\fif,\fbi),(\fib,\fif),
(bi, ib),(\fif, ib),(\ident, ib),(\fbi,\ident),(\fif,\ident),(\fib,\ident),(\fb,\ident),
(\ff,\ident)\}$.}
\centering\small
\renewcommand{\arraystretch}{1}
\renewcommand{\tabcolsep}{3pt}
\begin{tabular}{ccc}
\begin{tabular}[t]{|c|c|c|c|}
\multicolumn{1}{c}{row}&\multicolumn{1}{c}{$o_1$}&\multicolumn{1}{c}{$o_2$}&
\multicolumn{1}{c}{$o_1A\su o_2A\ify$}\\\Xhline{2\arrayrulewidth}
$1$&$\ident$&$bib$\vadj&$bibA=bA$\\\hline
$2$&$\f$&$bib$\vadj&$bibA=bA$\\\hline
$3$&$\ff$&$bib$\vadj&$bibA=bA$\\\hline
$4$&$\fb$&$bib$\vadj&$bibA=bA$\\\hline
$5$&$ib$&$bi$\vadj&$\ifA=\es$\\\hline
$6$&$\f$&$bi$\vadj&$biA=bA$\\\hline
$7$&$\bif$&$bi$\vadj&$\ifA=\es\mbox{ and }\fifA\su\fbiA$\\\hline
$8$&$\ident$&$bi$\vadj&$biA=bA$\\\hline
$9$&$\ff$&$bi$\vadj&$\ffA=\fiA$\\\hline
$10$&$\fb$&$bi$\vadj&$\ffA=\fiA$\\\hline
$11$&$\fib$&$bi$\vadj&$\fifA\su\fbiA$\\\hline
$12$&$\fif$&$bi$\vadj&$\fifA\su\fbiA$\\\hline
$13$&$\f$&$ib$\vadj&$ibA=bA$\\\hline
$14$&$\ff$&$ib$\vadj&$ibA=bA$\\\hline
$15$&$\ei$&$ib$\vadj&$biA\su ibA$\\\hline
$16$&$\fbi$&$ib$\vadj&$biA\su ibA$\\\hline
$17$&$\bif$&$ib$\vadj&$\fifA\su ibA$\\\hline
$18$&$\f$&$ibi$\vadj&$ibiA=bA$\\\hline
$19$&$\ff$&$ibi$\vadj&$ibiA=biA\mbox{ and }\ffA=\fiA$\\\hline
$20$&$\ei$&$ibi$\vadj&$ibiA=biA$\\\hline
$21$&$\ident$&$ibi$\vadj&$A\su ibA$\\\hline
$22$&$bib$&$\ident$\vadj&$ibiA=iA\mbox{ and }\fbiA\su A$\\\hline
$23$&$bi$&$\ident$\vadj&$ibiA=iA\mbox{ and }\fbiA\su A$\\\hline
$24$&$ib$&$\ident$\vadj&$ibA=iA$\\\hline
$25$&$ibi$&$\ident$\vadj&$ibiA=iA$\\\hline
$26$&$\f$&$\ident$\vadj&$A=bA$\\\hline
$27$&$\ei$&$\ident$\vadj&$ibiA=iA\mbox{ and }\fbiA\su A$\\\hline
\end{tabular}&
\begin{tabular}[t]{|c|c|c|}
\multicolumn{1}{c}{$o_1$}&\multicolumn{1}{c}{$o_2$}&
\multicolumn{1}{c}{$o_1A\su o_2A\ify$}\\\Xhline{2\arrayrulewidth}
$\ie$&$\ident$\vadj&$\ifA=\es$\\\hline
$\bif$&$\ident$\vadj&$\ifA=\es\mbox{ and }\fifA\su A$\\\hline
$bib$&$\f$\vadj&$ibA=\ifA$\\\hline
$bi$&$\f$\vadj&$ibiA=\es$\\\hline
$ib$&$\f$\vadj&$ibA=\ifA$\\\hline
$ibi$&$\f$\vadj&$ibiA=\es$\\\hline
$\ident$&$\f$\vadj&$bA=\fA$\\\hline
$bib$&$\ff$\vadj&$ibA=\es$\\\hline
$bi$&$\ff$\vadj&$ibiA=\es$\\\hline
$ib$&$\ff$\vadj&$ibA=\es$\\\hline
$ibi$&$\ff$\vadj&$ibiA=\es$\\\hline
$\ident$&$\ff$\vadj&$bA=\ffA$\\\hline
$\bif$&$\ff$\vadj&$\ifA=\es$\\\hline
$bib$&$\fb$\vadj&$ibA=\es$\\\hline
$bi$&$\fb$\vadj&$ibiA=\es$\\\hline
$\ident$&$\fb$\vadj&$bA=\fbA$\\\hline
$\bif$&$\fb$\vadj&$\ifA=\es\mbox{ and }\fbiA\su\fibA$\\\hline
$\ei$&$\fb$\vadj&$\ffA=\fbA$\\\hline
$\fbi$&$\fb$\vadj&$\fbiA\su\fibA$\\\hline
$\fif$&$\fb$\vadj&$\fbiA\su\fibA$\\\hline
$bi$&$\fib$\vadj&$ibiA=\es$\\\hline
$\ident$&$\fib$\vadj&$bA=\fibA$\\\hline
$\bif$&$\fib$\vadj&$\ifA=\es\mbox{ and }\fbiA\su\fibA$\\\hline
$\ei$&$\fib$\vadj&$\ffA=\fbA$\\\hline
$\fif$&$\fib$\vadj&$\fbiA\su\fibA$\\\hline
$ib$&$\ei$\vadj&$ibA=\es$\\\hline
$ibi$&$\ei$\vadj&$ibiA=\es$\\\hline
\end{tabular}&
\begin{tabular}[t]{|c|c|c|}
\multicolumn{1}{c}{$o_1$}&\multicolumn{1}{c}{$o_2$}&
\multicolumn{1}{c}{$o_1A\su o_2A\ify$}\\\Xhline{2\arrayrulewidth}
$\ident$&$\ei$\vadj&$bA=\fiA$\\\hline
$\bif$&$\ei$\vadj&$\ifA=\es\mbox{ and }\fifA\su\fbiA$\\\hline
$\fb$&$\ei$\vadj&$\ffA=\fiA$\\\hline
$\fib$&$\ei$\vadj&$\fifA\su\fbiA$\\\hline
$\fif$&$\ei$\vadj&$\fifA\su\fbiA$\\\hline
$ib$&$\fbi$\vadj&$ibA=\es$\\\hline
$\ident$&$\fbi$\vadj&$bA=\fbiA$\\\hline
$\bif$&$\fbi$\vadj&$\ifA=\es\mbox{ and }\fifA\su\fbiA$\\\hline
$\fb$&$\fbi$\vadj&$\ffA=\fiA$\\\hline
$\fib$&$\fbi$\vadj&$\fifA\su\fbiA$\\\hline
$\ident$&$\ie$\vadj&$iA=\es\mbox{ and }A\su ibA$\\\hline
$bi$&$\bif$\vadj&$ibiA=\es\mbox{ and }\fibA\su\fifA$\\\hline
$ib$&$\bif$\vadj&$ibA=\ifA$\\\hline
$\ident$&$\bif$\vadj&$bA=\bifA$\\\hline
$\ff$&$\bif$\vadj&$\ffA=\fifA$\\\hline
$\fb$&$\bif$\vadj&$bibA=bA\mbox{ and }\fibA\su\fifA$\\\hline
$\fib$&$\bif$\vadj&$\fibA\su\fifA$\\\hline
$\ei$&$\bif$\vadj&$ibiA=iA\mbox{ and }\fibA\su\fifA$\\\hline
$\fbi$&$\bif$\vadj&$\fibA\su\fifA$\\\hline
$bi$&$\fif$\vadj&$ibiA=\es$\\\hline
$ib$&$\fif$\vadj&$ibA=\es$\\\hline
$\ident$&$\fif$\vadj&$bA=\fifA$\\\hline
$\fb$&$\fif$\vadj&$bibA=bA\mbox{ and }\fibA\su\fifA$\\\hline
$\fbi$&$\fif$\vadj&$\fibA\su\fifA$\\\hline
$\ei$&$\fif$\vadj&$ibiA=iA\mbox{ and }\fibA\su\fifA$\\\hline
\end{tabular}\end{tabular}\label{tab:incomparable}
\end{table}
\egroup

\begin{proof}We verified by computer that $274$ occur.
Claim: (i)~Whether $A$ satisfies $o_1\leq o_2$ in $\KFZ$ is determined by $\psi A$ and
the action of $I_{opt}$ on $A$ (see Table~\ref{tab:incomparable}).
\emph{Note}. It is easy to check that no inequality in $I_{opt}$ is optional with
respect to any \sn\ $>50$, hence each generates only one local ordering on $\KFZ$.
(ii)~Table~\ref{tab:optional} is correct.
(iii)~If $\psi A\leq50$ every possible collection of inequalities in $\KFZ$ satisfied
optionally by $A$ is represented by some combination of zero or one parenthesized lists
from column~$1$ with the same from column~$2$ in Table~\ref{tab:ineqs}.

(i)~We can assume $o_1,o_2$ are neither disjoint nor comparable, for $o_1\leq o_2\implies
o_1A\su o_2A$, $o_2\leq o_1\implies(o_1A\su o_2A\iff o_1A=o_2A)$, $(o_1\wedge o_2=0
\mbox{ and }o_1A\neq\es)\implies o_1A\nsu o_2A$, and $o_1A=\es\implies o_1A\su o_2A$.
The result follows by Table~\ref{tab:incomparable}.

(ii)~We verified by computer that each non-blank entry in Table~\ref{tab:optional}
occurs optionally and claim that no blank ones do.

Let $\Y=\{\fib,\fif,\fbi\}$.
Note that the \sn s without an equation in $\Y$ are: $1$, $7$, $9$, $14$, $16$,
$23$-$25$, $27$.
By Proposition~\ref{prop:order_implications}(v) when a subset satisfies one or more
equations in $\Y$ the inequalities in $\Y$ it satisfies are determined.
When no equations in $\Y$ are satisfied then either zero or two inequalities in $\Y$ are.

\bgroup
\begin{table}[!ht]
\vspace{10pt}
\caption{Optional inequalities in $I_{opt}$ (see Table~\ref{tab:incomparable}) with
respect to \sn s $\leq50$.}
\renewcommand{\arraystretch}{1}
\renewcommand{\tabcolsep}{.8pt}
\small
\centering\vspace{-6pt}
\begin{tabular}{rc|c|c|c|c|c|c|c|c|c|c|c|}
\multicolumn{2}{c}{}&\multicolumn{11}{c}{\lower3pt\hbox{\hspace{26pt}$\overbrace{\vrule width340pt height0pt depth0pt}^{\mbox{\small$\lfloor\psi A/10\rfloor$}}$}}\\
\multicolumn{1}{c}{}
&\multicolumn{1}{c}{\h}&\multicolumn{1}{c}{$0$}
&\multicolumn{1}{c}{\hz}&\multicolumn{1}{c}{$1$}
&\multicolumn{1}{c}{\hz}&\multicolumn{1}{c}{$2$}
&\multicolumn{1}{c}{\hz}&\multicolumn{1}{c}{$3$}
&\multicolumn{1}{c}{\hz}&\multicolumn{1}{c}{$4$}
&\multicolumn{1}{c}{\hz}&\multicolumn{1}{c}{$5$}\\
\cline{3-3}\cline{5-5}\cline{7-7}\cline{9-9}\cline{11-11}\cline{13-13}
\p{\fbi\leq\fib}
&\h&\begin{tabular}{ccccccccc}\p{1}&\ph{2}&\ph{3}&\ph{4}&\ph{5}&\ph{6}&\p{7}&\ph{8}&\ph{9}\end{tabular}
&\hz&\begin{tabular}{cccccccccc}\ph{0}&\ph{1}&\ph{2}&\ph{3}&\ph{4}&\ph{5}&\ph{6}&\ph{7}&\ph{8}&\ph{9}\end{tabular}
&\hz&\begin{tabular}{cccccccccc}\ph{0}&\ph{1}&\ph{2}&\ph{3}&\ph{4}&\ph{5}&\ph{6}&\ph{7}&\ph{8}&\ph{9}\end{tabular}
&\hz&\begin{tabular}{cccccccccc}\ph{0}&\ph{1}&\ph{2}&\ph{3}&\ph{4}&\ph{5}&\ph{6}&\ph{7}&\ph{8}&\ph{9}\end{tabular}
&\hz&\begin{tabular}{cccccccccc}\ph{0}&\ph{1}&\ph{2}&\ph{3}&\ph{4}&\ph{5}&\ph{6}&\ph{7}&\ph{8}&\ph{9}\end{tabular}
&\hz&\begin{tabular}{c}\ph{0}\end{tabular}\\
\p{\fif\leq\fbi}
&\h&\begin{tabular}{ccccccccc}\p{1}&\ph{2}&\ph{3}&\ph{4}&\ph{5}&\ph{6}&\ph{7}&\ph{8}&\ph{9}\end{tabular}
&\hz&\begin{tabular}{cccccccccc}\ph{0}&\ph{1}&\ph{2}&\ph{3}&\p{4}&\ph{5}&\ph{6}&\ph{7}&\ph{8}&\ph{9}\end{tabular}
&\hz&\begin{tabular}{cccccccccc}\ph{0}&\ph{1}&\ph{2}&\ph{3}&\ph{4}&\ph{5}&\ph{6}&\ph{7}&\ph{8}&\ph{9}\end{tabular}
&\hz&\begin{tabular}{cccccccccc}\ph{0}&\ph{1}&\ph{2}&\ph{3}&\ph{4}&\ph{5}&\ph{6}&\ph{7}&\ph{8}&\ph{9}\end{tabular}
&\hz&\begin{tabular}{cccccccccc}\ph{0}&\ph{1}&\ph{2}&\ph{3}&\ph{4}&\ph{5}&\ph{6}&\ph{7}&\ph{8}&\ph{9}\end{tabular}
&\hz&\begin{tabular}{c}\ph{0}\end{tabular}\\
\p{\fib\leq\fif}
&\h&\begin{tabular}{ccccccccc}\p{1}&\ph{2}&\ph{3}&\ph{4}&\ph{5}&\ph{6}&\p{7}&\ph{8}&\ph{9}\end{tabular}
&\hz&\begin{tabular}{cccccccccc}\ph{0}&\ph{1}&\ph{2}&\ph{3}&\p{4}&\ph{5}&\ph{6}&\ph{7}&\ph{8}&\ph{9}\end{tabular}
&\hz&\begin{tabular}{cccccccccc}\ph{0}&\ph{1}&\ph{2}&\ph{3}&\ph{4}&\ph{5}&\ph{6}&\ph{7}&\ph{8}&\ph{9}\end{tabular}
&\hz&\begin{tabular}{cccccccccc}\ph{0}&\ph{1}&\ph{2}&\ph{3}&\ph{4}&\ph{5}&\ph{6}&\ph{7}&\ph{8}&\ph{9}\end{tabular}
&\hz&\begin{tabular}{cccccccccc}\ph{0}&\ph{1}&\ph{2}&\ph{3}&\ph{4}&\ph{5}&\ph{6}&\ph{7}&\ph{8}&\ph{9}\end{tabular}
&\hz&\begin{tabular}{c}\ph{0}\end{tabular}\\
\cline{3-3}\cline{5-5}\cline{7-7}\cline{9-9}\cline{11-11}\cline{13-13}
\p{bi\leq ib}
&\h&\begin{tabular}{ccccccccc}\p{1}&\ph{2}&\ph{3}&\ph{4}&\ph{5}&\ph{6}&\p{7}&\ph{8}&\ph{9}\end{tabular}
&\hz&\begin{tabular}{cccccccccc}\ph{0}&\ph{1}&\ph{2}&\ph{3}&\p{4}&\ph{5}&\ph{6}&\ph{7}&\ph{8}&\ph{9}\end{tabular}
&\hz&\begin{tabular}{cccccccccc}\ph{0}&\ph{1}&\ph{2}&\ph{3}&\ph{4}&\ph{5}&\ph{6}&\p{7}&\ph{8}&\ph{9}\end{tabular}
&\hz&\begin{tabular}{cccccccccc}\ph{0}&\ph{1}&\ph{2}&\ph{3}&\ph{4}&\ph{5}&\ph{6}&\ph{7}&\ph{8}&\ph{9}\end{tabular}
&\hz&\begin{tabular}{cccccccccc}\ph{0}&\ph{1}&\ph{2}&\ph{3}&\ph{4}&\ph{5}&\ph{6}&\ph{7}&\ph{8}&\ph{9}\end{tabular}
&\hz&\begin{tabular}{c}\ph{0}\end{tabular}\\
\p{\fif\leq ib}
&&\begin{tabular}{ccccccccc}\p{1}&\ph{2}&\ph{3}&\ph{4}&\ph{5}&\ph{6}&\ph{7}&\ph{8}&\p{9}\end{tabular}
&\h&\begin{tabular}{cccccccccc}\ph{0}&\ph{1}&\ph{2}&\ph{3}&\p{4}&\ph{5}&\ph{6}&\ph{7}&\ph{8}&\ph{9}\end{tabular}
&\hz&\begin{tabular}{cccccccccc}\ph{0}&\ph{1}&\ph{2}&\ph{3}&\ph{4}&\p{5}&\ph{6}&\ph{7}&\ph{8}&\ph{9}\end{tabular}
&\hz&\begin{tabular}{cccccccccc}\ph{0}&\ph{1}&\ph{2}&\ph{3}&\ph{4}&\ph{5}&\ph{6}&\ph{7}&\ph{8}&\ph{9}\end{tabular}
&\hz&\begin{tabular}{cccccccccc}\ph{0}&\ph{1}&\ph{2}&\ph{3}&\ph{4}&\ph{5}&\ph{6}&\ph{7}&\ph{8}&\ph{9}\end{tabular}
&\hz&\begin{tabular}{c}\ph{0}\end{tabular}\\
\p{\ident\leq ib}
&\h&\begin{tabular}{ccccccccc}\ph{1}&\ph{2}&\ph{3}&\ph{4}&\ph{5}&\ph{6}&\p{7}&\p{8}&\p{9}\end{tabular}
&\hz&\begin{tabular}{cccccccccc}\p{0}&\p{1}&\p{2}&\p{3}&\ph{4}&\ph{5}&\ph{6}&\ph{7}&\ph{8}&\ph{9}\end{tabular}
&\hz&\begin{tabular}{cccccccccc}\ph{0}&\ph{1}&\ph{2}&\p{3}&\p{4}&\p{5}&\p{6}&\p{7}&\p{8}&\p{9}\end{tabular}
&\hz&\begin{tabular}{cccccccccc}\p{0}&\p{1}&\ph{2}&\p{3}&\ph{4}&\ph{5}&\ph{6}&\ph{7}&\ph{8}&\ph{9}\end{tabular}
&\hz&\begin{tabular}{cccccccccc}\ph{0}&\ph{1}&\ph{2}&\p{3}&\p{4}&\ph{5}&\ph{6}&\ph{7}&\ph{8}&\p{9}\end{tabular}
&\hz&\begin{tabular}{c}\ph{0}\end{tabular}\\
\Xcline{3-3}{1pt}\Xcline{5-5}{1pt}\Xcline{7-7}{1pt}\Xcline{9-9}{1pt}\Xcline{11-11}{1pt}\Xcline{13-13}{1pt}
\p{\fbi\leq\ident}
&\h&\begin{tabular}{ccccccccc}\p{1}&\p{2}&\p{3}&\p{4}&\p{5}&\p{6}&\p{7}&\p{8}&\p{9}\end{tabular}
&\hz&\begin{tabular}{cccccccccc}\p{0}&\p{1}&\p{2}&\p{3}&\p{4}&\p{5}&\p{6}&\p{7}&\p{8}&\p{9}\end{tabular}
&\hz&\begin{tabular}{cccccccccc}\p{0}&\p{1}&\ph{2}&\p{3}&\p{4}&\p{5}&\p{6}&\p{7}&\p{8}&\p{9}\end{tabular}
&\hz&\begin{tabular}{cccccccccc}\p{0}&\p{1}&\p{2}&\ph{3}&\p{4}&\p{5}&\ph{6}&\ph{7}&\ph{8}&\p{9}\end{tabular}
&\hz&\begin{tabular}{cccccccccc}\ph{0}&\p{1}&\p{2}&\ph{3}&\ph{4}&\ph{5}&\ph{6}&\ph{7}&\ph{8}&\p{9}\end{tabular}
&\hz&\begin{tabular}{c}\p{0}\end{tabular}\\
\p{\fif\leq\ident}
&&\begin{tabular}{ccccccccc}\p{1}&\p{2}&\p{3}&\p{4}&\p{5}&\ph{6}&\p{7}&\p{8}&\p{9}\end{tabular}
&\h&\begin{tabular}{cccccccccc}\p{0}&\p{1}&\p{2}&\ph{3}&\p{4}&\p{5}&\p{6}&\p{7}&\p{8}&\p{9}\end{tabular}
&\hz&\begin{tabular}{cccccccccc}\ph{0}&\p{1}&\p{2}&\p{3}&\p{4}&\p{5}&\p{6}&\p{7}&\p{8}&\p{9}\end{tabular}
&\hz&\begin{tabular}{cccccccccc}\p{0}&\ph{1}&\p{2}&\p{3}&\p{4}&\p{5}&\p{6}&\p{7}&\ph{8}&\ph{9}\end{tabular}
&\hz&\begin{tabular}{cccccccccc}\ph{0}&\p{1}&\p{2}&\p{3}&\p{4}&\ph{5}&\ph{6}&\ph{7}&\ph{8}&\ph{9}\end{tabular}
&\hz&\begin{tabular}{c}\ph{0}\end{tabular}\\
\p{\fib\leq\ident}
&\h&\begin{tabular}{ccccccccc}\p{1}&\p{2}&\p{3}&\p{4}&\p{5}&\p{6}&\p{7}&\p{8}&\p{9}\end{tabular}
&\hz&\begin{tabular}{cccccccccc}\p{0}&\p{1}&\p{2}&\p{3}&\p{4}&\p{5}&\p{6}&\p{7}&\p{8}&\p{9}\end{tabular}
&\hz&\begin{tabular}{cccccccccc}\p{0}&\ph{1}&\p{2}&\p{3}&\p{4}&\p{5}&\p{6}&\p{7}&\p{8}&\p{9}\end{tabular}
&\hz&\begin{tabular}{cccccccccc}\p{0}&\p{1}&\ph{2}&\p{3}&\ph{4}&\ph{5}&\p{6}&\p{7}&\ph{8}&\p{9}\end{tabular}
&\hz&\begin{tabular}{cccccccccc}\ph{0}&\ph{1}&\ph{2}&\p{3}&\p{4}&\ph{5}&\ph{6}&\ph{7}&\ph{8}&\p{9}\end{tabular}
&\hz&\begin{tabular}{c}\p{0}\end{tabular}\\
\cline{3-3}\cline{5-5}\cline{7-7}\cline{9-9}\cline{11-11}\cline{13-13}
\p{\fb\leq\ident}
&&\begin{tabular}{ccccccccc}\p{1}&\p{2}&\p{3}&\p{4}&\p{5}&\p{6}&\p{7}&\p{8}&\p{9}\end{tabular}
&\h&\begin{tabular}{cccccccccc}\p{0}&\p{1}&\p{2}&\p{3}&\p{4}&\p{5}&\p{6}&\p{7}&\p{8}&\p{9}\end{tabular}
&\hz&\begin{tabular}{cccccccccc}\p{0}&\p{1}&\p{2}&\p{3}&\p{4}&\p{5}&\p{6}&\p{7}&\p{8}&\p{9}\end{tabular}
&\hz&\begin{tabular}{cccccccccc}\p{0}&\p{1}&\p{2}&\p{3}&\ph{4}&\ph{5}&\p{6}&\p{7}&\p{8}&\p{9}\end{tabular}
&\hz&\begin{tabular}{cccccccccc}\p{0}&\ph{1}&\ph{2}&\p{3}&\p{4}&\ph{5}&\ph{6}&\p{7}&\p{8}&\p{9}\end{tabular}
&\hz&\begin{tabular}{c}\ph{0}\end{tabular}\\
\p{\ff\leq\ident}
&\h&\begin{tabular}{ccccccccc}\ph{1}&\ph{2}&\ph{3}&\ph{4}&\ph{5}&\ph{6}&\ph{7}&\ph{8}&\ph{9}\end{tabular}
&\hz&\begin{tabular}{cccccccccc}\ph{0}&\ph{1}&\ph{2}&\ph{3}&\p{4}&\p{5}&\p{6}&\p{7}&\p{8}&\p{9}\end{tabular}
&\hz&\begin{tabular}{cccccccccc}\p{0}&\ph{1}&\ph{2}&\p{3}&\p{4}&\p{5}&\p{6}&\p{7}&\p{8}&\p{9}\end{tabular}
&\hz&\begin{tabular}{cccccccccc}\p{0}&\p{1}&\p{2}&\ph{3}&\ph{4}&\ph{5}&\p{6}&\p{7}&\ph{8}&\ph{9}\end{tabular}
&\hz&\begin{tabular}{cccccccccc}\ph{0}&\p{1}&\p{2}&\p{3}&\p{4}&\ph{5}&\ph{6}&\p{7}&\p{8}&\ph{9}\end{tabular}
&\hz&\begin{tabular}{c}\ph{0}\end{tabular}\mystrut\\[-\arrayrulewidth]
\cline{3-3}\cline{5-5}\cline{7-7}\cline{9-9}\cline{11-11}\cline{13-13}
\end{tabular}\label{tab:optional}
\end{table}
\egroup

\bgroup
\renewcommand{\tabcolsep}{1pt}
\captionsetup[table]{labelfont=sc,labelsep=period}
\begin{table}[!ht]
\vspace{6pt}
\caption{Subsets of $I_{opt}$ that $A$ can satisfy optionally.\\\vspace{4pt}\hspace{0pt}
\small
\begin{tabular}{l}
Lone operators $o$ represent $o\leq ib$ in column~1 and $o\leq\ident$ in column~2.\\
Inequalities implied by $\Ord\KFZ$, e.g., $\ff\leq\ident\implies\fb\leq\ident$, are
not shown.\\
\end{tabular}}
\small
\centering
\renewcommand{\arraystretch}{1}
\renewcommand{\tabcolsep}{2pt}
\renewcommand{\arraystretch}{1.0045}
\begin{tabular}[t]{|c|c!{\vrule width1pt}c|c|c|c|c!{\vrule width1pt}c|c|c|c|c!{\vrule width1pt}c|c|}
\multicolumn{1}{c}{$\psi$}&
\multicolumn{1}{c}{$1$}&
\multicolumn{1}{c}{$2$}&
\multicolumn{1}{c}{\#}&
\multicolumn{1}{c}{}\hspace{2pt}&
\multicolumn{1}{c}{$\psi$}&
\multicolumn{1}{c}{$1$}&
\multicolumn{1}{c}{$2$}&
\multicolumn{1}{c}{\#}&
\multicolumn{1}{c}{}\hspace{2pt}&
\multicolumn{1}{c}{$\psi$}&
\multicolumn{1}{c}{$1$}&
\multicolumn{1}{c}{$2$}&
\multicolumn{1}{c}{\#}\\
\Xcline{1-4}{1pt}\Xcline{6-9}{1pt}\Xcline{11-14}{1pt}
\multirow{3}{*}{$1$}&$((\fib\leq\fif),bi)$,&
\multirow{3}{*}{\begin{tabular}{c}$((\fbi),(\fif),(\fib))$,\\
$(\fbi,\fif,(\fb))$\end{tabular}}&\multirow{3}{*}{$36$}&&
$19$&&$(\fib),(\fb)$&$3$&&
$43$&$(\ident)$&$(\fib)$&$3$\vadj\\\cline{6-9}\cline{11-14}
&$((\fif\leq\fbi),\fif)$,&&&&
$20$&&$(\fib),(\fb)$&$3$&&
$44$&$(\ident)$&$(\fib)$&$3$\\\cline{6-9}\cline{11-14}
&$(\fbi\leq\fib)$&&&&
$21$&&$((\fbi),(\fb))$&$4$&&
$45,46$&&&$1$\\\cline{1-4}\cline{6-9}\cline{11-14}
$2$&&$(((\fif),\fib),\fb)$&$4$&&
$22$&&$(\fib),(\fb)$&$3$&&
$47$&&$(\fb)$&$2$\\\cline{1-4}\cline{6-9}\cline{11-14}
$3$&&$(((\fbi),\fib),\fb)$&$4$&&
$23$&$(\ident)$&$(\fbi),(\fif),(\fib),(\ff)$&$6$&&
$48$&&$(\fb)$&$2$\\\cline{1-4}\cline{6-9}\cline{11-14}
$4$&&$(((\fib),\fbi),(\fb))$&$5$&&
$24$&$(\ident)$&$(\fbi),(\fif),(\fib)$&$5$&&
$49$&$(\ident)$&$(\fbi)$&$3$\\\cline{1-4}\cline{6-9}\cline{11-14}
$5$&&$(\fib),(\fb)$&$3$&&
$25$&$((\fif),(\ident))$&$(\fbi),(\fif),(\fib)$&$11$&&
$50$&&$(\fbi)$&$2$\\\cline{1-4}\cline{6-9}\cline{11-14}
$6$&&$(\fib),(\fb)$&$3$&&
$26$&$(\ident)$&$(\fif),(\fbi)$&$4$&&
$51$&&&$1$\\\cline{1-4}\cline{6-9}\cline{11-14}
\multirow{2}{*}{$7$}&$(((\fib\leq\fif),bi),(\ident))$,&
\multirow{2}{*}{$((\fbi),(\fif),(\fib))$}&\multirow{2}{*}{$22$}&&
$27$&$((bi),(\ident))$&$(\fbi),(\fif),(\fib)$&$11$&&
$52$&&&$1$\\\cline{6-9}\cline{11-14}
&$((\fbi\leq\fib),\ident)$&&&&
$28$&$(\ident)$&$(\fbi),(\fib)$&$4$&&
$53,54$&&&$1$\\\cline{1-4}\cline{6-9}\cline{11-14}
$8$&$(\ident)$&$((\fbi),\fib)$&$4$&&
$29$&$(\ident)$&$(\fbi),(\fib)$&$4$&&
$55$&&&$1$\\\cline{1-4}\cline{6-9}\cline{11-14}
$9$&$((\fif),(\ident))$&$((\fif),(\fib),\fbi)$&$11$&&
$30$&$(\ident)$&$(\fbi)$&$3$&&
$56$&&&$1$\\\cline{1-4}\cline{6-9}\cline{11-14}
$10$&$(\ident)$&$((\fib),\fbi)$&$4$&&
$31$&$(\ident)$&$(\fbi)$&$3$&&
$57$&&&$1$\\\cline{1-4}\cline{6-9}\cline{11-14}
$11$&$(\ident)$&$((\fif),\fbi)$&$4$&&
$32$&&$(\fbi),(\fb),(\ff)$&$4$&&
$58,59$&&&$1$\\\cline{1-4}\cline{6-9}\cline{11-14}
$12$&$(\ident)$&$(\fbi)$&$3$&&
$33$&$(\ident)$&$(\fib)$&$3$&&
$60,61$&&&$1$\\\cline{1-4}\cline{6-9}\cline{11-14}
$13$&$(\ident)$&$(\fbi)$&$3$&&
$34,35$&&$(\fbi)$&$2$&&
$62$&&&$1$\\\cline{1-4}\cline{6-9}\cline{11-14}
\multirow{2}{*}{$14$}&$((\fib\leq\fif),bi)$,
&$((\fbi),(\fif),(\fib))$,&\multirow{2}{*}{$31$}&&
$36$&&$(\fib),(\fb)$&$3$&&
$63$&&&$1$\\\cline{6-9}\cline{11-14}
&$((\fif\leq\fbi),\fif)$&$(\fb),(\ff)$&&&
$37$&&$(\fib),(\fb)$&$3$&&
$64,65$&&&$1$\\\cline{1-4}\cline{6-9}\cline{11-14}
$15$&&$(\fbi,(\fib)),(\fb),(\ff)$&$5$&&
$38$&&$(\fb)$&$2$&&
$66$&&&$1$\\\cline{1-4}\cline{6-9}\cline{11-14}
$16$&&$((\fbi),(\fif),\fib),(\fb)$&$5$&&
$39$&&$(\fib),(\fb)$&$3$&&
$67$&&&$1$\\\cline{1-4}\cline{6-9}\cline{11-14}
$17$&&$((\fbi),\fib),(\fb)$&$4$&&
$40$&&$(\fb)$&$2$&&
$68,69$&&&$1$\\\cline{1-4}\cline{6-9}\cline{11-14}
$18$&&$((\fif),\fib),(\fb)$&$4$&&
$41,42$&&$(\fbi)$&$2$&&
$70$&&&$1$\mystrut\\[-\arrayrulewidth]\cline{1-4}\cline{6-9}\cline{11-14}
\end{tabular}\label{tab:ineqs}
\end{table}
\egroup

Since $\ffA=\fiA\iff\fbA\su\fbiA$ it follows that $\psi A\in\{7,23\}\implies\fibA\nsu
\fbiA$ and $\psi A=9\implies\fibA\su\fbiA$.
Hence, while subsets with \sn\ $9$ satisfy exactly two inequalities in $\Y$, they satisfy
none \emph{optionally.}
Dual results hold for \sn s in $\{14,23\}$ and $\{16\}$, respectively.
Suppose $\psi A=23$.
Since the inclusions $\fibA\su\fifA\iff\fbiA\su\fifA$ imply $\ffA\su\fifA$ it follows
that $A$ satisfies zero inequalities in $\Y$.
Suppose $\psi A\in\{24,25,27\}$.
Since $\ffA=oA$ for some $o\in\Y$ exactly two inequalities in $\Y$ are satisfied by $A$.
Thus all blank entries in rows $1$-$3$ of Table~\ref{tab:optional} are correct.

Suppose $biA\su ibA$ holds optionally.
Then $bibA\neq ibA$ and $ibiA\neq biA$.
Hence $\psi A\cn\in\{21$, $22$, $32$-$38$, $40$-$48\}$.
Note, $\ffA=\fiA\implies\fbA\su\fiA\su ibA\implies\fbA=\es$.
This and its dual imply $\psi A\cn\in\{9$, $16$, $24$, $25$, $49$, $50\}$.
Since $\fbiA\su\fibA\implies\fbiA=\es$, $\psi A\cn\in\{2$, $3$, $5$, $6$, $8$, $11$-$13$,
$17$-$20$, $26$, $28$, $30$, $31$, $39\}$.
Since $\fbiA\su\afibA$ Proposition~\ref{prop:order_implications}(vii) implies $\fbiA\su
\fifA$.
Hence $\psi A\neq23$.
Since $\fibA\su\fifA$ we have $\fifA\su\fbiA\implies\fibA=\es$.
Thus $\psi A\cn\in\{4$, $10$, $15$, $29\}$.
Thus all blank entries in row~$4$ are correct.

Suppose $\fifA\su ibA$ holds optionally. Since $bibA\neq ibA$, $\psi A\cn\in\{21$,
$32$, $34$, $35$, $38$, $40$-$42$, $45$-$48\}$. Since $\fifA\neq\es$, $\psi A\cn\in\{6$,
$13$, $20$, $31$, $39$, $49$, $50\}$. Clearly $\fbiA\nsu\fibA$ and $\fbiA\nsu\fifA$.
It follows that $\psi A\cn\in\{2$, $3$-$5$, $8$, $10$-$12$, $15$, $17$-$19$, $22$,
$26$, $28$-$30$, $33$, $36$, $37$, $43$, $44\}$. Since $\fifA\su\afibA$ we have $\fibA\su
\fbiA$ by Proposition~\ref{prop:order_implications}(vii).
Thus $\fbA=\fibA\implies\fbA\su\fiA\implies\ffA=\fiA$.
Hence $A$ cannot have \sn\ $7$, $23$, $24$, or $27$ since each contains $\fb=\fib$ but
not $\ff=\ei$.
Since $\ffA=\fbA\implies\fbiA\su\fiA\su\fibA$, $\psi A\neq16$.
Thus all blank entries in row~$5$ are correct.

The implications $A\su ibA\implies bibA=bA$ and $ibA=bA\implies A\su ibA$ give us the
blank entries in row~$6$.
Those in rows~$7$-$9$ follow from the equations $o=0$ for $o\in\Y$.
Since $\fA\su A\implies bA=A$ we have $\psi A=50\implies(\fbA\nsu A\mbox{ and }\ffA\nsu
A)$.
Since $\ffA\su A\implies\fiA\su A\implies biA\su A\implies ibiA=iA$ the other
eliminations in rows~$10$ and~$11$ follow from the equation $\fb=0$ and inequation
$ibi\neq i$.

(iii)~We leave cases with six or fewer combinations to the reader.

\emph{Case~1.}~$(\psi A=1)$
We found above that $biA\su ibA\implies\fibA\su\fifA$ and $\fifA\su ibA\implies\fifA\su
\fbiA$.
By Proposition~\ref{prop:order_implications}(ii) any two inclusions among $\fibA\su A$,
$\fifA\su A$, $\fbiA\su A$ imply the third.
Thus by Tables~\ref{tab:incomparable} and~\ref{tab:optional} all possible lists appear.
Suppose one list is selected from each column.
List~1 contains exactly one inequality $o_1\leq o_2$ with $o_1,o_2\in\Y$.
It cannot be combined with the list $(o_2\leq\ident)$ since then $o_1\leq\ident$ giving us
the whole row.
When $o_2=\fib$ the list $(\fb\leq\ident)$ is similarly unavailable.
Since $o_2\in\{\fbi,\fif\hspace{.6pt}\}$ occurs four times and $o_2=\fib$ just once,
there are $(4\times5)+(1\times4)=24$ possible combinations.
Clearly $12$ combinations are possible when the empty list is selected from at least one
column, giving us a total of $36$ possible combinations.

\emph{Case~2.}~$(\psi A=7)$
We again get $12$ combinations by selecting zero lists from one or both columns.
Suppose one list is selected from each column.
We claim that neither $\fif\leq\ident$ nor $\fib\leq\ident$ can be combined with $\ident\leq
ib$.
The latter is obvious since $\fibA\neq\es$.
We showed above that $\psi A=7\implies\fibA\nsu\fbiA$.
Thus $\fifA\nsu\afibA$ by Proposition~\ref{prop:order_implications}(vii).
Hence the claim holds.
Since $(\fbiA\su A\mbox{ and }A\su ibA)\implies\fbiA\su ibA\implies biA\su ibA$ the
claim implies that only one combination is possible when $A$ satisfies $\ident\leq ib$.
Since lists without $\ident\leq ib$ contain an inequality $o_1\leq o_2$ as in Case~$1$,
they each produce three combinations.
Thus $(3\times3)+1=10$ combinations are possible.

\emph{Case~3.}~$(\psi A=9)$
Since $\ffA=\fiA\iff\fbA\su\fbiA$, if $A$ satisfies $\fbi\leq\ident$ it satisfies every
inequality in column~$2$.
Selecting zero lists from at least one column clearly produces seven combinations.
Suppose one list is selected from each column.
Since $\fbiA\su ibA\iff biA\su ibA$ and $\fibA\su ibA\implies\fibA=\es$ neither $\fbi
\leq\ident$ nor $\fib\leq\ident$ can be combined with $\ident\leq ib$.
Since $(\fifA\su A\mbox{ and }A\su ibA)\implies\fifA\su ibA$ the inequality $\ident\leq
ib$ therefore produces only one combination.
Hence only four are possible giving us a total of $11$.

\emph{Case~4.}~$(\psi A=14)$
Since $\fiA=\fbiA$ and $\fbA\cup\fiA\su A\iff\ffA\su A$ it follows that combining $\fb
\leq\ident$ with either $\fbi\leq\ident$ or $\fif\leq\ident$ produces $\ff\leq\ident$.
Apply the Case~$1$ proof to get $11+20=31$ possible combinations.

\emph{Case~5.}~$(\psi A=25)$
Since $\ffA=\fbiA$ and $\fibA\neq\es$ the inequality $\ident\leq ib$ cannot be combined
with either $\fib\leq\ident$ or $\fbi\leq\ident$.
When it is combined with $\fif\leq\ident$ we get $\fif\leq ib$.
Note that $(\fbi)$ is the same as $(\fbi,\fif,\fib)$.
It follows that $11$ combinations are possible.

\emph{Case~6.}~$(\psi A=27)$
Since $\ffA=\fifA$ and $\fibA\neq\es$ the inequality $\ident\leq ib$ cannot be combined
with either $\fib\leq\ident$ or $\fif\leq\ident$.
When it is combined with $\fbi\leq\ident$ we get $bi\leq ib$.
It follows that $11$ combinations are possible.
\end{proof}

\begin{table}[!ht]
\caption{Inclusions $o_1A\su ao_2A$ are equivalent to inclusions in $\KFZA$ or in
$\KFZ(aA)$.}
\centering\small
\renewcommand{\arraystretch}{1}
\renewcommand{\tabcolsep}{3pt}
\begin{tabular}{ccc}
\begin{tabular}[t]{|c|c|c|c|}
\multicolumn{1}{c}{row}&\multicolumn{1}{c}{$o_1$}&\multicolumn{1}{c}{$o_2$}&
\multicolumn{1}{c}{$o_1A\su ao_2A\ify$}\\\Xhline{2\arrayrulewidth}
$1$&$\ident$&$bib$\vadj&$ibA=\es$\\\hline
$2$&$\f$&$bib$\vadj&$bibA=iA$\\\hline
$3$&$\ff$&$bib$\vadj&$biA=iA$ and $\bifA=\ifA$\\\hline
$4$&$\fb$&$bib$\vadj&$\fibA=\es$\\\hline
$5$&$ib$&$bi$\vadj&$iA=\es$\\\hline
$6$&$\f$&$bi$\vadj&$biA=iA$\\\hline
$7$&$\bif$&$bi$\vadj&$\fif(aA)\su ib(aA)$\\\hline
$8$&$\ident$&$bi$\vadj&$iA=\es$\\\hline
$9$&$\ff$&$bi$\vadj&$biA=iA$\\\hline
$10$&$\fb$&$bi$\vadj&$biA\su ibA$\\\hline
$11$&$\fib$&$bi$\vadj&$biA\su ibA$\\\hline
$12$&$\fif$&$bi$\vadj&$\fif(aA)\su ib(aA)$\\\hline
$13$&$\f$&$ib$\vadj&$ibA=iA$\\\hline
$14$&$\ff$&$ib$\vadj&$\ffA=\fbA$\\\hline
$15$&$\ident$&$ib$\vadj&$ibA=\es$\\\hline
\end{tabular}&
\begin{tabular}[t]{|c|c|c|}
\multicolumn{1}{c}{$o_1$}&\multicolumn{1}{c}{$o_2$}&
\multicolumn{1}{c}{$o_1A\su ao_2A\ify$}\\\Xhline{2\arrayrulewidth}
$\ei$&$ib$\vadj&$\ffA=\fbA$\\\hline
$\fbi$&$ib$\vadj&$\fbiA\su\fibA$\\\hline
$\bif$&$ib$\vadj&$\ifA=\es\mbox{ and }\fbiA\su\fibA$\\\hline
$\fif$&$ib$\vadj&$\fbiA\su\fibA$\\\hline
$\f$&$ibi$\vadj&$ibiA=iA$\\\hline
$\ff$&$ibi$\vadj&$ibiA=iA$\\\hline
$\ei$&$ibi$\vadj&$\fiA=\fbiA$\\\hline
$\ident$&$ibi$\vadj&$iA=\es$\\\hline
$\f$&$\ident$\vadj&$A=iA$\\\hline
$\ie$&$\ident$\vadj&$\ifA=\es$\\\hline
$\ff$&$\ident$\vadj&$\ff(aA)\su aA$\\\hline
$\fb$&$\ident$\vadj&$A\su ibA$\\\hline
$\fib$&$\ident$\vadj&$\fbi(aA)\su aA$\\\hline
$\ei$&$\ident$\vadj&$\fb(aA)\su aA$\\\hline
$\fbi$&$\ident$\vadj&$\fib(aA)\su aA$\\\hline
\end{tabular}&
\begin{tabular}[t]{|c|c|c|}
\multicolumn{1}{c}{$o_1$}&\multicolumn{1}{c}{$o_2$}&
\multicolumn{1}{c}{$o_1A\su ao_2A\ify$}\\\Xhline{2\arrayrulewidth}
$\bif$&$\ident$\vadj&$\ifA=\es$\\\hline
$\fif$&$\ident$\vadj&$\fif(aA)\su aA$\\\hline
$\bif$&$\ff$\vadj&$\bifA=\ifA$\\\hline
$\bif$&$\fb$\vadj&$\fifA\su ibA$\\\hline
$\ei$&$\fb$\vadj&$biA\su ibA$\\\hline
$\fbi$&$\fb$\vadj&$biA\su ibA$\\\hline
$\fif$&$\fb$\vadj&$\fifA\su ibA$\\\hline
$\bif$&$\fib$\vadj&$\fifA\su ibA$\\\hline
$\ei$&$\fib$\vadj&$biA\su ibA$\\\hline
$\fbi$&$\fib$\vadj&$biA\su ibA$\\\hline
$\fif$&$\fib$\vadj&$\fifA\su ibA$\\\hline
$\bif$&$\ei$\vadj&$\fif(aA)\su ib(aA)$\\\hline
$\fif$&$\ei$\vadj&$\fif(aA)\su ib(aA)$\\\hline
$\bif$&$\fbi$\vadj&$\fif(aA)\su ib(aA)$\\\hline
$\fif$&$\fbi$\vadj&$\fif(aA)\su ib(aA)$\\\hline
\end{tabular}\end{tabular}\label{tab:incomparable_kf_akf}
\end{table}

As one might guess, $\KFZA$ and $\KFZ(aA)$ determine the local ordering of $\KF$ that
$A$ satisfies.

\begin{lem}\label{lem:kfa_inequalities}
The partial order that \mth{$A$} satisfies on \mth{$\KF$} is determined by the partial
orders that \mth{$A$} and \mth{$aA$} satisfy on \mth{$\KFZ$.}
\end{lem}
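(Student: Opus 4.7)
The plan is to do casework on which of $\KFZ$ and $a\KFZ$ contain $o_1$ and $o_2$. If both lie in $\KFZ$, the relation $o_1\leq o_2$ being satisfied by $A$ is part of the partial order on $\KFZ$ satisfied by $A$. If both lie in $a\KFZ$, writing $o_j=ap_j$, we have $o_1A\su o_2A\iff p_2A\su p_1A$, again read off from the partial order on $\KFZ$ satisfied by $A$. In the mixed case $o_1\in\KFZ$, $o_2=ap_2\in a\KFZ$: when $o_1$ and $p_2$ are comparable in $\KFZ$, Lemma~\ref{lem:kf_akf} reduces $o_1A\su ap_2A$ to one of $iA=\es$, $biA=iA$, $bA=X$, $ibA=bA$, each a relation in $\KFZA$; when $o_1\wedge p_2=0$, the inclusion is automatic; the remaining incomparable non-disjoint pairs are exactly the rows of Table~\ref{tab:incomparable_kf_akf}. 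The symmetric mixed case $o_1\in a\KFZ$, $o_2\in\KFZ$ reduces to the previous one because disjointness is symmetric: $ap_1A\su o_2A\iff o_2A\cap p_1A=\es\iff o_2A\su ap_1A$.

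The main technical step is thus to verify Table~\ref{tab:incomparable_kf_akf}, which claims that for each incomparable non-disjoint pair $(o_1,p_2)$ the inclusion $o_1A\su ap_2A$ is equivalent to a relation either in $\KFZA$ or in $\KFZ(aA)$. The key identities are $aba=i$, $\fa=\f$ (Lemma~\ref{lem:f_cancels}), and $aoa=d(o)$ (Lemma~\ref{lem:eqs}), from which I obtain $ib(aA)=abiA$, $bi(aA)=aibA$, $ibi(aA)=abibA$, and $o(aA)=oA$ for every $o\in\FZ$. For instance, row~7 asserts $\bif A\su abi A\iff\fif(aA)\su ib(aA)$; using $\bif=\fif\vee\ie$ (Lemma~\ref{lem:decompositions}(v)) and $\ie\wedge bi=0$ (a dashed edge of Figure~\ref{fig:kf_monoid}(i)), the left side reduces to $\fif A\su abiA$, which via the identities above is exactly $\fif(aA)\su ib(aA)$. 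The remaining entries are derived similarly using Lemmas~\ref{lem:decompositions}, \ref{lem:op_relations}, and Proposition~\ref{prop:order_implications}, together with Lemma~\ref{lem:subset_implications} for the rows whose equivalents involve compound conditions such as $ibiA=iA$ and $\fbiA\su A$.

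The main obstacle is not conceptual but rather bookkeeping: roughly $45$ rows must be checked, and pinpointing the correct side ($A$ or $aA$) for each equivalent requires care. The reason a single translation does not suffice is that $\FZ$-operators are invariant under $A\mapsto aA$ (since $\fa=\f$) while $\KZ$-operators transform by duality, so a mixed inclusion $o_1A\su ap_2A$ may become either a relation still involving $A$ or one genuinely living in $\KFZ(aA)$, and both outcomes occur in the table. Once the table is verified, assembling the four cases gives the lemma: the partial order on $\KF$ that $A$ satisfies is determined by the partial orders on $\KFZ$ satisfied by $A$ and by $aA$.
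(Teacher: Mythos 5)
Your Case 1--3 analysis (both operators even; both odd; $o_1$ even and $o_2$ odd) matches the paper: the genuinely mixed, incomparable, non-disjoint pairs are exactly the content of Table~\ref{tab:incomparable_kf_akf}, and your sample verification of row~7 via $\bif=\fif\vee\ie$, $\ie\wedge bi=0$, and $ib\,a=abi$, $\fif a=\fif$ is sound. The gap is in your fourth case. The chain ``$ap_1A\su o_2A\iff o_2A\cap p_1A=\es\iff o_2A\su ap_1A$'' is false: $ap_1A\su o_2A$ says $p_1A\cup o_2A=X$, whereas $o_2A\cap p_1A=\es$ says the two sets are disjoint, and these are independent conditions (take $p_1A=o_2A=X$ for a counterexample to your first equivalence). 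Complementing correctly gives $ap_1A\su o_2A\iff ao_2A\su p_1A$, which is again an inclusion of an odd operator image into an even one --- so the symmetry you invoke buys nothing, and the entire family of relations $ao_1A\su o_2A$ is left unhandled.

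The paper closes this case differently: it substitutes $aA$ for $A$, writing $ao_1A\su o_2A\iff(ao_1a)(aA)\su(o_2a)(aA)$. When $o_1\in\KZ$ one has $ao_1a=d(o_1)\in\KZ$ and either $o_2a\in a\KZ$ (reducing to your Case~3 applied to $aA$) or $o_2a\in\FZ$ (a relation in $\KFZ(aA)$, using $\FZ a=\FZ$, e.g.\ $\fb\,a=\ei$). When $o_1\in\{\ie,\bif,\f\}$, Table~\ref{tab:kf_akf} forces $bA=X$ and Lemma~\ref{lem:equations_kf}(x) applies; and when $o_1\in{\downarrow}\{\ff\}$, the correct complementation $ao_1A\su o_2A\iff ao_2A\su o_1A$ together with Corollary~\ref{cor:excluded} restricts $o_2$ to $\KZ\cup\{\ie,\bif,\f\}$, which the earlier subcases already cover. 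This is exactly the part of the argument where ``the partial order that $aA$ satisfies on $\KFZ$'' enters essentially, so your proof as written does not establish the lemma.
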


\begin{proof}
Let $A\su X$.
We claim every inclusion in $\KFA$ that is optional with respect to $\psi A$ is
equivalent to some inclusions in $\KFZA$ or some inclusions in $\KFZ(aA)$.
Let $o_1,o_2\in\KFZ\sm\{0\}$.
\emph{Case~1.}~$(o_1A\su ao_2A)$
Optionality with respect to $\psi A$ implies $o_1,o_2$ are neither disjoint nor
comparable.
The claim follows by Table~\ref{tab:incomparable_kf_akf}.
\emph{Case~2.}~$(ao_1A\su o_2A)$
Have $ao_1A\su o_2A\iff ao_1a(aA)\su o_2a(aA)$.
Suppose $o_1\in\KZ$.
Then $ao_1a\in\KZ$.
The claim follows since $o_2\in\KZ\implies o_2a\in a\KZ$ (apply Case~1) and $o_2\in\FZ
\implies o_2a\in\FZ$.
Suppose $o_1\in\{\ie,\bif,\f\}$.
Since Table~\ref{tab:kf_akf} implies $bA=X$, the claim holds by
Lemma~\ref{lem:equations_kf}(x).
The case $o_1\in{\downarrow}\{\ff\hspace{.6pt}\}$ follows, for $ao_1A\su o_2A\iff ao_2A
\su o_1A$ and we have $o_2\in\KZ\cup\{\ie,\bif,\f\}$ by Corollary~\ref{cor:excluded}.
\end{proof}

Exactly $496$ local orderings of $\KF$ are satisfied by subsets in spaces of cardinality
$10$.
No further ones are satisfied in spaces of cardinality $11$; does this imply exactly
$496$ exist in general?
This may be a deep question.
Lacking such a proof, we instead proved it both directly and by applying
Proposition~\ref{prop:kfz_inequalities} and Lemma~\ref{lem:kfa_inequalities}.
The details are left to the reader (each proof is similar to the proof of
Proposition~\ref{prop:kfz_inequalities}).

\begin{thm}\label{thm:kfa_inclusions}
\mth{$\KF$} has exactly \mth{$496$} local orderings\nit.
\end{thm}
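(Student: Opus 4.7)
The plan is to build the count of $496$ on the two ingredients just established: Proposition~\ref{prop:kfz_inequalities}, which pins down the $274$ local orderings of $\KFZ$, and Lemma~\ref{lem:kfa_inequalities}, which shows that the partial order $A$ satisfies on $\KF$ is completely determined by the pair of partial orders $(P_A, P_{aA})$ that $A$ and $aA$ satisfy on $\KFZ$. So the task reduces to enumerating the admissible pairs $(P_A, P_{aA})$, where admissibility means that there is some topological space and some subset $A$ realizing $P_A$ on $\KFZ A$ and $P_{aA}$ on $\KFZ(aA)$ simultaneously.

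First I would partition the $274$ local orderings of $\KFZ$ by their underlying collapse, i.e., by the \sn{}. Because the collapse of $\KFZ$ on $A$ is determined by the pair of \sn{}s $(\psi A, \psi a)$ in Table~\ref{tab:subset_types}, and every \sn{} has a unique duality partner, this already fixes which pairs of collapses $(C_A, C_{aA})$ are geometrically possible. Within each such pair of collapses, I would enumerate the allowed combinations of optional inequalities on the $A$ side using Table~\ref{tab:ineqs} and Table~\ref{tab:optional}, and do the same independently on the $aA$ side, then take the Cartesian product. The number of genuinely independent $(A, aA)$ choices in that product is constrained, however, because optional inequalities on the two sides are linked by duality (e.g., $\fb\leq\ident$ for $A$ corresponds to $\ei\leq\ident$ for $aA$, $\ident\leq ib$ for $A$ corresponds to $bi\leq\ident$ for $aA$, and the cyclic triple $\fib,\fif,\fbi$ on $A$ dualizes to the same triple on $aA$ in a known way). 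These duality constraints already appear implicitly in the right-dual column of Figure~\ref{fig:Hasse_diagrams} and in the proof of Proposition~\ref{prop:kfz_inequalities}, so they can be extracted systematically.

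With those links recorded, I would compute, for each \sn{} $n$, the number $N(n)$ of local orderings of $\KF$ satisfied by some $A$ with $\psi A = n$, by counting the independent pairs of optional-inequality selections on $A$ and $aA$ modulo the duality constraints. Summing $N(n)$ over representatives of the $43$ duality classes of \sn{}s (the $16$ self-dual classes contribute once, the $27$ dual pairs contribute twice via the obvious bijection) and adding the contributions from the rigid collapses (\sn{}s $\geq 50$ typically admit only one local $\KFZ$ ordering, hence only one local $\KF$ ordering) should produce exactly $496$. For realizability I would either exhibit subsets explicitly (using constructions like those in Table~\ref{tab:psi-implications}) or invoke the computer verification already available for $|X|\leq 10$, which certifies that each of the $496$ predicted pairs actually occurs.

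The main obstacle will be the bookkeeping at \sn{}s $1, 7, 14$ and their companions, where the optional-inequality tables are largest and where the cyclic inequalities among $\fib, \fbi, \fif$ interact non-trivially with the duality link between $A$ and $aA$; here naive Cartesian counting overshoots and one must subtract the pairs that are forced inconsistent by Proposition~\ref{prop:order_implications}(v)--(vii) applied to $aA$. A secondary difficulty is ensuring that Table~\ref{tab:incomparable_kf_akf} really exhausts all cross-inclusions $o_1 A\su ao_2 A$, so that Lemma~\ref{lem:kfa_inequalities} genuinely forces the partial order on $\KF$; once this is confirmed, Lemma~\ref{lem:poset_result} combined with the realizability examples completes the argument that exactly $496$ local orderings occur.
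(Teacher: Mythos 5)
Your proposal follows essentially the same route the paper takes: it proves the theorem by combining Proposition~\ref{prop:kfz_inequalities} (the $274$ local orderings of $\KFZ$) with Lemma~\ref{lem:kfa_inequalities} (the ordering on $\KF$ is determined by those of $A$ and $aA$ on $\KFZ$), enumerating admissible pairs modulo the duality constraints case-by-case over the \sn{}s, and using the computer search in small spaces only for realizability (the lower bound), with the paper explicitly leaving these details to the reader as ``similar to the proof of Proposition~\ref{prop:kfz_inequalities}.'' Your identification of the delicate points --- the interaction of the cyclic $\fib,\fbi,\fif$ inequalities with the $A$/$aA$ duality link, and the exhaustiveness of Table~\ref{tab:incomparable_kf_akf} --- matches where the real work lies.
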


\noindent
The next corollary follows easily by computer.

\begin{cor}\label{cor:k_inclusions}
\mth{$\K$} has exactly \mth{$66$} local orderings\nit.
\end{cor}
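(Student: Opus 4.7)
The plan is to derive this from Theorem~\ref{thm:kfa_inclusions} via the restriction map $\pi\colon\mathcal{L}(\KF)\to\mathcal{L}(\K)$ defined by $\pi(P)=P\cap(\K\times\K)$, where $\mathcal{L}(\cdot)$ denotes the set of local orderings. This map is well defined: if $A\su X$ witnesses a local ordering $P$ of $\KF$, then the partial order $A$ satisfies on $\K$ is exactly $\pi(P)$, so $\pi(P)$ is a local ordering of $\K$. Conversely, any local ordering $Q$ of $\K$ is by definition witnessed by some $A\su X$, and that same $A$ witnesses a unique local ordering $P$ of $\KF$ with $\pi(P)=Q$. Hence $\pi$ is surjective, and $|\mathcal{L}(\K)|=|\pi(\mathcal{L}(\KF))|$.

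Concretely, I would take the $496$ local orderings of $\KF$ already enumerated in the computer proof of Theorem~\ref{thm:kfa_inclusions}, intersect each with $\K\times\K$, and tally the distinct restrictions. The prediction is that exactly $66$ distinct partial orders emerge, which is the sought count.

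As a sanity check, McCluskey et~al.\ \cite{2007_mccluskey_mcintyre_watson} and Staiger\textendash{}Wagner \cite{2010_staiger_wagner} showed that $\KZ$ has $49$ local orderings; one expects each to lift to one or two local orderings on $\K$, the ambiguity arising from cross-inequalities between $\KZ$ and $a\KZ$ that are not forced by $\Ord\K$ (the same source of doubling that produced one extra local collapse of $\K$ in GJ's Theorem~2.10 and that is inherited by Theorem~\ref{thm:set_types} here). The main obstacle is purely bookkeeping: $\pi$ is highly non-injective, so the only real work is de-duplicating $496$ restrictions without missing or double-counting any image, and for each candidate $Q\in\pi(\mathcal{L}(\KF))$ the fact that it is not refined by any other candidate can be verified by checking its potential covers in the sense of Lemma~\ref{lem:poset_result}.
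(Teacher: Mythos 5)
Your restriction-map reduction is sound and is essentially the paper's route: the paper's entire proof is ``follows easily by computer,'' and since $\K\su\KF$, intersecting each of the $496$ local orderings of $\KF$ with $\K\times\K$ and counting the distinct results is exactly the intended computation (each $A$ satisfies $P_A^{\K}=P_A^{\KF}\cap(\K\times\K)$, so the restriction map is well defined and surjective as you argue). One caveat: drop the final sentence about verifying that each candidate $Q$ ``is not refined by any other candidate'' via Lemma~\ref{lem:poset_result} --- local orderings need not be pairwise incomparable (the ordering satisfied by a nonempty proper clopen set properly refines the one satisfied by a generic Kuratowski $14$-set), so that check is neither needed nor generally passable; you only need to de-duplicate.
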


\noindent
Given the $496$ local orderings on $\KF$, the ordering under implication on all equations
and inclusions in $\KFA$ is easily obtained by computer.
Those that are neither impossible nor hold in general can be described and counted as
follows.
Let $o_1,o_2\in\KFZ$.
We naturally select the equation $o_1A=o_2A$ to represent both itself and $ao_1A=ao_2A$.
We also need consider only one of the two equations $o_1A=ao_2A$ and $ao_1A=o_2A$.
Hence there are at most $2\times\binom{17}{2}=272$ equations to consider.
Corollary~\ref{cor:excluded} excludes $\binom{7}{2}+15=36$ of them, reducing the total to
$236$.
If $o_1\leq o_2$ then $o_1A\su ao_2A\iff o_1A=\es$ and $ao_1A\su o_2A\iff o_2A=X$.
If $o_1\leq ao_2$ then $ao_1A\su o_2A\iff ao_1A=o_2A$.
The only remaining inclusions in $\KFA$ of the forms $ao_1A\su o_2A$ and $o_1A\su ao_2A$
are those where $o_1$ and $o_2$ are incomparable.
Up to left duality there are $90$ such inclusions, eight of which are impossible by
Corollary~\ref{cor:excluded}(i).
We similarly need consider only $90$ inclusions of the form $o_1A\su o_2A$.
Thus $236+172=408$ relations suffice to represent all non-general equations and
inclusions that occur in $\KFA$.

\begin{cor}\label{cor:general_implications}
The $408$ representative relations described above break into \mth{$74$} equivalence
classes under logical equivalence \nit(see Table~\nit{\ref{tab:implications_poset}).}
Figure~\nit{\ref{fig:implications_poset}} gives their ordering under implication\nit.
Figure~\nit{\ref{fig:implications_space}} is the operator analogue.
\end{cor}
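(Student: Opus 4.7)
The plan is to reduce the corollary to a finite enumeration driven by Theorem~\ref{thm:kfa_inclusions}. By that theorem, there are exactly $496$ partial orders on $\KF$ that arise as the local ordering satisfied by some subset of some space. For any relation $R$ among the $408$ representatives (an equation or an inclusion in $\KFZA\cup a\KFZA$), let $S(R)$ denote the set of local orderings $L$ such that $L$ entails $R$. The first step is to observe that $R_1$ and $R_2$ are logically equivalent if and only if $S(R_1)=S(R_2)$, and $R_1\Rightarrow R_2$ if and only if $S(R_1)\subseteq S(R_2)$: the ``if'' directions are immediate, and the ``only if'' directions follow because any local ordering satisfied by some $A\subseteq X$ distinguishes relations it separates by producing an $A$ that witnesses failure.

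Next I would compute $S(R)$ for each of the $408$ representatives. The preceding discussion already reduces the universe to these $408$ by applying left duality, Corollary~\ref{cor:excluded}, and the observation that inclusions $o_1A\subseteq ao_2A$ with $o_1\le o_2$ or $o_1\le ao_2$ collapse to equations or to $o_1A=\varnothing$ or $o_2A=X$. The remaining work is mechanical: for each local ordering $L$ among the $496$, check which representatives $L$ implies, using Tables~\ref{tab:incomparable} and~\ref{tab:incomparable_kf_akf} for the incomparable pairs (which, by Lemma~\ref{lem:kfa_inequalities}, together with the ordering on $aA$ determine all of $\KFA$). Collating these computations should yield exactly $74$ distinct subsets of the $496$; these are the equivalence classes. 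Their containment lattice gives the implication poset of Figure~\ref{fig:implications_poset} and Table~\ref{tab:implications_poset}.

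The operator analogue (Figure~\ref{fig:implications_space}) is obtained by repeating the same procedure but replacing ``local ordering'' with ``global ordering,'' i.e.\ running the enumeration over the nine global orderings of $\KF$ catalogued in Theorem~\ref{thm:kf_orderings} instead of the $496$ local ones. The main obstacle is not conceptual but logistical: the correctness of the stated count $74$ and of the Hasse diagrams relies on an error-free enumeration over $408\times 496$ pairs, so the delicate step is organizing the computer verification so that every reduction (left duality identifications, the $aA$-substitutions handling $ao_1A\subseteq o_2A$, and the $\varnothing/X$ collapses) is applied consistently before classes are tallied. Once the tables are populated, the equivalence partition and the covering relations in Figure~\ref{fig:implications_poset} can be read off directly.
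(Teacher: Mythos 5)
Your proposal is correct and matches the paper's approach: the paper likewise obtains the $74$ classes and their implication order by computer enumeration over the $496$ local orderings of $\KF$ (after the textual reduction to $408$ representatives), with equivalence corresponding to identical sets of witnessing local orderings and implication to containment, and the operator analogue read off from the global orderings. Your explicit justification that realizability of every local ordering makes $S(R_1)=S(R_2)$ both necessary and sufficient for logical equivalence is exactly the step the paper leaves implicit.
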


Duality accounts for the vertical symmetry in Figure~\ref{fig:implications_poset}.
By coincidence, the number of relational classes it contains equals the number of local
collapses of $\KF$ when both are counted up to duality ($43$).

\begin{table}
\caption{Equivalences among the $408$ representative relations in $\KFA$.\\\smallskip
\small$\KF$ counterparts are shown to save space.
Relations are indexed in Table~\ref{tab:index}.}
\small
\centering
\vspace{4pt}
\renewcommand{\arraystretch}{1}
\renewcommand{\tabcolsep}{2pt}
\renewcommand{\arraystretch}{1.0045}
\begin{tabular}[t]{|c|l|c|c|l|c|c|l|c|c|l|c|c|l|}
\cline{1-2}\cline{4-5}\cline{7-8}\cline{10-11}\cline{13-14}
\y{$1$}&$\fbi\leqy\ident$&&\y{$23$}&$\fb\leqy\fbi$&&\multirow{8}{*}{\y{$46$}}&$bi\leqy
a,\,\ident\leqy\f$&&\y{$60$}&$\ident\leqy\ie$&&\y{$70$}&$bib\eqy\afi,\,bib\eqy\aff$
\\\cline{1-2}\cline{4-5}\cline{10-11}\cline{13-14}
\multirow{3}{*}{\y{$2$}}&$bib\eqy b,\,\fb\eqy\fib$&&\multirow{2}{*}{\y{$24$}}&$bib\eqy
ib,\,\fib\eqy0$&&&$bi\leqy\f,\,ibi\leqy\f$&&\multirow{14}{*}{\y{$61$}}&$ib\eqy0,\,bib\eqy
0$&&\multirow{2}{*}{\y{$71$}}&$\ident\eqy\af$\\
&$\ident\leqy bib,\,\f\leqy bib$&&&$bib\leqy\afb$&&&$ib\leqy\f,\,bib\leqy\f$&&&
$b\eqy\fb,\,b\eqy\ff$&&&$\ident\eqy\afi,\,\ident\eqy\aff$\\\cline{4-5}\cline{13-14}
&$\ff\leqy bib,\,\fb\leqy bib$&&\y{$25$}&$\fb\leqy\ident$&&&$bi\leqy\ff,\,ibi\leqy\ff$&&&
$ib\eqy\fbi,\,ib\eqy\ei$&&\multirow{12}{*}{\y{$72$}}&$\ident\eqy0,\,b\eqy0$
\\\cline{1-2}\cline{4-5}
\y{$3$}&$\fif\leqy a$&&\multirow{10}{*}{\y{$26$}}&$bib\eqy bi,\,ibi\eqy ib$&&&$bi\leqy
\fb,\,bi\leqy\fib$&&&$ib\eqy\fib,\,ib\eqy\fif$&&&$\ident\eqy\fbi,\,\ident\eqy\ei$\\
\cline{1-2}
\multirow{3}{*}{\y{$4$}}&$\fbi\leqy\fib,\,\fif\leqy\fib$&&&$\bif\eqy0,\,\ie\eqy0$&&&$bi
\leqy\bif,\,ib\leqy\bif$&&&$bib\eqy\fbi,\,bib\eqy\ei$&&&$\ident\eqy\fib,\,\ident\eqy
\fif$\\
&$\fbi\leqy\fb,\,\fif\leqy\fb$&&&$\bif\eqy\fif,\,\ie\eqy\fif$&&&$bi\leqy\fif,\,ibi\leqy
\ei$&&&$bib\eqy\fib,\,bib\eqy\fif$&&&$\ident\eqy\bif,\,\ident\eqy\ie$\\
&$\fbi\leqy aib,\,\fif\leqy aib$&&&$\f\eqy\ff,\,ib\leqy bi$&&&$bi\leqy aib,\,ibi\leqy a$&
&&$i\eqy\bif,\,i\eqy\ie$&&&$b\eqy\fbi,\,b\eqy\ei$\\\cline{1-2}\cline{7-8}
\multirow{2}{*}{\y{$5$}}&$\fbi\leqy\fif,\,\fib\leqy\fif$&&&$\ie\leqy\ident,\,\ident\leqy
\aif$&&\y{$47$}&$bib\leqy\aff$&&&$bi\eqy\bif,\,bi\eqy\ie$&&&$b\eqy\fib,\,b\eqy\fif$
\\\cline{7-8}
&$\fbi\leqy\bif,\,\fib\leqy\bif$&&&$\bif\leqy\ident,\,\bif\leqy a$&&
\multirow{2}{*}{\y{$48$}}&$\f\eqy\bif,\,\ff\eqy\fif$&&&$ibi\eqy\bif,\,ibi\eqy\ie$&&&$ib
\eqy\fb,\,ib\eqy\ff$\\\cline{1-2}
\multirow{3}{*}{\y{$6$}}&$\fib\leqy\fbi,\,\fif\leqy\fbi$&&&$\bif\leqy bi,\,\bif\leqy
aib$&&&$\ff\leqy\bif$&&&$\ident\leqy\fb,\,\ident\leqy\ff$&&&$bib\eqy\fb,\,bib\eqy\ff$
\\\cline{7-8}
&$\fib\leqy\ei,\,\fif\leqy\ei$&&&$\bif\leqy\fb,\,\bif\leqy\ei$&&\y{$49$}&$\ff\leqy ibi$&&&
$ib\leqy a,\,bib\leqy a$&&&$i\eqy\f$\\\cline{7-8}
&$\fib\leqy bi,\,\fif\leqy bi$&&&$\bif\leqy\fbi,\,\bif\leqy\fib$&&
\multirow{14}{*}{\y{$50$}}&$b\eqy1,\,i\eqy\af$&&&$bib\leqy\fb,\,bib\leqy\ff$&&&$bi\eqy
\f,\,ibi\eqy\f$\\\cline{1-2}
\y{$7$}&$\fif\leqy\ident$&&&$\bif\leqy\ff$&&&$ib\eqy1,\,bib\eqy1$&&&$ib\leqy\ei,\,ib\leqy
\ff$&&&$\ident\leqy\fbi,\,\ident\leqy\ei$\\\cline{1-2}\cline{4-5}
\multirow{3}{*}{\y{$8$}}&$ibi\eqy i,\,\ei\eqy\fbi$&&\y{$27$}&$\ei\leqy a$&&&$b\eqy
\afb,\,b\eqy\afib$&&&$ib\leqy\fbi,\,ib\leqy\fif$&&&$\ident\leqy\fib,\,\ident\leqy\fif$
\\\cline{4-5}\cline{10-11}\cline{13-14}
&$ibi\leqy\ident,\,\f\leqy aibi$&&\y{$28$}&$\ff\eqy\fib$&&&$ib\eqy\afb,\,ib\eqy\afib$&&
\y{$62$}&\y{$\ident\eqy bi,\,\ident\eqy bib$}&&\multirow{18}{*}{\y{$73$}}&$\f\eqy1,\,\ie
\eqy1$\\\cline{4-5}\cline{10-11}
&$\ff\leqy aibi,\,\ei\leqy aibi$&&\y{$29$}&$\ff\eqy\fbi$&&&$bib\eqy\afb,\,bib\eqy\afib$&&
\y{$63$}&$\f\eqy\ie,\,\ff\eqy0$&&&$\bif\eqy1,\,b\eqy ai$\\\cline{1-2}\cline{4-5}
\cline{10-11}
\y{$9$}&$\fib\leqy a$&&\y{$30$}&$\fib\eqy\ei$&&&$bi\eqy\aif,\,ibi\eqy\abif$&&\y{$64$}&
\y{$\ident\eqy ib,\,\ident\eqy ibi$}&&&$b\eqy abi,\,b\eqy aibi$\\\cline{1-2}\cline{4-5}
\cline{10-11}
\multirow{5}{*}{\y{$10$}}&$bi\leqy ib$&&\y{$31$}&$\fb\leqy\bif,\,\fb\leqy\fif$&&&$a\leqy
ib,\,a\leqy\f$&&\multirow{14}{*}{\y{$65$}}&$bi\eqy1,\,ibi\eqy1$&&&$bi\eqy abib,\,bi\eqy
aib$\\\cline{4-5}
&$bi\leqy\afb,\,\ei\leqy ib$&&\multirow{3}{*}{\y{$32$}}&$bi\eqy ib,\,bib\eqy ibi$&&&$aib
\leqy\f,\,abib\leqy\f$&&&$i\eqy\afi,\,i\eqy\aff$&&&$bib\eqy aibi,\,i\eqy aib$\\
&$\fbi\leqy ib,\,bi\leqy\afib$&&&$\bif\eqy\fbi,\,\bif\eqy\fib$&&&$abi\leqy\f,\,aibi\leqy
\f$&&&$bi\eqy\afib,\,bi\eqy\afb$&&&$i\eqy abib,\,ib\eqy aibi$\\
&$\ei\leqy\afib,\,\fb\leqy\afbi$&&&$\ie\eqy\fbi,\,\ie\eqy\fib$&&&$aib\leqy\ff,\,abib\leqy
\ff$&&&$bi\eqy\afbi,\,bi\eqy\afif$&&&$\f\eqy\afb,\,\f\eqy\aff$\\\cline{4-5}
&$\fb\leqy\afi,\,\fbi\leqy\afib$&&\y{$33$}&$\ei\leqy\bif,\,\ei\leqy\fif$&&&$aib\leqy
\ei,\,aib\leqy\fbi$&&&$ibi\eqy\afib,\,ibi\eqy\afb$&&&$\f\eqy\afbi,\,\f\eqy\afi$
\\\cline{1-2}\cline{4-5}
\multirow{3}{*}{\y{$11$}}&$bi\leqy\abif,\,bi\leqy\afif$&&\y{$34$}&$\fb\eqy\fbi$&&&$aib
\leqy\bif,\,abi\leqy\bif$&&&$ibi\eqy\afif,\,ibi\eqy\afbi$&&&$\f\eqy\afib,\,\f\eqy\afif$
\\\cline{4-5}
&$\ei\leqy\afif,\,\bif\leqy\afi$&&\multirow{2}{*}{\y{$35$}}&$bi\eqy i,\,\ei\eqy0$&&&$aib
\leqy\fif,\,abib\leqy\fb$&&&$b\eqy\abif,\,b\eqy\aif$&&&$\ff\eqy\abif,\,\ff\eqy\aif$\\
&$\fbi\leqy\afif,\,\bif\leqy\afbi$&&&$bi\leqy\af,\,bi\leqy\aff$&&&$abi\leqy ib,\,a\leqy
bib$&&&$ib\eqy\abif,\,bib\eqy\aif$&&&$\bif\eqy\afbi,\,\bif\eqy\afi$
\\\cline{1-2}\cline{4-5}\cline{7-8}
\y{$12$}&$\fib\eqy\fif$&&\y{$36$}&$\ff\leqy a$&&\y{$51$}&\y{$\ident\leqy ibi$}&&&$bib\eqy
\abif,\,ib\eqy\aif$&&&$\bif\eqy\afb,\,\bif\eqy\afib$\\\cline{1-2}\cline{4-5}\cline{7-8}
\y{$13$}&$\fib\leqy\ident$&&\y{$37$}&$\fb\eqy\ei$&&\multirow{4}{*}{\y{$52$}}&$bib\eqy
\ie,\,ib\eqy\bif$&&&$a\leqy\ei,\,a\leqy\ff$&&&$\bif\eqy\afif,\,\ie\eqy\afb$\\\cline{1-2}
\cline{4-5}
\y{$14$}&$\fib\eqy\fbi$&&\y{$38$}&$\ff\leqy\ident$&&&$i\eqy\fib,\,i\eqy\fif$&&&$a\leqy
bi,\,a\leqy ibi$&&&$\ie\eqy\afbi,\,\ie\eqy\afi$\\\cline{1-2}\cline{4-5}
\y{$15$}&$\fbi\leqy a$&&\multirow{2}{*}{\y{$39$}}&$ib\eqy b,\,\fb\eqy0$&&&$bi\eqy
\fib,\,bi\eqy\fif$&&&$aibi\leqy\ei,\,aibi\leqy\ff$&&&$\ie\eqy\afib,\,\ie\eqy\afif$\\
\cline{1-2}
\y{$16$}&$\fbi\eqy\fif$&&&$\f\leqy ib,\,\ff\leqy ib$&&&$ibi\eqy\fib,\,ibi\eqy\fif$&&&
$abi\leqy\fb,\,abi\leqy\ff$&&&$\abif\leqy\fb,\,\abif\leqy\ei$
\\\cline{1-2}\cline{4-5}\cline{7-8}
\multirow{3}{*}{\y{$17$}}&$\bif\leqy ib,\,\fif\leqy ib$&&\y{$40$}&$\fb\eqy\fif$&&
\y{$53$}&$\ident\eqy b,\,\f\leqy\ident$&&&$abi\leqy\fib,\,abi\leqy\fif$&&&$\abif\leqy
\fbi,\,\abif\leqy\fib$\\\cline{4-5}\cline{7-8}\cline{10-11}
&$\fb\leqy\afif,\,\bif\leqy\afb$&&\multirow{2}{*}{\y{$41$}}&$ib\eqy i,\,\f\eqy\fb$&&
\multirow{2}{*}{\y{$54$}}&$bib\eqy\f,\,b\eqy\bif$&&\y{$66$}&$a\leqy\ie$&&&$\aff\leqy\bif$
\\\cline{10-11}\cline{13-14}
&$\fib\leqy\afif,\,\bif\leqy\afib$&&&$ib\leqy\ident,\,\f\leqy aib$&&&$\ident\leqy\bif$&&
\multirow{2}{*}{\y{$67$}}&$\ident\eqy\f$&&\multirow{12}{*}{\y{$74$}}&$\ident\eqy1,\,i
\eqy1$\\\cline{1-2}\cline{4-5}\cline{7-8}
\multirow{2}{*}{\y{$18$}}&$bi\eqy ibi,\,\fbi\eqy0$&&\multirow{2}{*}{\y{$42$}}&$bi\eqy
b,\,\f\eqy\ei$&&\multirow{2}{*}{\y{$55$}}&$bib\eqy i,\,\bif\eqy\ei$&&&$\ident\eqy
\fb,\,\ident\eqy\ff$&&&$\ident\eqy\afib,\,\ident\eqy\afb$\\\cline{10-11}
&$\ei\leqy ibi$&&&$\ident\leqy bi,\,\f\leqy bi$&&&$\ie\eqy\ei,\,bib\leqy\af$&&
\multirow{4}{*}{\y{$68$}}&$b\eqy\ie,\,ib\eqy\f$&&&$\ident\eqy\afbi,\,\ident\eqy\afif$
\\\cline{1-2}\cline{4-5}\cline{7-8}
\multirow{3}{*}{\y{$19$}}&$\ff\eqy\fb,\,\ei\leqy\fb$&&\y{$43$}&$\ei\eqy\fif$&&
\multirow{2}{*}{\y{$56$}}&$ibi\eqy b,\,\bif\eqy\fb$&&&$i\eqy\fb,\,i\eqy\ff$&&&$\ident\eqy
\abif,\,\ident\eqy\aif$\\\cline{4-5}
&$\ff\leqy aib,\,\ei\leqy aib$&&\y{$44$}&$\f\eqy\fbi,\,\f\eqy\fib$&&&$\ie\eqy\fb,\,\f\leqy
ibi$&&&$bi\eqy\fb,\,bi\eqy\ff$&&&$i\eqy\afib,\,i\eqy\afb$\\\cline{4-5}\cline{7-8}
&$\ei\leqy\fib$&&\y{$45$}&\y{$bib\leqy\ident$}&&\multirow{2}{*}{\y{$57$}}&$ibi\eqy\af,\,i
\eqy\abif$&&&$ibi\eqy\fb,\,ibi\eqy\ff$&&&$i\eqy\afbi,\,i\eqy\afif$
\\\cline{1-2}\cline{4-5}\cline{10-11}
\y{$20$}&$bi\leqy\ident,\,\ei\leqy\ident$&&\multirow{6}{*}{\y{$46$}}&$i\eqy0,\,b\eqy\f$&&
&$a\leqy\bif$&&\multirow{3}{*}{\y{$69$}}&$b\eqy i$&&&$bi\eqy\afi,\,bi\eqy\aff$
\\\cline{1-2}\cline{7-8}
\multirow{2}{*}{\y{$21$}}&$\bif\eqy\ie,\,\fif\eqy0$&&&$bi\eqy0,\,ibi\eqy0$&&\y{$58$}&
$\ident\eqy i,\,\f\leqy a$&&&$\f\eqy0,\,\f\eqy\fif$&&&$ibi\eqy\afi,\,ibi\eqy\aff$\\
\cline{7-8}
&$\bif\leqy\aff$&&&$i\eqy\ei,\,i\eqy\fbi$&&\multirow{4}{*}{\y{$59$}}&$ibi\eqy\aif,\,bi\eqy
\abif$&&&$\ff\eqy\bif,\,\ff\eqy\ie$&&&$b\eqy\af$\\\cline{1-2}\cline{10-11}
\y{$22$}&$\ident\leqy ib,\,\fb\leqy a$&&&$bi\eqy\ei,\,bi\eqy\fbi$&&&$b\eqy\afbi,\,b\eqy
\afif$&&\multirow{3}{*}{\y{$70$}}&$i\eqy\aif,\,bi\eqy\af$&&&$ib\eqy\af,\,bib\eqy\af$
\\\cline{1-2}
\multirow{2}{*}{\y{$23$}}&$\ff\eqy\ei,\,\fb\leqy\ei$&&&$ibi\eqy\ei,\,ibi\eqy\fbi$&&&$ib\eqy
\afbi,\,ib\eqy\afif$&&&$b\eqy\afi,\,b\eqy\aff$&&&$a\leqy\fib,\,a\leqy\fb$\\
&$\ff\leqy bi,\,\fb\leqy bi$&&&$ib\eqy\ie,\,bib\eqy\bif$&&&$bib\eqy\afbi,\,bib\eqy\afif$&
&&$ib\eqy\afi,\,ib\eqy\aff$&&&$a\leqy\fbi,\,a\leqy\fif$
\\\cline{1-2}\cline{4-5}\cline{7-8}\cline{10-11}\cline{13-14}
\end{tabular}\label{tab:implications_poset}
\end{table}

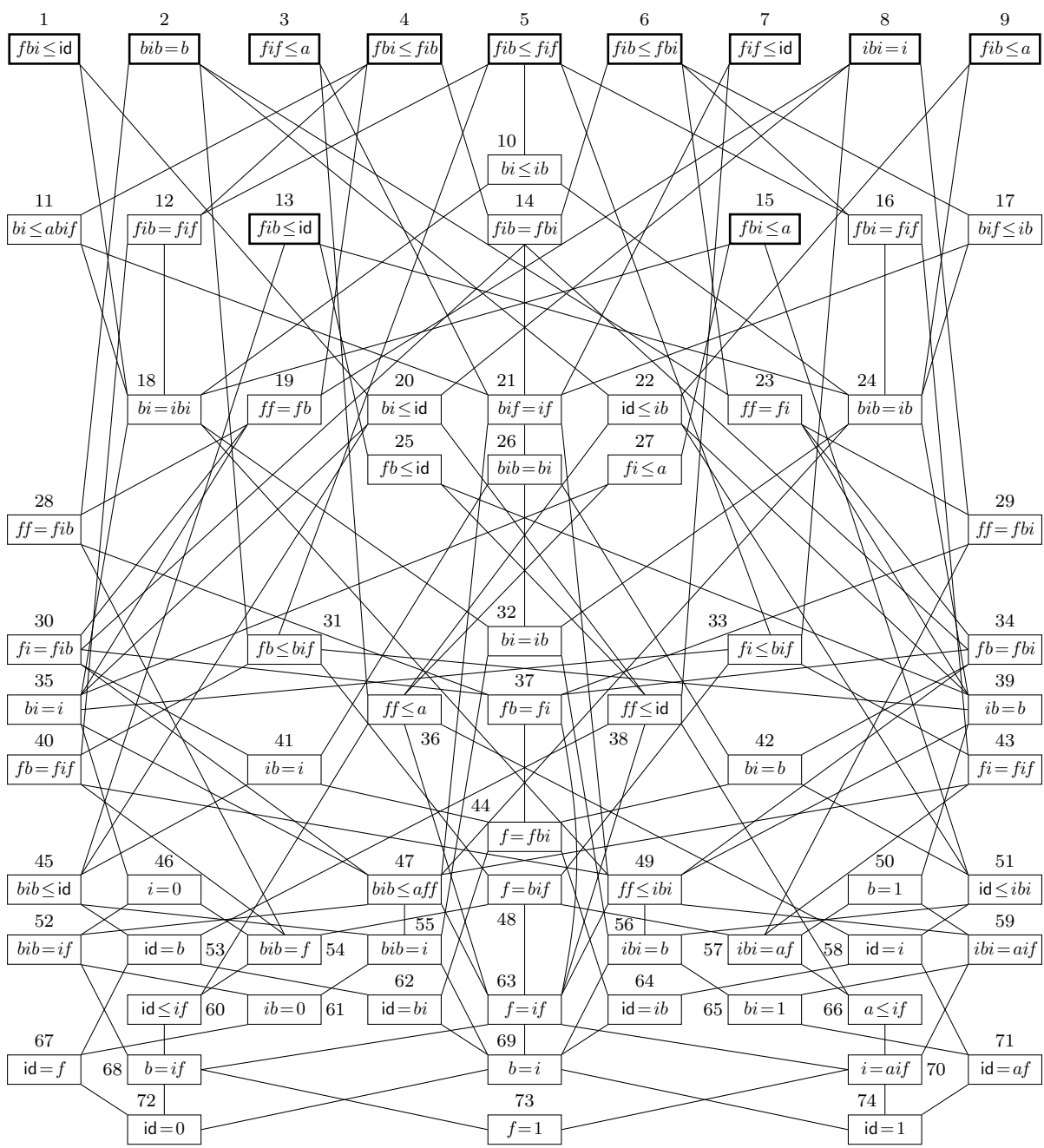
\begin{figure}
\centering
\small
\begin{tikzpicture}
[auto,
 block/.style={rectangle,draw=black,align=center,minimum width={40pt},
 minimum height={16pt},inner sep=2pt,scale=.8},
 topblock/.style={rectangle,draw=black,line width=1pt,align=center,minimum width={38pt},
 minimum height={16pt},inner sep=2pt,scale=.8},
 line/.style ={draw},
 scale=.92]

	\node[block,anchor=south] (0) at (-6,2) {\vphantom{$\f$}$\ident\eqy b$};
	\node[block,anchor=south] (1) at (6,3) {\vphantom{$\f$}$b\eqy1$};
	\node[block,anchor=south] (2) at (6,2) {\vphantom{$\f$}$\ident\eqy i$};
	\node[block,anchor=south] (3) at (6,-1) {\vphantom{$\f$}$\ident\eqy1$};
	\node[block,anchor=south] (4) at (-2,1) {\vphantom{$\f$}$\ident\eqy bi$};
	\node[block,anchor=south] (5) at (4,1) {\vphantom{$\f$}$bi\eqy1$};
	\node[block,anchor=south] (6) at (2,1) {\vphantom{$\f$}$\ident\eqy ib$};
	\node[block,anchor=south] (7) at (-8,0) {$\ident\eqy\f$};
	\node[block,anchor=south] (8) at (-6,-1) {\vphantom{$\f$}$\ident\eqy0$};
	\node[block,anchor=south] (9) at (6,1) {$a\leqy\ie$};
	\node[block,anchor=south] (10) at (4,2) {$ibi\eqy\af$};
	\node[block,anchor=south] (11) at (8,0) {$\ident\eqy\af$};
	\node[block,anchor=south] (12) at (0,0) {\vphantom{$\f$}$b\eqy i$};
	\node[block,anchor=south] (13) at (4,5) {\vphantom{$\f$}$bi\eqy b$};
	\node[block,anchor=south] (14) at (8,6) {\vphantom{$\f$}$ib\eqy b$};
	\node[topblock,anchor=south] (15) at (-6,17) {\vphantom{$\f$}$bib\eqy b$};
	\node[block,anchor=south] (16) at (2,2) {\vphantom{$\f$}$ibi\eqy b$};
	\node[block,anchor=south] (17) at (-6,3) {\vphantom{$\f$}$i\eqy0$};
	\node[block,anchor=south] (18) at (-4,1) {\vphantom{$\f$}$ib\eqy0$};
	\node[block,anchor=south] (19) at (-6,0) {$b\eqy\ie$};
	\node[block,anchor=south] (20) at (-4,2) {$bib\eqy\f$};
	\node[block,anchor=south] (21) at (-8,6) {\vphantom{$\f$}$bi\eqy i$};
	\node[block,anchor=south] (22) at (-4,5) {\vphantom{$\f$}$ib\eqy i$};
	\node[block,anchor=south] (23) at (-2,2) {\phantom{$y$}$bib\eqy i$\phantom{$y$}};
	\node[topblock,anchor=south] (24) at (6,17) {\vphantom{$\f$}$ibi\eqy i$};
	\node[block,anchor=south] (25) at (-8,2) {$bib\eqy\ie$};
	\node[block,anchor=south] (26) at (0,7.14) {\vphantom{$\f$}$bi\eqy ib$};
	\node[block,anchor=south] (27) at (0,10) {\vphantom{$\f$}$bib\eqy bi$};
	\node[block,anchor=south] (28) at (-6,11) {\vphantom{$\f$}$bi\eqy ibi$};
	\node[block,anchor=south] (29) at (6,11) {\vphantom{$\f$}$bib\eqy ib$};
	\node[block,anchor=south] (30) at (0,1) {$\f\eqy\ie$};
	\node[block,anchor=south] (31) at (0,3) {$\f\eqy\bif$};
	\node[block,anchor=south] (32) at (0,3.88) {$\f\eqy\fbi$};
	\node[block,anchor=south] (33) at (-4,11) {$\ff\eqy\fb$};
	\node[block,anchor=south] (34) at (-8,9) {$\ff\eqy\fib$};
	\node[block,anchor=south] (35) at (4,11) {$\ff\eqy\ei$};
	\node[block,anchor=south] (36) at (8,9) {$\ff\eqy\fbi$};
	\node[block,anchor=south] (37) at (0,11) {$\bif\eqy\ie$};
	\node[block,anchor=south] (38) at (0,6) {$\fb\eqy\ei$};
	\node[block,anchor=south] (39) at (8,7) {$\fb\eqy\fbi$};
	\node[block,anchor=south] (40) at (-8,5) {$\fb\eqy\fif$};
	\node[block,anchor=south] (41) at (-8,7) {$\ei\eqy\fib$};
	\node[block,anchor=south] (42) at (0,14) {$\fib\eqy\fbi$};
	\node[block,anchor=south] (43) at (-6,14) {$\fib\eqy\fif$};
	\node[block,anchor=south] (44) at (8,5) {$\ei\eqy\fif$};
	\node[block,anchor=south] (45) at (6,14) {$\fbi\eqy\fif$};
	\node[block,anchor=south] (46) at (0,-1) {$\f\eqy1$};
	\node[block,anchor=south] (47) at (6,0) {$i\eqy\aif$};
	\node[block,anchor=south] (48) at (8,2) {$ibi\eqy\aif$};
	\node[block,anchor=south] (49) at (-8,14) {$bi\leqy\abif$};
	\node[block,anchor=south] (50) at (0,15) {\vphantom{$\f$}$bi\leqy ib$};
	\node[block,anchor=south] (51) at (-2,11) {\vphantom{$\f$}$bi\leqy\ident$};
	\node[block,anchor=south] (52) at (-2,3) {$bib\leqy\aff$};
	\node[block,anchor=south] (53) at (-8,3) {\vphantom{$\f$}$bib\leqy\ident$};
	\node[block,anchor=south] (54) at (8,14) {$\bif\leqy ib$};
	\node[block,anchor=south] (55) at (2,11) {\vphantom{$\f$}$\ident\leqy ib$};
	\node[block,anchor=south] (56) at (-4,7) {$\fb\leqy\bif$};
	\node[block,anchor=south] (57) at (-2,10) {$\fb\leqy\ident$};
	\node[topblock,anchor=south] (58) at (4,14) {$\fbi\leqy a$};
	\node[topblock,anchor=south] (59) at (-2,17) {$\fbi\leqy\fib$};
	\node[topblock,anchor=south] (60) at (0,17) {$\fib\leqy\fif$};
	\node[topblock,anchor=south] (61) at (-8,17) {$\fbi\leqy\ident$};
	\node[block,anchor=south] (62) at (-2,6) {$\ff\leqy a$};
	\node[block,anchor=south] (63) at (2,3) {$\ff\leqy ibi$};
	\node[block,anchor=south] (64) at (2,6) {$\ff\leqy\ident$};
	\node[block,anchor=south] (65) at (2,10) {$\ei\leqy a$};
	\node[block,anchor=south] (66) at (4,7) {$\ei\leqy\bif$};
	\node[topblock,anchor=south] (67) at (8,17) {$\fib\leqy a$};
	\node[topblock,anchor=south] (68) at (2,17) {$\fib\leqy\fbi$};
	\node[topblock,anchor=south] (69) at (-4,14) {$\fib\leqy\ident$};
	\node[topblock,anchor=south] (70) at (-4,17) {$\fif\leqy a$};
	\node[topblock,anchor=south] (71) at (4,17) {$\fif\leqy\ident$};
	\node[block,anchor=south] (72) at (8,3) {\vphantom{$\f$}$\ident\leqy ibi$};
	\node[block,anchor=south] (73) at (-6,1) {$\ident\leqy\ie$};

	\node[anchor=south] at (61.north) {\scriptsize$1$};
	\node[anchor=south] at (15.north) {\scriptsize$2$};
	\node[anchor=south] at (70.north) {\scriptsize$3$};
	\node[anchor=south] at (59.north) {\scriptsize$4$};
	\node[anchor=south] at (60.north) {\scriptsize$5$};
	\node[anchor=south] at (68.north) {\scriptsize$6$};
	\node[anchor=south] at (71.north) {\scriptsize$7$};
	\node[anchor=south] at (24.north) {\scriptsize$8$};
	\node[anchor=south] at (67.north) {\scriptsize$9$};
	\node[anchor=south] at (50.140) {\scriptsize$10$};
	\node[anchor=south] at (49.north) {\scriptsize$11$};
	\node[anchor=south] at (43.north) {\scriptsize$12$};
	\node[anchor=south] at (69.north) {\scriptsize$13$};
	\node[anchor=south] at (42.north) {\scriptsize$14$};
	\node[anchor=south] at (58.north) {\scriptsize$15$};
	\node[anchor=south] at (45.north) {\scriptsize$16$};
	\node[anchor=south] at (54.north) {\scriptsize$17$};
	\node[anchor=south] at (28.140) {\scriptsize$18$};
	\node[anchor=south] at (33.north) {\scriptsize$19$};
	\node[anchor=south] at (51.north) {\scriptsize$20$};
	\node[anchor=south] at (37.140) {\scriptsize$21$};
	\node[anchor=south] at (55.north) {\scriptsize$22$};
	\node[anchor=south] at (35.north) {\scriptsize$23$};
	\node[anchor=south] at (29.140) {\scriptsize$24$};
	\node[anchor=south] at (57.north) {\scriptsize$25$};
	\node[anchor=south] at (27.140) {\scriptsize$26$};
	\node[anchor=south] at (65.north) {\scriptsize$27$};
	\node[anchor=south] at (34.north) {\scriptsize$28$};
	\node[anchor=south] at (36.north) {\scriptsize$29$};
	\node[anchor=south] at (41.north) {\scriptsize$30$};
	\node[anchor=south] at (56.north east) {\quad\scriptsize$31$};
	\node[anchor=south] at (26.140) {\scriptsize$32$};
	\node[anchor=south] at (66.north west) {\scriptsize$33$\quad\quad};
	\node[anchor=south] at (39.north) {\scriptsize$34$};
	\node[anchor=south] at (21.north) {\scriptsize$35$};
	\node[anchor=north] at (62.330) {\scriptsize$36$};
	\node[anchor=south] at (38.north) {\scriptsize$37$};
	\node[anchor=north] at (64.210) {\scriptsize$38$};
	\node[anchor=south] at (14.north) {\scriptsize$39$};
	\node[anchor=south] at (40.north) {\scriptsize$40$};
	\node[anchor=south] at (22.north) {\scriptsize$41$};
	\node[anchor=south] at (13.north) {\scriptsize$42$};
	\node[anchor=south] at (44.north) {\scriptsize$43$};
	\node[anchor=east] at (32.150) {\raise14pt\hbox{\scriptsize$44$}};
	\node[anchor=south] at (53.north) {\scriptsize$45$};
	\node[anchor=south] at (17.north) {\scriptsize$46$};
	\node[anchor=south] at (52.north) {\scriptsize$47$};
	\node[anchor=north] at (31.220) {\scriptsize$48$};
	\node[anchor=south] at (63.north) {\scriptsize$49$};
	\node[anchor=south] at (1.north) {\scriptsize$50$};
	\node[anchor=south] at (72.north) {\scriptsize$51$};
	\node[anchor=south] at (25.north) {\scriptsize$52$};
	\node[anchor=west] at (0.0) {\!\scriptsize$53$};
	\node[anchor=west] at (20.0) {\!\scriptsize$54$};
	\node[anchor=west] at (23.80) {\raise9pt\hbox{\scriptsize$55$}};
	\node[anchor=east] at (16.100) {\raise9pt\hbox{\scriptsize$56$}};
	\node[anchor=east] at (10.180) {\scriptsize$57$\!};
	\node[anchor=east] at (2.180) {\scriptsize$58$\!};
	\node[anchor=south] at (48.north) {\scriptsize$59$};
	\node[anchor=west] at (73.0) {\!\scriptsize$60$};
	\node[anchor=west] at (18.0) {\!\scriptsize$61$};
	\node[anchor=south] at (4.north) {\scriptsize$62$};
	\node[anchor=south] at (30.140) {\scriptsize$63$};
	\node[anchor=south] at (6.north) {\scriptsize$64$};
	\node[anchor=east] at (5.180) {\scriptsize$65$\!};
	\node[anchor=east] at (9.180) {\scriptsize$66$\!};
	\node[anchor=south] at (7.north) {\scriptsize$67$};
	\node[anchor=east] at (19.180) {\scriptsize$68$\!};
	\node[anchor=south] at (12.140) {\scriptsize$69$};
	\node[anchor=west] at (47.0) {\!\scriptsize$70$};
	\node[anchor=south] at (11.north) {\scriptsize$71$};
	\node[anchor=south] at (8.140) {\scriptsize$72$};
	\node[anchor=south] at (46.north) {\scriptsize$73$};
	\node[anchor=south] at (3.140) {\scriptsize$74$};

	\draw[line] (0.north west) -- (53.south east);
	\draw[line] (0.north east) -- (64.south west);
	\draw[line] (1.north east) -- (14.south west);
	\draw[line] (2.north west) -- (62.south east);
	\draw[line] (2.north east) -- (72.south west);
	\draw[line] (3.north east) -- (11.south west);
	\draw[line] (3.west) -- (12.east);
	\draw[line] (3) -- (47);
	\draw[line] (4.north west) -- (0.south east);
	\draw[line] (4.north east) -- (32.south west);
	\draw[line] (5.north west) -- (16.south east);
	\draw[line] (5.north east) -- (48.south west);
	\draw[line] (6.north east) -- (2.south west);
	\draw[line] (6.north west) -- (32.south east);
	\draw[line] (7.north east) -- (0.south west);
	\draw[line] (7.north east) -- (18.south west);
	\draw[line] (8.north west) -- (7.south east);
	\draw[line] (8.east) -- (12.west);
	\draw[line] (8) -- (19);
	\draw[line] (9.north west) -- (10.south east);
	\draw[line] (9.north west) -- (64.south east);
	\draw[line] (10.north east) -- (1.south west);
	\draw[line] (10.north west) -- (31.south east);
	\draw[line] (10.north) -- (36.south west);
	\draw[line] (10.north) -- (44.south west);
	\draw[line] (11.north west) -- (2.south east);
	\draw[line] (11.north west) -- (5.south east);
	\draw[line] (12.north west) -- (4.south east);
	\draw[line] (12.north east) -- (6.south west);
	\draw[line] (12.north east) -- (16.south west);
	\draw[line] (12.north west) -- (23.south east);
	\draw[line] (12) -- (30);
	\draw[line] (13.north west) -- (27.south east);
	\draw[line] (13.north east) -- (39.south west);
	\draw[line] (14.north west) -- (29.south east);
	\draw[line] (14.north west) -- (35.south east);
	\draw[line] (14.north west) -- (55.south east);
	\draw[line] (14.west) -- (56.east);
	\draw[line] (14.north west) -- (57.south east);
	\draw[line] (16.north west) -- (26.south east);
	\draw[line] (16) -- (63);
	\draw[line] (16.north east) -- (72.south west);
	\draw[line] (17.north west) -- (21.south east);
	\draw[line] (18.north east) -- (23.south west);
	\draw[line] (18.north west) -- (25.south east);
	\draw[line] (19.north west) -- (25.south east);
	\draw[line] (19.east) -- (30.south west);
	\draw[line] (19) -- (73);
	\draw[line] (20.north west) -- (17.south east);
	\draw[line] (20.north east) -- (31.south west);
	\draw[line] (20.north) -- (34.south east);
	\draw[line] (20.north) -- (40.south east);
	\draw[line] (21.north east) -- (28.south west);
	\draw[line] (21.north east) -- (33.south west);
	\draw[line] (21.north east) -- (51.south west);
	\draw[line] (21.north east) -- (65.south west);
	\draw[line] (21.east) -- (66.west);
	\draw[line] (22.north east) -- (27.south west);
	\draw[line] (22.north west) -- (41.south east);
	\draw[line] (23.north east) -- (26.south west);
	\draw[line] (23) -- (52);
	\draw[line] (23.north west) -- (53.south east);
	\draw[line] (25.north east) -- (17.south west);
	\draw[line] (25.north east) -- (52.south west);
	\draw[line] (26) -- (27);
	\draw[line] (26.north west) -- (28.south east);
	\draw[line] (26.north east) -- (29.south west);
	\draw[line] (27) -- (37);
	\draw[line] (28) -- (43);
	\draw[line] (28.north west) -- (49.south east);
	\draw[line] (28.north east) -- (50.south west);
	\draw[line] (28.north east) -- (58.south west);
	\draw[line] (28.north west) -- (61.south east);
	\draw[line] (29) -- (45);
	\draw[line] (29.north west) -- (50.south east);
	\draw[line] (29.north east) -- (54.south west);
	\draw[line] (29.north east) -- (67.south west);
	\draw[line] (29.north west) -- (69.south east);
	\draw[line] (30) -- (31);
	\path (30.north east) edge[out=79,in=281] (38.south east);
	\draw[line] (30.north west) -- (52.south east);
	\draw[line] (30.north west) -- (62.south);
	\draw[line] (30.north east) -- (63.south west);
	\draw[line] (30.north east) -- (64.south);
	\draw[line] (31.north west) -- (56.south east);
	\draw[line] (31.north east) -- (66.south west);
	\draw[line] (32.north east) -- (13.south west);
	\draw[line] (32.north west) -- (22.south east);
	\draw[line] (32) -- (38);
	\draw[line] (33.north east) -- (24.south west);
	\draw[line] (33.north east) -- (59.south west);
	\draw[line] (34.north east) -- (15.south west);
	\draw[line] (34.north east) -- (33.south west);
	\draw[line] (35.north west) -- (15.south east);
	\draw[line] (35.north west) -- (68.south east);
	\draw[line] (36.north west) -- (24.south east);
	\draw[line] (36.north west) -- (35.south east);
	\draw[line] (37) -- (42);
	\draw[line] (37.north west) -- (49.south east);
	\draw[line] (37.north east) -- (54.south west);
	\draw[line] (37.north west) -- (70.south east);
	\draw[line] (37.north east) -- (71.south west);
	\draw[line] (38.north west) -- (34.south east);
	\draw[line] (38.north east) -- (36.south west);
	\draw[line] (38.north east) -- (39.west);
	\draw[line] (38.north west) -- (41.east);
	\draw[line] (39.north west) -- (35.south east);
	\draw[line] (39.west) -- (42.south);
	\draw[line] (40.north east) -- (43.south west);
	\draw[line] (40.north east) -- (56.south west);
	\draw[line] (41.north east) -- (33.south west);
	\draw[line] (41.east) -- (42.south);
	\draw[line] (42.north west) -- (59.south east);
	\draw[line] (42.north east) -- (68.south west);
	\draw[line] (43.north east) -- (59.south west);
	\draw[line] (43.north east) -- (60.south west);
	\draw[line] (44.north west) -- (45.south east);
	\draw[line] (44.north west) -- (66.south east);
	\draw[line] (45.north west) -- (60.south east);
	\draw[line] (45.north west) -- (68.south east);
	\draw[line] (46.west) -- (19.east);
	\draw[line] (46.east) -- (47.west);
	\draw[line] (47) -- (9);
	\draw[line] (47.west) -- (30.south east);
	\draw[line] (47.north east) -- (48.south west);
	\draw[line] (48.north west) -- (1.south east);
	\draw[line] (48.north west) -- (63.south east);
	\draw[line] (49.north east) -- (59.south west);
	\draw[line] (50) -- (60);
	\draw[line] (51.north east) -- (24.south west);
	\draw[line] (51.north west) -- (61.south east);
	\draw[line] (52.north west) -- (21.south east);
	\draw[line] (52.north east) -- (29.south west);
	\draw[line] (52.north east) -- (37.south west);
	\draw[line] (52.north west) -- (41.south east);
	\draw[line] (52.north east) -- (44.south west);
	\draw[line] (53.north east) -- (22.south west);
	\draw[line] (53.north east) -- (51.south west);
	\draw[line] (53.north east) -- (69.south);
	\draw[line] (54.north west) -- (68.south east);
	\draw[line] (55.north west) -- (15.south east);
	\draw[line] (55.north east) -- (67.south west);
	\draw[line] (56.north west) -- (15.south east);
	\draw[line] (56.110) -- (60.south west);
	\draw[line] (57.north west) -- (69.south east);
	\draw[line] (62.north) -- (55.south west);
	\draw[line] (62.north) -- (65.south west);
	\draw[line] (62.north west) -- (70.south east);
	\draw[line] (63.north east) -- (14.south west);
	\draw[line] (63.north west) -- (28.south east);
	\draw[line] (63.north west) -- (37.south east);
	\draw[line] (63.north east) -- (39.south west);
	\draw[line] (63.north west) -- (40.south east);
	\draw[line] (64.north) -- (51.south east);
	\draw[line] (64.north) -- (57.south east);
	\draw[line] (64.north east) -- (71.south west);
	\draw[line] (65.north east) -- (58.south west);
	\draw[line] (66.north east) -- (24.south west);
	\draw[line] (66.70) -- (60.south east);
	\draw[line] (72.north west) -- (13.south east);
	\draw[line] (72.north west) -- (55.south east);
	\draw[line] (72.north west) -- (58.south);
	\draw[line] (73.north east) -- (20.south west);
	\draw[line] (73.north east) -- (62.south west);

\end{tikzpicture}
\vspace{6pt}
\caption{The partial order on relational classes in $\KFA$ under logical implication.\\
\smallskip\small$\KF$ counterparts are shown to save space.
Relations are indexed in Table~\ref{tab:index}.}
\label{fig:implications_poset}\vspace{-6pt}
\end{figure}

\begin{table}
\caption{Index to Figure~\ref{fig:implications_poset} and Table~\ref{tab:implications_poset}.}
\small
\centering
\renewcommand{\tabcolsep}{2pt}
\renewcommand{\arraystretch}{1}
\begin{tabular}{llcllcllcllcllcllcllcll}
$b\eqy0$&$72$&&$bib\eqy\abif$&$65$&&$\f\eqy b$&$46$&&$\ei\eqy bib$&$61$&&$i\eqy\abif$&$57$&&$ib\eqy\afib$&$50$&&$ibi\eqy bib$&$32$&&$\ident\eqy\ff$&$67$\\
$b\eqy1$&$50$&	&$bib\eqy\af$&$74$&	&$\f\eqy bi$&$72$&	&$\ei\eqy\bif$&$55$&	&$i\eqy\af$&$50$&	&$ib\eqy\afif$&$59$&	&$ibi\eqy\bif$&$61$&	&$\ident\eqy\ei$&$72$\\
$b\eqy abi$&$73$&	&$bib\eqy\afb$&$50$&	&$\f\eqy bib$&$54$&	&$\ei\eqy\f$&$42$&	&$i\eqy\afb$&$74$&	&$ib\eqy aibi$&$73$&	&$ibi\eqy\f$&$72$&	&$\ident\eqy\fib$&$72$\\
$b\eqy\abif$&$65$&	&$bib\eqy\afbi$&$59$&	&$\f\eqy\bif$&$48$&	&$\ei\eqy\fb$&$37$&	&$i\eqy\afbi$&$74$&	&$ib\eqy\aif$&$65$&	&$ibi\eqy\fb$&$68$&	&$\ident\eqy\fif$&$72$\\
$b\eqy\af$&$74$&	&$bib\eqy\aff$&$70$&	&$\fb\eqy0$&$39$&	&$\ei\eqy\fbi$&$\phantom{0}8$&	&$i\eqy\aff$&$65$&	&$ib\eqy b$&$39$&	&$ibi\eqy\fbi$&$46$&	&$\ident\eqy i$&$58$\\
$b\eqy\afb$&$50$&	&$bib\eqy\afi$&$70$&	&$\fb\eqy b$&$61$&	&$\ei\eqy\ff$&$23$&	&$i\eqy\afi$&$65$&	&$ib\eqy bi$&$32$&	&$ibi\eqy\ff$&$68$&	&$\ident\eqy ib$&$64$\\
$b\eqy\afbi$&$59$&	&$bib\eqy\afib$&$50$&	&$\fb\eqy bi$&$68$&	&$\fib\eqy0$&$24$&	&$i\eqy\afib$&$74$&	&$ib\eqy bib$&$24$&	&$ibi\eqy\ei$&$46$&	&$\ident\eqy ibi$&$64$\\
$b\eqy\aff$&$70$&	&$bib\eqy\afif$&$59$&	&$\fb\eqy bib$&$72$&	&$\fib\eqy b$&$72$&	&$i\eqy\afif$&$74$&	&$ib\eqy\bif$&$52$&	&$ibi\eqy\fib$&$52$&	&$\ie\eqy0$&$26$\\
$b\eqy\afi$&$70$&	&$bib\eqy aibi$&$73$&	&$\fb\eqy\bif$&$56$&	&$\fib\eqy bi$&$52$&	&$i\eqy aib$&$73$&	&$ib\eqy\f$&$68$&	&$ibi\eqy\fif$&$52$&	&$\ie\eqy1$&$73$\\
$b\eqy\afib$&$50$&	&$bib\eqy\aif$&$65$&	&$\fb\eqy\f$&$41$&	&$\fib\eqy bib$&$61$&	&$i\eqy\aif$&$70$&	&$ib\eqy\fb$&$72$&	&$ibi\eqy i$&$\phantom{0}8$&	&$\ie\eqy\afb$&$73$\\
$b\eqy\afif$&$59$&	&$bib\eqy b$&$\phantom{0}2$&	&$\fbi\eqy0$&$18$&	&$\fib\eqy\bif$&$32$&	&$i\eqy b$&$69$&	&$ib\eqy\fbi$&$61$&	&$ibi\eqy ib$&$26$&	&$\ie\eqy\afbi$&$73$\\
$b\eqy ai$&$73$&	&$bib\eqy bi$&$26$&	&$\fbi\eqy b$&$72$&	&$\fib\eqy\f$&$44$&	&$i\eqy bi$&$35$&	&$ib\eqy\ff$&$72$&	&$\ident\eqy0$&$72$&	&$\ie\eqy\afi$&$73$\\
$b\eqy aibi$&$73$&	&$\bif\eqy0$&$26$&	&$\fbi\eqy bi$&$46$&	&$\fib\eqy\fb$&$\phantom{0}2$&	&$i\eqy bib$&$55$&	&$ib\eqy\ei$&$61$&	&$\ident\eqy1$&$74$&	&$\ie\eqy\afib$&$73$\\
$b\eqy\aif$&$65$&	&$\bif\eqy1$&$73$&	&$\fbi\eqy bib$&$61$&	&$\fib\eqy\fbi$&$14$&	&$i\eqy\bif$&$61$&	&$ib\eqy\fib$&$61$&	&$\ident\eqy\abif$&$74$&	&$\ie\eqy\afif$&$73$\\
$bi\eqy0$&$46$&	&$\bif\eqy\afb$&$73$&	&$\fbi\eqy\bif$&$32$&	&$\fib\eqy\ff$&$28$&	&$i\eqy\f$&$72$&	&$ib\eqy\fif$&$61$&	&$\ident\eqy\af$&$71$&	&$\ie\eqy b$&$68$\\
$bi\eqy1$&$65$&	&$\bif\eqy\afbi$&$73$&	&$\fbi\eqy\f$&$44$&	&$\fib\eqy\ei$&$30$&	&$i\eqy\fb$&$68$&	&$ib\eqy i$&$41$&	&$\ident\eqy\afb$&$74$&	&$\ie\eqy bi$&$61$\\
$bi\eqy abib$&$73$&	&$\bif\eqy\afi$&$73$&	&$\fbi\eqy\fb$&$34$&	&$\fif\eqy0$&$21$&	&$i\eqy\fbi$&$46$&	&$ibi\eqy0$&$46$&	&$\ident\eqy\afbi$&$74$&	&$\ie\eqy bib$&$52$\\
$bi\eqy\abif$&$59$&	&$\bif\eqy\afib$&$73$&	&$\ff\eqy0$&$63$&	&$\fif\eqy b$&$72$&	&$i\eqy\ff$&$68$&	&$ibi\eqy1$&$65$&	&$\ident\eqy\aff$&$71$&	&$\ie\eqy\bif$&$21$\\
$bi\eqy\af$&$70$&	&$\bif\eqy\afif$&$73$&	&$\ff\eqy\abif$&$73$&	&$\fif\eqy bi$&$52$&	&$i\eqy\ei$&$46$&	&$ibi\eqy\abif$&$50$&	&$\ident\eqy\afi$&$71$&	&$\ie\eqy\f$&$63$\\
$bi\eqy\afb$&$65$&	&$\bif\eqy b$&$54$&	&$\ff\eqy\aif$&$73$&	&$\fif\eqy bib$&$61$&	&$i\eqy\fib$&$52$&	&$ibi\eqy\af$&$57$&	&$\ident\eqy\afib$&$74$&	&$\ie\eqy\fb$&$56$\\
$bi\eqy\afbi$&$65$&	&$\bif\eqy bi$&$61$&	&$\ff\eqy b$&$61$&	&$\fif\eqy\bif$&$26$&	&$i\eqy\fif$&$52$&	&$ibi\eqy\afb$&$65$&	&$\ident\eqy\afif$&$74$&	&$\ie\eqy\fbi$&$32$\\
$bi\eqy\aff$&$74$&	&$\bif\eqy bib$&$46$&	&$\ff\eqy bi$&$68$&	&$\fif\eqy\f$&$69$&	&$ib\eqy0$&$61$&	&$ibi\eqy\afbi$&$65$&	&$\ident\eqy\aif$&$74$&	&$\ie\eqy\ff$&$69$\\
$bi\eqy\afi$&$74$&	&$\f\eqy0$&$69$&	&$\ff\eqy bib$&$72$&	&$\fif\eqy\fb$&$40$&	&$ib\eqy1$&$50$&	&$ibi\eqy\aff$&$74$&	&$\ident\eqy b$&$53$&	&$\ie\eqy\ei$&$55$\\
$bi\eqy\afib$&$65$&	&$\f\eqy1$&$73$&	&$\ff\eqy\bif$&$69$&	&$\fif\eqy\fbi$&$16$&	&$ib\eqy\abif$&$65$&	&$ibi\eqy\afi$&$74$&	&$\ident\eqy bi$&$62$&	&$\ie\eqy\fib$&$32$\\
$bi\eqy\afif$&$65$&	&$\f\eqy\afb$&$73$&	&$\ff\eqy\f$&$26$&	&$\fif\eqy\ff$&$48$&	&$ib\eqy\af$&$74$&	&$ibi\eqy\afib$&$65$&	&$\ident\eqy bib$&$62$&	&$\ie\eqy\fif$&$26$\\
$bi\eqy aib$&$73$&	&$\f\eqy\afbi$&$73$&	&$\ff\eqy\fb$&$19$&	&$\fif\eqy\ei$&$43$&	&$ib\eqy\afb$&$50$&	&$ibi\eqy\afif$&$65$&	&$\ident\eqy\bif$&$72$&	&$\ie\eqy i$&$61$\\
$bi\eqy\aif$&$50$&	&$\f\eqy\aff$&$73$&	&$\ff\eqy\fbi$&$29$&	&$\fif\eqy\fib$&$12$&	&$ib\eqy\afbi$&$59$&	&$ibi\eqy\aif$&$59$&	&$\ident\eqy\f$&$67$&	&$\ie\eqy ib$&$46$\\
$bi\eqy b$&$42$&	&$\f\eqy\afi$&$73$&	&$\ei\eqy0$&$35$&	&$i\eqy0$&$46$&	&$ib\eqy\aff$&$70$&	&$ibi\eqy b$&$56$&	&$\ident\eqy\fb$&$67$&	&$\ie\eqy ibi$&$61$\\
$bib\eqy0$&$61$&	&$\f\eqy\afib$&$73$&	&$\ei\eqy b$&$72$&	&$i\eqy1$&$74$&	&$ib\eqy\afi$&$70$&	&$ibi\eqy bi$&$18$&	&$\ident\eqy\fbi$&$72$&	&$\ie\eqy\ident$&$72$\\
$bib\eqy1$&$50$&	&$\f\eqy\afif$&$73$&	&$\ei\eqy bi$&$46$&	&$i\eqy abib$&$73$&&&&&&&&&&&&\\\hline\vrule width0pt height9pt
$a\leqy bi$&$65$&	&$abib\leqy\ff$&$50$&	&$bi\leqy aib$&$46$&	&$\bif\leqy\afib$&$17$&	&$\fb\leqy bi$&$23$&	&$\ff\leqy bib$&$\phantom{0}2$&	&$\fib\leqy\fbi$&$\phantom{0}6$&	&$ib\leqy\ident$&$41$\\
$a\leqy bib$&$50$&	&$\abif\leqy\fb$&$73$&	&$bi\leqy\bif$&$46$&	&$\bif\leqy aib$&$26$&	&$\fb\leqy bib$&$\phantom{0}2$&	&$\ff\leqy\bif$&$48$&	&$\fib\leqy\ei$&$\phantom{0}6$&	&$ibi\leqy a$&$46$\\
$a\leqy\bif$&$57$&	&$\abif\leqy\fbi$&$73$&	&$bi\leqy\f$&$46$&	&$\bif\leqy bi$&$26$&	&$\fb\leqy\bif$&$31$&	&$\ff\leqy ib$&$39$&	&$\fib\leqy\fif$&$\phantom{0}5$&	&$ibi\leqy\f$&$46$\\
$a\leqy\f$&$50$&	&$\abif\leqy\ei$&$73$&	&$bi\leqy\fb$&$46$&	&$\bif\leqy\fb$&$26$&	&$\fb\leqy\fbi$&$23$&	&$\ff\leqy ibi$&$49$&	&$\fib\leqy\ident$&$13$&	&$ibi\leqy\ff$&$46$\\
$a\leqy\fb$&$74$&	&$\abif\leqy\fib$&$73$&	&$bi\leqy\ff$&$46$&	&$\bif\leqy\fbi$&$26$&	&$\fb\leqy\ei$&$23$&	&$\ff\leqy\ident$&$38$&	&$\fif\leqy a$&$\phantom{0}3$&	&$ibi\leqy\ei$&$46$\\
$a\leqy\fbi$&$74$&	&$\aff\leqy\bif$&$73$&	&$bi\leqy\fib$&$46$&	&$\bif\leqy\ff$&$26$&	&$\fb\leqy\fif$&$31$&	&$\ei\leqy a$&$27$&	&$\fif\leqy aib$&$\phantom{0}4$&	&$ibi\leqy\ident$&$\phantom{0}8$\\
$a\leqy\ff$&$65$&	&$aib\leqy\bif$&$50$&	&$bi\leqy\fif$&$46$&	&$\bif\leqy\ei$&$26$&	&$\fb\leqy\ident$&$25$&	&$\ei\leqy\afib$&$10$&	&$\fif\leqy bi$&$\phantom{0}6$&	&$\ident\leqy\aif$&$26$\\
$a\leqy\ei$&$65$&	&$aib\leqy\f$&$50$&	&$bi\leqy ib$&$10$&	&$\bif\leqy\fib$&$26$&	&$\fbi\leqy a$&$15$&	&$\ei\leqy\afif$&$11$&	&$\fif\leqy\fb$&$\phantom{0}4$&	&$\ident\leqy bi$&$42$\\
$a\leqy\fib$&$74$&	&$aib\leqy\fbi$&$50$&	&$bi\leqy\ident$&$20$&	&$\bif\leqy ib$&$17$&	&$\fbi\leqy\afib$&$10$&	&$\ei\leqy aib$&$19$&	&$\fif\leqy\fbi$&$\phantom{0}6$&	&$\ident\leqy bib$&$\phantom{0}2$\\
$a\leqy\fif$&$74$&	&$aib\leqy\ff$&$50$&	&$bib\leqy a$&$61$&	&$\bif\leqy\ident$&$26$&	&$\fbi\leqy\afif$&$11$&	&$\ei\leqy aibi$&$\phantom{0}8$&	&$\fif\leqy\ei$&$\phantom{0}6$&	&$\ident\leqy\bif$&$54$\\
$a\leqy ib$&$50$&	&$aib\leqy\ei$&$50$&	&$bib\leqy\af$&$55$&	&$\f\leqy a$&$58$&	&$\fbi\leqy aib$&$\phantom{0}4$&	&$\ei\leqy\bif$&$33$&	&$\fif\leqy\fib$&$\phantom{0}4$&	&$\ident\leqy\f$&$46$\\
$a\leqy ibi$&$65$&	&$aib\leqy\fif$&$50$&	&$bib\leqy\afb$&$24$&	&$\f\leqy aib$&$41$&	&$\fbi\leqy\bif$&$\phantom{0}5$&	&$\ei\leqy\fb$&$19$&	&$\fif\leqy ib$&$17$&	&$\ident\leqy\fb$&$61$\\
$a\leqy\ie$&$66$&	&$aibi\leqy\f$&$50$&	&$bib\leqy\aff$&$47$&	&$\f\leqy aibi$&$\phantom{0}8$&	&$\fbi\leqy\fb$&$\phantom{0}4$&	&$\ei\leqy\fib$&$19$&	&$\fif\leqy\ident$&$\phantom{0}7$&	&$\ident\leqy\fbi$&$72$\\
$abi\leqy\bif$&$50$&	&$aibi\leqy\ff$&$65$&	&$bib\leqy\f$&$46$&	&$\f\leqy bi$&$42$&	&$\fbi\leqy\fib$&$\phantom{0}4$&	&$\ei\leqy\fif$&$33$&	&$ib\leqy a$&$61$&	&$\ident\leqy\ff$&$61$\\
$abi\leqy\f$&$50$&	&$aibi\leqy\ei$&$65$&	&$bib\leqy\fb$&$61$&	&$\f\leqy bib$&$\phantom{0}2$&	&$\fbi\leqy\fif$&$\phantom{0}5$&	&$\ei\leqy ib$&$10$&	&$ib\leqy bi$&$26$&	&$\ident\leqy\ei$&$72$\\
$abi\leqy\fb$&$65$&	&$bi\leqy a$&$46$&	&$bib\leqy\ff$&$61$&	&$\f\leqy ib$&$39$&	&$\fbi\leqy ib$&$10$&	&$\ei\leqy ibi$&$18$&	&$ib\leqy\bif$&$46$&	&$\ident\leqy\fib$&$72$\\
$abi\leqy\ff$&$65$&	&$bi\leqy\abif$&$11$&	&$bib\leqy\ident$&$45$&	&$\f\leqy ibi$&$56$&	&$\fbi\leqy\ident$&$\phantom{0}1$&	&$\ei\leqy\ident$&$20$&	&$ib\leqy\f$&$46$&	&$\ident\leqy\fif$&$72$\\
$abi\leqy\fib$&$65$&	&$bi\leqy\af$&$35$&	&$\bif\leqy a$&$26$&	&$\f\leqy\ident$&$53$&	&$\ff\leqy a$&$36$&	&$\fib\leqy a$&$\phantom{0}9$&	&$ib\leqy\fbi$&$61$&	&$\ident\leqy ib$&$22$\\
$abi\leqy\fif$&$65$&	&$bi\leqy\afb$&$10$&	&$\bif\leqy\afb$&$17$&	&$\fb\leqy a$&$22$&	&$\ff\leqy aib$&$19$&	&$\fib\leqy\afif$&$17$&	&$ib\leqy\ff$&$61$&	&$\ident\leqy ibi$&$51$\\
$abi\leqy ib$&$50$&	&$bi\leqy\aff$&$35$&	&$\bif\leqy\afbi$&$11$&	&$\fb\leqy\afbi$&$10$&	&$\ff\leqy aibi$&$\phantom{0}8$&	&$\fib\leqy bi$&$\phantom{0}6$&	&$ib\leqy\ei$&$61$&	&$\ident\leqy\ie$&$60$\\
$abib\leqy\f$&$50$&	&$bi\leqy\afib$&$10$&	&$\bif\leqy\aff$&$21$&	&$\fb\leqy\afi$&$10$&	&$\ff\leqy bi$&$23$&	&$\fib\leqy\bif$&$\phantom{0}5$&	&$ib\leqy\fif$&$61$&	&$\ie\leqy\ident$&$26$\\
$abib\leqy\fb$&$50$&	&$bi\leqy\afif$&$11$&	&$\bif\leqy\afi$&$11$&	&$\fb\leqy\afif$&$17$&&&&&&&&&&&&
\end{tabular}\label{tab:index}
\end{table}

\begin{figure}[!ht]
\centering
\small
\begin{tikzpicture}
[auto,
 block/.style={rectangle,draw=black,align=center,minimum width={40pt},
 minimum height={16pt},inner sep=2pt,scale=.8},
 topblock/.style={rectangle,draw=black,line width=1pt,align=center,minimum width={38pt},
 minimum height={16pt},inner sep=2pt,scale=.8},
 line/.style ={draw}]

	\node[block,anchor=south] (0) at (0,0) {D: $41,44,45,53,55,62,69$};
	\node[block,anchor=east] (1) at (-.425,1) {EO,\,D: $32$};
	\node[block,anchor=west] (2) at (.368,1) {P,\,D: $2,19,20,28,30,31,35,36,37,40,47,48,63$};
	\node[block,anchor=east] (3) at (-.415,1.8) {OU,\,EO,\,D: $26$};
	\node[block,anchor=west] (4) at (.368,1.8) {P,\,D, and optionally in ED,\,EO: $25$};
	\node[block,anchor=west] (5) at (.368,2.6) {ED,\,EO,\,P,\,D: $1,5,10,12,13,18$};
	\node[block,anchor=south] (6) at (0.01,3.18) {KD,\,OU,\,ED,\,EO,\,P,\,D: $3,4,11,14,21$};
	\node[block,anchor=south] (7) at (0,-.8) {cannot hold for all $A\su X$: $46,52,54,60,61,67,68,72,73$};

	\draw[line] (0.168) -- (1.270);
	\draw[line] (0.12) -- (2.186);
	\draw[line] (1.88) -- (3.320);
	\draw[line] (1.north east) -- (5.south west);
	\draw[line] (2.164) -- (4.248);
	\draw[line] (3.40) -- (6.192);
	\draw[line] (4.113) -- (5.302);
	\draw[line] (5.170) -- (6.347);
	\draw[line] (7.90) -- (0.270);

\end{tikzpicture}
\caption{Operator relations in $\KF$ up to duality, ordered by logical implication (see
Table~\ref{tab:implications_poset}).}
\label{fig:implications_space}
\end{figure}
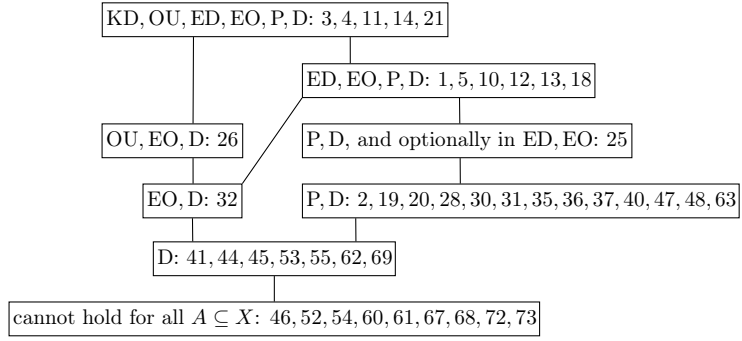

\section{The Interplay Between \texorpdfstring{$\mathbf\KF$}{KF} and
\texorpdfstring{$\KFA$}{KFA}}\label{sec:interplay}

\subsection{Introduction.}\label{sub:interplay_intro}

Definition~$3$.$3$ and Lemma~$3$.$15$ in GJ are reprinted below.

\begin{dfn}\label{dfn:topsum} \nit{(GJ, p\mbox{.} $22$)}
Let \mth{$\{(X_i,\mathcal{T}_i):i\in I\}$} be a family of topological spaces\nit.
The \define{sum space} \mth{$\sum_{i\in I}(X_i,\mathcal{T}_i)$} is the space on the
disjoint union \mth{$\dot{\cup}_{i\in I}X_i:=\cup_{i\in I}(X_i\times\{i\})$} with base
\mth{$\cup_{i\in I}\{S\times\{i\}:S\in\mathcal{T}_i\}$.}
\end{dfn}

\begin{lem}\label{lem:topsum} \nit{(GJ, p\mbox{.} $27$)}
Let \mth{$\{(X_i,\mathcal{T}_i):i\in I\}$} be a family of spaces with the \mth{$X_i$}
pairwise disjoint and let \mth{$o_1$} and \mth{$o_2$} be two Kuratowski operators\nit.
Then \mth{$o_1$} and \mth{$o_2$} are equal on the sum space \mth{$\sum_{i\in I}
(X_i,\mathcal{T}_i)$} if and only if they are equal on each \mth{$(X_i,\mathcal{T}_i)$.}
Furthermore if \mth{$S:=\dot{\cup}_{i\in I}S_i$} is a subset of
\mth{$\dot{\cup}_{i\in I}X_i$} then \mth{$o_1$} and \mth{$o_2$} agree on \mth{$S$} if and
only if they agree on each \mth{$S_i$.}
\end{lem}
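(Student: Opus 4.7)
The plan is to reduce everything to the single structural fact that Kuratowski operators are computed componentwise on the sum space. Concretely, I will show that for every $o\in\K$ and every subset $S=\dot{\cup}_{i\in I}S_i$ of $\dot{\cup}_{i\in I}X_i$,
\[
o(S)\;=\;\dot{\cup}_{i\in I}\,o^{(i)}(S_i),
\]
where $o^{(i)}$ denotes the Kuratowski operator on $(X_i,\mathcal{T}_i)$ obtained by the same word in $\{a,b\}$. Once this identity is in hand, both halves of the lemma are immediate, because a disjoint union $\dot{\cup}_i A_i$ equals a disjoint union $\dot{\cup}_i B_i$ if and only if $A_i=B_i$ for every $i$.

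First I would verify the identity for the two generators. For complement, the statement is obvious: a point $(x,i)$ lies in $a(S)$ iff it is not in $S$ iff $x\notin S_i$ iff $(x,i)\in a_i(S_i)\times\{i\}$. For closure, I would use the base for the sum topology from Definition~\ref{dfn:topsum}: every open neighborhood of $(x,i)$ contains a basic set $U\times\{i\}$ with $U\in\mathcal{T}_i$. Thus $(x,i)\in b(S)$ iff every $U\in\mathcal{T}_i$ containing $x$ meets $S_i$, i.e.\ iff $x\in b_i(S_i)$. Hence $b(S)=\dot{\cup}_{i\in I}b_i(S_i)$.

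Next, since $\K$ is the submonoid of $\monoid{O}$ generated by $\{a,b\}$ (Kuratowski's theorem, cited in Section~\ref{sec:Theorem}), I induct on the length of a word representing $o$. The base case ($o=\ident$) is trivial. For the inductive step, given $o=o'o''$ with the identity holding for $o''$, apply $o'$ to $o''(S)=\dot{\cup}_i (o'')^{(i)}(S_i)$; since $o'\in\{a,b\}$ satisfies the componentwise identity, we get $o(S)=\dot{\cup}_i o^{(i)}(S_i)$ as required.

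Finally I would deduce both assertions. For the ``furthermore'' clause, $o_1(S)=o_2(S)$ expands to $\dot{\cup}_i o_1^{(i)}(S_i)=\dot{\cup}_i o_2^{(i)}(S_i)$, which by disjointness is equivalent to $o_1^{(i)}(S_i)=o_2^{(i)}(S_i)$ for every $i$. For the first clause, observe that every subset of $\dot{\cup}_i X_i$ decomposes as a disjoint union of subsets of the $X_i$, so $o_1=o_2$ on the sum space iff $o_1(S)=o_2(S)$ for every such $S$ iff $o_1^{(i)}=o_2^{(i)}$ on each factor. The only nontrivial point in the whole argument is the closure identity, and even that is essentially automatic from the form of the base for the sum topology; the rest is bookkeeping.
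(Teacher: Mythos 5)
Your proof is correct. Note that the paper itself offers no proof of this lemma --- it is reprinted verbatim from Gardner and Jackson (their Lemma 3.15) and used as a black box --- so there is nothing to compare against; your argument (verify the componentwise identity $o(S)=\dot{\cup}_{i\in I}o^{(i)}(S_i)$ for the generators $a$ and $b$ using the base for the sum topology, extend by induction on word length since $\K$ is generated by $\{a,b\}$, then read off both claims from the fact that disjoint unions agree iff their components do) is the standard and expected one, and all the steps check out.
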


Let $X_n:=\textstyle\sum_{i=1}^n\XT$ for $n\geq1$.
The following is Proposition~$3$.$16$ in GJ.

\begin{prop}\label{prop:topsum} \nit{(GJ, p\mbox{.} $27$)}
Let \mth{$\XT$} have Kuratowski monoid \mth{$\K$.}

\noindent
\nit{\makebox[16pt]{\hfill(i)}} If \mth{$K(\XT)=2$} then \mth{$\XT$} is a full space\nit.

\noindent\hangindent=20pt
\nit{\makebox[16pt]{\hfill(ii)}} If \mth{$K(\XT)\in\{6,8\}$} then \mth{$X_2$} is full
with Kuratowski monoid \mth{$\K$.}

\noindent
\nit{\makebox[16pt]{\hfill(iii)}} If \mth{$K(\XT)=10$} then \mth{$X_3$} is full with
Kuratowski monoid \mth{$\K$.}

\noindent
\nit{\makebox[16pt]{\hfill(iv)}} If \mth{$K(\XT)=14$} then \mth{$X_4$} is full with
Kuratowski monoid \mth{$\K$.}
\end{prop}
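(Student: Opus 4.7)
The plan is to use Lemma~\ref{lem:topsum} to translate the problem into a combinatorial statement about equivalence relations on $\K$. First, applying Lemma~\ref{lem:topsum} to $n$ identical copies of $\XT$ shows that an operator equation holds on $X_n$ iff it holds on $\XT$, giving $K(X_n) = K(\XT)$; this handles the ``Kuratowski monoid $\K$'' clause in (ii)--(iv). Second, if $S \su X_n$ decomposes as $S = \dot{\cup}_{i=1}^n S_i$ with $S_i \su X$ in the $i$th copy, the lemma gives $o_1 S = o_2 S$ iff $o_1 S_i = o_2 S_i$ for every $i$; equivalently, $\sim_S$ equals the intersection of the $\sim_{S_i}$ as equivalence relations on $\K$. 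Hence $X_n$ is full iff one can find $n$ subsets of $X$ whose associated equivalence relations on $\K$ intersect in the identity.

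Part (i) is immediate: $K(\XT) = 2$ forces $\K = \{\ident, a\}$, and $A = \es$ gives $|\KA| = 2$. For (ii) and (iii) I would argue by explicit construction, handling the extremal spaces first. In an indiscrete space $X$ (partition, $K = 6$), every proper nonempty subset $A$ satisfies $bA = X = aiA$, collapsing $b$ with $ai$ and forcing $|\KA| \leq 4$; but in $X_2$ a set $S$ meeting only one copy has $bS \neq X_2$, breaking this coincidence and yielding $|\KS| = 6$. Parallel constructions handle Sierpi\'nski-like EO spaces ($K = 8$) in (ii) and the minimal ED/OU spaces ($K = 10$) in (iii). For any non-extremal space $\XT$ already admitting a full subset in $X_1$, the remaining copies can be padded with $\es$: since even operators output $\es$ and odd operators output $X$ on empty summands, the outputs cannot collide across parity, preserving $|\KS|$.

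The main work is in (iv). Here $K(\XT) = 14$, and by definition every pair of distinct operators in $\K$ is distinguished by some subset of $X$. The task is to organize witnesses for all $\binom{14}{2}$ pairs into just four subsets. By Langford~\cite{1971_langford}, a subset is a Kuratowski $14$-set iff five specific independent inclusions in $\KZA$ simultaneously fail; crucially, a witnessing subset for one Langford inclusion typically refutes several others as well, allowing the five to be bundled into four witnesses. Concretely, I would scan the $30$ local collapses of $\KZ$ enumerated by GJ (Theorem~$2$.$10$ of \cite{2008_gardner_jackson}) for four whose equivalence relations intersect to the identity on $\K$, each realized by some subset of $X$ because $K(\XT) = 14$.

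The main obstacle will be the combinatorial verification in (iv) that four (not five) subsets really suffice --- that the five Langford inclusions can be covered by four local collapses whose common refinement is trivial. This reduces to a finite check over GJ's table of local collapses, using the dualities, left cancellation, and implication rules in $\K$; it is conceptually straightforward but requires careful bookkeeping to identify a specific four-element family of collapses that serves as a universal cover across all Kuratowski spaces.
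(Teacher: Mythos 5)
Your overall framework is the paper's: by Lemma~\ref{lem:topsum} the collapse satisfied by a disjoint union is the intersection of the collapses satisfied by its summands (Corollary~\ref{cor:topsum_phi_and_psi}), $K(X_n)=K(\XT)$, and fullness of $X_n$ amounts to exhibiting $n$ subsets of $X$ whose collapses meet to the trivial one. Part~(i) and the padding-by-$\es$ observation are fine. The trouble in (ii) and (iii) is that your case split --- the minimal space of each type, plus spaces already containing a full subset --- is not exhaustive. Table~\ref{tab:kf_kfa} records ED and EO spaces with every $k$-number down to $4$, and, for instance, the sum of the Sierpi\'nski space with a two-point indiscrete space is an ED space with $K=10$ and $k=8$ that is neither minimal nor full; your ``parallel constructions'' for the minimal spaces say nothing about such spaces, yet the proposition quantifies over all spaces with the given Kuratowski monoid.

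The deeper gap is in (iv). The step ``each realized by some subset of $X$ because $K(\XT)=14$'' is false: $K(\XT)=14$ supplies a witness for each operator \emph{inequation}, but it does not force any particular local collapse to occur, and there is no four-element family of collapses realized in every Kuratowski space --- the minimal GE space realizes only $\phi$-numbers $14,15,26$--$30$, while KD spaces never realize $\phi$-numbers $14$ or $15$ (their possible $\psi$-numbers exclude $41$--$44$). So the ``universal cover'' you propose to identify does not exist; the four witnesses must be chosen space by space, according to which collapses that space is guaranteed to realize. Langford's five inclusions do give $X_5$ full; compressing five witnesses to four is precisely the content of (iv), and ``typically refutes several others'' is not an argument. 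The paper closes both gaps with machinery you have no analogue of: Theorem~\ref{thm:GE_implications} (resting on Lemma~\ref{lem:k-number_26}, Proposition~\ref{prop:GE_characterization}, and Lemma~\ref{lem:psi_31}) pins down collapses that \emph{every} space of a given type must realize, and Corollary~\ref{cor:increased_k-number} shows $k(X_{n+1})\geq k(X_n)+2$ whenever $X_n$ is not full; from $k(\XT)\geq 8$ for Kuratowski spaces this already forces $k(X_4)=14$, and the remaining cases are read off Table~\ref{tab:psi_topsum} together with the meet table (Table~\ref{tab:disjoint_union_phi}). You would need to prove such guaranteed-realization results before your finite check could even be set up.
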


As GJ point out, for $n\geq2$ Lemma~\ref{lem:topsum} implies $X_n$ has the same
Kuratowski monoid as $\XT$ and $K(\XT)>k(X_{n-1})\implies k(X_n)>
k(X_{n-1})$.\footnote{Since Lemma~\ref{lem:topsum} applies to all GE operators $o_1,o_2$
we similarly have $\Kf(\XT)>\kf(X_{n-1})\implies \kf(X_n)>\kf(X_{n-1})$.}
After proving Proposition~\ref{prop:topsum}(iv) for the hypothetical case $k(\XT)=6$ and
noting that the sum space on two copies of the minimal Kuratowski space does not contain
a $14$-set, they write:

\begin{center}
\begin{minipage}{320pt}\inlinequote{We do not know of a space where more than
three copies are required\nit, or in fact any Kuratowski space with k-number\/} $6$.
\end{minipage}
\end{center}

\noindent
No such spaces exist.
We prove this and investigate the $\KF$ analogues.
Several preliminary results are needed.

\begin{lem}\label{lem:i-union}\vrule width0pt

\noindent
\nit{\makebox[16pt]{\hfill(i)}}\phantom{t}If \mth{$A$} and \mth{$iB$} are closed then
\mth{$i(A\cup B)=iA\cup iB$} and \mth{$bi(A\cup B)=biA\cup biB$.}

\noindent
\nit{\makebox[16pt]{\hfill(ii)}}\phantom{t}If \mth{$A$} and \mth{$bB$} are open then
\mth{$b(A\cap B)=bA\cap bB$} and \mth{$ib(A\cap B)=ibA\cap ibB$.}

\noindent\hangindent=20pt
\nit{\makebox[16pt]{\hfill(iii)}}\phantom{t}For all \mth{$B\su X$,} \mth{$bibA=ibA
\implies ib(A\cup B)=ibA\cup ibB$} and \mth{$ibiA=biA\implies bi(A\cap B)=biA\cap biB$.}
\end{lem}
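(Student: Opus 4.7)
The plan is to prove the three parts in order, getting (ii) from (i) by duality and handling (iii) with a separate peeling argument.

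For (i), the inclusion $iA\cup iB\su i(A\cup B)$ is immediate from isotonicity of $i$. For the reverse, first use that $A$ is closed: $i(A\cup B)\sm A$ is an open subset of $B$, hence of $iB$, so $i(A\cup B)\su A\cup iB$. Then use that $iB$ is closed: $i(A\cup B)\sm iB$ is an open subset of $A$, hence of $iA$. This yields $i(A\cup B)\su iA\cup iB$. Applying $b$ to both sides and invoking the closure axiom $b(C_1\cup C_2)=bC_1\cup bC_2$ gives $bi(A\cup B)=biA\cup biB$.

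For (ii), dualize. Set $A'=aA$ and $B'=aB$. Since $A$ is open, $A'$ is closed; since $bB$ is open, $iB'=iaB=abB$ is closed (using $ia=ab$, a consequence of Lemma~\ref{lem:eqs}(ii)). Thus (i) applies to $A',B'$. Rewriting $i(A'\cup B')=ia(A\cap B)=ab(A\cap B)$ and $iA'\cup iB'=abA\cup abB=a(bA\cap bB)$, then taking complements, gives $b(A\cap B)=bA\cap bB$. Similarly, since $bi\cdot a=a\cdot ib$ (again by Lemma~\ref{lem:eqs}(ii) applied to $o=bi$), the identity $bi(A'\cup B')=biA'\cup biB'$ rewrites as $ib(A\cap B)=ibA\cap ibB$.

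For (iii), the hypothesis $bibA=ibA$ constrains only $A$, so one cannot apply (i) directly. The trick is to use additivity of $b$ to rewrite $ib(A\cup B)=i(bA\cup bB)$; call this open set $U$. The reverse inclusion $ibA\cup ibB\su U$ holds by isotonicity. For $U\su ibA\cup ibB$, let $V:=U\sm ibA$, which is open since $ibA$ is closed by hypothesis. For any $x\in V$ we have $x\in bA\cup bB$ and $x\notin ibA$, so every open neighborhood of $x$ meets $X\sm bA$; such a neighborhood contained in $V$ therefore meets $(bA\cup bB)\sm bA\su bB$. Hence $x\in b(bB)=bB$, so $V\su bB$, and being open, $V\su ibB$. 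Then $U=ibA\cup V\su ibA\cup ibB$. The second implication is dual.

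The main obstacle is (iii), since neither $B$ nor $ibB$ is constrained. The key realization is that reformulating $ib(A\cup B)$ as the interior of the union of two closed sets $bA$ and $bB$ reduces the problem to a purely local fact: an open set disjoint from the closed set $ibA$ and contained in $bA\cup bB$ must lie in $bB$. This requires only that $ibA$ be closed, so the asymmetric hypothesis suffices.
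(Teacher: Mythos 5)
Your proof is correct, and for parts (i) and (ii) it is essentially the paper's argument: the same two-step peeling (first $i(A\cup B)\setminus A\subseteq iB$ using closedness of $A$, then $i(A\cup B)\setminus iB\subseteq iA$ using closedness of $iB$), followed by additivity of $b$, with (ii) obtained by duality. For (iii) the paper merely says it follows from (i) and (ii), and indeed it does \emph{directly}: since $ib(A\cup B)=i(bA\cup bB)$ and the hypothesis $bibA=ibA$ makes $i(bA)$ closed, part (i) applies with the two sets' roles swapped ($bB$ as the closed set, $bA$ as the set with closed interior) — so your remark that (i) cannot be applied directly is not quite right, and your separate peeling argument, while valid, re-proves that special case.
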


\begin{proof}
(i)~The hypothesis implies $i(A\cup B)=(i(A\cup B)\cap A)\cup(i(A\cup B)\sm A)\su A\cup
iB$.
Hence $i(A\cup B)=(i(A\cup B)\sm iB)\cup(i(A\cup B)\cap iB)\su iA\cup iB$.
The reverse inclusion holds in general.
The second equation follows.
(ii) is the dual of (i) and (iii) follows easily from (i) and (ii).
\end{proof}

\begin{lem}\label{lem:b_equals_bib}
\hspace{-8pt}\raisebox{-.5\baselineskip}{$\begin{array}[c]{r@{\hspace{3pt}}l}
\nit{(i)}&\mbox{If }A\mbox{ and }B\mbox{ each satisfy }bib=b\mbox{ then so does }A\cup
B\nit.\\
\nit{(ii)}&\mbox{If }A\mbox{ and }B\mbox{ each satisfy }ibi=i\mbox{ then so does }A\cap
B\nit.\\
\end{array}$}
\end{lem}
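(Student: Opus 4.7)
The plan is to observe that both parts reduce to just three ingredients: the monotonicity of $bib$ and $ibi$ (which follows since $b$ and $i$ are isotone, as noted in the paper after the closure axioms), the unconditional inequalities $bib\leq b$ and $i\leq ibi$, and the additive/multiplicative behavior of $b$ and $i$ on unions/intersections. The unconditional inequality $bib\leq b$ holds because $i\leq\ident$ gives $ib\leq b$, hence $bib\leq bb=b$; dually $\ident\leq b$ yields $bi\geq i$ and thus $ibi\geq i$.

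For part~(i), I would start from the closure axiom $b(A\cup B)=bA\cup bB$ and use the hypothesis to rewrite this as $bibA\cup bibB$. Monotonicity of $bib$ gives $bibA\su bib(A\cup B)$ and $bibB\su bib(A\cup B)$, whence $b(A\cup B)\su bib(A\cup B)$. The reverse inclusion is the general inequality $bib\leq b$, so equality holds and $A\cup B$ satisfies $bib=b$.

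For part~(ii), the cleanest route is to dualize: by Lemma~\nit{\ref{lem:eqs}} the equation $ibi\eqy i$ is the left (or right) dual of $bib\eqy b$, and $a(A\cap B)=aA\cup aB$, so (ii) is equivalent to applying (i) to $aA$ and $aB$. Alternatively one can mimic the argument directly: since interior is multiplicative, $i(A\cap B)=iA\cap iB=ibiA\cap ibiB$ by hypothesis, and monotonicity of $ibi$ gives $ibi(A\cap B)\su ibiA\cap ibiB$, so $ibi(A\cap B)\su i(A\cap B)$; combined with the general inequality $i\leq ibi$ this yields the claim.

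There is no real obstacle in this proof; the only point worth flagging is that one must resist the temptation to invoke the full distributivity of $i$ over unions or $b$ over intersections (which fails in general) — the argument only needs the one direction that each operator interacts well with, namely $b$ with $\cup$ and $i$ with $\cap$, together with monotonicity of $bib$ (resp.\ $ibi$) to push information in the opposite direction.
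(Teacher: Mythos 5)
Your proof is correct and follows essentially the same route as the paper: both arguments rewrite $b(A\cup B)=bA\cup bB=bibA\cup bibB$ via additivity of $b$ and the hypothesis, establish the inclusion into $bib(A\cup B)$, and close with the general inequality $bib\leq b$, handling (ii) by duality. The only (harmless) difference is that you obtain $bibA\cup bibB\su bib(A\cup B)$ directly from isotonicity of the composite $bib$, whereas the paper factors out $b$ and uses $ibA\cup ibB\su i(bA\cup bB)$; both steps are valid.
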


\begin{proof}
(i)~$b(A\cup B)=bA\cup bB=bibA\cup bibB=b(ibA\cup ibB)\su bi(bA\cup bB)=bib(A\cup B)$.
(ii) is the dual of (i).
\end{proof}

\begin{lem}\label{lem:general_lemma}
\hspace{-10pt}\raisebox{-\baselineskip}{$\begin{array}[c]{r@{\hspace{3pt}}l}
\nit{(i)}&ibA=ib(A\cap ibA)=ib(A\cap bibA)\nit.\\
\nit{(ii)}&A\cap ibA\mbox{ and }A\cap bibA\mbox{ each satisfy }bib=b\nit.\\
\nit{(iii)}&\f(A\cap B)=(b(A\cap B)\sm iA)\cup(b(A\cap B)\sm iB)\nit.\\
\end{array}$}
\end{lem}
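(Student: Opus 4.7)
The plan is to prove the three parts in order, using part~(i) as the technical engine for part~(ii), while part~(iii) follows by straightforward manipulation.

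For part~(i), the main observation is that $ibA$ is an open set contained in $bA$, and any nonempty open subset of $bA$ must meet $A$ (otherwise $A$ would be contained in its complement, which is closed, contradicting containment of the closure). First I would verify $ib(A\cap ibA)\su ibA$ by isotonicity. For the reverse inclusion, since $ibA$ is open, it suffices to show $ibA\su b(A\cap ibA)$. Given any $x\in ibA$ and any open neighborhood $V$ of $x$, the set $V\cap ibA$ is a nonempty open subset of $bA$, hence it meets $A$; that is, $V\cap(A\cap ibA)\neq\es$, so $x\in b(A\cap ibA)$. This establishes $ibA=ib(A\cap ibA)$. Since $A\cap ibA\su A\cap bibA\su A$ and $ib$ is isotone, sandwiching yields $ibA=ib(A\cap bibA)$ as well.

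For part~(ii), write $B=A\cap ibA$ (the case $B=A\cap bibA$ is analogous). By part~(i), $ibB=ibA$, so $bibB=bibA$. For $bB$: the inclusion $B\su ibA\su bA$ gives $bB\su bibA$, and the inclusion $ibA\su b(A\cap ibA)=bB$ established in~(i) yields $bibA\su bbB=bB$. Hence $bB=bibA=bibB$, so $B$ satisfies $bib=b$. The same argument works verbatim for $A\cap bibA$.

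For part~(iii), I would expand $\f=b\wedge ba$, use $b(aA\cup aB)=baA\cup baB$, distribute the intersection, and note that $baA=aiA$. The main obstacle throughout is part~(i); once the ``open subset of $bA$ meets $A$'' trick is in hand, parts~(ii) and~(iii) are essentially bookkeeping.
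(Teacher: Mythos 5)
Your proposal is correct and follows essentially the same route as the paper: the neighborhood argument that every open neighborhood of a point of $ibA$ meets $A\cap ibA$ (which is exactly the paper's ``$U\cap ibA\cap A\neq\es$'' step), part~(ii) deduced from the two inclusions $b(A\cap ibA)\su bibA$ and $bibA\su b(A\cap ibA)$ obtained in~(i), and part~(iii) by expanding $\f=b\wedge ba$ and using $ba(A\cap B)=aiA\cup aiB$. No gaps.
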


\begin{proof}
(i)~Let $x\in ibA$ and $U$ be an open neighborhood of $x$.
Since $x\in bA$, $U\cap ibA\cap A\neq\es$.
Thus $ibA\su ib(A\cap ibA)\su ib(A\cap bibA)\su ibA$.
(ii)~By (i) we have $b(A\cap ibA)\su bA\cap bibA=bibA=bib(A\cap ibA)$.
The same applies to $A\cap bibA$.
(iii)~Have $ba(A\cap B)=b(aA\cup aB)=aiA\cup aiB$.
The result follows.
\end{proof}

\begin{lem}\label{lem:b_equals_bi}\vrule width0pt

\noindent
\nit{\makebox[16pt]{\hfill(i)}}\phantom{t}If \mth{$A\cup B$} satisfies \mth{$bib=bi$}
then \mth{$\ifA\su ibB$.}

\noindent
\nit{\makebox[16pt]{\hfill(ii)}}\phantom{t}If \mth{$\ifA$} and \mth{$iB$} are both empty
then \mth{$bi(A\cup B)=biA$.}

\noindent
\nit{\makebox[16pt]{\hfill(iii)}}\phantom{t}If \mth{$\ifA$} is empty and \mth{$\ie B$} is
closed then \mth{$\ie(A\cup gB)=\ie(A\cup gaB)$.}

\noindent
\nit{\makebox[16pt]{\hfill(iv)}}\phantom{t}If \mth{$\ifA$} is empty then \mth{$\ie B\su
\ie(A\sym B)$} for all \mth{$B\su X$.}
\end{lem}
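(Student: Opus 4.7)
The plan is to prove (i) and (ii) directly from the definitions, use the same empty-interior-boundary technique for (iv), and tackle (iii) last, as its handling will require combining these with the border-operator identities from Lemma~\ref{lem:decompositions}.

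For (i), I would first apply Lemma~\ref{lem:equations_kf}(i) to rewrite the hypothesis as $\ie(A\cup B)=\es$. Via $b=\f\vee i$ (Lemma~\ref{lem:decompositions}(i)), this says every non-empty open subset of $b(A\cup B)$ meets $i(A\cup B)$. Given $x\in\ifA$ with open witness $U\su\f A=bA\sm iA$, the aim is $U\su bB$, giving $x\in ibB$. For any open $V\su U$, $V$ is a non-empty open subset of $bA\su b(A\cup B)$, so it meets $i(A\cup B)$ via some open $W\su V\cap(A\cup B)$; since $W\su U\su aiA$ we have $W\nsu A$, forcing $W\cap B\neq\es$ and hence $V\cap B\neq\es$. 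Running this over arbitrarily small $V$ at each $y\in U$ yields $U\su bB$. For (ii), I would strengthen the conclusion to $i(A\cup B)\su biA$, from which $bi(A\cup B)\su biA$ follows by applying $b$ and using $bb=b$. Given $y\in i(A\cup B)$ with witness open $V\su A\cup B$, for any open $V'\su V$ the open set $V'\sm bA$ lies in $(A\cup B)\sm A\su B$, hence in $iB=\es$. So $V'\su bA$, and then $\ifA=\es$ forces $V'\cap iA\neq\es$, giving $y\in biA$.

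For (iv), pick $x\in\ie B$ with open witness $U\su\f B$; the claim is $U\su\f(A\sym B)$. For any open $V\su U$, both $V\cap B$ and $V\cap aB$ are dense in $V$. If $V$ were disjoint from $A\sym B$, then $V\cap B\su A$ and $V\cap aB\su aA$ would give $V\cap A=V\cap B$ and $V\sm A=V\sm B$, making both $V\cap A$ and $V\sm A$ dense in $V$, i.e., $V\su\f A$---contradicting $\ifA=\es$. So $V$ meets $A\sym B$. Since $\f(aA)=\f A$ by Lemma~\ref{lem:f_cancels}, the hypothesis applies equally to $aA$; the symmetric argument, using $a(A\sym B)=aA\sym B$, shows $V$ meets $a(A\sym B)$. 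Hence $U\su\f(A\sym B)$ and $x\in\ie(A\sym B)$.

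Part (iii) I expect to be the main obstacle. The key identities are $\f B=gB\cup gaB$ with $gB\cap gaB=\es$ (Lemma~\ref{lem:decompositions}(iv)) and $bg=\fbg\vee\ie$ (Lemma~\ref{lem:decompositions}(vi)), which yield $(A\cup gB)\sym(A\cup gaB)=\f B\sm A$. The plan is to split $\f B$ into the thick part $\ie B$ (closed by hypothesis) and the thin part $\f B\sm\ie B$ (nowhere dense, having empty interior), and to show neither can contribute a point to $\ie(A\cup gB)\sym\ie(A\cup gaB)$, using $\ifA=\es$ to transfer neighborhood information across $A$. The hard part will be at points of $\ie B\cap bA$ where both $gB$ and $gaB$ accumulate: there closedness of $\ie B$ should let any open witness for $\ie(A\cup gB)$ be shrunk inside $\ie B$, after which the argument from (iv) (applied to a suitable restriction of $B$) should collapse the two interiors-of-boundaries.
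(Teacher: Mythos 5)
Parts (i), (ii) and (iv) are correct, though you reach them by pointwise density arguments where the paper works at the operator level: (i) is the one-line computation $ibA\su bib(A\cup B)=bi(A\cup B)\su b(iA\cup B)=biA\cup bB$ via Lemma~\ref{lem:half_dist}(i) together with $\ie=ib\sm bi$ (Lemma~\ref{lem:op_relations}(i)); (ii) similarly follows from $bi(A\cup B)\su b(A\cup iB)=bA$ and $bibA\sm biA\su\bifA=\es$; and (iv) is your same two-sided density argument, organized in the paper by splitting $\ie B$ over $biA\cup bi(aA)=\aifA=X$. Your versions of these three parts are complete and would stand, at the cost of being longer than necessary.

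Part (iii), however, is a genuine gap: you have only a plan, flagged by yourself as uncertain, and the plan does not identify the facts that actually make the statement true. The decisive observations are (a) $ibg=\ie$ (Lemma~\ref{lem:ifo}(iv)), so that $i(bgB)=\ie B$ is closed by hypothesis, and (b) Lemma~\ref{lem:i-union}(i), which then gives $ib(A\cup gB)=i(bA\cup bgB)=ibA\cup\ie B$; substituting $aB$ for $B$ and using $\fa=\f$ yields $ib(A\cup gaB)=ibA\cup\ie B$ as well, while your part (ii) (with $i(gB)=i(gaB)=\es$ from $ig=0$) gives $bi(A\cup gB)=biA=bi(A\cup gaB)$, whence $\ie=ib\sm bi$ finishes the proof in two lines. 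Your proposed route through $(A\cup gB)\sym(A\cup gaB)=\f B\sm A$ and a case split on $\ie B$ versus $\f B\sm\ie B$ never isolates where the closedness of $\ie B$ enters (it is exactly the hypothesis of Lemma~\ref{lem:i-union}(i) that lets the interior distribute over the union), and the step ``any open witness for $\ie(A\cup gB)$ can be shrunk inside $\ie B$'' is itself a consequence of the computation $ib(A\cup gB)=ibA\cup\ie B$ that you have not established, so the sketch is circular as it stands.
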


\begin{proof}
(i)~Apply Lemma~\ref{lem:half_dist}(i) to get $ibA\su bib(A\cup B)=bi(A\cup B)\su
b(iA\cup B)=biA\cup bB$.
Hence $\ifA=ibA\sm biA\su bB$.
Thus $\ifA\su ibB$.
(ii)~Have $i(A\cup B)\su bi(A\cup B)\su b(A\cup iB)=bA$.
Thus $i(A\cup B)\su ibA$.
Conclude $bi(A\cup B)\sm biA\su bibA\sm biA\su\bifA=\es$.
(iii)~By Lemma~\ref{lem:i-union}(i) we have $ib(A\cup gB)=i(bA\cup bgB)=ibA\cup\ie B$.
Substitute $aB$ for $B$ to get $ib(A\cup gaB)=ibA\cup\ifa B=ib(A\cup gB)$.
By (ii), $bi(A\cup gB)=biA=bi(A\cup gaB)$.
(iv)~Let $x\in\ie B\cap biA$ and $U$ be an open neighborhood of $x$.
Have $\es\neq U\cap\ie B\cap iA\su\f B$.
Thus $U\cap(A\cap B)\neq\es\neq U\cap(A\sm B)$.
Hence $x\in b(A\sm B)\cap b(A\cap B)$ $\su$ $(b(A\sm B)\cup b(B\sm A))$ $\cap$ $(ba
(A\cup B)\cup b(A\cap B))=\f(A\sym B)$.
Thus $\ie B\cap bi(aA)\su\f(aA\sym B)=\fa(A\sym B)=\f(A\sym B)$.
But $biA\cup bi(aA)=\aifA=X$.
\end{proof}

\subsection{Interrelationships between the GE monoid and local collapses.}

Obviously, dual \pn s always occur together in any given space.
This also holds for two dual pairs.

\begin{prop}\label{prop:always_together}
\pn s \mth{$18$-$21$ (}equivalently\nit, \sn s \mth{$49$-$52$)} always occur together in
any given space\nit.
\end{prop}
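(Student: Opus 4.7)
The plan is to reduce to the single nontrivial equivalence that $\phi = 18$ is realized in $\XT$ iff $\phi = 20$ is realized; the equivalences $\phi = 18 \iff \phi = 19$ and $\phi = 20 \iff \phi = 21$ follow immediately from duality, since Table~\ref{tab:subset_types}'s column $\phi a$ maps $18 \mapsto 19$ and $20 \mapsto 21$, so passing to complements converts a realization of one into a realization of the other.

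For the easier direction, suppose $\phi A = 18$, so the partition of $\KZ$ induced by $A$ is $\{\{\ident\},\{i\},\{ib,ibi\},\{b,bi,bib\}\}$; in particular $biA = bibA = bA$ and $ibiA = ibA$. I would set $B := iA$ and compute directly $\ident B = iB = iA$, $bB = biB = bibB = bA$, $ibB = ibiB = ibA$. These three values are pairwise distinct because $i$, $\{ib,ibi\}$, and $\{b,bi,bib\}$ lie in different blocks for $\phi A = 18$; non-triviality ($iA \neq \es$) is automatic, since $iA = \es$ would force $biA = bA = \es$ and collapse $\phi A$ off of $18$. Hence $\phi(iA) = 20$ exactly.

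For the harder direction, suppose $\phi A = 20$, so $A$ is open, $bA$ is regular closed, and $A \subsetneq ibA \subsetneq bA$. I would set $B := A \cup \{p\}$ for a carefully chosen $p \in \fbA := bA \setminus ibA$. The key auxiliary claim is $|\fA| \geq 2$: if instead $\fA = \{q\}$ then $bA = A \cup \{q\}$, and a two-case analysis on whether $q \in ibA$ forces either $ibA = bA$ or $ibA = A$, each of which contradicts the class structure of $\phi A = 20$. Given $|\fA| \geq 2$ and $p \in \fbA$ (which is nonempty because $ibA \subsetneq bA$), the crucial computations are $bB = bA$ (from $A \subseteq B \subseteq bA$) and $iB = A$ (since any open neighborhood of $p$ inside $B \subseteq bA$ would put $p$ in $ibA$, contradicting $p \in \fbA$). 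These propagate to $bB = biB = bibB = bA$ and $ibB = ibiB = ibA$, matching the class structure of $\phi = 18$. Pairwise distinctness of $B$, $iB$, $ibB$, $bB$ follows from $\phi A = 20$ together with $|\fA| \geq 2$, which is exactly what secures $B \neq bA$.

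The main obstacle is the auxiliary claim $|\fA| \geq 2$: without it the chosen $p$ could be the unique boundary point of $A$, making $B = bA$ closed and sending $\phi B$ into one of the closed-subset types rather than landing on $18$. Once this claim and the nonemptiness of $\fbA$ are in hand, the remaining verifications are routine applications of the equations encoded by $\phi = 20$ and $\phi = 18$ in Table~\ref{tab:subset_types}.
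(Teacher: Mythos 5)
Your proof is correct and is essentially the paper's own argument in dual form: the paper reduces to the closed pair (it shows $\phi A=21$ yields $\phi(A\sm\{x\})=19$ for $x\in biA\sm iA$, and $\phi A=19$ gives $\phi(bA)=21$), while you work with the open pair, adding a point of $bA\sm ibA$ and applying $i$ instead of deleting a point of $biA\sm iA$ and applying $b$. Under complementation the two constructions coincide, so this is the same proof up to left duality.
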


\begin{proof}
Suppose $\phi A=21$.
There exists a point $x\in biA\sm iA$.
Let $B=A\sm\{x\}$.
Since $iA\su B$ we have $x\in biA\su bB\su bA=A$.
Hence $bB=bA$.
Since $iA\su a(\{x\})$ we have $iA\su ia(\{x\})$.
Thus $iB=i(A\sm\{x\})=iA\cap ia(\{x\})=iA$.
It follows that $oB=oA$ for all $o\in\KZ\sm\{\ident\}$.
Since $A=bA\supsetneq biA\supsetneq iA$ we have $|A\sm iA|\geq2$.
Thus $B\neq iA=iB$.
Conclude $\phi B=19$.
Conversely $\phi A=19\implies\phi(bA)=21$ by Table~\ref{tab:subset_types}.
\end{proof}

\begin{cor}\label{cor:kf_12}
\mth{$\kf(\XT)\neq12$.}
\end{cor}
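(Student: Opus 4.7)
The plan is to derive a contradiction from the assumption $\kf(\XT)=12$ by pinpointing which local collapses can realize a \kfnum\ of exactly $12$ and then invoking Proposition~\ref{prop:always_together} to force the existence of a subset with strictly larger \kfnum.

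First I would consult the $\kf$ column of Table~\ref{tab:subset_types} and observe that the value $12$ is attained by precisely two \sn s, namely $51$ and $52$, whose corresponding \pn s are $20$ and $21$. Hence if $\kf(\XT)=12$ then there exists some $A\su X$ realising this maximum, and that subset must satisfy $\psi A\in\{51,52\}$, equivalently $\phi A\in\{20,21\}$.

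Next I would apply Proposition~\ref{prop:always_together}, which asserts that \pn s $18$-$21$ always occur together in any given space, equivalently that \sn s $49$-$52$ always occur together. Thus, from the existence of a subset with \sn\ $51$ or $52$ we obtain some $B\su X$ with $\psi B\in\{49,50\}$. Reading the $\kf$ column of Table~\ref{tab:subset_types} again, both \sn s $49$ and $50$ have $\kf=14$, so $\kf(B)=14>12$, contradicting $\kf(\XT)=12$.

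There is essentially no obstacle: the bookkeeping has already been done in Table~\ref{tab:subset_types} (which tells us that $\kf=12$ forces $\phi\in\{20,21\}$) and the nontrivial content lies entirely in Proposition~\ref{prop:always_together}, which is already proved. The argument is a one-line corollary once these two facts are combined.
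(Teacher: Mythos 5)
Your proposal is correct and follows essentially the same route as the paper's own proof: read off from Table~\ref{tab:subset_types} that $\kf(A)=12$ forces $\phi A\in\{20,21\}$ (equivalently $\psi A\in\{51,52\}$), then apply Proposition~\ref{prop:always_together} to produce a companion subset with $\phi$-number in $\{18,19\}$ and hence $\kf$-number $14$, contradicting maximality.
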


\begin{proof}
By Table~\ref{tab:subset_types}, $\phi A\in\{20,21\}\ify k_f(A)=12$ and
$\phi A\in\{18,19\}\imp k_f(A)=14$.
Apply Proposition~\ref{prop:always_together}.
\end{proof}

\begin{lem}\label{lem:k-number_26}
Kuratowski and OU spaces always contain at least one non-clopen, regular closed subset,
i.e., a subset with \pn\ \mth{$26$.}
\end{lem}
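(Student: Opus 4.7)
The plan is to exhibit, in any Kuratowski or OU space, a subset with $\phi$-number exactly $26$ via the uniform construction $B := biA$ for a cleverly chosen $A$.

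First I would observe that $B = biA$ is \emph{automatically} regular closed: by Table~\ref{tab:multiplication} (idempotence of $bi$ noted in the proof of Proposition~\ref{prop:multiplication}), $bibi = bi$, so $biB = bibiA = biA = B$, and $bB = bbiA = biA = B$ since $bb = b$. Thus $B$ is closed and equals the closure of its interior. The only remaining property needed for $\phi B = 26$ is that $B$ be non-clopen, which amounts to $iB \neq B$. Computing, $iB = ibiA$, so the requirement is simply $biA \neq ibiA$.

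Next I would invoke the hypothesis to find a suitable $A$. In a Kuratowski space $|\K| = 14$, so in particular $bi \neq ibi$ as operators on $\XT$, whence there exists $A \su X$ with $biA \neq ibiA$. In an OU space, Table~\ref{tab:six_Kuratowski_monoids} records that the only identification in $\KZ$ beyond the defining ones is $bi = bib$; the pair $(bi, ibi)$ remains unequal globally, so again some $A$ witnesses $biA \neq ibiA$. For this $A$, the set $B = biA$ is closed, regular closed, and non-clopen, and a direct check using the images $B = bB = bibB = biB$ and $iB = ibB = ibiB = ibiA$ confirms that $B$ satisfies precisely collapse $26$ of Table~\ref{tab:subset_types} (seven positive operators collapsing to two distinct sets, with $\ident = b$).

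There is no genuine obstacle here: the argument is essentially a one-line application of the multiplication table plus the global inequality $bi \neq ibi$. The only point requiring care is verifying that the collapse induced on $\KZ$ by $B$ is $\phi = 26$ and not a finer collapse such as $\phi \in \{29, 30\}$ (which would force $B$ to be clopen); this is precisely ruled out by the choice of $A$ with $biA \neq ibiA$.
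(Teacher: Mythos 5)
Your proof is correct and is essentially the paper's own argument: the paper likewise takes $B=biA$ for some $A$ witnessing the global inequality $bi\neq ibi$ (valid in Kuratowski and OU spaces) and checks $bib(biA)=bi(biA)=biA\neq i(biA)=ib(biA)$, concluding $\phi(biA)=26$. Your version merely spells out the idempotence computations and the non-clopen check in more detail.
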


\begin{proof}
For some $A\su X$, $bib(biA)=bi(biA)=biA\neq i(biA)=ib(biA)$.
Conclude $\phi(biA)=26$ by Table~\ref{tab:characterizations}.
\end{proof}

\begin{lem}\label{lem:psi_42_44}
If \mth{$\phi A\in\{24,26\}$} and \mth{$\psi B=61$} then \mth{$\psi(A\cup B)=42$.}
\end{lem}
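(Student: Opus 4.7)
The plan is to compute $bC$ and $iC$ for $C := A \cup B$ explicitly, propagate these to the remaining $\KFZ$-values by short algebraic identities, and then rule out strict refinements of \sn\ $42$. I first extract the content of the two hypotheses from Table~\ref{tab:subset_types}. The condition $\phi A \in \{24,26\}$ gives $bibA = biA = bA$ and $ibiA = ibA = iA$, together with the strict inclusions $\es \subsetneq iA \subsetneq bA \subsetneq X$: $iA = \es$ would force $bA = biA = \es$ and hence $A = \es$, contradicting $\phi A \neq 30$; $bA = X$ would force $iA = ibA = X$ and again $A = X$; and $iA = bA$ would put $\phi A$ outside $\{24,26\}$. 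The condition $\psi B = 61$ gives $bB = X$, $iB = \es$, and $\f B = X$.

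The key step is $iC = iA$. The inclusion $iA \su iC$ is immediate. For the converse, write $U = iC$; then $U$ is open and $U \su A \cup B$, so $U \cap aA \su B$. The open set $U \cap iaA$ is contained in $aA \cap B \su B$, hence in $iB = \es$, so $U \cap iaA = \es$. By Lemma~\ref{lem:eqs}(ii), $ia = ab$, so $iaA = abA$; hence $U \su a(abA) = bA$. Now $U$ is open and contained in $bA$, and the hypothesis $ibA = iA$ forces $U \su ibA = iA$. The main obstacle is precisely this last step: without $ibA = iA$ one only obtains $U \su ibA$, which could properly exceed $iA$, and the proposition could fail.

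With $bC = X$ and $iC = iA$, every remaining $\KFZ$-value on $C$ follows by one-line manipulations via $ba = ai$ (Lemma~\ref{lem:eqs}(ii)), $biA = bA$, and $ibA = iA$: first $i(aiA) = ab(iA) = abA$ and $b(abA) = ai(bA) = aiA$, hence $\f C = baC = aiA$ and $\bif C = bi(aiA) = aiA = \f C$. Similar short calculations give $\ff C = \ei C = \fbi C = \fif C = \fA$ and $\fb C = \fib C = \es$. Together with $bC = X$, $iC = a\f C$, and $biC = bA = a\ie C$, these match the remainder of \sn\ $42$ in Table~\ref{tab:subset_types} exactly.

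It remains to exclude strict refinements of \sn\ $42$. If $C = X$ then $aA \su B$, so $iaA \su iB = \es$; since $iaA = abA$, this gives $bA = X$, contradicting $bA \neq X$. This rules out $\ident = b$ on $C$. Since $\phi A \in \{24,26\}$ forces $A \neq iA$, we also have $C \neq iC = iA$ (else $A \su iA$ would give $A = iA$), ruling out $\ident = i$. The strict inclusions $\es \subsetneq iA \subsetneq bA \subsetneq X$ yield $iC \neq \es$, $biC = bA \neq X = bC$, and $\fA \neq \es$, which close off every further $\KZA$- or $\FZA$-refinement. Hence $\psi C = 42$ exactly.
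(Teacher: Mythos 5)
Your proof is correct, and its computational core is the same as the paper's: exploit $bB=X$ and $iB=\es$ to get $b(A\cup B)=X$ and $i(A\cup B)=iA$, then read off the collapse. The differences are in packaging. Where you prove $i(A\cup B)\su iA$ from scratch with an open-set argument (an open subset of $A\cup B$ meets $aA$ only inside $B$, hence inside $iB=\es$, hence lies in $bA$ and so in $ibA=iA$), the paper instead bounds $ibi(A\cup B)\su ib(A\cup iB)=ibA=iA$ by citing Lemma~\ref{lem:half_dist}(i) and sandwiches with $i\leq ibi$; your route is more elementary and self-contained, the paper's is one line. More significantly, the paper then finishes immediately by invoking the characterization of \sn\ $42$ in Table~\ref{tab:characterizations} (namely $bi\neq i$ together with $ibi=\af$, the latter being exactly $ibi=i=ab\vee i$ once $ab=0$), which makes the entire remainder computation and the exclusion of refinements unnecessary. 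Your verification of all of $\FZ(A\cup B)$ and your appeal to the meet-semilattice to rule out the unique cover $\psi=62$ are correct but do work that the characterization table was built to avoid. Everything you assert checks out, including the strict inclusions $\es\subsetneq iA\subsetneq bA\subsetneq X$ extracted from $\phi A\in\{24,26\}$ and the identities $\f C=\bif C=aiA$, $\ff C=\ei C=\fbi C=\fif C=\fA$, $\fb C=\fib C=\es$.
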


\begin{proof}
Have $X=bB\su b(A\cup B)$.
Thus $A\cup B$ satisfies $ab=0$.
Have $iB=\es$.
Thus by Lemma~\ref{lem:half_dist}(i), $ibi(A\cup B)\su ib(A\cup iB)=ibA=iA=iA\cup iB\su
i(A\cup B)$.
Hence $A\cup B$ satisfies $ibi=i=ab\vee i=\af$.
Left-multiplying $ibi(A\cup B)=iA$ by $b$ yields $bi(A\cup B)=biA\neq iA=i(A\cup B)$.
By Table~\ref{tab:characterizations} we conclude $\psi(A\cup B)=42$.
\end{proof}

\begin{prop}\label{prop:irresolvable}
KD spaces are irresolvable\nit.
\end{prop}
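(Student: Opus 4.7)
The plan is a proof by contradiction: assume $X$ is KD and that there exists a resolvable subset $A \su X$, meaning $A$ satisfies $\psi A = 61$ (equivalently $iA = \es$ and $bA = X$, by Table~\ref{tab:subset_types}). The goal is to construct an explicit set $C$ such that $ifC$ is open but not closed, violating the defining property $bifC = ifC$ of a KD space.

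First I would exploit $|\K| = 14$ to pick a subset $B \su X$ witnessing the operator inequality $bi \neq ibi$, so that $biB \neq ibiB$; such $B$ exists in any KD space because the collapse $bi = ibi$ would reduce $K(\XT)$ to at most $10$ (see Table~\ref{tab:six_Kuratowski_monoids}). Set $U := iB$, an open set, and define $C := A \cap U$. A short computation gives $iC = \es$ (since $iC \su iA$), while the density of $A$ together with openness of $U$ yields $bC = bU$: the nontrivial inclusion follows because any open neighborhood of a point of $bU$ meets $U$ in a nonempty open set, which then meets $A$ by density. Dually, $aC = aA \cup aU$ and $b(aA) = X$ (from $iA = \es$), hence $b(aC) = X$, and therefore $fC = bC \wedge b(aC)$ evaluated at $C$ gives $fC = bU$. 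Applying $i$ yields $ifC = ibU = ibiB$.

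The contradiction follows by computing $bifC = bibU = bibiB$ and invoking the operator identity $bibi = bi$, which is recorded in Table~\ref{tab:multiplication} (row $b$, column $ibi$); this yields $bifC = biB$. Since $B$ was chosen with $biB \neq ibiB$, we conclude $bifC \neq ifC$, so $ifC$ is open but not closed, contradicting the KD hypothesis that $ifC$ is clopen for every subset. The only step that is not pure operator bookkeeping, and hence the main obstacle, is verifying the set-theoretic equality $bC = bU$ for $C = A \cap U$ with $A$ dense and $U$ open; once this is in hand, the rest of the argument is an application of tabulated identities and the defining condition of KD.
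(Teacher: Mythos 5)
Your proof is correct, and it takes a genuinely different route from the paper's. The paper proves irresolvability by combining two of its own lemmas: every Kuratowski space contains a non-clopen regular closed set $A$ (Lemma~\ref{lem:k-number_26}), and the union of such an $A$ with a dense-codense set $B$ has \sn\ $42$ (Lemma~\ref{lem:psi_42_44}); since \sn\ $42$ forces $\fif(A\cup B)\neq\es$, i.e., $\bif\neq\ie$ on $A\cup B$, the space cannot be KD. You instead intersect the dense-codense set with an open set: taking $B$ with $ibiB\neq biB$ (available since $K(\XT)=14$ rules out the global collapse $ibi=bi$, which is the left dual of the ED collapse $bib=ib$) and $C=A\cap iB$, your density argument gives $bC=b(iB)$ and $b(aC)=X$, hence $\f C=b(iB)$ and $\ie C=ibiB$, whose closure $bibiB=biB$ differs from it. Both arguments ultimately exhibit a set whose $\ie$ is open but not closed, and both need a witness to the non-degeneracy of the Kuratowski monoid (a non-clopen regular closed set in the paper, a witness to $ibi\neq bi$ for you). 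What yours buys is self-containedness: it avoids the $\psi$-number classification entirely and rests only on one elementary density computation plus the identity $bibi=bi$. What the paper's buys is reuse: Lemma~\ref{lem:psi_42_44} and the resulting \sn\ $42$ are needed elsewhere (e.g., in Table~\ref{tab:psi-implications} and Theorem~\ref{thm:GE_implications}), so the detour through \sn s comes essentially for free there. One cosmetic note: your appeal to Table~\ref{tab:six_Kuratowski_monoids} to justify the existence of $B$ is fine, but it is cleaner to say that $ibi=bi$ is the left dual of $bib=ib$, which fails globally exactly when the space is not ED, EO, partition, or discrete.
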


\begin{proof}
Suppose $\XT$ is a Kuratowski space containing a subset $B$ such that $\psi B=61$.
By Lemma~\ref{lem:k-number_26}, $X$ contains a subset $A$ with \pn\ $26$.
By Lemma~\ref{lem:psi_42_44} we have $\psi(A\cup B)=42$.
Conclude $\XT$ is not KD.
\end{proof}

\begin{lem}\label{lem:psi_60}
KD spaces always contain subsets \mth{$A\nsu B$} such that \mth{$\psi A=60$} and
\mth{$\psi B=62$.}
\end{lem}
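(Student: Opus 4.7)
Let $(X,\T)$ be a KD space. The plan is to construct $A$ and $B$ from a single well-chosen subset $C\su X$, exploiting the defining collapse of KD.

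First I would invoke Theorem~\ref{thm:kf_monoids} (cf.\ Figure~\ref{fig:Hasse_diagrams}(i)): KD is characterised globally by $\fif=0$, equivalently $\bif=\ie$, so that $\ie C$ is clopen for every $C\su X$. Because KD is not further collapsed---the node $\{\ie,\bif\}$ remains distinct from $\{0,\fif\}$ in that diagram---the operator $\ie$ is nonzero, so some $C\su X$ satisfies $U:=\ie C\neq\es$. I will also need $U\neq X$: were $U=X$, then $\f C=X$, forcing $bC=baC=X$ and $iC=iaC=\es$, which would make $X$ resolvable, contradicting Proposition~\ref{prop:irresolvable}. Hence $U$ is a proper nonempty clopen subset.

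Next I will set $A:=U\cap C$ and check that $\psi A=60$. Because $U\su \f C=bC\cap baC$ and $U$ is open, every open $V\su U$ meets both $C$ and $aC$: the first half yields $bA=U$, while any nonempty open $V\su A$ would satisfy both $V\su C$ (hence $V\su iC$) and $V\su U\su baC=X\sm iC$, a contradiction; so $iA=\es$. A $\KZ$-orbit computation then yields $\KZ A=\{A,U,\es\}$ and $\phi A=25$. Since $iA=\es$ implies $baA=X$ and hence $\f A=bA$, Table~\ref{tab:subset_types} pins down $\psi A=60$.

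For $B$ I would take $B:=b(abC)$, the closure of the open set $abC=X\sm bC$. This is regular closed by construction; provided $bC$ is not clopen, $B$ itself is non-clopen, giving $\phi B=26$ and $\psi B=62$. The disjointness then falls out of the construction: for $u\in U=\ie C$, some open neighborhood $V\su U\su bC$ is disjoint from $abC$, so $u\notin b(abC)=B$; hence $A\su U$ is disjoint from $B$, and as $A\neq\es$ this yields $A\nsu B$. The main obstacle will be to guarantee a simultaneous choice of $C$ with $\ie C\neq\es$ and $bC$ non-clopen; if the first candidate fails, I would replace $C$ by $C\cup D$ where $D$ is any subset with $bD$ non-clopen (available since $|\K|=14$ forces $b\ne ib$) and recheck that $\ie$ is still nonzero on the union, falling back on Lemma~\ref{lem:k-number_26} as a last resort to supply a suitable $B$ if the direct construction breaks down.
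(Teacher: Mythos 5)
Your construction of $A$ is correct and is a genuinely different route from the paper's first half: starting from any $C$ with $U:=\ie C\neq\es$ (available because the KD collapse keeps $\ie\neq0$, and $U\neq X$ by irresolvability), the set $A=U\cap C$ satisfies $bA=U$ clopen, $iA=\es$, $\f A=bA\neq X$, and $A\neq U$, which forces $\psi A=60$. The paper instead works with $gE$ and, in the bad subcase, with $agE\sm bi(agE)$; your version is more direct and the verification goes through.

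The second half, however, has a genuine gap. The asserted implication ``$bC$ not clopen $\Rightarrow$ $B=b(abC)$ not clopen'' is false: $b(abC)=bi(aC)$ is clopen if and only if $\fib C=\es$, whereas $bC$ is clopen if and only if $\fb C=\es$, and since $\fib\leq\fb$ the implication you need runs in the wrong direction. Concretely, in the minimal KD space with base $\{v\},\{w\},\{v,w,x\},\{y,z\}$, take $C=\{x,y\}$: then $\ie C=\{y,z\}\neq\es$ and $bC=\{x,y,z\}$ is \emph{not} clopen, yet $b(abC)=b(\{v,w\})=\{v,w,x\}$ \emph{is} clopen, so your $B$ fails to have $\psi B=62$. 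The proposed repairs do not close this: passing to $C\cup D$ controls neither $\ie(C\cup D)$ nor $\fib(C\cup D)$, and invoking Lemma~\ref{lem:k-number_26} only produces \emph{some} non-clopen regular closed $V$ with no control over whether $A\su V$ --- but the non-containment $A\nsu B$ is precisely the content of the lemma (it is what Lemma~\ref{lem:psi_31} consumes), so it cannot be abandoned ``as a last resort.'' The paper closes exactly this case by setting $B=V\sm bA$ whenever $A\su V$ and checking, using that $bA$ is clopen with $bA\su iV$ while $V$ is regular closed but not open, that $B$ is still a non-clopen regular closed set disjoint from $A$. Some such explicit repair of $V$ is indispensable; without it the proof is incomplete.
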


\begin{proof}
Let $X$ be a KD space.
Claim $\psi A=60$ for some $A\su X$.
We have $bibE\neq biE$ for some $E\su X$.
Tables~\ref{tab:multiplication} and~\ref{tab:implications_poset} imply $\ie(gE)=\ie E
\neq\es$.
Since $X$ is KD and $gE$ satisfies $i=0\neq\ie$ it follows by
Table~\ref{tab:subset_types} that $\psi(gE)\in\{48,60\}$.
The claim holds if $\psi(gE)=60$ so we assume $\psi(gE)=48$.
Let $U=agE$.
Have $\bif=\ie$ since $X$ is KD.
Thus, since $abiU$ is open and $bU=X$, Lemma~\ref{lem:i-union}(ii) implies
$b(U\sm biU)=bU\cap babiU=aibiU=\bif E=\ie E\neq\es$.
It follows that $U\sm biU$ satisfies $ib=b$ and $\ident\neq0$.
Since we also have $i(U\sm biU)=iU\cap iabiU=iU\sm biU=\es$, Table~\ref{tab:subset_types}
and Proposition~\ref{prop:irresolvable} imply $\psi(U\sm biU)=60$.
Hence the claim holds.

By Lemma~\ref{lem:k-number_26} there exists $V\su X$ such that $\psi V=62$.
If $A\nsu V$ we are done so we assume $A\su V$.
Let $B=V\sm bA$.
Note that $\psi(iaA)=68$.
Since $iV$ and $biaA$ are each open Lemma~\ref{lem:i-union}(ii) implies $B=V\cap iaA=biV
\cap biaA=b(iV\cap iaA)=bi(V\cap iaA)=biB$.
Since $V$ is closed and $A\su V$ we have $bA\su V$.
Hence $bA\su iV$ since $bA$ is open.
Thus, since $V$ is not open, we get $i(V\sm bA)=iV\cap iabA=iV\sm bA\subsetneq V\sm bA$.
Conclude $\psi B=62$ by Table~\ref{tab:subset_types}.
Since $A\cap B=\es$ the result follows.
\end{proof}

\begin{lem}\label{lem:sufficient_31}
If \mth{$\bifA=\ifA\neq\es$,} \mth{$iA=ibiA\neq biA$,} and \mth{$bibA=bA$,} then
\mth{$\psi A=31$.}
\end{lem}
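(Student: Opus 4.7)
The plan is to verify each equation in the remainder for \sn\ $31$ (Table~\ref{tab:subset_types}) holds for $A$, and that no further equations further collapse things. First I would check $\phi A = 6$ by ruling out the other \pn s consistent with $bibA = bA$ and $ibiA = iA$, namely $\phi \in \{23, 24, 25, 30\}$; each of these would force $biA \in \{iA, bA\}$, contradicting hypothesis $iA \neq biA$. The edge equations $\fbA = \fibA$ and $\eiA = \fbiA$ come directly from Lemma~\ref{lem:equations_kf}(iv)--(v). The equation $\bif = \ie$ on $A$ (equivalently $\fifA = \es$) follows from $\bifA = \ifA$ via the disjoint decomposition $\bif = \fif \vee \ie$ of Lemma~\ref{lem:decompositions}(v): the hypothesis gives $\fifA \subseteq \ifA$, while disjointness forces $\fifA = \es$.

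The central task is then showing $\fbA = \eiA$, from which $\ffA = \fbA$ follows immediately using $\ff = \fb \vee \ei$ of Lemma~\ref{lem:op_relations}(v). The plan here is to exploit Lemma~\ref{lem:op_relations}(ii), $bib = \bif \vee bi$, which with $bibA = bA$ and $\bifA = \ifA$ yields $bA = \ifA \cup biA$. Since $\ifA \subseteq ibA$, this gives $\fbA = bA \setminus ibA \subseteq biA \setminus ibA$. Meanwhile Lemma~\ref{lem:op_relations}(iii) applied under $ibiA = iA$, combined with $\bifA = \ifA \subseteq ibA$, yields $ibA \setminus iA = ibA \cap \bifA = \ifA$, i.e., $ibA = iA \cup \ifA$ (a disjoint union since $\ifA \subseteq \fA$). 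Then using Lemma~\ref{lem:op_relations}(i), $ibA \setminus biA = \ifA$, so $biA \cap ibA = ibA \setminus \ifA = iA$. Combining, $\fbA = biA \setminus ibA = biA \setminus iA = \eiA$.

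The hard part should be recognizing that $bib = \bif \vee bi$ is the key decomposition for converting the hypothesis $\bifA = \ifA$ into structural information about how $bA$ decomposes relative to $biA$; once that is in hand everything cascades from already-established identities. Finally one reads off $\psi A = 31$ from Table~\ref{tab:subset_types}, since the collapse we have established---$\phi A = 6$, all of $\ff, \fb, \fib, \ei, \fbi$ equal on $A$, $\fifA = \es$, and $\bifA = \ifA$---uniquely picks out \sn\ $31$ among the nine \sn s under $\phi = 6$ (the nonvacuity $\ifA \neq \es$ prevents further collapse to a smaller \sn).
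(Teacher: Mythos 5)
There is a genuine gap in your first step, the claim that $\phi A=6$. The \pn s whose collapses contain both $\{bib,b\}$ and $\{ibi,i\}$ are $\{6,14,15,24,25,26,27,30\}$, not $\{6,23,24,25,30\}$, and your elimination criterion fails in two ways. For $\phi A\in\{24,26,27\}$ the forced equation is $biA=bA$ (together with $ibA=iA$), and $biA=bA$ does \emph{not} contradict $iA\neq biA$; what actually kills these cases is $\ifA=ibA\sm biA\su iA\sm biA=\es$ (Lemma~\ref{lem:op_relations}(i)), contradicting $\ifA\neq\es$. More seriously, $\phi A=14$ (that is, $bibA=ibA=bA$ and $ibiA=iA$ with $biA$ distinct from both) forces neither $biA=iA$ nor $biA=bA$ and is consistent with all three hypotheses at the level of $\K$-equations, so your argument as written simply does not address it. The case can be closed --- your own later computation gives $\fbA=\fiA$, while $ibA=bA$ forces $\fbA=bA\sm ibA=\es$, hence $\fiA=\es$, hence $biA=\fiA\cup iA=iA$, a contradiction --- but that means the determination of $\phi A$ cannot precede the main computation in the way you have arranged it.

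The rest of the proposal checks out: $bib=\bif\vee bi$ from Lemma~\ref{lem:op_relations}(ii) does give $bA=\ifA\cup biA$, and the chain through Lemma~\ref{lem:op_relations}(i),(iii) and Lemma~\ref{lem:decompositions} correctly yields $\fbA=biA\sm ibA=biA\sm iA=\fiA$. This is, however, a genuinely different and far longer route than the paper's, whose entire proof is a four-clause elimination against Table~\ref{tab:subset_types}: $\bifA=\ifA\neq\es$ restricts to $\psi A\in\{6,13,20,31,38\}$ or $\phi A\in\{16,17,25\}$; then $ibiA=iA$ excludes $6,13,38$; $ibiA\neq biA$ excludes $\phi A\in\{16,17,25\}$; and $bibA=bA$ excludes $20$, leaving $\psi A=31$. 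Since you too ultimately read $\psi A=31$ off the same table, your explicit derivation of $\ffA=\fbA=\fiA$ is redundant: once $\phi A=6$ and $\fifA=\es$ are in hand, $31$ is the only \sn\ under $\phi=6$ whose remainder contains $\fif=0$.
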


\begin{proof}
(i)~By Table~\ref{tab:subset_types}, $\bifA=\ifA\neq\es\implies(\psi A\in
\{6,13,20,31,38\}$ or $\phi A\in\{16,17,25\})$, $ibiA=iA\implies\psi A\cn\in\{6,13,38\}$,
$ibiA\neq biA\implies\phi A\cn\in\{16,17,25\}$, and $bibA=bA\implies\psi A\neq20$.
\end{proof}

\begin{lem}\label{lem:psi_31}
KD spaces always contain subsets with \sn s \mth{$31$} and \mth{$48$.}
\end{lem}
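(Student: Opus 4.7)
The plan is to realize both subsets as simple Boolean combinations of the $\psi=60$ and $\psi=62$ sets produced by Lemma~\ref{lem:psi_60}. First I invoke that lemma to obtain $A,B\su X$ with $\psi A=60$ and $\psi B=62$; by reinspecting the construction in its proof, one has $B=V\sm bA$ for some $V$ with $\phi V=26$, so $B\cap bA=\es$ (in particular $A\cap B=\es$). From $\psi A=60$ I extract $iA=\es$ and $bA$ clopen, nonempty and proper with $A\subsetneq bA$; from $\psi B=62$ I extract $B=bB=bibB$, $B\neq X$, and $\es\neq iB\subsetneq B$. A short argument shows $i\f B=\es$: any open set meeting $B$ meets $iB$ by regular-closedness of $B$, so an open set contained in $\f B=B\sm iB$ would have to be empty.

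For $C:=A\cup B$, I claim $\psi C=31$. Applying Lemma~\ref{lem:i-union}(i) with first argument $B$ (closed) and second argument either $A$ (where $iA=\es$ is closed) or $bA$ (where $ibA=bA$ is closed) yields $iC=iB$ and $ibC=iB\cup bA$; additivity of $b$ gives $bC=B\cup bA$ and $bibC=biB\cup bbA=B\cup bA=bC$; regular openness of $iB$ (dual of $B=bibB$) gives $ibiC=iB=iC$; and $biC=biB=B$. Since $\f B\nsu bA$ (as $\f B\su B$, $B\cap bA=\es$, and $\f B\neq\es$), we have $biC\neq ibC$, so $\phi C=6$. In KD the global identity $\bif=\ie$ forces $\bif C=\if C=i(\f B\cup bA)=bA\neq\es$, using Lemma~\ref{lem:i-union}(i) and $i\f B=\es$. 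All hypotheses of Lemma~\ref{lem:sufficient_31} are now in place, delivering $\psi C=31$.

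For $D:=\f B\cup A$, the same machinery with $\f B$ replacing $B$ throughout, together with $i\f B=\es$, produces $iD=\es$, $biD=ibiD=\es$, $bD=\f B\cup bA$, and $ibD=bibD=bA$. These together with $\ident D=\f B\cup A$ and the distinctness forced by $\f B\neq\es\neq bA\sm A$ and $B\cap bA=\es$ give $\phi D=17$ ($ibi=bi=i$ and $bib=ib$, with $\ident,b,ib,i$ distinct). Computing the boundary operators from these data, the disjointness and $i\f B=\es$ simplify everything: $\ffD=\fbD=\f B$, while $\eiD=\fibD=\fbiD=\fifD=\es$, and $\bifD=\ieD=bA$. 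Together with the consequences $b=f$ (forced by $iD=\es$) and $ib=\ie$ (also forced by $iD=\es$), these values match the row $\psi=48$ of Table~\ref{tab:subset_types}.

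The main obstacle will be keeping the bookkeeping tidy: each set requires roughly ten operator evaluations that must all collapse to elements of the small list $\{\es,\,bA,\,\f B,\,B,\,iB,\,B\cup bA,\,iB\cup bA,\,A\cup B\}$. Nothing is conceptually deep; the proof hinges on four structural inputs conspiring in concert---the disjointness $B\cap bA=\es$ baked into Lemma~\ref{lem:psi_60}'s construction, the clopenness of $bA$, the regular-closedness of $B$ (forcing $i\f B=\es$), and the KD identities $\fif=0$ and $\bif=\ie$---with Lemma~\ref{lem:i-union}(i) supplying the distributive step for interiors and Lemma~\ref{lem:sufficient_31} closing out the case $\psi C=31$.
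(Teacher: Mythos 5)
There is a genuine gap: the disjointness $B\cap bA=\es$ that you extract ``by reinspecting the construction'' of Lemma~\ref{lem:psi_60} is not part of that lemma's conclusion and does not always hold. The proof of Lemma~\ref{lem:psi_60} has two branches: only when $A\su V$ does it set $B=V\sm bA$; when $A\nsu V$ it simply takes $B=V$, and then all you know is $A\nsu B$ (equivalently $bA\nsu B$, since $B$ is closed). Your first construction $C=A\cup B$ survives this weakening --- it is in fact exactly the paper's argument, except that $\ie C$ comes out as $bA\sm B$ rather than $bA$, and $A\nsu B$ still makes it nonempty, so Lemma~\ref{lem:sufficient_31} applies. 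But your second construction $D=\f B\cup A$ genuinely depends on the disjointness: you need $\f B\nsu bA$ to get $bibD\neq bD$ (i.e.\ $\phi D=17$). If $\f B\su bA$ --- which nothing in the hypotheses $\psi A=60$, $\psi B=62$, $A\nsu B$ rules out --- then $bD=\f B\cup bA=bA$ is clopen, $ibD=bD$, and $D$ collapses to $\psi D=60$ instead of $48$. So the $\psi=48$ half of your proof is not established.

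The paper sidesteps this by taking $g(A\cup B)$ rather than $\f B\cup A$ (note these coincide exactly when $A\cap iB=\es$, so your $D$ is a special case of the paper's set): since $\psi(A\cup B)=31$ has already been proved, the $\psi g$ column of Table~\ref{tab:subset_types} pins $\psi(g(A\cup B))$ down to $\{48,60\}$ with no further computation, and the single identity $ibg=\ie$ together with $g(A\cup B)\cap B\neq\es$ rules out $60$. To repair your version you would either need to justify $\f B\nsu bA$ in the branch $A\nsu V$ (e.g.\ by re-proving $\psi(V\sm bA)=62$ without the hypothesis $A\su V$, which the paper's verification uses via $bA\su iV$), or switch to $g(A\cup B)$ as the paper does.
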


\begin{proof}
Let $X$ be a KD space.
By Lemma~\ref{lem:psi_60} there exist subsets $A\nsu B$ in $X$ such that $\psi A=60$ and
$\psi B=62$.
Claim $\psi(A\cup B)=31$.
Since $iA$ ($=\es$) and $B$ are closed, Lemma~\ref{lem:i-union}(i) implies $i(A\cup B)=iA\cup
iB=iB\neq biB=bi(A\cup B)$.
Hence $ibi(A\cup B)=ibiB=iB=i(A\cup B)$.
Lemma~\ref{lem:b_equals_bib}(i) implies $bib(A\cup B)=b(A\cup B)$.
Since $ibA$ and $B$ are closed and $baA$ and $aB$ are open we have $\ie(A\cup B)=ib(A\cup
B)\cap abi(A\cup B)=(ibA\cup iB)\cap ib(aA\cap aB)=(bA\cup iB)\cap ibaA\cap ibaB=bA\sm
B\neq\es$.
Since $X$ is KD, $\bif(A\cup B)=\ie(A\cup B)$.
Hence the claim holds by Lemma~\ref{lem:sufficient_31}.
This implies $\psi(g(A\cup B))\in\{48,60\}$.
Have $g(A\cup B)=(A\cup B)\sm i(A\cup B)=(A\cup B)\sm iB=(A\sm iB)\cup gB$.
Thus, since $gB\neq\es$, $g(A\cup B)\cap B\neq\es$.
Hence $ibg(A\cup B)=\ie(A\cup B)=bA\sm B\neq bg(A\cup B)$.
Conclude $\psi(g(A\cup B))=48$.
\end{proof}

\begin{lem}\label{lem:psi_37}
\mth{$\psi A=37\implies\psi(A\cap ibA)=\psi(A\cap bibA)=44$.}
\end{lem}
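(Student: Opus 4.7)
The plan is to handle $B = A \cap ibA$ and $B = A \cap bibA$ uniformly. Three facts drive the argument: by Lemma~\ref{lem:general_lemma}(i), $ibB = ibA$; by Lemma~\ref{lem:general_lemma}(ii), $B$ satisfies $bib = b$, so $bibB = bB$; and since $B \su A$ and $\psi A = 37$ forces $iA = \es$, we have $iB = \es$ (hence $biB = ibiB = \es$). These immediately confirm the $\phi = 15$ relations $ibi = bi = i$ and $bib = b$.

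Next I would verify the remaining equations in the defining list of \sn\ $44$. From $iB = \es$ with Lemma~\ref{lem:decompositions}(i), $bB = \fB$; from Lemma~\ref{lem:op_relations}(i) with $biB = \es$, $ibB = \ie B$; and $\ei B = \fbi B = \es$ since $\f$ takes $\es$ to $\es$. For the four-way collapse $\ff B = \fb B = \fib B = \fif B$: because $bB$ is closed and $ibB$ open, Lemma~\ref{lem:decompositions}(ii) combined with $bibB = bB$ gives $\fb B = bB \sm ibB = bibB \sm ibB = \fib B$; substituting $\fB = bB$ (hence $ifB = ibB$) yields $\ff B = \fb B$ and $\fif B = \fib B$. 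Finally, $\fB = \bif B$ follows from Lemma~\ref{lem:decompositions}(iv)--(v) together with $\ff B = \fif B$. This exhausts the remainder list for \sn\ $44$.

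The main obstacle is ruling out a stricter $\psi$-number by verifying that $B$, $iB$, $bB$, $ibB$ are pairwise distinct and none equals $X$. Because $\psi A = 37$ forces $|\KFZA| = 7$, both $ibA \neq \es$ and $\fifA \neq \es$ (so $bibA \sm ibA = \fibA = \fifA \neq \es$ by Lemma~\ref{lem:decompositions}(ii)). Then: $B \neq \es$, for $A \cap ibA = \es$ would force $bA \su aibA$ (as $aibA$ is closed), hence $ibA = \es$, a contradiction; the $A \cap bibA$ case reduces to this via $A \cap bibA \supseteq A \cap ibA$. Next, $B \neq ibA = ibB$ because $ibA \su A$ would contradict Lemma~\ref{lem:subset_implications}(i) applied to $\ifA = ibA \neq \es$. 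For $B \neq bibA = bB$: if $B = A \cap ibA$, equality forces $bibA \su ibA$, contradicting $\fifA \neq \es$; if $B = A \cap bibA$, equality forces $bibA \su A$, hence $ibA \su A$, again contradicting Lemma~\ref{lem:subset_implications}(i). Combined with the preceding paragraph, $\phi B = 15$ and $\psi B = 44$ exactly.
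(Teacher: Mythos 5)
Your proof is correct and takes essentially the same route as the paper: both arguments rest on $i(A\cap ibA)\su iA=\es$ together with Lemma~\ref{lem:general_lemma}(i)--(ii), which yield $b(A\cap ibA)=bib(A\cap ibA)=bibA\neq ibA=ib(A\cap ibA)$ (and likewise for $A\cap bibA$), after which $\psi=44$ is forced. The paper simply reads this off from the characterization of \sn\ $44$ by $ib\neq b$, $bib=\f$ (Table~\ref{tab:characterizations}), so your longer hand-verification of the remainder equations and of the distinctness of $B$, $iB$, $bB$, $ibB$ is an unpacking of that table lookup rather than a different argument; the one item you flag but never discharge, $bB\neq X$, is immediate since $bB=bibA$ and collapse $37$ contains no equation of the form $bib=ao$.
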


\begin{proof}
Have $i(A\cap ibA)\su iA=\es$.
Lemma~\ref{lem:general_lemma}(i)-(ii) imply $b(A\cap ibA)=bib(A\cap ibA)=bibA\neq ibA=
ib(A\cap ibA)$.
Thus $\psi(A\cap ibA)=44$ by Table~\ref{tab:subset_types}.
The other proof is similar.
\end{proof}

None of the six possible Kuratowski monoids is characterized by the presence or absence
of a subset with any specific $\phi$- or \sn.
However, one of the seven GE monoids is.

\begin{prop}\label{prop:GE_characterization}
A topological space is GE if and only if it contains a subset with \sn\ \mth{$44$.}
\end{prop}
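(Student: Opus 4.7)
The plan is to prove each implication separately, with the bulk of the work in the forward direction.

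$(\Leftarrow)$ Suppose $A\su X$ satisfies $\psi A\eqz44$. By Table~\ref{tab:subset_types}, \sn\ $44$ forces $\fbiA\eqz\fiA\eqz\es$ while $\fibA\eqz\fbA\eqz\ffA$ is nonempty (else $A$ would belong to a coarser \sn). So the operators $\fib$ and $\fbi$ on $X$ are neither identical nor identically zero. But Theorem~\ref{thm:kf_monoids} and Figure~\ref{fig:Hasse_diagrams} catalogue the six non-GE space types, and each globally collapses $\fib$: with $\fbi$ in KD, and with $0$ in ED, OU, EO, P, D. Neither collapse is consistent with what $A$ witnesses, so $X$ must be GE.

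$(\Rightarrow)$ Let $X$ be GE. Proposition~\ref{prop:kf_ordering} gives $\Ord\KF$ on $X$, so the non-edge $\fib\cn\leq\fbi$ is realized: pick $B\su X$ with $\fibB\nsu\fbiB$, equivalently $bibB\supsetneq biB\cup ibB$ by Corollary~\ref{cor:equivalences}(ii). A template is the minimal GE space on $\{w,x,y,z\}$ with base $\{w\},\{x,y\},\{w,x,y,z\}$: the subset $A\eqz\{x,z\}$ satisfies $iA\eqz\es$, $bA\eqz\{x,y,z\}$ regular closed, $ibA\eqz\{x,y\}$, $\fibA\eqz\fbA\eqz\ffA\eqz\fifA\eqz\{z\}$, and $\fbiA\eqz\fiA\eqz\es$, giving $\psi A\eqz44$; here $z\in bibB\sm(biB\cup ibB)$ for $B\eqz\{y\}$, and $\{x\}$ is a one-point slice of $ibB$ whose closure is all of $bibB$. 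For a general GE space, I would attempt $A\eqz U\cup\{p\}$ where $p\in bibB\sm(biB\cup ibB)$ and $U\su ibB\sm\{p\}$ is chosen so that $bU\eqz bibB$ and $iU\eqz\es$; the equations characterizing $\psi\eqz44$ then follow from routine multiplication via Table~\ref{tab:multiplication}, and the non-collapse to a coarser \sn\ from the non-relations of $\Ord\KF$ applied to $B$.

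The main obstacle is the existence of such a slice $U$ in every GE space. Naive choices fail: $U\eqz\es$ gives $A\eqz\{p\}$, which typically lands in $\psi\eqz28$, while $U\eqz ibB$ forces $iA\supseteq ibB\neqy\es$. Robust existence should follow from the GE hypothesis, which guarantees that $ibB$ is a nontrivial open set strictly contained in the closed set $bibB$ that contains $p$ in its boundary. A clean fallback, available in GE spaces containing a subset of $\psi$-number $37$, is to apply Lemma~\ref{lem:psi_37}, which outputs $A\cap ibA$ of \sn\ $44$ directly; the remaining small GE spaces (like the minimal four-point one) are handled by the direct construction above.
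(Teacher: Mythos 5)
Your backward direction is sound: a subset with \sn\ $44$ satisfies $\fibA\neq\es$ and $\fbiA=\es$, which is incompatible with the global collapse $\fib=\fbi$ (KD) and with $\fib=0$ (ED, OU, EO, partition, discrete), so the space must be GE. This is a reasonable repackaging of what the paper establishes in Section~4 via the lists of admissible $\phi$- and \sn s per space type.

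The forward direction, however, has a genuine gap that you yourself flag as ``the main obstacle'': you never establish that the slice $U\su ibB$ with $bU=bibB$ and $iU=\es$ exists in an arbitrary GE space, nor that the resulting $A=U\cup\{p\}$ actually lands in \sn\ $44$ rather than some neighboring collapse. Your fallback via Lemma~\ref{lem:psi_37} only applies ``in GE spaces containing a subset of $\psi$-number $37$,'' but you give no argument that every GE space contains a subset with \sn\ $37$ or $44$ in the first place, and ``the remaining small GE spaces'' are not a well-defined residual class you can dispatch by inspecting one four-point example. The paper closes exactly this gap with the border operator: since $X$ is GE, $\fif\neq0$, so some $A$ satisfies $\fifA\neq\es$; by Table~\ref{tab:multiplication}, $\fif g=\fif$ and $ig=0$, so $gA$ satisfies $i(gA)=\es$ and $\fif(gA)=\fifA\neq\es$, which by Table~\ref{tab:subset_types} forces $\psi(gA)\in\{37,44\}$; Lemma~\ref{lem:psi_37} then converts the $37$ case into a set of \sn\ $44$. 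If you want to salvage your argument, replace the ad hoc construction of $U$ with this use of $g$ (or prove an existence lemma for $U$ that works in every GE space); as written, the key existence claim is asserted, not proved.
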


\begin{proof}
The ``if'' was established in Section~\ref{sec:kfa}.
Conversely suppose $X$ is GE.
Some $A\su X$ satisfies $\fif g=\fif\neq0$.
Since $gA$ also satisfies $i=0$, Table~\ref{tab:subset_types} implies $\psi(gA)\in
\{37,44\}$.
Apply Lemma~\ref{lem:psi_37}.
\end{proof}

Since Lemma~\ref{lem:psi_31} and Proposition~\ref{prop:GE_characterization} imply that
every Kuratowski space contains a subset $A$ with $\psi A\in\{31,44\}$, no Kuratowski
space has \knum\ $6$.

\begin{table}
\footnotesize
\centering
\caption{Characterizations of the local collapses of $\KZ$ and $\KF$.}
\renewcommand{\tabcolsep}{1pt}
\renewcommand{\arraystretch}{1}
\begin{tabular}[t]{|c|@{\hspace{1pt}}c@{\hspace{2pt}}|c|c|@{\hspace{1pt}}
c@{\hspace{1pt}}|}
\multicolumn{1}{c}{$\phi A$}&
\multicolumn{1}{c}{$\ify A$ satisfies}&
\multicolumn{1}{c}{}&
\multicolumn{1}{c}{$\phi A$}&
\multicolumn{1}{c}{$\ify A$ satisfies}\\
\Xcline{1-2}{1pt}\Xcline{4-5}{1pt}
\raisebox{-.5pt}{$1$}\vrule width0pt height8pt&$bib\neqy b,ibi\neqy i,bib\neqy bi,bib\neqy ib,ibi\neqy
bi$&\vrule height0pt depth0pt width10pt&
\raisebox{-.5pt}{$16$}&$ibi\neqy i,ibi\eqy bi,bib\neqy bi,ib\eqy b$\\\cline{1-2}\cline{4-5}
\y{$2$}&$bib\eqy b,ibi\neqy i,bib\neqy bi,bib\neqy ib,ibi\neqy bi$&&
\y{$17$}&$bib\neqy b,bib\eqy ib,ibi\neqy ib,bi\eqy i$\\\cline{1-2}\cline{4-5}
\y{$3$}&$ibi\eqy i,bib\neqy b,ibi\neqy ib,ibi\neqy bi,bib\neqy ib$&&
\y{$18$}&$\ident\neqy i,ibi\neqy i,bib\neqy ib,bi\eqy b$\\\cline{1-2}\cline{4-5}
\y{$4$}&$bib\neqy b,ibi\neqy i,bib\eqy ib,ibi\neqy bi$&&
\y{$19$}&$\ident\neqy b,bib\neqy b,ibi\neqy bi,ib\eqy i$\\\cline{1-2}\cline{4-5}
\y{$5$}&$ibi\neqy i,bib\neqy b,ibi\eqy bi,bib\neqy ib$&&
\y{$20$}&$\ident\eqy i,ibi\neqy i,bib\neqy ib$\\\cline{1-2}\cline{4-5}
\y{$6$}&$bib\eqy b,ibi\eqy i,bib\neqy bi,bib\neqy ib,ibi\neqy bi$&&
\y{$21$}&$\ident\eqy b,bib\neqy b,ibi\neqy bi$\\\cline{1-2}\cline{4-5}
\y{$7$}&$bib\neqy b,ibi\eqy i,bib\eqy ib,ibi\neqy bi$&&
\y{$22$}&$\ident\neqy i,ibi\eqy b$\\\cline{1-2}\cline{4-5}
\y{$8$}&$ibi\neqy i,bib\eqy b,ibi\eqy bi,bib\neqy ib$&&
\y{$23$}&$\ident\neqy b,bib\eqy i$\\\cline{1-2}\cline{4-5}
\y{$9$}&$ibi\neqy i,ibi\neqy bi,ib\eqy b$&&
\y{$24$}&$\ident\neqy b,\ident\neqy i,\f\eqy\fbi$\\\cline{1-2}\cline{4-5}
\y{$10$}&$bib\neqy b,bib\neqy ib,bi\eqy i$&&
\y{$25$}&$bib\neqy bi,\f\eqy\ie$\\\cline{1-2}\cline{4-5}
\y{$11$}&$bib\neqy b,ibi\neqy i,bib\neqy bi,bib\eqy ib,ibi\eqy bi$&&
\y{$26$}&$\ident\eqy bi,bib\neqy ib$\\\cline{1-2}\cline{4-5}
\y{$12$}&$bib\neqy b,ibi\neqy i,bib\eqy bi,bib\neqy ib$&&
\y{$27$}&$\ident\eqy ib,ibi\neqy bi$\\\cline{1-2}\cline{4-5}
\y{$13$}&$bib\neqy b,ibi\neqy i,bi\eqy ib$&&
\y{$28$}&$\ident\eqy i,ibi\neqy i,bib\eqy ib$\\\cline{1-2}\cline{4-5}
\y{$14$}&$ibi\eqy i,ibi\neqy bi,ib\eqy b$&&
\y{$29$}&$\ident\eqy b,bib\neqy b,ibi\eqy bi$\\\cline{1-2}\cline{4-5}
\y{$15$}&$bib\eqy b,bib\neqy ib,bi\eqy i$&&
\y{$30$}&\y{$b\eqy i$}\mystrut\\[-\arrayrulewidth]\cline{1-2}\cline{4-5}
\multicolumn{5}{c}{}\\
\multicolumn{1}{c}{$\psi A$}&
\multicolumn{1}{c}{$\ify A$ satisfies}&
\multicolumn{1}{c}{}&
\multicolumn{1}{c}{$\psi A$}&
\multicolumn{1}{c}{$\ify A$ satisfies}\\
\Xcline{1-2}{1pt}\Xcline{4-5}{1pt}
\raisebox{-.5pt}{$1$}\vrule width0pt height8pt&$bib\neqy b,ibi\neqy i,\fib\neqy\fbi,\fib\neqy\fif,\fbi\neqy
\fif$&&
\raisebox{-.5pt}{$36$}&$bib\neqy b,bib\neqy ib,bi\eqy i,i\neqy0$\\\cline{1-2}\cline{4-5}
\y{$2$}&$bib\neqy b,ibi\neqy i,\fib\eqy\fbi,\fib\neqy\fif,\bif\neqy\ie$&&
\y{$37$}&$bib\neqy b,bib\neqy ib,i\eqy 0$\\\cline{1-2}\cline{4-5}
\y{$3$}&$ibi\neqy i,ibi\neqy bi,bib\neqy b,\fib\neqy\fbi,\fib\eqy\fif$&&
\y{$38$}&see $\phi A\eqy11$\\\cline{1-2}\cline{4-5}
\y{$4$}&$bib\neqy b,bib\neqy ib,ibi\neqy i,\fib\neqy\fbi,\fbi\eqy\fif$&&
\y{$39$}&see $\phi A\eqy12$\\\cline{1-2}\cline{4-5}
\y{$5$}&$bib\neqy b,bib\neqy ib,ibi\neqy i,\fib\eqy\fbi,\fbi\eqy\fif$&&
\y{$40$}&see $\phi A\eqy13$\\\cline{1-2}\cline{4-5}
\y{$6$}&$bib\neqy b,bib\neqy bi,bib\neqy ib,ibi\neqy i,\bif\eqy\ie$&&
\y{$41$}&$ibi\eqy i,ibi\neqy bi,ib\eqy b,b\neqy1$\\\cline{1-2}\cline{4-5}
\y{$7$}&$ibi\neqy i,bib\eqy b,\fib\neqy\fif,\ff\neqy\ei$&&
\y{$42$}&$bi\neqy i,ibi\eqy\af$\\\cline{1-2}\cline{4-5}
\y{$8$}&$ibi\neqy i,ibi\neqy bi,\fib\neqy\fbi,\fb\eqy\fif$&&
\y{$43$}&$bib\eqy b,bib\neqy ib,bi\eqy i,i\neqy0$\\\cline{1-2}\cline{4-5}
\y{$9$}&$ibi\neqy i,\ff\eqy\ei,\fib\neqy\fbi,\fbi\neqy\fif$&&
\y{$44$}&$ib\neqy b,bib\eqy\f$\\\cline{1-2}\cline{4-5}
\y{$10$}&$ibi\neqy i,bib\neqy ib,bib\eqy b,\fbi\eqy\fif,\fib\neqy\fif$&&
\y{$45$}&$ibi\eqy bi,ibi\neqy i,ibi\neqy ib,ib\eqy b,b\neqy1$\\\cline{1-2}\cline{4-5}
\y{$11$}&$ibi\neqy i,\fib\neqy\fif,\fb\eqy\fbi,\bif\neqy\ie$&&
\y{$46$}&$bi\neqy ib,ibi\neqy i,ibi\eqy\aif$\\\cline{1-2}\cline{4-5}
\y{$12$}&$ibi\neqy i,ibi\neqy bi,\fib\eqy\fif,\ff\eqy\ei$&&
\y{$47$}&$bib\eqy ib,bib\neqy b,bib\neqy bi,bi\eqy i,i\neqy0$\\\cline{1-2}\cline{4-5}
\y{$13$}&$ibi\neqy i,ibi\neqy ib,ibi\neqy bi,bib\eqy b,\bif\eqy\ie$&&
\y{$48$}&$bi\neqy ib,bib\neqy b,bib\eqy\ie$\\\cline{1-2}\cline{4-5}
\y{$14$}&$bib\neqy b,ibi\eqy i,\fbi\neqy\fif,\ff\neqy\fb$&&
\y{$49$}&see $\phi A\eqy18$\\\cline{1-2}\cline{4-5}
\y{$15$}&$bib\neqy b,bib\neqy ib,\fib\neqy\fbi,\ei\eqy\fif$&&
\y{$50$}&see $\phi A\eqy19$\\\cline{1-2}\cline{4-5}
\y{$16$}&$bib\neqy b,\ff\eqy\fb,\fib\neqy\fbi,\fib\neqy\fif$&&
\y{$51$}&see $\phi A\eqy20$\\\cline{1-2}\cline{4-5}
\y{$17$}&$bib\neqy b,ibi\neqy bi,ibi\eqy i,\fib\eqy\fif,\fbi\neqy\fif$&&
\y{$52$}&see $\phi A\eqy21$\\\cline{1-2}\cline{4-5}
\y{$18$}&$bib\neqy b,\fbi\neqy\fif,\ei\eqy\fib,\bif\neqy\ie$&&
\y{$53$}&$\ident\neqy i,ibi\eqy b,b\neqy1$\\\cline{1-2}\cline{4-5}
\y{$19$}&$bib\neqy b,bib\neqy ib,\fbi\eqy\fif,\ff\eqy\fb$&&
\y{$54$}&$\ident\neqy i,bi\eqy 1$\\\cline{1-2}\cline{4-5}
\y{$20$}&$bib\neqy b,bib\neqy bi,bib\neqy ib,ibi\eqy i,\bif\eqy\ie$&&
\y{$55$}&$\ident\neqy b,bib\eqy i,i\neqy0$\\\cline{1-2}\cline{4-5}
\y{$21$}&see $\phi A\eqy4$&&
\y{$56$}&$\ident\neqy b,ib\eqy 0$\\\cline{1-2}\cline{4-5}
\y{$22$}&see $\phi A\eqy5$&&
\y{$57$}&see $\phi A\eqy24$\\\cline{1-2}\cline{4-5}
\y{$23$}&$bib\eqy b,ibi\eqy i,\ff\neqy\fb,\ff\neqy\ei,\f\neqy\bif$&&
\y{$58$}&$bi\neqy ib,b\neqy1,i\neqy0,\f\eqy\ie$\\\cline{1-2}\cline{4-5}
\y{$24$}&$\fb\neqy\ei,\ff\eqy\fib,\f\neqy\bif$&&
\y{$59$}&$\ident\neqy i,bi\eqy\af,i\neqy0$\\\cline{1-2}\cline{4-5}
\y{$25$}&$\fb\neqy\ei,\ff\eqy\fbi,\f\neqy\bif$&&
\y{$60$}&$\ident\neqy b,ib\eqy\f,b\neqy1$\\\cline{1-2}\cline{4-5}
\y{$26$}&$\fb\eqy\ei,\bif\neqy\ie,\f\neqy\bif$&&
\y{$61$}&$\f\eqy 1$\\\cline{1-2}\cline{4-5}
\y{$27$}&$\ff\neqy\fb,\ff\neqy\ei,\f\eqy\bif$&&
\y{$62$}&see $\phi A\eqy26$\\\cline{1-2}\cline{4-5}
\y{$28$}&$ibi\eqy i,ibi\neqy bi,\fib\neqy\fbi,\fb\eqy\fif$&&
\y{$63$}&see $\phi A\eqy27$\\\cline{1-2}\cline{4-5}
\y{$29$}&$bib\eqy b,bib\neqy ib,\fib\neqy\fbi,\ei\eqy\fif$&&
\y{$64$}&$\ident\eqy i,\ident\neqy b,ibi\eqy bi,b\neqy1$\\\cline{1-2}\cline{4-5}
\y{$30$}&$\fib\eqy\fbi,\bif\neqy\ie,\f\eqy\bif$&&
\y{$65$}&$\ident\eqy\af,\ident\neqy1$\\\cline{1-2}\cline{4-5}
\y{$31$}&$bi\neqy b,ib\neqy b,\fb\eqy\ei,\bif\eqy\ie$&&
\y{$66$}&$\ident\eqy b,\ident\neqy i,bib\eqy ib,i\neqy0$\\\cline{1-2}\cline{4-5}
\y{$32$}&see $\phi A\eqy7$&&
\y{$67$}&$\ident\eqy\f,\ident\neqy0$\\\cline{1-2}\cline{4-5}
\y{$33$}&see $\phi A\eqy8$&&
\y{$68$}&$\ident\neqy0,\ident\neqy1,b\eqy i$\\\cline{1-2}\cline{4-5}
\y{$34$}&$ibi\neqy i,ibi\neqy bi,ib\eqy b,b\neqy1$&&
\y{$69$}&\y{$\ident\eqy 1$}\\\cline{1-2}\cline{4-5}
\y{$35$}&$ibi\neqy i,ibi\neqy bi,b\eqy 1$&&
\y{$70$}&\y{$\ident\eqy 0$}\mystrut\\[-\arrayrulewidth]\cline{1-2}\cline{4-5}
\end{tabular}\label{tab:characterizations}
\end{table}

\begin{thm}\label{thm:GE_implications}
The GE monoid of a space implies it has subsets satisfying the following classes of dual
\sn s\nit:
discrete with \mth{$|X|>1$,} \mth{$\{68\}$;}
indiscrete partition\nit, \mth{$\{61\}$;}
non-indiscrete partition\nit, \mth{$\{59,60\}$,} \mth{$\{68\}$;}
EO\nit, ED\nit, \mth{$\{65,67\}$;}
OU\nit, \mth{$\{62,63\}$,} \mth{$\{65,67\}$;}
KD\nit, \mth{$\{31\}$,} \mth{$\{46,48\}$,} \mth{$\{59,60\}$,} \mth{$\{62,63\}$,}
\mth{$\{64,66\}$,} \mth{$\{65,67\}$,} \mth{$\{68\}$;}
GE\nit, \mth{$\{42,44\}$,} \mth{$\{62,63\}$,} \mth{$\{65,67\}$.}
Each list is sharp.
\end{thm}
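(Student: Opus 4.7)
The plan is to prove the existence claim and the sharpness claim separately, working through the seven GE monoid types in turn. For the existence half, most of the needed dual classes already follow from results established earlier in the paper. Specifically: Lemma~\ref{lem:k-number_26} yields $\{62,63\}$ in Kuratowski (hence GE and KD) and OU spaces; Lemma~\ref{lem:psi_60} delivers $\{59,60\}$ (together with $\{62,63\}$) in KD; Lemma~\ref{lem:psi_31} provides $\{31\}$ and $\{46,48\}$ in KD; and Proposition~\ref{prop:GE_characterization} supplies $\{42,44\}$ in GE. Since Theorem~\ref{thm:kf_monoids} identifies KD spaces as disconnected, they contain clopen proper subsets, giving $\{68\}$. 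The trivial types (discrete $|X|>1$ yielding $\{68\}$; indiscrete partition yielding $\{61\}$) are immediate from the definitions. For a non-indiscrete partition space, a single block realizes $\{68\}$, while a subset that unions one block with a proper nonempty piece of another block yields $\{59,60\}$: here $bA=X$, $iA$ is a block, and $\ident=i$ fails, matching the characterization of $\psi=59$ in Table~\ref{tab:characterizations}.

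The remaining existence work is to produce $\{65,67\}$ in ED, EO, OU, KD, and GE spaces, and $\{64,66\}$ in OU and KD. For $\{65,67\}$: any non-discrete Kuratowski-type space contains a dense open proper subset, for example the interior of the closure of a $\phi=1$ set (in KD or GE) or any proper open in the minimal ED/EO/OU models; such a set satisfies $\phi=28$, $b=1$, hence $\psi=65$, and its complement satisfies $\psi=67$. For $\{64,66\}$: I would use a clopen proper subspace (which exists in OU and KD since OU models contain isolated points plus a sink, and KD spaces are disconnected) together with an open subset whose closure equals that whole clopen piece, yielding $\phi=28$ with $b\neq1$ and therefore $\psi=64$ (with $\psi=66$ coming from the complement).

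For sharpness, I would verify each listed class list by exhibiting, for every GE monoid type, a specific space (typically the minimal one from Proposition~\ref{prop:minimal_spaces}) whose subsets realize exactly the listed classes together with $\psi\in\{69,70\}$ for $X$ and $\es$. Since these are finite spaces with explicit bases, the verification reduces to a finite enumeration, and is in any case consistent with the frequencies reported in Table~\ref{tab:monoid_frequencies} and the earlier assertion in Section~\ref{sec:kfa} that each listed \sn{} is realized in some space of the corresponding type.

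The main obstacle will be the KD case, where seven dual classes must be exhibited uniformly across all KD spaces. While $\{31\}$, $\{46,48\}$, $\{59,60\}$, $\{62,63\}$, $\{68\}$ follow from existing lemmas, producing $\{64,66\}$ and $\{65,67\}$ requires exploiting the KD structure (disconnectedness plus the clopen-$\ifA$ hypothesis) to pick a subset of a proper clopen component whose closure equals the whole component. The subtle point is verifying via Table~\ref{tab:characterizations} that the constructed subset has exactly the desired \sn{} and no stronger collapse sneaks in; this is where the KD hypothesis that $\ifA$ is clopen is essential, since it pins down the $\f$-part of the $\KFZA$ structure precisely.
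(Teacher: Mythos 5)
Your deployment of the reusable lemmas (Lemma~\ref{lem:k-number_26}, Lemma~\ref{lem:psi_60}, Lemma~\ref{lem:psi_31}, Proposition~\ref{prop:GE_characterization}) matches the paper, but several of your remaining constructions fail or are circular. First, your route to $\{65,67\}$ is unsubstantiated: the interior of the closure of a $\phi=1$ set need not be dense (for GE's $34$-set in $\mathbb{R}$ it is $(0,3)\cup(5,6)$), so you have not produced a \sn\ $65$ set in general. The paper's device is different and uniform: every non-discrete, non-partition space contains an open set $U$ that is not closed, and $\f U$ equals its own boundary, so $\psi(\f U)=67$ (read off the $\psi\f$ column of Table~\ref{tab:subset_types}); its complement is then the dense open proper set giving \sn\ $65$. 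Second, your claim that $\{64,66\}$ occurs in every OU space is false --- the minimal OU space with base $\{\{x\},\{y\},\{x,y,z\}\}$ has no subset satisfying $\ident=i$, $ibi=bi$, $b\neq1$, $\ident\neq b$ --- and the theorem deliberately omits $\{64,66\}$ from the OU list; proving it there would contradict the asserted sharpness. For KD the paper obtains \sn\ $64$ as the interior of a \sn\ $46$ set (supplied dually by Lemma~\ref{lem:psi_31}), not from a clopen component. Third, deriving $\{68\}$ in KD from disconnectedness is circular: the disconnectedness of KD spaces is Corollary~\ref{cor:kd_disconnected}, which the paper proves \emph{from} this theorem via the presence of \sn\ $68$; the paper instead takes the closure of the \sn\ $60$ set furnished by Lemma~\ref{lem:psi_60}. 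Fourth, your partition construction assumes $bA=X$, which holds only for two-block partitions; the paper uses $A=\{x,y\}$ with $y$ in a block of size at least two, landing on \sn\ $60$ directly.

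On sharpness your plan is right in spirit but understated: one must show, for each GE monoid and each unlisted dual class, a witness space of that type omitting the class. A single minimal space per type realizing exactly the listed classes does suffice, but that realization has to be actually checked rather than inferred from Table~\ref{tab:monoid_frequencies}; the paper does this by computer.
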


\begin{proof}
Nonempty proper subsets of discrete (indiscrete) spaces have \sn\ $68$ $(61)$.
Every non-indiscrete partition space contains distinct points $x,y,z$ such that $b(\{x\})
\neq b(\{y\})=b(\{z\})$.
We have $\psi(\{x,y\})=60$ and $\psi(b(\{x\}))=68$.
By Table~\ref{tab:subset_types} the boundary of every open set that is not closed has
\sn\ $67$.
Thus every non-discrete, non-partition space contains a subset with \sn\ $67$.
Kuratowski and OU spaces contain a subset with \sn\ $62$ by Lemma~\ref{lem:k-number_26}.
GE spaces contain a subset with \sn\ $44$ by Proposition~\ref{prop:GE_characterization}.
Lemma~\ref{lem:psi_31} implies KD spaces contain a subset with \sn\ $31$, the interior of
which has \sn\ $63$, and a subset with \sn\ $46$, the interior of which has \sn\ $64$.
Lemma~\ref{lem:psi_60} implies KD spaces contain a subset with \sn\ $60$, the closure of
which has \sn\ $68$.
We verified sharpness by computer.
\end{proof}

Table~2.1 in GJ points out that \pn s $4$, $7$, $11$, $13$, $16$ and their duals cannot
occur in connected spaces since they imply $iA\subsetneq ibiA=biA\subsetneq bA$ and/or
$iA\subsetneq ibA=bibA\subsetneq bA$.
Similarly, \sn s $6$, $13$, $31$, $58$, $59$ and their duals imply $\es\subsetneq\ifA=
\bifA\subsetneq bA$,\footnote{Since connected spaces do not admit \sn\ $6$,
Table~\ref{tab:subset_types} proves a conjecture of Moslehian and Tavallaii
\cite{1995_moslehian_tavallaii} that states every Kuratowski $14$-set generates $8$
distinct sets under $\{\f,i\}$ in connected spaces.} \sn s $34$, $41$, $53$, $64$ and
their duals imply either $\es\subsetneq iA=biA\subsetneq bA$ or $iA\subsetneq ibA=bA
\subsetneq X$, and \sn\ $68$ implies $\es\subsetneq iA=bA\subsetneq X$.
We verified by computer that each of the remaining \sn s occurs in at least one connected
space.

The next corollary holds by the above and Theorem~\ref{thm:GE_implications}.

\begin{cor}\label{cor:kd_disconnected}
KD\nit, non-indiscrete partition\nit, and discrete spaces of cardinality \mth{$>1$} are
disconnected\nit.
There exist extremally disconnected spaces\nit, i.e.\nit, spaces that satisfy
\mth{$ibi=bi$,} that are not disconnected\nit.
\end{cor}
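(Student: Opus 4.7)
The plan is to prove the corollary in two pieces, each of which is immediate once we invoke machinery already developed. The key background fact, stated just before the corollary, is that a space is disconnected if and only if it contains some subset with \sn\ $68$.

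For the first assertion, I would simply read off the claim from Theorem~\ref{thm:GE_implications}. That theorem explicitly lists a subset with dual \sn\ class $\{68\}$ as guaranteed in each of the three relevant space types: discrete spaces with $|X|>1$, non-indiscrete partition spaces, and KD spaces. Combined with the characterization above, this gives disconnectedness in all three cases without further work.

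For the second assertion, I would exhibit a concrete connected extremally disconnected space rather than argue abstractly. The natural candidate is the minimal ED space from Proposition~\ref{prop:minimal_spaces}, namely $X=\{x,y,z\}$ with base $\{\{x,y\},\{x,y,z\}\}$, so that $\topology{T}=\{\es,\{x,y\},X\}$ and the closed sets are $\{\es,\{z\},X\}$. The only clopen subsets are $\es$ and $X$, so the space is connected. By Proposition~\ref{prop:minimal_spaces} it is ED, i.e., satisfies $bib=ib$ (the defining identity from Table~\ref{tab:six_Kuratowski_monoids}); applying right duality via Corollary~\ref{cor:eqs} turns this into $ibi=bi$, which is the condition stated in the corollary.

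There is no real obstacle here: both parts reduce to citing results already in place. The only mild subtlety is translating between the two equivalent forms of ``extremally disconnected'' ($bib=ib$ in the paper's tabular presentation versus $ibi=bi$ in the corollary's statement), and this is a one-line dualization via Corollary~\ref{cor:eqs}.
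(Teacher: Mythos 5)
Your proof is correct, and the first assertion follows the paper's route exactly: Theorem~\ref{thm:GE_implications} guarantees a subset with \sn\ $68$ in each of the three space types, and the characterization of connectedness via \sn\ $68$ finishes it. For the second assertion you diverge mildly from the paper, which obtains a connected extremally disconnected space from the sharpness clause of Theorem~\ref{thm:GE_implications} (ED spaces force only the \sn\ classes $\{65,67\}$, and sharpness -- verified there by computer -- yields an ED space with no subset of \sn\ $68$, hence connected). You instead exhibit the minimal three-point ED space and verify by hand that its only clopen sets are $\es$ and $X$; this is more self-contained and avoids leaning on a computer-verified claim, at the cost of needing Proposition~\ref{prop:minimal_spaces} to certify the example is ED. One small terminological slip: passing from $bib=ib$ to $ibi=bi$ uses the (two-sided) dual $o\mapsto aoa$ of Corollary~\ref{cor:eqs}, not the right dual $o\mapsto oa$; the substance of the step is unaffected.
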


\begin{prop}\label{prop:psi_39}
Every KD space that contains a set \mth{$A$} with \mth{$\psi A=39$} contains a set
\mth{$B$} with \mth{$\psi B\in\{6,13\}$.}
\end{prop}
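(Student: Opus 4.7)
The plan is to produce the desired $B$ by combining $A$ with a \sn\ $31$ subset of $X$, which is guaranteed to exist in a KD space, and then to use the KD-automatic identities to identify the \sn\ of the resulting set as either $6$ or $13$.

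First I would unpack $\psi A=39$ inside the KD context.  The \pn\ $12$ data and the KD-automatic global equations $\fif\eqz0$, $\bif\eqz\ie$, $\fib\eqz\fbi$ reduce the remainder of \sn\ $39$ to the two genuine conditions $\ieA=\es$ (equivalently $ibA\su biA$, so that $bibA=biA$ and $ibiA=ibA$ by Lemma~\ref{lem:kd_bib}) and $\ifA=\es$.  In particular $A$ carries the strict chain $iA\subsetneq ibA\subsetneq biA\subsetneq bA$, and the boundary $\fA$ is nowhere dense.

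Next, by Lemma~\ref{lem:psi_31} the KD space $X$ contains some $C$ with $\psi C=31$, and by Table~\ref{tab:subset_types} such a $C$ satisfies $\ieC=\fbC\neq\es$.  Since $\ifA=\es$, Lemma~\ref{lem:b_equals_bi}(iv) applies and gives $\es\neq\ieC\su\ie(A\sym C)$.  Setting $B:=A\sym C$, Lemma~\ref{lem:kd_bib} (which reads $bib=bi\vee\ie$ in non-GE spaces) then yields $bibB\neq biB$, so that the Kuratowski collapse on $B$ is incompatible with every \pn\ that forces $bib\eqz bi$ (namely \pn s $12,13,22,24,26,27,28,29,30$).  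The task reduces to showing that $\phi B\in\{1,2\}$, because in a KD space every subset with $\phi\eqz1$ automatically inherits exactly the remainder of \sn\ $6$ (the KD-automatic equations), while a $\phi\eqz2$ subset in KD automatically satisfies $\fbB=\fibB=\fbiB=bB\sm ibB$ and, as a direct computation with Lemma~\ref{lem:op_relations}(v) shows, $\ffB=\eiB$; so $\phi B\eqz2$ forces $\psi B\eqz13$.

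Accordingly, the analytic core is to eliminate the remaining KD-compatible \pn s ($3,4,6,9,16,21$) for $B$, using the strict chain carried by $A$ together with $\ieB\neq\es$ and $\ifA=\es$.  Each candidate \pn\ corresponds to some further collapse ($\ident\eqz i$, $bib\eqz b$-with-duals, $ibi\eqz i$, or $b\eqz f$-patterns), and one rules it out by tracking the action of $A\sym C$ componentwise on the four clopen pieces of $X$ cut out by $ibA$, $biA$, $bA$ and using $\ieC\neq\es$ plus the KD identity $\fib\eqz\fbi$ to produce a Kuratowski value distinguishing the two sides of each putative collapse.  The main obstacle is precisely this case analysis: the Boolean interaction between $A$ and $C$ must be kept under control with the Kuratowski operators, and if $A\sym C$ happens to land in a KD-compatible \pn\ outside $\{1,2\}$ then the construction must be iterated---for instance by replacing $C$ with $C\cap ib C$ (cf.~Lemma~\ref{lem:psi_37}) or by forming $A\sym(bA\sm biA)$---until a set with $\phi\in\{1,2\}$ is produced.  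Once $\phi B\in\{1,2\}$ is established, the KD automatics finish the argument and give $\psi B\in\{6,13\}$.
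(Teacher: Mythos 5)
Your overall strategy---combine $A$ with a set manufactured from the guaranteed \sn~$31$ subset, then let the KD identities $\fif=0$, $\bif=\ie$, $\fib=\fbi$ pin the result down to $\phi\in\{1,2\}$ and hence $\psi\in\{6,13\}$---is the right one, and your reduction of the target to $\phi B\in\{1,2\}$ is sound. But the proof has a genuine gap exactly where you flag ``the main obstacle.'' Taking $B=A\sym C$ with $\psi C=31$ gives you $\ie B\neq\es$ via Lemma~\ref{lem:b_equals_bi}(iv) (so $bibB\neq biB$ by Lemma~\ref{lem:kd_bib}), but it gives you no control whatsoever over $\fib B$, and without $\fib B\neq\es$ you cannot exclude the KD-possible collapses with $\fib=\fbi=0$. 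The tool that delivers $\fib$ of a combination (Lemma~\ref{lem:psi_39}) requires the auxiliary set to have \emph{empty interior} and to satisfy $bib=ib$; a \sn~$31$ set satisfies neither. This is precisely why the paper does not use $E$ itself but its borders $gE$ and $gaE$ (which have \sn\ in $\{48,60\}$, hence $i=\es$ and $bib=ib$), so that Corollary~\ref{cor:psi_39_sym} yields $\psi(A\sym gE)\in\{6,13,20,31\}$ in one stroke. Incidentally, your side claim that a \sn~$31$ set satisfies $\ie C=\fb C$ is false---the collapse contains $\bif=\ie$ and $\fb=\ei$ but not $\ie=\fb$---though only $\ie C\neq\es$ is actually needed there.

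The second missing ingredient is the elimination of the outcomes with $ibi=i$ (\sn s $20$ and $31$). Your proposal to ``iterate, e.g.\ replacing $C$ by $C\cap ibC$, until a set with $\phi\in\{1,2\}$ is produced'' carries no termination argument and is not a proof. The paper resolves this with a two-candidate trick: it runs the construction on both $gE$ and $gaE$, notes that $(A\cup gE)\cap(A\cup gaE)=A$ because $gE\cap gaE=\es$, and invokes Lemma~\ref{lem:b_equals_bib}(ii) together with $ibiA\neq iA$ to conclude that $A\cup gE$ and $A\cup gaE$ cannot both satisfy $ibi=i$; whichever one fails it lands in $\{6,13\}$. (Lemma~\ref{lem:a_union_gb} is what guarantees $\ie(A\cup gE)=\ie(A\cup gaE)\neq\es$ in the residual case where both symmetric differences have \sn~$31$.) Unless you supply substitutes for these two mechanisms---a lower bound on $\fib$ of the combined set, and a pigeonhole-type argument killing $ibi=i$ for at least one candidate---the case analysis you defer cannot be closed.
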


\noindent
To prove this we need several preliminary results.

\begin{lem}\label{lem:kd_void_interior}
Suppose \mth{$iA=\es$} in a KD space \mth{$X$.}
Then \mth{$ibA\cap biB=\ie(A\cap B)$} for all \mth{$B\su X$.}
\end{lem}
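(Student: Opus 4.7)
The plan is to prove the two inclusions of the claimed equality $ibA \cap biB = \ie(A \cap B)$ separately, using three structural tools: the open-closure identity that for any open $U$ and any $S\su X$ we have $U\cap bS = b(U\cap S)$ (which holds because every open neighborhood of a point $x\in U\cap bS$ meets $S$ inside $U$); the half-distributivity inclusion $i(bA\cap B)\su ib(A\cap B)$ from Lemma~\ref{lem:half_dist}(ii); and the KD identity $bib = bi\vee\ie$ supplied by Lemma~\ref{lem:kd_bib}, which combined with $iA=\es$ gives $bibC = \ie C$ whenever $iC = \es$.

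First I would handle the inclusion $ibA\cap biB \su \ie(A\cap B)$. Since $ibA$ is open, the open-closure identity yields $ibA\cap biB = b(ibA\cap iB)$. Because $i$ distributes over binary intersection, $ibA\cap iB = i(bA\cap B)$, and Lemma~\ref{lem:half_dist}(ii) upgrades this to $ibA\cap iB \su ib(A\cap B)$. Closing both sides and invoking $i(A\cap B)\su iA = \es$ — so that Lemma~\ref{lem:kd_bib} gives $bib(A\cap B) = \ie(A\cap B)$ — completes this direction.

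For the reverse inclusion I would first observe that $\ie(A\cap B) = ib(A\cap B)$, because $i(A\cap B)=\es$ forces $\f(A\cap B)=b(A\cap B)$. The inclusion $ib(A\cap B)\su ibA$ is then immediate by isotonicity. The main obstacle will be showing $ib(A\cap B)\su biB$. Here I would exploit the KD-specific fact that $V:=ib(A\cap B)$ is in fact clopen: from $\bif = \ie$ applied to $A\cap B$, combined with $i(A\cap B)=\es$, one obtains $bV = b\bif(A\cap B) = b\ie(A\cap B) = \bif(A\cap B) = V$. Thus $V\cap(A\cap B)$ is dense in $V$, and any point of $V$ admits $V$ itself as an open neighbourhood meeting $A\cap B$.

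The hard step will be promoting this density in $B$ to membership in $biB$. My approach would be the following contrapositive argument: suppose $x\in V$ but $x\notin biB$. Then $W := V\cap a(biB)$ is an open neighbourhood of $x$, still contained in $b(A\cap B)$, and disjoint from $iB$. Picking a point $p\in W\cap(A\cap B)$ gives $p\in A$ and $p\in B\sm iB = gB$. I would then use the KD identity $\bif = \ie$ in conjunction with $iA = \es$, and with the disjointness-type relations packaged in Lemmas~\ref{lem:ifo} and~\ref{lem:kd_bib}, to derive a contradiction with $W$ being an open subset of $b(A\cap B)$ that is entirely contained in $aiB$. Getting this last contradiction to go through cleanly — essentially showing that in KD spaces an open set disjoint from $iB$ cannot be dense in $b(A\cap B)$ when $iA=\es$ — is the main technical hurdle of the whole lemma.
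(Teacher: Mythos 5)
Your first inclusion, $ibA\cap biB\su\ie(A\cap B)$, is correct and is essentially the paper's argument: use openness of $ibA$ to push the intersection inside a closure, apply Lemma~\ref{lem:half_dist}(ii), and then use Lemma~\ref{lem:kd_bib} together with $i(A\cap B)\su iA=\es$ to identify $bib(A\cap B)$ with $\ie(A\cap B)$. One small caution: your ``open-closure identity'' $U\cap bS=b(U\cap S)$ holds only as the inclusion $U\cap bS\su b(U\cap S)$ (try $U=S=(0,1)$ in $\R$), but that is the direction you actually use, so this half of your argument stands.

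The reverse inclusion is where the proposal fails, and not for lack of effort: the step you isolate as the main hurdle, $ib(A\cap B)\su biB$ (equivalently $\ie(A\cap B)\su biB$), is false. In the minimal KD space of Proposition~\ref{prop:minimal_spaces}, with base $\{v\},\{w\},\{v,w,x\},\{y,z\}$, put $A=B=\{x,y\}$. Then $iA=\es$, $bA=\{x,y,z\}$, $\fA=bA$, and $\ie(A\cap B)=\ifA=\{y,z\}$, while $biB=b(iB)=b(\es)=\es$; so $\ie(A\cap B)\nsu biB$ and indeed $ibA\cap biB=\es\neq\ie(A\cap B)$. Your density observation is right as far as it goes ($V=\{y,z\}$ is clopen and $V\cap A=\{y\}$ is dense in it), but density in $A\cap B$ cannot force membership in $b(iB)$ when $iB=\es$, so the contradiction you plan to extract from a point of $V\sm biB$ does not exist. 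You should not expect to close this gap by any route: the paper's own proof of the $(\supseteq)$ direction establishes only $\ie(A\cap B)\su ib(A\cap B)\su ibA\cap ibB$, i.e., with $ibB$ in place of $biB$, which is a genuinely different (and weaker) statement and does not yield the displayed equality either. The only part of the lemma that both you and the paper actually prove is the forward inclusion $ibA\cap biB\su\ie(A\cap B)$.
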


\begin{proof}
$(\su)$~The hypotheses imply $ib(aA)=X$ and $\bif=\ie$.
Note that $bibA$ is open by Lemma~\ref{lem:kd_bib}.
Thus by Lemmas~\ref{lem:i-union}(ii), \ref{lem:half_dist}(ii),
and~\ref{lem:general_lemma}(iii), $ibA\cap biB=bi(bA\cap B)\su bib(A\cap B)=
b(ib(A\cap B)\cap ib(aA))=bi(b(A\cap B)\sm iA)\su\bif(A\cap B)=\ie(A\cap B)$.
$(\supseteq)$~$\ie(A\cap B)\su ib(A\cap B)\su ibA\cap ibB$.
\end{proof}

\begin{lem}\label{lem:a_union_gb}
Suppose \mth{$bibA=biA\neq bA$} in a KD space \mth{$X$.}
If \mth{$A\sym gB$} and \mth{$A\sym gaB$} each satisfy \mth{$bib=b$} then \mth{$\ie(A\cup
gB)=\ie(A\cup gaB)\neq\es$.}
\end{lem}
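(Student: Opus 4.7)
The plan is to handle the equality and the non-emptiness separately.

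For the equality, Lemma~\ref{lem:equations_kf}(i) turns the hypothesis $bibA=biA$ into $\ifA=\es$, and the KD identity $\bif=\ie$ says $b(\ie E)=\ie E$ for every $E\su X$, i.e., $\ie E$ is closed (in fact clopen, since $\ie=ib\wedge abi$ is always open). With $\ifA=\es$ and $\ie B$ closed, Lemma~\ref{lem:b_equals_bi}(iii) delivers $\ie(A\cup gB)=\ie(A\cup gaB)$ at once.

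For non-emptiness, fix a witness $x\in bA\sm biA$, which exists because $biA\su bA$ and $biA\neq bA$. Lemma~\ref{lem:b_equals_bi}(ii), applied to $A$ and $gB$ using $\ifA=\es$ and $i(gB)=\es$ (the latter from Lemma~\ref{lem:ifo}(ii)), gives $bi(A\cup gB)=biA$, so $x\notin bi(A\cup gB)$; the task reduces to showing $x\in ib(A\cup gB)$.

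A short case analysis on neighborhoods of $x$ finishes things. Call it Case~A if every open neighborhood of $x$ meets $A\sm gB$, and Case~A$'$ if every open neighborhood of $x$ meets $A\sm gaB$. If neither holds, there are open neighborhoods $U_0,U_1$ of $x$ with $U_0\cap A\su gB$ and $U_1\cap A\su gaB$, whence $(U_0\cap U_1)\cap A\su gB\cap gaB=\es$, contradicting $x\in bA$. So at least one case holds, and by the equality already established it suffices to treat Case~A. There, $x\in b(A\sm gB)\su b(A\sym gB)$, and the hypothesis gives $b(A\sym gB)=bib(A\sym gB)$; the KD identity $bib=bi\vee\ie$ from Lemma~\ref{lem:kd_bib} rewrites this as $bi(A\sym gB)\cup\ie(A\sym gB)$. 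Isotony of $bi$ and $A\sym gB\su A\cup gB$ give $bi(A\sym gB)\su bi(A\cup gB)=biA$, so $x\notin biA$ forces $x\in\ie(A\sym gB)\su ib(A\sym gB)\su ib(A\cup gB)$, completing the argument. The main obstacle is spotting the two-case split; once it is in place, the disjointness $gB\cap gaB=\es$ rules out the joint "escape" case and everything else is bookkeeping.
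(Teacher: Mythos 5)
Your proof is correct and follows essentially the same route as the paper's: the equality is obtained identically via Lemma~\ref{lem:b_equals_bi}(iii), and your neighborhood case split on $A\sm gB$ versus $A\sm gaB$ is the pointwise form of the paper's decomposition $bA=b(A\cap agB)\cup b(A\cap agaB)$, with the same appeals to Lemmas~\ref{lem:kd_bib} and~\ref{lem:b_equals_bi}(ii) and the disjointness $gB\cap gaB=\es$. The only difference is packaging: you exhibit a witness $x\in bA\sm biA$ directly and land it in $\ie(A\sym gB)\su ib(A\cup gB)$, whereas the paper assumes $\ie(A\cup gB)=\es$ and derives the contradiction $bA\su biA$.
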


\begin{proof}
Since $\ifA=\es$ and every subset satisfies $\bif=\ie$ the equation holds by
Lemma~\ref{lem:b_equals_bi}(iii).
Suppose $\ie(A\cup gB)=\es$.
By Lemmas~\ref{lem:kd_bib} and~\ref{lem:b_equals_bi}(ii), $b(A\cap agB)\su b(A\sym gB)=
bib(A\sym gB)\su bib(A\cup gB)=bi(A\cup gB)=biA$.
Similarly $b(A\cap agaB)\su biA$.
Since $gB\cap gaB=\es$ we have $agB\cup agaB=X$.
Thus $bA=b((A\cap agB)\cup(A\cap agaB))\su biA$.
Conclude $\ie(A\cup gB)\neq\es$.
\end{proof}

\begin{lem}\label{lem:sym}
\mth{$bibA=ibA\implies bi(aA)\cap biB\cap bi(aB)\su bi(A\sym B)\sm ib(A\sym B)$.}
\end{lem}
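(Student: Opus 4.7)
My plan is to first observe that the hypothesis $bibA\eqy ibA$ makes the open set $ibA$ also closed, hence clopen; by Corollary~\ref{cor:eqs} its complement $bi(aA)\eqy a(ibA)$ is clopen as well. I would then fix $x\in bi(aA)\cap biB\cap bi(aB)$ and handle the two required inclusions separately. The crucial reduction in each case is that the clopenness of $bi(aA)$ lets me replace any open neighborhood $U$ of $x$ by $U\cap bi(aA)$, so throughout I may assume $U\su bi(aA)\eqy b(iaA)$ and hence $U\cap ibA\eqy\es$.

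For $x\in bi(A\sym B)$, I would take such a $U$. Since $x\in biB$, the set $U\cap iB$ is nonempty and open, and being a nonempty open subset of the closure $b(iaA)$ it must meet $iaA$. So
\[
\es\neq U\cap iB\cap iaA\eqy U\cap i(B\sm A)\su U\cap i(A\sym B),
\]
which shows that every open neighborhood of $x$ meets $i(A\sym B)$, giving $x\in bi(A\sym B)$.

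For $x\notin ib(A\sym B)$, I would argue by contradiction: suppose some open neighborhood $U$ of $x$ satisfies $U\su b(A\sym B)\eqy b(A\sm B)\cup b(B\sm A)$. After arranging $U\cap ibA\eqy\es$ as above, I would use $x\in bi(aB)$ to produce the nonempty open set $V:=U\cap iaB$. Because $V\su aB$ and $V$ is open, $V$ is disjoint from $B\sm A$, and any open set disjoint from a set is disjoint from its closure, so $V\cap b(B\sm A)\eqy\es$. This forces $V\su b(A\sm B)\su bA$, and then openness of $V$ yields $V\su ibA$, contradicting $V\su U$ and $U\cap ibA\eqy\es$.

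The main obstacle is recognizing that the hypothesis $bibA\eqy ibA$ delivers more than the closedness of $ibA$: it makes both $ibA$ and $bi(aA)$ clopen. That upgrade is what powers the ``WLOG $U\su bi(aA)$'' step, which then converts the three-fold closure condition $x\in bi(aA)\cap biB\cap bi(aB)$ into short open-set manipulations in each half.
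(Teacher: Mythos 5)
Your proof is correct. The core device is the same one the paper uses: from $bibA=ibA$ you get that $ibA$ is clopen, hence $bi(aA)=a(ibA)$ is open, so every neighborhood of $x$ may be shrunk to lie inside $bi(aA)=b(i(aA))$; your argument for $x\in bi(A\sym B)$ (shrink $U$, note $U\cap iB$ is a nonempty open subset of $b(i(aA))$ and so meets $i(aA)$, and $i(aA)\cap iB\su i(A\sym B)$) is essentially verbatim the paper's first half. Where you diverge is the second inclusion. The paper reuses the same direct trick: $U\cap i(aA)\cap i(aB)\neq\es$ and $i(aA)\cap i(aB)\su ia(A\sym B)$, so every neighborhood of $x$ escapes $b(A\sym B)$ and $x\notin ib(A\sym B)$. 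You instead argue by contradiction, decomposing $b(A\sym B)=b(A\setminus B)\cup b(B\setminus A)$, showing the nonempty open set $V=U\cap i(aB)$ misses $b(B\setminus A)$, and concluding $V\su b(A\setminus B)\su bA$, hence $V\su ibA$, contradicting $U\cap ibA=\es$. Both are sound; the paper's version is slightly more economical in that it never needs the additivity of closure over the union $A\sym B=(A\setminus B)\cup(B\setminus A)$, while yours makes the role of the clopen set $ibA$ more explicit in the second half (the contradiction lands squarely in it), at the cost of one extra decomposition step.
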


\begin{proof}
Since $bi(aA)$ is open, every open neighborhood $U$ of $x\in bi(aA)\cap biB\cap bi(aB)$
satisfies $i(A\sym B)$ $=$ $i(A\cup B)\cap i(aA\cup aB)$ $\supseteq$ $U\cap i(aA)\cap iB$
$\neq$ $\es$ $\neq$ $U\cap i(aA)\cap i(aB)$ $\su$ $i(A\cup aB)\cap i(aA\cup B)$ $=$ $ia(A
\sym B)$.
\end{proof}

\begin{lem}\label{lem:psi_39}
If \mth{$iA=\es$} and $bibA=ibA$ in a KD space \mth{$X$} then \mth{$\fib B\su\fib(A\sym
B)\cap\fib(A\cup B)$} for all \mth{$B\su X$.}
\end{lem}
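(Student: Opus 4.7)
The plan is to reduce both containments $\fib B\su\fib(A\sym B)$ and $\fib B\su\fib(A\cup B)$ to a single disjointness fact, namely $\fib B\cap ibA=\es$. The main obstacle will be establishing this disjointness; once we have it, both containments follow mechanically from the cited lemmas. I begin by simplifying $\fib B$ in a KD space: since KD is non-GE, Lemma~\ref{lem:kd_bib} gives $bib=bi\vee\ie$, and because $\ie B\su ibB$ always, $bibB=biB\cup ibB$. Hence $\fib B=bibB\sm ibB=biB\sm ibB$, equivalently $biB\cap bi(aB)$ via the duality $a\cdot ib=bi\cdot a$ (so $a(ibB)=bi(aB)$).

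For the key disjointness $\fib B\cap ibA=\es$, Lemma~\ref{lem:kd_void_interior} (applicable because $iA=\es$ in a KD space) yields $ibA\cap biB=\ie(A\cap B)$. Since $i(A\cap B)\su iA=\es$, we have $\f(A\cap B)=b(A\cap B)$, so $\ie(A\cap B)=ib(A\cap B)\su ibB$. Thus $ibA\cap biB\su ibB$, which forces $ibA\cap(biB\sm ibB)=\es$, i.e., $\fib B\su a(ibA)$.

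The containment $\fib B\su\fib(A\sym B)$ then follows from Lemma~\ref{lem:sym}: using the hypothesis $bibA=ibA$, it gives $bi(aA)\cap biB\cap bi(aB)\su bi(A\sym B)\sm ib(A\sym B)\su\fib(A\sym B)$ (the final inclusion since $bi\leq bib$). Rewriting $bi(aA)=a(ibA)$ and applying the formula $\fib B=biB\cap bi(aB)$ from the first paragraph, the left side becomes $a(ibA)\cap\fib B$, which equals $\fib B$ by the disjointness above.

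Finally, for $\fib B\su\fib(A\cup B)$, Lemma~\ref{lem:i-union}(iii) (applicable via $bibA=ibA$) yields $ib(A\cup B)=ibA\cup ibB$; intersecting with $\fib B$ gives $(\fib B\cap ibA)\cup(\fib B\cap ibB)=\es$, using the disjointness above together with the trivial $(biB\sm ibB)\cap ibB=\es$. Monotonicity gives $\fib B\su biB\su bi(A\cup B)$, so $\fib B\su bi(A\cup B)\sm ib(A\cup B)\su\fib(A\cup B)$, completing the argument.
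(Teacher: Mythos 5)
Your proof is correct and follows essentially the same route as the paper's: establish the disjointness $\fib B\cap ibA=\es$ from Lemma~\ref{lem:kd_void_interior} (you route it through $\ie(A\cap B)=ib(A\cap B)\su ibB$, the paper through $ibiB$ and $\fib=\fbi$ in KD spaces), then feed $\fib B=biB\cap bi(aB)$ into Lemma~\ref{lem:sym} for the symmetric-difference half. The one genuine, and welcome, simplification is the $A\cup B$ half, where you invoke Lemma~\ref{lem:i-union}(iii) directly from the hypothesis $bibA=ibA$ instead of the paper's detour through the openness of $bi(aA)$ and Lemma~\ref{lem:i-union}(ii).
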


\begin{figure}[!ht]
\small
\centering
\begin{tabular}[t]{c@{\hspace{15pt}}c}
\raise122pt\hbox{\begin{tikzpicture}
[auto,
 block/.style={rectangle,draw=black,align=center,minimum width={20pt},
 minimum height={16pt},inner sep=2pt,scale=.64},
 topblock/.style={rectangle,draw=black,line width=1pt,align=center,minimum width={18pt},
 minimum height={16pt},inner sep=2pt,scale=.64},
 line/.style ={draw},scale=.8]

	\node[block,anchor=south] (p1) at (0,0) {$1$};
	\node[block,anchor=south] (p2) at (-1.25,1) {$2$};
	\node[block,anchor=south] (p3) at (1.25,1) {$3$};
	\node[block,anchor=south] (p4) at (-3,1) {$4$};
	\node[block,anchor=south] (p5) at (3,1) {$5$};
	\node[block,anchor=south] (p6) at (0,2) {$6$};
	\node[block,anchor=south] (p7) at (-3,2) {$7$};
	\node[block,anchor=south] (p8) at (3,2) {$8$};
	\node[block,anchor=south] (p9) at (-2,3) {$9$};
	\node[block,anchor=south] (p10) at (2,3) {$10$};
	\node[block,anchor=south] (p11) at (0,3) {$11$};
	\node[block,anchor=south] (p12) at (0,4) {$12$};
	\node[block,anchor=south] (p13) at (0,5) {$13$};
	\node[block,anchor=south] (p14) at (-3,4) {$14$};
	\node[block,anchor=south] (p15) at (3,4) {$15$};
	\node[block,anchor=south] (p16) at (-3,5) {$16$};
	\node[block,anchor=south] (p17) at (3,5) {$17$};
	\node[block,anchor=south] (p18) at (-1.25,4.5) {$18$};
	\node[block,anchor=south] (p19) at (1.25,4.5) {$19$};
	\node[block,anchor=south] (p20) at (-1.25,5.35) {$20$};
	\node[block,anchor=south] (p21) at (1.25,5.35) {$21$};
	\node[block,anchor=south] (p22) at (-2,6.5) {$22$};
	\node[block,anchor=south] (p23) at (2,6.5) {$23$};
	\node[block,anchor=south] (p24) at (0,5.75) {$24$};
	\node[block,anchor=south] (p25) at (0,7) {$25$};
	\node[block,anchor=south] (p26) at (-1.25,7.25) {$26$};
	\node[block,anchor=south] (p27) at (1.25,7.25) {$27$};
	\node[block,anchor=south] (p28) at (-2,8) {$28$};
	\node[block,anchor=south] (p29) at (2,8) {$29$};
	\node[topblock,anchor=south] (p30) at (0,8.5) {$30$};
	\node[anchor=south] (caption) at (0,-1) {\pn s (height $=$ $6$)};

	\draw[line] (p1.north west) -- (p2.south east);
	\draw[line] (p1.north east) -- (p3.south west);
	\draw[line] (p1.180) -- (p4.0);
	\draw[line] (p1.0) -- (p5.180);
	\path (p1.north east) edge[out=79,in=281] (p12.south east);
	\draw[line] (p2.north east) -- (p6.south west);
	\draw[line] (p2.0) -- (p8.180);
	\draw[line] (p2.90) -- (p9.south east);
	\draw[line] (p2) -- (p18);
	\draw[line] (p3.north west) -- (p6.south east);
	\draw[line] (p3.180) -- (p7.0);
	\draw[line] (p3.90) -- (p10.south west);
	\draw[line] (p3) -- (p19);
	\draw[line] (p4) -- (p7);
	\draw[line] (p4.north east) -- (p9.south west);
	\draw[line] (p4.north east) -- (p11.south west);
	\draw[line] (p5) -- (p8);
	\draw[line] (p5.north west) -- (p10.south east);
	\draw[line] (p5.north west) -- (p11.south east);
	\draw[line] (p6.north west) -- (p14.0);
	\draw[line] (p6.north east) -- (p15.180);
	\path (p6.north west) edge[out=101,in=259] (p24.south west);
	\draw[line] (p7) -- (p14);
	\draw[line] (p7.north east) -- (p17.south west);
	\draw[line] (p8) -- (p15);
	\draw[line] (p8.north west) -- (p16.south east);
	\draw[line] (p9.north west) -- (p14.south east);
	\draw[line] (p9.north west) -- (p16.south east);
	\draw[line] (p10.north east) -- (p15.south west);
	\draw[line] (p10.north east) -- (p17.south west);
	\path (p11.north west) edge[out=101,in=259] (p13.south west);
	\draw[line] (p11.north west) -- (p16.south east);
	\draw[line] (p11.north east) -- (p17.south west);
	\draw[line] (p12) -- (p13);
	\draw[line] (p12.180) -- (p18.0);
	\draw[line] (p12.0) -- (p19.180);
	\draw[line] (p13.north west) -- (p22.south east);
	\draw[line] (p13.north east) -- (p23.south west);
	\draw[line] (p14.110) -- (p25.south west);
	\draw[line] (p15.70) -- (p25.south east);
	\draw[line] (p16.north east) -- (p22.south west);
	\draw[line] (p16.north east) -- (p25.south west);
	\draw[line] (p17.north west) -- (p23.south east);
	\draw[line] (p17.north west) -- (p25.south east);
	\draw[line] (p18) -- (p20);
	\draw[line] (p18.north west) -- (p22.270);
	\draw[line] (p18.north east) -- (p24.south west);
	\draw[line] (p19) -- (p21);
	\draw[line] (p19.north east) -- (p23.270);
	\draw[line] (p19.north west) -- (p24.south east);
	\draw[line] (p20.north east) -- (p27.south west);
	\draw[line] (p20.90) -- (p28.270);
	\draw[line] (p21.north west) -- (p26.south east);
	\draw[line] (p21.90) -- (p29.270);
	\draw[line] (p22) -- (p28);
	\draw[line] (p23) -- (p29);
	\draw[line] (p24.north west) -- (p26.south east);
	\draw[line] (p24.north east) -- (p27.south west);
	\draw[line] (p25) -- (p30);
	\draw[line] (p26.north east) -- (p30.south west);
	\draw[line] (p27.north west) -- (p30.south east);
	\draw[line] (p28.0) -- (p30.180);
	\draw[line] (p29.180) -- (p30.0);

\end{tikzpicture}}&
\begin{tikzpicture}
[auto,
 block/.style={rectangle,draw=black,align=center,minimum width={20pt},
 minimum height={16pt},inner sep=2pt,scale=.64},
 topblock/.style={rectangle,draw=black,line width=1pt,align=center,minimum width={18pt},
 minimum height={16pt},inner sep=2pt,scale=.64},
 line/.style ={draw},scale=.8]

	\node[block,anchor=south] (1) at (0,.5) {$1$};
	\node[block,anchor=south] (2) at (0,1.5) {$2$};
	\node[block,anchor=south] (3) at (-2.5,1.65) {$3$};
	\node[block,anchor=south] (4) at (2.5,1.65) {$4$};
	\node[block,anchor=south] (5) at (0,2.5) {$5$};
	\node[block,anchor=south] (6) at (0,4.5) {$6$};
	\node[block,anchor=south] (7) at (-5,1.5) {$7$};
	\node[block,anchor=south] (8) at (-5,3) {$8$};
	\node[block,anchor=south] (9) at (-6,3) {$9$};
	\node[block,anchor=south] (10) at (-6,6) {$10$};
	\node[block,anchor=south] (11) at (-4,6) {$11$};
	\node[block,anchor=south] (12) at (-4,7.5) {$12$};
	\node[block,anchor=south] (13) at (-2,9.85) {$13$};
	\node[block,anchor=south] (14) at (5,1.5) {$14$};
	\node[block,anchor=south] (15) at (5,3) {$15$};
	\node[block,anchor=south] (16) at (6,3) {$16$};
	\node[block,anchor=south] (17) at (6,6) {$17$};
	\node[block,anchor=south] (18) at (4,6) {$18$};
	\node[block,anchor=south] (19) at (4,7.5) {$19$};
	\node[block,anchor=south] (20) at (2,9.85) {$20$};
	\node[block,anchor=south] (21) at (-1.75,2.65) {$21$};
	\node[block,anchor=south] (22) at (1.75,2.65) {$22$};
	\node[block,anchor=south] (23) at (0,6.25) {$23$};
	\node[block,anchor=south] (24) at (-2.5,6.88) {$24$};
	\node[block,anchor=south] (25) at (2.5,6.88) {$25$};
	\node[block,anchor=south] (26) at (0,7.5) {$26$};
	\node[block,anchor=south] (27) at (0,9) {$27$};
	\node[block,anchor=south] (28) at (-4,9.65) {$28$};
	\node[block,anchor=south] (29) at (4,9.65) {$29$};
	\node[block,anchor=south] (30) at (0,11.25) {$30$};
	\node[block,anchor=south] (31) at (0,12.125) {$31$};
	\node[block,anchor=south] (32) at (-1.75,8.25) {$32$};
	\node[block,anchor=south] (33) at (1.75,8.25) {$33$};
	\node[block,anchor=south] (34) at (-6,7.5) {$34$};
	\node[block,anchor=south] (35) at (-6,9) {$35$};
	\node[block,anchor=south] (36) at (6,7.5) {$36$};
	\node[block,anchor=south] (37) at (6,9) {$37$};
	\node[block,anchor=south] (38) at (0,13) {$38$};
	\node[block,anchor=south] (39) at (0,14) {$39$};
	\node[block,anchor=south] (40) at (0,15) {$40$};
	\node[block,anchor=south] (41) at (-4,11.5) {$41$};
	\node[block,anchor=south] (42) at (-6,13) {$42$};
	\node[block,anchor=south] (43) at (4,11.5) {$43$};
	\node[block,anchor=south] (44) at (6,13) {$44$};
	\node[block,anchor=south] (45) at (-2,13.5) {$45$};
	\node[block,anchor=south] (46) at (-4,14) {$46$};
	\node[block,anchor=south] (47) at (2,13.5) {$47$};
	\node[block,anchor=south] (48) at (4,14) {$48$};
	\node[block,anchor=south] (49) at (-6,15.25) {$49$};
	\node[block,anchor=south] (50) at (6,15.25) {$50$};
	\node[block,anchor=south] (51) at (-2,16.25) {$51$};
	\node[block,anchor=south] (52) at (2,16.25) {$52$};
	\node[block,anchor=south] (53) at (-6,16.25) {$53$};
	\node[block,anchor=south] (54) at (-4,16.75) {$54$};
	\node[block,anchor=south] (55) at (6,16.25) {$55$};
	\node[block,anchor=south] (56) at (4,16.75) {$56$};
	\node[block,anchor=south] (57) at (0,16) {$57$};
	\node[block,anchor=south] (58) at (0,17) {$58$};
	\node[block,anchor=south] (59) at (-2,17.5) {$59$};
	\node[block,anchor=south] (60) at (2,17.5) {$60$};
	\node[topblock,anchor=south] (61) at (0,18) {$61$};
	\node[block,anchor=south] (62) at (-1.5,18.5) {$62$};
	\node[block,anchor=south] (63) at (1.5,18.5) {$63$};
	\node[block,anchor=south] (64) at (-4,18.5) {$64$};
	\node[block,anchor=south] (65) at (-6,19.25) {$65$};
	\node[block,anchor=south] (66) at (4,18.5) {$66$};
	\node[block,anchor=south] (67) at (6,19.25) {$67$};
	\node[block,anchor=south] (68) at (0,19.5) {$68$};
	\node[topblock,anchor=south] (69) at (-2,20) {$69$};
	\node[topblock,anchor=south] (70) at (2,20) {$70$};
	\node[anchor=south] (caption) at (0,-.5) {\sn s (height $=$ $10$)};

	\draw[line] (1) -- (2);
	\draw[line] (1.180) -- (3.0);
	\draw[line] (1.0) -- (4.180);
	\draw[line] (1.180) -- (7.0);
	\draw[line] (1.0) -- (14.180);
	\draw[line] (2) -- (5);
	\path (2.north east) edge[out=79,in=281] (6.south east);
	\draw[line] (2.north west) -- (11.south east);
	\draw[line] (2.north east) -- (18.south west);
	\draw[line] (3.0) -- (5.180);
	\draw[line] (3.north west) -- (8.south east);
	\draw[line] (3.north east) -- (17.south west);
	\draw[line] (3.0) -- (22.south west);
	\draw[line] (4.180) -- (5.0);
	\draw[line] (4.north west) -- (10.south east);
	\draw[line] (4.north east) -- (15.south west);
	\draw[line] (4.180) -- (21.south east);
	\draw[line] (5.north west) -- (12.south east);
	\draw[line] (5.north east) -- (19.south west);
	\path (5.north west) edge[out=108,in=252] (38.south west);
	\draw[line] (6.north west) -- (13.south east);
	\draw[line] (6.north east) -- (20.south west);
	\path (6.north west) edge[out=108,in=252] (38.south west);
	\path (6.north east) edge[out=79,in=281] (39.south east);
	\draw[line] (7) -- (8);
	\draw[line] (7.north west) -- (9.270);
	\draw[line] (7.north east) -- (23.south west);
	\draw[line] (8.90) -- (12.south west);
	\draw[line] (8.90) -- (28.south west);
	\draw[line] (8.north east) -- (33.south west);
	\draw[line] (9) -- (10);
	\draw[line] (9.north east) -- (11.south west);
	\draw[line] (9.north east) -- (25.south west);
	\draw[line] (10.north east) -- (12.south west);
	\draw[line] (10.north east) -- (29.south west);
	\draw[line] (10) -- (34);
	\draw[line] (11) -- (12);
	\draw[line] (11.north east) -- (13.south west);
	\draw[line] (11.0) -- (26.south west);
	\draw[line] (12.north east) -- (30.270);
	\draw[line] (12.north east) -- (45.south west);
	\draw[line] (13.north east) -- (31.south west);
	\draw[line] (13) -- (45);
	\draw[line] (13.north west) -- (49.south east);
	\draw[line] (14) -- (15);
	\draw[line] (14.north east) -- (16.270);
	\draw[line] (14.north west) -- (23.south east);
	\draw[line] (15.90) -- (19.south east);
	\draw[line] (15.90) -- (29.south east);
	\draw[line] (15.north west) -- (32.south east);
	\draw[line] (16) -- (17);
	\draw[line] (16.north west) -- (18.south east);
	\draw[line] (16.north west) -- (24.south east);
	\draw[line] (17.north west) -- (19.south east);
	\draw[line] (17.north west) -- (28.south east);
	\draw[line] (17) -- (36);
	\draw[line] (18) -- (19);
	\draw[line] (18.north west) -- (20.south east);
	\draw[line] (18.180) -- (26.south east);
	\draw[line] (19.north west) -- (30.270);
	\draw[line] (19.north west) -- (47.south east);
	\draw[line] (20.north west) -- (31.south east);
	\draw[line] (20) -- (47);
	\draw[line] (20.north east) -- (50.south west);
	\draw[line] (21) -- (32);
	\draw[line] (21.north west) -- (34.south east);
	\draw[line] (21.90) -- (38.south west);
	\draw[line] (22) -- (33);
	\draw[line] (22.north east) -- (36.south west);
	\draw[line] (22.90) -- (38.south east);
	\draw[line] (23.180) -- (24.0);
	\draw[line] (23.0) -- (25.180);
	\path (23.north east) edge[out=79,in=281] (27.south east);
	\draw[line] (24.0) -- (26.180);
	\draw[line] (24.north west) -- (28.south east);
	\draw[line] (25.180) -- (26.0);
	\draw[line] (25.north east) -- (29.south west);
	\path (26.north west) edge[out=101,in=259] (30.south west);
	\path (26.north west) edge[out=101,in=259] (31.south west);
	\draw[line] (27.180) -- (28.0);
	\draw[line] (27.0) -- (29.180);
	\draw[line] (28.north east) -- (30.180);
	\draw[line] (28.north east) -- (43.180);
	\draw[line] (29.north west) -- (30.0);
	\draw[line] (29.north west) -- (41.0);
	\path (30.north west) edge[out=108,in=252] (58.south west);
	\path (31.north west) edge[out=108,in=252] (57.south west);
	\path (31.north east) edge[out=79,in=281] (58.south east);
	\draw[line] (32.north west) -- (41.south east);
	\draw[line] (32.north east) -- (47.270);
	\draw[line] (33.north east) -- (43.south west);
	\draw[line] (33.north west) -- (45.270);
	\draw[line] (34) -- (35);
	\draw[line] (34.north east) -- (41.south west);
	\draw[line] (34.north east) -- (45.south west);
	\draw[line] (35) -- (42);
	\draw[line] (35.north east) -- (46.270);
	\draw[line] (36) -- (37);
	\draw[line] (36.north west) -- (43.south east);
	\draw[line] (36.north west) -- (47.south east);
	\draw[line] (37) -- (44);
	\draw[line] (37.north west) -- (48.270);
	\path (38.north west) edge[out=101,in=259] (40.south west);
	\draw[line] (38.180) -- (45.0);
	\draw[line] (38.0) -- (47.180);
	\draw[line] (39) -- (40);
	\draw[line] (39.180) -- (49.0);
	\draw[line] (39.0) -- (50.180);
	\draw[line] (40.180) -- (53.0);
	\draw[line] (40.0) -- (55.180);
	\draw[line] (41.north west) -- (42.south east);
	\draw[line] (41.north east) -- (58.south west);
	\draw[line] (42.north east) -- (59.south west);
	\draw[line] (43.north east) -- (44.south west);
	\draw[line] (43.north west) -- (58.south east);
	\draw[line] (44.north west) -- (60.south east);
	\draw[line] (45.180) -- (46.0);
	\draw[line] (45) -- (53);
	\draw[line] (45.north east) -- (58.south west);
	\draw[line] (46) -- (54);
	\draw[line] (46.north east) -- (59.south west);
	\draw[line] (47.0) -- (48.180);
	\draw[line] (47) -- (55);
	\draw[line] (47.north west) -- (58.south east);
	\draw[line] (48) -- (56);
	\draw[line] (48.north west) -- (60.south east);
	\draw[line] (49.0) -- (51.180);
	\draw[line] (49) -- (53);
	\draw[line] (49.0) -- (57.180);
	\draw[line] (50.180) -- (52.0);
	\draw[line] (50) -- (55);
	\draw[line] (50.180) -- (57.0);
	\draw[line] (51.north east) -- (63.south west);
	\draw[line] (51.north west) -- (64.south east);
	\draw[line] (52.north west) -- (62.south east);
	\draw[line] (52.north east) -- (66.south west);
	\draw[line] (53.0) -- (54.180);
	\draw[line] (53.north east) -- (64.south west);
	\draw[line] (54.north west) -- (65.south east);
	\draw[line] (55.180) -- (56.0);
	\draw[line] (55.north west) -- (66.south east);
	\draw[line] (56.north east) -- (67.south west);
	\draw[line] (57.north west) -- (62.south east);
	\draw[line] (57.north east) -- (63.south west);
	\draw[line] (58.180) -- (59.0);
	\draw[line] (58.0) -- (60.180);
	\path (58.north east) edge[out=79,in=281] (68.south east);
	\draw[line] (59.0) -- (61.180);
	\draw[line] (59) -- (69);
	\draw[line] (60.180) -- (61.0);
	\draw[line] (60) -- (70);
	\draw[line] (62.north east) -- (68.180);
	\draw[line] (63.north west) -- (68.0);
	\draw[line] (64.180) -- (65.0);
	\draw[line] (64.0) -- (68.180);
	\draw[line] (65.0) -- (69.180);
	\draw[line] (66.0) -- (67.180);
	\draw[line] (66.180) -- (68.0);
	\draw[line] (67.180) -- (70.0);
	\draw[line] (68.180) -- (69.0);
	\draw[line] (68.0) -- (70.180);

\end{tikzpicture}\end{tabular}
\caption{The meet semilattices of $\phi$- and $\psi$-numbers under set inclusion.\\\small
\smallskip\hspace{42.5pt}Note that $\phi_1\su\phi_2\implies\phi_1\leq\phi_2$; the same is
true of \sn s.}
\label{fig:meet_semilattices}
\end{figure}

\begin{proof}
Let $x\in\fib B$.
Have $ibA\cap biB=\ie(A\cap B)$ by Lemma~\ref{lem:kd_void_interior}.
Hence $ibA\cap biB\su ibiB$.
Since $X$ is KD, this implies $ibA\cap\fib B=ibA\cap\fbi B=\es$.
Thus $x\in bi(aA)$.
Since $\fib B=\fib B\cap\fbi B=biB\cap bi(aB)$ by Lemma~\ref{lem:op_relations}(iv),
$x\in\fib(A\sym B)$ by Lemma~\ref{lem:sym}.
Have $x\in biB\su bi(A\cup B)$.
Since $bi(aA)$ is open by Lemma~\ref{lem:kd_bib}, Lemma~\ref{lem:i-union}(ii) implies $x
\in bi(aA)\cap bi(aB)=bi(aA\cap aB)=aib(A\cup B)$.
\end{proof}

\begin{cor}\label{cor:psi_39_sym}
In KD spaces\nit, \mth{$(\psi A=39$} and \mth{$\psi B\in\{48,60\})\implies\psi(A\sym
B)\in\{6,13,20,31\}$.}
\end{cor}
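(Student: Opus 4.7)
The plan is to pin down $\psi(A\sym B)$ by establishing $\fib(A\sym B)\neq\es$ and $\ie(A\sym B)\neq\es$, and then combining these with the KD operator identities $\fif=0$, $\fib=\fbi$, and $\bif=\ie$. A scan of Table~\ref{tab:subset_types} against the list of KD-admissible \sn s given after Theorem~\ref{thm:set_types} shows that every candidate outside $\{6,13,20,31\}$ either has $\fib=0$ in its remainder (the cases 38, 40, 45--48, 53--56, 58--60, 64--70; several of these use the easy operator inequality $\fib\leq\fb$) or has $\bif=\ie=0$ in its remainder (the cases 39, 49--52, 57, 62, 63). So these two non-emptiness conditions will suffice.

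First I would obtain $\fib(A\sym B)\neq\es$ via Lemma~\ref{lem:psi_39}. For $\psi B=48$ (where $\phi B=17$) and for $\psi B=60$ (where $\phi B=25$), Table~\ref{tab:subset_types} yields $bibB=ibB$, and the respective remainders supply $iB=\es$. These are exactly the hypotheses of Lemma~\ref{lem:psi_39} with our $B$ in the role of the lemma's $A$ and our $A$ in the role of the lemma's $B$, so the conclusion reads $\fib A\su\fib(A\sym B)\cap\fib(A\cup B)$. Since $\psi A=39$ has $\fib=\fbi$ in its remainder with neither side forced to $0$, the set $\fib A$ is non-empty, whence $\fib(A\sym B)\neq\es$.

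Next I would obtain $\ie(A\sym B)\neq\es$ via Lemma~\ref{lem:b_equals_bi}(iv). The remainder of $\psi A=39$ includes $\f=\ff$; combined with the disjoint decomposition $\fA=\ffA\cup\ieA$ from Lemma~\ref{lem:decompositions}(iv), this forces $\ifA=\es$. Lemma~\ref{lem:b_equals_bi}(iv) then delivers $\ie B\su\ie(A\sym B)$. For $\psi B=48$, the remainder equation $ib=\ie$ together with $iB=\es$ gives $\ie B=ibB$, non-empty because $\phi B=17$ keeps $ib$ nontrivial; for $\psi B=60$, the collapse $\phi B=25$ gives $\ie B=ibB=bB$, non-empty because $B=\es$ would force $\psi B=70$. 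Either way $\ie(A\sym B)\neq\es$. The main obstacle is the table-enumeration step in the first paragraph: although mechanical, it requires checking each KD-admissible remainder against the two non-emptiness criteria and invoking the $\fib\leq\fb$ reductions where the remainder only explicitly records $\fb=0$; all the real topological content, however, is already packaged in Lemmas~\ref{lem:psi_39} and~\ref{lem:b_equals_bi}(iv).
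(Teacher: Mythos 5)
Your proof is correct and follows essentially the same route as the paper: the paper's own proof likewise combines Lemma~\ref{lem:psi_39} (with the roles of $A$ and $B$ swapped) to get $\fib(A\sym B)\neq\es$ and Lemma~\ref{lem:b_equals_bi}(iv) to get $\ie(A\sym B)\neq\es$, then concludes by Table~\ref{tab:subset_types}. You have merely made explicit the table-scan over the KD-admissible $\psi$-numbers that the paper leaves implicit.
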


\begin{proof}
Lemmas~\ref{lem:b_equals_bi}(iv) and~\ref{lem:psi_39} imply $\ie(A\sym B)\neq\es$ and
$\fib(A\sym B)\neq\es$.
The result follows by Table~\ref{tab:subset_types}.
\end{proof}

\bgroup
\renewcommand{\tabcolsep}{8pt}
\begin{table}
\caption{All $k$- and $k_f$-numbers that occur for each GE monoid.}
\centering
\begin{tabular}{|c|l|l|}
\multicolumn{1}{c}{\raisebox{1pt}{space type}}&\multicolumn{1}{c}{\raisebox{1pt}
{$k(\XT)$}}&
\multicolumn{1}{c}{\raisebox{1pt}{$k_f(\XT)$}}\\\Xhline{2\arrayrulewidth}
\vrule width0pt height11pt depth0ptGE&$8,10,12,14$&$10,14,16,18,\dots,34$\\\hline
\vrule width0pt height11pt depth0ptKD&$10,12,14$&$18,22,28$\\\hline
\vrule width0pt height11pt depth0ptED&$4,6,8,10$&$4,6,8,10,16,22$\\\hline
\vrule width0pt height11pt depth0ptOU&$4,6,8,10$&$8,10,14,16,20$\\\hline
\vrule width0pt height11pt depth0ptEO&$4,6,8$&$4,6,8,10,16$\\\hline
\vrule width0pt height11pt depth0ptpartition&$4,6$&$4,6,10$\\\hline
\vrule width0pt height11pt depth0ptdiscrete&$2$&$2,4$\\\hline
\end{tabular}\label{tab:kf_kfa}
\end{table}
\egroup

\begin{table}
\caption{Named spaces satisfying various space type and $k$-number combinations.\\\small
\smallskip\hspace{-95pt}For definitions see~Steen and Seebach \cite{1970_steen_seebach}.}
\small\centering
\renewcommand{\tabcolsep}{2pt}
\begin{tabular}{|c|c|c|c|c|c|c|}
\multicolumn{1}{c}{\raisebox{1pt}{type}}&\multicolumn{1}{c}{\raisebox{1pt}{$k$}}&
\multicolumn{1}{c}{\raisebox{1pt}{$\XT$}}&\multicolumn{1}{c}{}&
\multicolumn{1}{c}{\raisebox{1pt}{type}}&\multicolumn{1}{c}{\raisebox{1pt}{$k$}}&
\multicolumn{1}{c}{\raisebox{1pt}{$\XT$}}\\\Xcline{1-3}{1pt}\Xcline{5-7}{1pt}
\multirow{2}{*}{\vrule width0pt height11pt depth0ptED}&\vrule width0pt height11pt
depth0pt$6$&
$\R$, right order topology&&
\multirow{2}{*}{\vrule width0pt height11pt depth0ptEO}&\vrule width0pt height11pt
depth0pt$6$&
$\R$, compact complement topology\\\cline{2-3}\cline{6-7}
&\vrule width0pt height11pt depth0pt$4$&$\N$, cofinite topology&&
&\vrule width0pt height11pt depth0pt$4$&Sierpiński space\\\cline{1-3}\cline{5-7}
&\vrule width0pt height11pt depth0pt$\y{10}$&$\N\sm\{1\}$, divisor topology&&
\multirow{2}{*}{P}&\vrule width0pt height11pt depth0pt$6$&$\N$, odd-even topology
\\\cline{2-3}\cline{6-7}
OU&\vrule width0pt height11pt depth0pt$6$&$\N$, excluded set topology&&
&\vrule width0pt height11pt depth0pt$4$&$\N$, indiscrete topology
\\\cline{2-3}\cline{5-7}
&\vrule width0pt height11pt depth0pt$4$&$\N$, excluded point topology&&
D&\vrule width0pt height11pt depth0pt$2$&$\N$, discrete topology
\mystrut\\[-\arrayrulewidth]\cline{1-3}\cline{5-7}
\end{tabular}\label{tab:named}
\end{table}

We are now ready to prove Proposition~\ref{prop:psi_39}.
Since Table~\ref{tab:subset_types} implies $\kf(A)=20\implies\psi A=39$ in KD spaces,
it follows that no KD space has $\kf$-number $20$.

\begin{proof}
Suppose $X$ is KD and $\psi A=39$ for some $A\su X$.
By Theorem~\ref{thm:GE_implications}, $\psi E=\psi(aE)=31$ for some $E\su X$.
Have $\psi(gE),\psi(gaE)\in\{48,60\}$.
Hence $\psi(A\sym gE),\psi(A\sym gaE)\in\{6,13,20,31\}$ by
Corollary~\ref{cor:psi_39_sym}.
We can assume that $\psi(A\sym gE)=\psi(A\sym gaE)=31$.
Lemma~\ref{lem:a_union_gb} implies $\ie(A\cup gE)=\ie(A\cup gaE)\neq\es$.
Since $ibiA\neq iA$, Lemma~\ref{lem:b_equals_bib}(ii) implies $A\cup gE$ and $A\cup gaE$
cannot both satisfy $ibi=i$.
Suppose $A\cup gE$ satisfies $ibi\neq i$.
Since $\psi(gE)\in\{48,60\}$ we have $\fib(A\cup gE)\neq\es$ by Lemma~\ref{lem:psi_39}.
Add $ibi\neq i$ to the Corollary~\ref{cor:psi_39_sym} argument to get $\psi(A\cup
gE)\in\{6,13\}$.
Similarly, $\psi(A\cup gaE)\in\{6,13\}$ if $A\cup gaE$ satisfies $ibi\neq i$.
\end{proof}

\begin{thm}\label{thm:KF_kf-numbers}
Table~\nit{\ref{tab:kf_kfa}} lists the values of \mth{$k(\XT)$} and \mth{$\kf(\XT)$} that
occur for each GE monoid\nit.
\end{thm}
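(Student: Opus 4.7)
The plan is to combine three ingredients: the trivial inequalities $k(\XT)\le K(\XT)$ and $\kf(\XT)\le\Kf(\XT)$, the classification of local collapses in Table~\ref{tab:subset_types}, and the exclusion propositions of Section~\ref{sec:interplay}. Each row of Table~\ref{tab:kf_kfa} is verified independently by enumerating the \sn s admissible in that space type, reading off the $(k(A),\kf(A))$-pairs, and exhibiting realizing examples.

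Upper bounds come immediately from Theorem~\ref{thm:kf_monoids} ($\Kf=34,28,22,20,16,10,4$ for the seven types) and Table~\ref{tab:six_Kuratowski_monoids} ($K=14,14,10,10,8,6,2$). For the internal gaps, Corollary~\ref{cor:kf_12} eliminates $\kf=12$ in every type, accounting for the $10$-to-$14$ gap in the GE row; Proposition~\ref{prop:irresolvable} forbids \sn\ $61$ in KD, hence $\kf=4$ there; and Proposition~\ref{prop:psi_39} eliminates $\kf=20$ in KD. The \pn-restrictions for non-Kuratowski types listed in Section~\ref{sub:inclusions} restrict which \sn s can occur in ED, OU, EO, and P, from which the remaining gaps follow by inspection of Table~\ref{tab:subset_types}.

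For realization, the minimal spaces supplied by Proposition~\ref{prop:minimal_spaces} cover the smallest $\kf$-values in each row. The largest value in each row ($\kf=\Kf$) is realized by the construction in Theorem~\ref{thm:ccb} for GE, and by Theorem~\ref{thm:GE_implications} for the other types---which guarantees, e.g., that KD spaces contain subsets with \sn s $31$, $13$, and $6$ (yielding $\kf=18,22,28$ respectively). Intermediate values are obtained either by direct construction in small spaces of the correct type, or via the sum-space construction $X_n=\sum_{i=1}^n\XT$, which by Lemma~\ref{lem:topsum} preserves the global collapse while allowing $\kf$-values of disjointly placed subsets to combine.

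The main obstacle is the KD row: ruling out $\kf\in\{14,16,24,26\}$ while retaining $\{18,22,28\}$. Proposition~\ref{prop:GE_characterization} is crucial here, since \sn\ $44$ (and dually \sn\ $42$), the only candidates producing $\kf=14$, occurs only in GE spaces, so KD admits neither. For $\kf\in\{16,24,26\}$ one must check each candidate \sn\ in Table~\ref{tab:subset_types} and verify that its required local structure is incompatible with the global equations KD forces, as displayed in the KD diagram of Figure~\ref{fig:Hasse_diagrams}. The verification is a finite but careful case analysis guided by Figure~\ref{fig:psi-implications}.
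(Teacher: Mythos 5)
Your overall architecture matches the paper's: the paper's proof consists of one sentence of computer verification for the realizations plus exclusions drawn from exactly the results you cite (Table~\ref{tab:subset_types}, Corollary~\ref{cor:kf_12}, Theorem~\ref{thm:GE_implications}, and Proposition~\ref{prop:psi_39}). However, your ``main obstacle'' paragraph on the KD row contains a concrete error. You assert that \sn\ $44$ and its dual $42$ are ``the only candidates producing $\kf=14$.'' By Table~\ref{tab:subset_types}, \sn s $42$ and $44$ give $\kf(A)=10$, not $14$; the \sn s with $\kf(A)=14$ are $35$, $37$, $49$, $50$, and two of these ($49$, $50$) \emph{do} occur in KD spaces. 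So the argument you propose for ruling out $\kf(\XT)=14$ in KD both misreads the table and would fail even if corrected. The correct (and much shorter) route is the one the paper uses: Theorem~\ref{thm:GE_implications} guarantees every KD space contains a subset with \sn\ $31$, which has $\kf(A)=18$, so $\kf(\XT)\geq18$ in KD and the values $14$, $16$ are excluded at a stroke; Proposition~\ref{prop:GE_characterization} plays no role here. The exclusion of $24$ and $26$ is then a pure Table~\ref{tab:subset_types} check (no \sn\ admissible in KD has $\kf(A)\in\{24,26\}$), and $20$ falls to Proposition~\ref{prop:psi_39}, as you say. Similarly, your invocation of Proposition~\ref{prop:irresolvable} to handle $\kf=4$ in KD is superfluous for the same reason: the lower bound $18$ already disposes of it.

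On the realization half, your proposed tools do not actually produce the listed values. For the GE row one must exhibit spaces with $\kf(\XT)$ equal to each of $10,14,16,18,\dots,34$; the minimal GE space gives $10$ and its topological sums give only $10$, $24$, $34$ (Table~\ref{tab:psi_topsum}), so the sum-space construction skips most of the intermediate values, and ``direct construction in small spaces'' is not carried out. The paper resolves this by explicit computer search, and some such verification (or a genuinely new family of examples) is indispensable; as written, the existence direction of the theorem is not established by your argument.
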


\begin{proof}
We verified by computer that each combination occurs.
The rest are excluded by Table~\ref{tab:subset_types},  Corollary~\ref{cor:kf_12},
Theorem~\ref{thm:GE_implications}, and Proposition~\ref{prop:psi_39}.
\end{proof}

The largest \knum\ (\kfnum) in each row of Table~\ref{tab:kf_kfa} is the space type's
\Knum\ (\Kfnum) since full (completely full) spaces of each type exist.

Table~\ref{tab:named} lists named spaces satisfying some of the possible space type
and \knum\ combinations (proofs are left to the reader).
Except for the Sierpiński space, each space with \knum\ $4$ is cited in GJ's proof of
Theorem~2.1.

\subsection{Topological sums.}

Lemma~\ref{lem:topsum} implies the following corollary.

\begin{cor}\label{cor:topsum_phi_and_psi}
The collapse of \mth{$\KZ$ $(\KF)$} satisfied by \mth{$A_1\ds A_2$ in $X_2$} is the
intersection of the collapses of \mth{$\KZ$} \mth{$(\KF)$} satisfied by \mth{$A_1$} and
\mth{$A_2$} in \mth{$\XT$.}
The poset under set inclusion of all \pn s \nit(\sn s\nit) is thus a meet semilattice
\nit{(}see Figure~\nit{\ref{fig:meet_semilattices}).}
Tables~\mth{\ref{tab:disjoint_union_phi}} and~\mth{\ref{tab:disjoint_union_psi}} list all
\mth{$\phi$-} and \sn s of disjoint unions \mth{$A_1\ds A_2$} in \mth{$X_2$} given
those of \mth{$A_1$} and \mth{$A_2$} in \mth{$\XT$.}
Note that \mth{$\alpha(A_1\ds A_2)\leq\min\{\alpha(A_1),\alpha(A_2)\}$} for
\mth{$\phi,\psi$} in place of \mth{$\alpha$.} 
\end{cor}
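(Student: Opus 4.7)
My plan is to derive the corollary as a direct consequence of Lemma~\ref{lem:topsum}, which by the footnote in Section~\ref{sec:interplay} extends from Kuratowski operators to all GE operators. Concretely, for any $o_1, o_2 \in \KF$ and $A_1, A_2 \subseteq X$, the lemma gives $o_1(A_1 \ds A_2) = o_2(A_1 \ds A_2)$ in $X_2$ if and only if $o_1 A_j = o_2 A_j$ in $\XT$ for $j = 1, 2$.

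From this the first assertion is immediate: the collapse of $\KF$ (respectively $\KZ$) satisfied by $A_1 \ds A_2$ in $X_2$ equals $\{\{o_1, o_2\} : o_1 A_j = o_2 A_j \text{ for } j=1,2\}$, which is precisely the intersection of the two individual collapses. Both inclusions are contained in the biconditional: if an equation holds on the union it holds on each summand, and conversely.

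For the meet semilattice claim, I would observe that any two local collapses $C_1, C_2$ are realized by subsets of (possibly different) topological spaces; applying the first assertion to their topological sum exhibits a subset realizing $C_1 \cap C_2$, showing that the poset of local collapses under set inclusion is closed under binary intersection. The inequality $\alpha(A_1 \ds A_2) \leq \min\{\alpha(A_1), \alpha(A_2)\}$ then follows from the indexing convention noted beneath Figure~\ref{fig:meet_semilattices}, namely that the \pn\ and \sn\ numberings respect set inclusion on the underlying collapses.

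Construction of Tables~\ref{tab:disjoint_union_phi} and~\ref{tab:disjoint_union_psi} reduces to a finite computation: for each pair of indices one intersects the two explicit collapses enumerated in Section~\ref{sec:kfa} and identifies which local collapse the result equals. The main obstacle is purely combinatorial bookkeeping across the $\binom{30}{2} + 30$ (respectively $\binom{70}{2} + 70$) cases, together with verifying that the intersection operation never produces a set of equations that is not on the master list of local collapses---but this is automatic, since any intersection is realized in a sum space and hence is itself a local collapse. There is no further mathematical content beyond Lemma~\ref{lem:topsum} itself.
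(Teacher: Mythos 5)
Your proposal is correct and matches the paper's approach exactly: the paper presents this corollary as an immediate consequence of Lemma~\ref{lem:topsum} (extended to GE operators via the footnote in Section~\ref{sec:interplay}), with the tables verified by finite case-checking. Your additional observation that the intersection of two local collapses is automatically realized in a sum space, hence is itself a local collapse, is the right justification for the meet-semilattice claim.
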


As we noted earlier, GJ showed that adding one copy to a non-full sum of copies of a
given space strictly increases the sum's \knum.
We show in Corollary~\ref{cor:increased_k-number} below that it always increases by $2$
or $4$.
The proof calls for several preliminary results.

\begin{lem}\label{lem:bi_ib_subsym}
If \mth{$\phi A\in\{16,17\}$} and \mth{$A\cup B$} or \mth{$A\cap B$} satisfies
\mth{$bib=bi$} then \mth{$biB\sm ibB\su bi(A\sym B)\sm ib(A\sym B)$.}
\end{lem}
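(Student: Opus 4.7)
The plan is to reduce the conclusion via Lemma~\ref{lem:sym}. Both rows~16 and~17 in Table~\ref{tab:subset_types} contain the equation $bib\eqy ib$, so $\phi A\in\{16,17\}$ implies $bibA=ibA$, validating the hypothesis of Lemma~\ref{lem:sym} and yielding
\[
bi(aA)\cap biB\cap bi(aB)\,\su\,bi(A\sym B)\sm ib(A\sym B).
\]
Since $d(ib)=bi$ by Corollary~\ref{cor:eqs}, one has $abi=iba$, hence $bi(aB)=aibB$, and therefore $biB\sm ibB=biB\cap aibB=biB\cap bi(aB)$ automatically. The lemma reduces to the single substantive inclusion
\[
biB\cap ibA\,\su\,ibB.
\]

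For the case when $A\cup B$ satisfies $bib=bi$, Lemma~\ref{lem:b_equals_bi}(i) delivers $\ifA\su ibB$, which by Lemma~\ref{lem:op_relations}(i) reads $ibA\sm biA\su ibB$; thus $ibA\su biA\cup ibB$, and intersecting with $biB$ reduces the claim to $biB\cap biA\su ibB$. Here the additional structure from $\phi A\in\{16,17\}$ becomes essential: in both rows, $biA$ is clopen (it is always closed, and the collapses $ibi\eqy bi$ in row~16 and $bi\eqy i$ in row~17 force it to be open as well). Using clopenness of $biA$ together with the clopen-restriction principle, the hypothesis transfers to $(A\cap biA)\cup(B\cap biA)$ satisfying $bib=bi$ inside the clopen subspace $biA$. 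Because $biA=b(iA)\su b(A\cap biA)\su biA$, the set $A\cap biA$ is dense in that subspace; a second invocation of Lemma~\ref{lem:b_equals_bi}(i) applied within $biA$ then yields $biA\sm iA\su ib(B\cap biA)\su ibB$. Combining this with $ibA\sm biA\su ibB$ gives $ibA\sm iA\su ibB$, reducing the original task to $iA\cap biB\su ibB$; this final inclusion is closed by noting that $iA$ is itself an open neighbourhood of each of its points and that $iA\su i(A\cup B)\su ib(A\cup B)$ (using $A\cup B$ sat $bib=bi$), so any point of $iA\cap biB$ already admits an open neighbourhood inside $bB$.

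The case that $A\cap B$ satisfies $bib=bi$ is handled dually: the identity $a\cdot bib=ibi\cdot a$ transforms the hypothesis into ``$aA\cup aB$ satisfies $ibi=ib$,'' and the same argument runs with $A$ replaced by $aA$ throughout, exploiting that rows~16 and~17 are mutually dual. The main obstacle will be the last step of the first case, namely establishing $iA\cap biB\su ibB$: while the two applications of Lemma~\ref{lem:b_equals_bi}(i) (one in $X$, one in the clopen subspace $biA$) cover all of $ibA\sm iA$, the open portion $iA$ requires a careful open-neighbourhood argument combining the density of $iA$ in $biA$ with the consequence $iA\su ib(A\cup B)$ of the hypothesis, and verifying that the clopen restriction of that hypothesis to $biA$ really does give the usable form of Lemma~\ref{lem:b_equals_bi}(i) inside the subspace.
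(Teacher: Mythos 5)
Your reduction of the lemma to the single inclusion $biB\cap ibA\su ibB$ is where the argument breaks: that inclusion is \emph{false} under the stated hypotheses, so no amount of work on the subsequent steps can close the proof. Let $X$ be the topological sum of three copies $Y_1,Y_2,Y_3$ of $\R$, let $A=Y_1\cup\mathbb{Q}_2\cup\{0\}_3$ and $B=[0,1]_1\cup(\R\sm\mathbb{Q})_2$. Then $iA=biA=ibiA=Y_1$ and $ibA=bibA=Y_1\cup Y_2$, so $\phi A=17$; and $A\cup B=Y_1\cup Y_2\cup\{0\}_3$ is closed with $bib(A\cup B)=Y_1\cup Y_2=bi(A\cup B)$, so the hypothesis holds. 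Yet $biB\cap ibA=[0,1]_1\nsu(0,1)_1\cup Y_2=ibB$, and likewise your later targets $biB\cap biA\su ibB$ and $iA\cap biB\su ibB$ fail ($iA\cap biB=[0,1]_1$). The lemma itself is true here: $biB\sm ibB=\{0,1\}_1=bi(A\sym B)\sm ib(A\sym B)$. The point is that the leftover set $biB\sm ibB\sm bi(aA)=biB\cap bi(aB)\cap ibA$ need not be empty; it only needs to land in the target, which your approach cannot deliver. Two subsidiary steps are also unsound: the ``second invocation'' of Lemma~\ref{lem:b_equals_bi}(i) inside the clopen subspace $biA$ gives nothing, because $A\cap biA$ is dense there and $biA$ is open in $X$, so both $ib$ and $bi$ of $A\cap biA$ computed in that subspace equal all of $biA$ and the relevant boundary-interior is empty (the asserted $biA\sm iA\su ib(B\cap biA)$ does not follow); and $iA\su ib(A\cup B)$ puts a neighbourhood of a point of $iA$ inside $b(A\cup B)$, not inside $bB$.

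The repair is the paper's second case, which you dropped. From $\ifA\su ibB$ (Lemma~\ref{lem:b_equals_bi}(i)) and $\ifA=ibA\cap ib(aA)$ one gets, by complementation, $bi(aB)\su biA\cup bi(aA)$. Hence every point of $biB\sm ibB=biB\cap bi(aB)$ lies either in $bi(aA)\cap biB\cap bi(aB)$ or in $bi(a(aA))\cap biB\cap bi(aB)$. The first set is handled exactly as you did, by Lemma~\ref{lem:sym} applied to $A$; the second is handled by Lemma~\ref{lem:sym} applied to $aA$, which is legitimate because rows $16$ and $17$ are dual, so $\phi(aA)\in\{16,17\}$ and $bib(aA)=ib(aA)$ as well, and the target is unchanged since $aA\sym B=a(A\sym B)$ and $(bi\sm ib)(aE)=(bi\sm ib)(E)$. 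In the example above it is precisely this second application that captures the points $\{0,1\}_1$.
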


\begin{proof}
Suppose $\phi A\in\{16,17\}$ and $A\cup B$ satisfies $bib=bi$.
Let $x\in biB\cap bi(aB)$.
Have $\ie A\su ibB$ by Lemma~\ref{lem:b_equals_bi}(i).
Thus $bi(aB)\su biA\cup bi(aA)$.
Since $aA\sym B=a(A\sym B)$ and $bi\sm ib=bia\sm iba$ the result follows by
Lemma~\ref{lem:sym}.
The remaining case holds since $aA\cup aB$ satisfies $bib=bi$ when $A\cap B$ does and
$aA\sym aB=A\sym B$.
\end{proof}

\begin{lem}\label{lem:dual_implications}
\noindent\hangindent=74pt
\nit{(i)}~If \mth{$bibB=biB$} and \mth{$A,A\cap B$} each satisfy \mth{$\fib=0$} then
\mth{$\fib B\su\fbi(A\cup B)\cap\fib(A\cup B)$.}

\noindent\hangindent=74pt\phantom{\textbf{Lemma 35.\!}}
\mth{(ii)}~If \mth{$bibB=biB$} and \mth{$A,A\cup B$} each satisfy \mth{$\fbi=0$} then
\mth{$\fbi B\su\fbi(A\cap B)\cap\fib(A\cap B)$.}
\end{lem}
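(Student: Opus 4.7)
The plan is to prove both parts by reducing everything to the identity $bi \sm ib = \fib \wedge \fbi$ from Lemma~\ref{lem:op_relations}(iv), combined with the decomposition $\fib B = bibB \sm ibB$ (and $\fbi B = biB \sm ibiB$), so that the task becomes containment in $bi \sm ib$ at the unioned/intersected set. The hypothesis $bibB=biB$ plays a double role: it simplifies $\fib B = biB \sm ibB$ (making $\fib B \leq \fbi B$ on $B$) and yields the auxiliary inclusion $ibB \subseteq biB$, which will be crucial in both parts.

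For \nit{(i)}: Since $\fib A=\es$ is equivalent to $bibA=ibA$, Lemma~\ref{lem:i-union}\nit{(iii)} gives $ib(A\cup B)=ibA\cup ibB$. Because $\fib B\cap ibB=\es$ tautologically, it suffices to verify the key claim $biB\cap ibA\subseteq ibB$; this will deliver $\fib B\cap ib(A\cup B)=\es$, which together with the trivial inclusions $\fib B\subseteq bibB\subseteq bib(A\cup B)$ and $\fib B\subseteq biB\subseteq bi(A\cup B)$ closes the argument via Lemma~\ref{lem:op_relations}\nit{(iv)}. To prove the key claim I would fix $x\in biB\cap ibA$, observe that $ibA$ is an open neighborhood of $x$, and show $x\in b(iB\cap ibA)$ (any open neighborhood of $x$, intersected with $ibA$, still meets $iB$). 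Then $iB\cap ibA$ is open and sits inside $b(A\cap B)$: any $y$ in it has $iB\cap ibA$ as an open neighborhood contained in $B$ and in $bA$, so every open neighborhood of $y$ meets $A\cap B$. Hence $iB\cap ibA\subseteq ib(A\cap B)$, and invoking $\fib(A\cap B)=\es$ (so $ib(A\cap B)=bib(A\cap B)$ is closed) yields $b(iB\cap ibA)\subseteq ib(A\cap B)\subseteq ibB$, which gives $x\in ibB$ as required.

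For \nit{(ii)}: The hypothesis $\fbi A=\es$ makes $biA$ clopen ($biA=ibiA$), and $\fbi(A\cup B)=\es$ makes $bi(A\cup B)$ open. For $x\in\fbi B=biB\sm ibiB$, pick an open neighborhood $V\subseteq bi(A\cup B)$ of $x$. Assuming $x\notin biA$, the set $V\sm biA$ is an open neighborhood of $x$, contained in $bi(A\cup B)\sm biA$; Lemma~\ref{lem:half_dist}\nit{(i)} gives $bi(A\cup B)\subseteq b(iA\cup B)=biA\cup bB$, so this difference lies in $bB$. Being open, it lies in $ibB$, and since $bibB=biB$ forces $ibB\subseteq biB$, the difference lies in $biB$ and therefore in $ibiB$ — contradicting $x\notin ibiB$. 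Hence $\fbi B\subseteq biA$, and combined with $\fbi B\subseteq biB$ plus Lemma~\ref{lem:i-union}\nit{(iii)} (which gives $bi(A\cap B)=biA\cap biB$ from $\fbi A=\es$), we get $\fbi B\subseteq bi(A\cap B)$. Finally, $ib(A\cap B)\subseteq ibB\subseteq biB$ is open, hence $\subseteq ibiB$, so $\fbi B\cap ib(A\cap B)=\es$. Lemma~\ref{lem:op_relations}\nit{(iv)} then packages the conclusion as $\fbi B\subseteq\fbi(A\cap B)\cap\fib(A\cap B)$.

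The main obstacle I expect is the geometric claim in \nit{(i)} that $biB\cap ibA\subseteq ibB$; in particular, the step $iB\cap ibA\subseteq ib(A\cap B)$ relies on carefully using the openness of $iB\cap ibA$ together with its inclusion in $bA$ and $B$, and this is where the hypothesis $\fib(A\cap B)=\es$ must be invoked via $bib(A\cap B)=ib(A\cap B)$. The analogous argument in \nit{(ii)} is slightly easier because the openness of $bi(A\cup B)$ directly furnishes the needed neighborhood, and the rest is bookkeeping around clopenness of $biA$.
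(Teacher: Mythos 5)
Your proof is correct and follows essentially the same route as the paper: part (i) rests on $ibA\cap iB\su ib(A\cap B)$ together with the closedness of $ib(A\cap B)=bib(A\cap B)$ and the fact that points of $\fib B$ lie in $biB$, then concludes via $ib(A\cup B)=ibA\cup ibB$ and Lemma~\ref{lem:op_relations}(iv); your formulation via the key claim $biB\cap ibA\su ibB$ is just the contrapositive packaging of the paper's neighborhood argument. The only presentational difference is that the paper dispatches (ii) as the left--right dual of (i), whereas you write out the dual argument explicitly; both are fine.
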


\begin{proof}
(i)~$bibB=biB\implies\fib B=\fbi B\su biB\su bi(A\cup B)$.
Let $x\in\fib B$.
Since $bib(A\cap B)=ib(A\cap B)\su ibB$, Lemma~\ref{lem:half_dist}(ii) implies
$U\cap ibA\cap iB=U\cap i(bA\cap B)\su U\cap ib(A\cap B)=\es$ for some open neighborhood
$U$ of $x$.
Since $x\in bibB=biB$, it follows that $x\cn\in ibA\cup ibB=ib(A\cup B)$ by
Lemma~\ref{lem:i-union}(iii).
Apply Lemma~\ref{lem:op_relations}(iv) to get the result.
(ii) is the dual of (i).
\end{proof}

\begin{lem}\label{lem:final_implication}
Suppose \mth{$A\cup B$} and \mth{$A\cap B$} each satisfy \mth{$ibi=i$.}
If \mth{$biB=bB$} then \mth{$ibiA\sm iA\su\fbi(A\cap B)$.}
\end{lem}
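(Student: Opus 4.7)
The goal is to show, for each $x\in ibiA\sm iA$, that $x\in\fbi(A\cap B)$. Since $ibi(A\cap B)=i(A\cap B)$ by hypothesis, $\fbi(A\cap B)=bi(A\cap B)\sm i(A\cap B)$, and the second factor is handled by $x\notin iA\supseteq i(A\cap B)$; the real task is to prove $x\in bi(A\cap B)=b(iA\cap iB)$, where I have used the general identity $i(A\cap B)=iA\cap iB$.

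Fix once and for all an open neighborhood $U$ of $x$ with $U\su ibiA$. The first step is to \textbf{localize $x$ inside $bB$}. Since $U$ is open and $U\su biA\su bi(A\cup B)$, the hypothesis $ibi(A\cup B)=i(A\cup B)$ gives $U\su ibi(A\cup B)=i(A\cup B)\su A\cup B$. The set $U\sm bB$ is open (as $bB$ is closed), contained in $A\cup B$, and disjoint from $B$, so it lies in $A$; being open, it actually lies in $iA$. If $x\notin bB$ then $x\in U\sm bB\su iA$, contradicting $x\notin iA$, so $x\in bB$.

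The second step \textbf{exploits the density chain} coming from $biB=bB$. That equation is precisely the statement that $iB$ is dense in $bB$, so every open neighborhood of every point of $bB$ meets $iB$. Pick any open $W$ with $x\in W\su U$: since $x\in bB$, $W\cap iB\neq\es$; but $W\cap iB$ is itself an open set contained in $U\su biA$, and $iA$ is dense in $biA$, so $W\cap iB$ meets $iA$. Any point of $W\cap iA\cap iB$ lies in $iA\cap iB=i(A\cap B)$, showing $W\cap i(A\cap B)\neq\es$. As $W$ ranges over all open subsets of $U$ containing $x$, this yields $x\in bi(A\cap B)$, and therefore $x\in\fbi(A\cap B)$.

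The main technical obstacle lies in the localization step, where both nontrivial hypotheses enter in an essential way. The hypothesis on $A\cup B$ is used not merely to trap $U$ inside $A\cup B$ but, more crucially, to force $x$ into $bB$, without which the density of $iB$ in $bB$ supplied by $biB=bB$ cannot be applied at $x$. Once $x\in bB$ is secured, threading the two densities together through the intermediate open set $W\cap iB$ is routine.
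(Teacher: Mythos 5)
Your proof is correct and follows essentially the same route as the paper's: both confine an open neighborhood of $x$ inside $A\cup B$ via the hypothesis on $A\cup B$, then chain the density of $iB$ in $bB$ (from $biB=bB$) with the density of $iA$ in $biA$ to land in $i(A\cap B)=iA\cap iB$, and use the hypothesis on $A\cap B$ to identify $\fbi(A\cap B)$ with $bi(A\cap B)\sm i(A\cap B)$. The only cosmetic difference is that you establish $x\in bB$ directly via the open set $U\sm bB$, whereas the paper reaches the same point through $x\in b(aA)$ and the inclusion $ibiA\su A\cup biB$.
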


\begin{proof}
Let $x\in ibiA\sm iA$.
Since $ibi(A\cap B)\su i(A\cap B)\su iA$, $x\cn\in ibi(A\cap B)$.
Since $x\in b(aA)$ and $ibiA\su biA\cap ibi(A\cup B)\su i(A\cup B)\su A\cup B\su A\cup
biB$, $U\cap ibiA\cap i(A\cap B)\neq\es$ for every open $U$ containing $x$.
\end{proof}

\begin{prop}\label{prop:phi-16}
\mth{$k(\XT)\geq10$} if \mth{$\phi A=16$} and \mth{$\phi B=26$} for some
\mth{$A,B\su X$.}
\end{prop}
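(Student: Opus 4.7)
The plan is to exploit the clopen decomposition forced by $\phi A=16$, locate a non-clopen regular closed ``piece'' $B'$ of $B$ sitting in one canonical clopen region, and combine it with a suitable restriction of $A$ to build $C\su X$ with $k(C)\geq10$.

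The collapse $\phi A=16$ (characterized by $ibA=bA$ and $ibiA=biA$, with $iA\neq biA\neq bA$) makes both $biA$ and $bA$ clopen, so setting $U_1:=biA$, $U_2:=bA\sm biA$, $U_3:=X\sm bA$ yields a clopen partition $X=U_1\ds U_2\ds U_3$. Writing $A_j:=A\cap U_j$, the collapse forces $A_1\subsetneq U_1$ (else $iA\supseteq i(biA)=biA$, a contradiction) and $A_2\neq\es$ (else $A\su biA$ gives $bA=biA$); moreover $iA_2=\es$ and $bA_2=U_2$. For $B$, the identity $B=biB$ together with $U_j$ clopen gives $bi(B\cap U_j)=biB\cap U_j=B\cap U_j$, so each $B\cap U_j$ is regular closed in $X$; if all three pieces were clopen then $B$ itself would be, contradicting $\phi B=26$. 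Fix $j$ so $B':=B\cap U_j$ is non-clopen regular closed, whence $\phi B'=26$ and in particular $iB'\subsetneq B'$.

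Set $C:=A_2\cup B'$ when $j=1$, and $C:=A\cup B'$ when $j\in\{2,3\}$. Using that $b$ and $i$ commute with intersection by a clopen set, together with the key identity $i(A_2\cup B')=iB'$ when $B'\su U_2$ (any open subset of $A_2\cup B'$ disjoint from the closed set $B'$ lies in $A_2$, whose interior is empty), a direct computation expresses every $oC$ with $o\in\KZ$ as an explicit union of pieces drawn from $\{iA,iB',U_1,U_2,B'\}$ (plus $bA$ when $j=3$). Routine distinctness checks then give $|\KZ C|=5$ for $j\in\{1,2\}$ and $|\KZ C|=6$ for $j=3$. Since $iC\supseteq iA\neq\es$ throughout, Table~\ref{tab:kf_akf} rules out any equation $o_1C=o_2C$ with $o_1\in\KZ$, $o_2\in a\KZ$; and in the subcase $bA=X$ (where $bC=X$), the opposite direction is excluded because any such equation would force $A_1=\es$, contradicting $iA\su A_1$. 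Hence $|\K C|=2\,|\KZ C|\geq10$.

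The main obstacle is the case $j=1$, where $B'$ and the nontrivial part of $A$ both live inside $U_1$: the naive choice $C=A\cup B'$ collapses too much, yielding only four distinct elements in $\KZ C$. Replacing $A$ by $A_2$ relocates the ``$A$-contribution'' into the clopen complement $U_2$, cleanly separating it from $B'\su U_1$ and producing the five-element chain $\{iB',B',A_2\cup B',U_2\cup iB',U_2\cup B'\}$ inside $\KZ C$; the density of $A_2$ in $U_2$ with empty interior is what supplies the extra distinct elements needed to reach $k(C)=10$.
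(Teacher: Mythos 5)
Your argument is correct, but it takes a genuinely different route from the paper. The paper's proof works with the Boolean combinations $E=A\sym B$, $A\cup B$, $A\cap B$: it splits on whether $A\cup B$ or $A\cap B$ satisfies $bib=bi$, and invokes a chain of preparatory lemmas (on $bi\sm ib$ of symmetric differences, on $\fib$ and $\fbi$ of unions and intersections, and a final implication about $ibi=i$) to show that one of these three sets already has $k$-number $\geq10$. You instead exploit the fact that $\phi A=16$ forces $biA$ and $bA$ to be clopen, cut $X$ into the clopen blocks $U_1,U_2,U_3$, peel off a non-clopen regular closed block $B'=B\cap U_j$, and assemble an explicit witness $C$ whose $\KZ$-family is computable by hand; the only non-formal steps are the identity $i(A_2\cup B')=iB'$ (which your density argument for $A_2$ in $U_2$ handles) and the doubling from $|\KZ C|$ to $|\K C|$ via $iC\neq\es$ and Table~\ref{tab:kf_akf}. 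I verified the three cases: $|\KZ C|=5,5,6$ for $j=1,2,3$, and $iC\neq\es$ in each (for $j=1$ one has $iC=iB'\neq\es$ rather than $iC\supseteq iA$, a harmless slip in your ``throughout''). Your extra remark about the subcase $bA=X$ is unnecessary: any equation $o_1C=ao_2C$ with $o_1,o_2\in\KZ$ already forces $iC=\es$ by the first row of Table~\ref{tab:kf_akf}, so $iC\neq\es$ alone finishes. What each approach buys: yours is more elementary and self-contained, needing none of the four lemmas the paper proves expressly for this proposition, and it exhibits an explicit set realizing $k\geq10$ (indeed $k(C)\in\{10,12\}$); the paper's version keeps the witness among $A\sym B$, $A\cup B$, $A\cap B$ and reuses machinery that serves the surrounding section.
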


\begin{proof}
Let $E=A\sym B$.
Have $ibE\nsu biE$ by Lemma~\ref{lem:b_equals_bi}(iv).
Suppose $A\cup B$ or $A\cap B$ satisfies $bib=bi$.
Then $biE\nsu ibE$ by Lemma~\ref{lem:bi_ib_subsym}.
Hence $ibE,biE,ibiE,bibE$ are pairwise distinct.
By Table~\ref{tab:subset_types} this implies $E$ is neither open nor closed.
Thus $k(E)\geq10$. 
Suppose $A\cup B$ and $A\cap B$ both satisfy $bib\neq bi$.
Since they both satisfy $ibi\neq ib$, if either additionally satisfies both $\fib\neq0$
and $\fbi\neq0$ then it has \knum\ $\geq10$ by the argument above.
Hence we can assume each satisfies $\fib=0$ or $\fbi=0$.
Since $\fib B=\fbi B\neq\es$, Lemma~\ref{lem:dual_implications} implies $A\cup B$
satisfies $\fbi\neq0$ and $A\cap B$ satisfies $\fib\neq0$.
Hence $A\cap B$ satisfies $\fbi=0$.
It follows by Lemma~\ref{lem:final_implication} that $A\cup B$ and $A\cap B$ do not both
satisfy $ibi=i$.
Thus $|\{\ident,i,ibi,ib,bib\}(A\cap B)|=5$ or $|\{\ident,i,ibi,bi,bib\}(A\cup B)|=5$.
\end{proof}

\begin{cor}\label{cor:increased_k-number}
If \mth{$\XT$} is full then so is \mth{$X_n$} for all \mth{$n$.}
If \mth{$X_n$} is not full then \mth{$2\leq k(X_{n+1})-k(X_n)\leq4$.}
\end{cor}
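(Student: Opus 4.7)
\emph{Plan.} My plan has three stages: (i) dispose of the first claim by an easy application of Lemma~\ref{lem:topsum}, (ii) handle the second claim in most cases using coarse size bounds on $K$, and (iii) treat the hard case $K = 10$ by a direct partition-intersection argument.

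For stage (i), I will take a witness $A \subseteq X$ with $k(A) = K(\XT)$ and view it inside the first copy of $X_n$ (with $\varnothing$ elsewhere). Lemma~\ref{lem:topsum} shows that operator distinctions on $A$ survive in $X_n$, yielding $k(X_n) \geq K(\XT) = K(X_n)$.

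For stage (ii), the lower bound $\geq 2$ will come from combining GJ's observation $k(X_{n+1}) > k(X_n)$ (noted just before Proposition~\ref{prop:topsum}) with the parity fact that every $k$-number in Table~\ref{tab:subset_types} is even. For the upper bound, I will exploit that $X_n$ has the same Kuratowski monoid as $\XT$, so $K(X_n) \in \{2,6,8,10,14\}$ by Table~\ref{tab:six_Kuratowski_monoids}. The values $K \in \{2,6,8\}$ are immediate since all realized $k$-values lie in $\{4,6,8\}$ (aside from $k=2$, which forces fullness). For $K = 14$, Theorem~\ref{thm:GE_implications} gives every Kuratowski space a subset with $\psi \in \{31,42,44\}$, each of which has $k = 10$, so $k(X_n),k(X_{n+1}) \in \{10,12,14\}$.

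For stage (iii), the only threatening configuration is a jump from $k(X_n) = 4$ to $k(X_{n+1}) > 8$ in an ED or OU space. The first reduction: monotonicity of $k$ in $n$ (itself a consequence of Lemma~\ref{lem:topsum}), combined with the strict jump at non-full spaces, will force $k(X_n) = 4$ to occur only at $n = 1$. Hence it suffices to bound $k(X_2)$ when $k(\XT) = 4$. Using the paper's classification of \pn s compatible with each non-Kuratowski space type (in the discussion following Theorem~\ref{thm:kf_monoids}), I will argue that every subset $T \subseteq \XT$ has $\phi T \in \{28,29,30\}$ (ED) or $\phi T \in \{26,27,28,29,30\}$ (OU), and each such \pn{} partitions $\KZ$ into at most two classes under the relation $o_1 \sim_T o_2 \iff o_1 T = o_2 T$. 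For $S = S_1 \sqcup S_2 \subseteq X_2$, Lemma~\ref{lem:topsum} identifies $\sim_S$ as the common refinement of $\sim_{S_1}$ and $\sim_{S_2}$. I will verify by direct pairwise enumeration of these partitions (after quotienting $\KZ$ by $bib = ib$ in ED or $bib = bi$ in OU) that every pairwise meet has at most four classes, so $|\KZ S| \leq 4$ and thus $|\K S| \leq |\KZ S| + |a\KZ S| \leq 8$. The hard part will be this partition-intersection analysis: triple meets can yield five classes, but the restriction to $n=1$ means only pairwise meets enter the argument.
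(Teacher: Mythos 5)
Your stages (i) and (ii) for the first claim and the lower bound, and your reduction of the $K=10$ case to $n=1$, all track the paper's proof. The fatal problem is in your treatment of $K(\XT)=14$: \sn s $42$ and $44$ have $k=8$, not $10$ (they sit over \pn s $14$ and $15$, whose $k$-column in Table~\ref{tab:subset_types} reads $8$; only \sn\ $31$, over \pn\ $6$, has $k=10$). So Theorem~\ref{thm:GE_implications} guarantees only $k\geq8$ for GE spaces, and Table~\ref{tab:kf_kfa} confirms GE spaces with $k(\XT)=8$ exist. The case $K=14$, $k(X_1)=8$, where you must rule out $k(X_2)=14$, is therefore real and entirely unaddressed by your proposal. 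It is also the hardest case in the paper: there one shows that $k(X_2)=14$ forces $X$ to contain a subset with \pn\ $16$, and then Lemma~\ref{lem:k-number_26} (every Kuratowski space contains a subset with \pn\ $26$) together with Proposition~\ref{prop:phi-16} (\pn s $16$ and $26$ together force $k(\XT)\geq10$) yields a contradiction with $k(\XT)=8$; Proposition~\ref{prop:phi-16} in turn rests on three preliminary lemmas about unions, intersections and symmetric differences. Nothing in your outline produces this.

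There is also a smaller gap in stage (iii). For an ED space with $k(\XT)=4$ the admissible \pn s are not only $28,29,30$: ED spaces admit \pn\ $25$, which attains $k=4$ via \sn\ $61$ (realized by $\N$ with the cofinite topology). That collapse splits $\KZ$ into \emph{three} classes, namely $\{\ident\}$, $\{b,ib,bib\}$, $\{i,bi,ibi\}$, and has $k=4$ only because of the cross-identification $bA=X=a(iA)$. Your ``two classes meet two classes gives at most four'' count therefore does not apply as stated; to recover $k(X_2)\leq8$ you would have to track the equations $o_1A=ao_2A$ through the disjoint union as well. (The paper avoids this bookkeeping by reading the meets directly off the computer-generated Table~\ref{tab:disjoint_union_phi}.)
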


\begin{proof}
The first assertion holds by Lemma~\ref{lem:topsum}.
Suppose $X_n$ is not full.
Some $A\su X_n$ then satisfies $k(A)=k(X_n)<K(X_n)=K(\XT)$.
For some $B\su X$ Kuratowski operators $o_1,o_2$ exist such that $o_1A=o_2A$ and
$o_1B\neq o_2B$.
By Lemma~\ref{lem:topsum}, $k(A\ds B)\geq k(A)+2$ in $X_{n+1}$.
Hence $k(X_{n+1})\geq k(X_n)+2$.

It remains to show that $k(X_{n+1})\leq k(X_n)+4$.
Since $k(X_{n+1})\leq K(X_{n+1})=K(X_n)$ we are done if $K(X_n)\leq k(X_n)+4$ so
we assume $K(X_n)\geq k(X_n)+6$.
Theorem~\ref{thm:KF_kf-numbers} implies $K(X_n)=k(X_n)+6$.
Since $X_n$ is not full, if $n\geq2$ we have $k(X_n)\geq k(X_{n-1})+2$, hence
$K(X_{n-1})=K(X_n)=k(X_n)+6\geq k(X_{n-1})+8$.
Theorem~\ref{thm:KF_kf-numbers} disallows this, so $n=1$.
By Theorem~\ref{thm:KF_kf-numbers} only two cases are possible.

\noindent
\case{Case}~$1$.~$(K(\XT)=10)$\;
Since ED spaces do not admit \pn s $26,27$ and OU spaces do not admit \pn\ $25$ it
follows from Table~\ref{tab:subset_types} and columns $25$-$30$ in
Table~\ref{tab:disjoint_union_phi} that $k(\XT)\geq6$.

\noindent
\case{Case}~$2$.~$(K(\XT)=14)$\;
By Table~\ref{tab:subset_types} and columns $13$-$30$ in
Table~\ref{tab:disjoint_union_phi}, $k(X_2)=14$ only if $X$ contains a subset with \pn\
$16$.
It follows by Lemma~\ref{lem:k-number_26} and Proposition~\ref{prop:phi-16} that $k(\XT)
\geq10$.

\noindent
Since both cases contradict $K(\XT)=k(\XT)+6$ the result follows.
\end{proof}

\begin{cor}\label{cor:increased_kf-number}
If \mth{$\XT$} is completely full then so is \mth{$X_n$} for all \mth{$n$.}
If \mth{$X_n$} is not completely full then \mth{$2\leq k_f(X_{n+1})-k_f(X_n)\leq20$.}
\end{cor}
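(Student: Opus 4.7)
The first assertion and the lower bound will follow by mimicking the proof of Corollary~\ref{cor:increased_k-number}, applying Lemma~\ref{lem:topsum} to all GE operators (not just Kuratowski ones). Concretely, any $A_0\su X$ realizing $k_f(A_0)=K_f(\XT)$ extends to $A_0\ds\es\ds\cdots\ds\es\su X_n$, which distinguishes the same pairs of operators in $\KF$ by Lemma~\ref{lem:topsum}; hence $k_f(X_n)=K_f(\XT)=K_f(X_n)$ and $X_n$ is completely full. For the lower bound, any $A\su X_n$ with $k_f(A)=k_f(X_n)<K_f(\XT)$ admits two distinct $o_1,o_2\in\KF$ agreeing on $A$; choosing $B\su X$ that distinguishes them gives $k_f(A\ds B)\geq k_f(A)+2$ in $X_{n+1}$ by Lemma~\ref{lem:topsum}.

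For the upper bound, since $k_f(X_{n+1})\leq K_f(X_{n+1})=K_f(\XT)$, it suffices to establish $K_f(\XT)-k_f(X_n)\leq20$. Scanning Table~\ref{tab:kf_kfa}, the only $(K_f,k_f)$ combination with gap exceeding $20$ is $K_f=34$ (i.e., $\XT$ is GE) with $k_f\in\{10,12\}$; Corollary~\ref{cor:kf_12} rules out $k_f=12$, forcing $k_f(X_n)=10$. Combining the lower bound with the exclusion of $12$, the sequence $k_f(X_m)$ must jump from $10$ to at least $14$ after one step, so $k_f(X_m)\geq14$ for all $m\geq2$. This reduces the problem to the single case $n=1$: \emph{if $\XT$ is GE with $k_f(\XT)=10$, show $k_f(X_2)\leq30$.}

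I plan to address this core case via Corollary~\ref{cor:topsum_phi_and_psi}: for any $A_1,A_2\su X$, the $\KZ$- and $\KF$-collapses satisfied by $A_1\ds A_2$ are the intersections of those satisfied by $A_1$ and $A_2$ separately. Inspection of Table~\ref{tab:subset_types} shows that every $A\su X$ with $k_f(A)\leq10$ has $\phi(A)\in\Phi:=\{14,15,16,17,22,23,24,25,26,27,28,29,30\}$. The decisive observation will be a brief pairwise check: any two $\phi_1,\phi_2\in\Phi$ share at least one equation among $\{bib=b,\,bib=ib,\,ibi=i,\,ibi=bi\}$. Indeed, ten of the thirteen \pn s in $\Phi$ individually include at least three of these four equations; the ``minimal'' cases $\phi\in\{24,26,27\}$ include only $bib=b$ and $ibi=i$, but both of these are present in every other $\phi\in\Phi$. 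Hence $\phi(A_1\ds A_2)\geq2$. Since \sn s $1,2,3,4$ all correspond to $\phi=1$ (per Table~\ref{tab:subset_types}), this yields $\psi(A_1\ds A_2)\geq5$, and that same table confirms every \sn{} $\geq5$ satisfies $k_f\leq30$.

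The main obstacle will be the finite pairwise verification inside $\Phi$, but this is entirely routine given Table~\ref{tab:subset_types} (or by tracing the \pn-semilattice in Figure~\ref{fig:meet_semilattices}); no conceptual novelty is required beyond the reduction above.
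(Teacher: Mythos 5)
Your argument reaches the correct conclusion and, for the upper bound, takes a genuinely different route from the paper's. The paper handles the one critical case ($\XT$ GE with $\kf(X_n)=10$) by passing through $k$-numbers: $\kf(X_n)=10$ forces $k(X_n)\leq8$ by Table~\ref{tab:subset_types}, Corollary~\ref{cor:increased_k-number} then gives $k(X_{n+1})\leq12$, and a subset with \knum\ $<14$ has \pn\ $\neq1$, hence \kfnum\ $\leq30$; this works for every $n$ at once with no case split on $n$. You instead argue directly with collapse intersections via Corollary~\ref{cor:topsum_phi_and_psi}. That is a legitimate and somewhat more self-contained alternative (it does not lean on the increment bound for $k$-numbers), though your reduction to $n=1$ is unnecessary overhead: the collapse of $A_1\ds\cdots\ds A_{n+1}$ is contained in the intersection of the collapses of any two summands, so your pairwise observation already bounds $\kf(X_{n+1})$ for every $n$ once you know every subset of $X$ has \pn\ in $\Phi$.

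One supporting assertion is false and needs repair. It is not true that $bib=b$ and $ibi=i$ are both present in every $\phi\in\Phi\sm\{24,26,27\}$: \pn\ $22$ ($bib=bi=b=ibi=ib$) does not contain $ibi=i$, its dual \pn\ $23$ does not contain $bib=b$, and likewise $16,28$ omit $ibi=i$ while $17,29$ omit $bib=b$. The pairwise claim you need is nevertheless true, and the correct justification is the counting argument already latent in your first observation: every $\phi\in\Phi\sm\{24,26,27\}$ contains at least three of the four equations, hence misses at most one, hence meets the two-element set $\{bib=b,\,ibi=i\}$ carried by each of $24,26,27$; and any two \pn s each containing at least three of the four share at least two. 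With that correction (and noting that $\phi(A_1\ds A_2)\neq1$ in fact gives $\psi(A_1\ds A_2)\geq7$, since \sn s $1$--$6$ all have \pn\ $1$), your proof stands.
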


\begin{proof}
Adjust the proof above for $\KF$ to get the first sentence and lower bound.
Since $\kf(\XT)=10\implies k(\XT)\leq8$ (see Table~\ref{tab:subset_types}),
Corollary~\ref{cor:increased_k-number} and Theorem~\ref{thm:KF_kf-numbers} imply the
upper bound.
\end{proof}

\bgroup
\begin{table}[!ht]
\caption{All $\psi$-numbers $\leq68$ in $X_n$ but not $X_{n-1}$ where $X_1$ is minimal
and $X_0:=\{0\}$.\\\smallskip\small\hspace{-137.5pt}Each list is followed by
$(k(X_n),k_{\hspace{-.5pt}f}(X_n))$.}
\renewcommand{\tabcolsep}{1pt}
\footnotesize
\centering\vspace{-12pt}
\begin{tabular}{|C{10pt}|L{82pt}|L{66pt}|L{50pt}|L{52pt}|L{26pt}|L{32pt}|L{26pt}|L{34pt}|L{34pt}|}
\multicolumn{1}{c}{}&\multicolumn{1}{c}{}&\multicolumn{1}{c}{}&\multicolumn{1}{c}{}&\multicolumn{1}{c}{}&\multicolumn{1}{c}{}&\multicolumn{2}{c}{partition}&\multicolumn{2}{c}{discrete}\\[-.5mm]
\multicolumn{1}{c}{$n$}&\multicolumn{1}{c}{GE}&\multicolumn{1}{c}{KD}&\multicolumn{1}{c}{OU}&
\multicolumn{1}{c}{ED}&\multicolumn{1}{c}{EO}&\multicolumn{1}{c}{non-i.}&\multicolumn{1}{c}{ind.}&\multicolumn{1}{c}{$|X|>1$}&\multicolumn{1}{c}{$|X|=1$}\\\Xhline{2\arrayrulewidth}
\multirow{2}{*}{$1$}&
\begin{minipage}[t]{82pt}\raggedright$42$,\bksp$44$,\bksp$62$,\bksp$63$,\bksp$65$,\bksp$67$\bksp$(8,10)$\end{minipage}&
\begin{minipage}[t]{66pt}\raggedright$31$,\bksp$46$,\bksp$48$,\bksp$59$,\bksp$60$,\bksp$62\mbox{-}68$\bksp$(10,18)$\end{minipage}&
\begin{minipage}[t]{50pt}\raggedright$62$,\bksp$63$,\bksp$65$,\bksp$67$\bksp$(4,8)$\end{minipage}&
\begin{minipage}[t]{52pt}\raggedright$61$,\bksp$65$,\bksp$67$\bksp$(4,4)$\end{minipage}&
\begin{minipage}[t]{26pt}\raggedright$65$,\bksp$67$\bksp$(4,4)$\end{minipage}&
\begin{minipage}[t]{32pt}\raggedright$59$,\bksp$60$,\bksp$68$\bksp$(6,6)$\end{minipage}&
\begin{minipage}[t]{26pt}\raggedright$61$\bksp$(4,4)$\end{minipage}&
\begin{minipage}[t]{34pt}\raggedright$68$\bksp$(2,4)$\end{minipage}&
\begin{minipage}[t]{34pt}\raggedright$(2,2)$\end{minipage}\\\hline
\multirow{3}{*}{$2$}&
\begin{minipage}[t]{82pt}\raggedright$24$,\bksp$25$,\bksp$27$,\bksp$32$,\bksp$33$,\bksp$35$,\bksp$37$,\bksp$40$,\bksp$41$,\bksp$43$,\bksp$49\mbox{-}52$,\bksp$57$,\bksp$64$,\bksp$66$,\bksp$68$\bksp$(10,24)$\end{minipage}&
\begin{minipage}[t]{56pt}\raggedright$13$,\bksp$20$,\bksp$38$,\bksp$40$,\bksp$45$,\bksp$47$,\bksp$49\mbox{-}52$,\bksp$57$,\bksp$58$\bksp$(12,22)$\end{minipage}&
\begin{minipage}[t]{50pt}\raggedright$40$,\bksp$49\mbox{-}52$,\bksp$57$,\bksp$64$,\bksp$66$,\bksp$68$\bksp$(8,16)$\end{minipage}&
\begin{minipage}[t]{52pt}\raggedright$40$,\bksp$46$,\bksp$48$,\bksp$59$,\bksp$60$,\bksp$64$,\bksp$66$,\bksp$68$\bksp$(8,16)$\end{minipage}&
\begin{minipage}[t]{26pt}\raggedright$40$,\bksp$64$,\bksp$66$,\bksp$68$\bksp$(8,16)$\end{minipage}&
\begin{minipage}[t]{26pt}\raggedright$58$\bksp$(6,10)$\end{minipage}&
\begin{minipage}[t]{26pt}\raggedright$59$,\bksp$60$,\bksp$68$\bksp$(6,6)$\end{minipage}&&
\begin{minipage}[t]{34pt}\raggedright$68$\bksp$(2,4)$\end{minipage}\\\hline
\multirow{2}{*}{$3$}&
\begin{minipage}[t]{82pt}\raggedright$1$,\bksp$7$,\bksp$9$,\bksp$14$,\bksp$16$,\bksp$21\mbox{-}23$,\bksp$34$,\bksp$36$,\bksp$39$\bksp$(14$,\bksp$34)$\end{minipage}&
\begin{minipage}[t]{56pt}\raggedright$6$,\bksp$39$\bksp$(14,28)$\end{minipage}&
\begin{minipage}[t]{50pt}\raggedright$39$\bksp$(10,20)$\end{minipage}&
\begin{minipage}[t]{52pt}\raggedright$38$,\bksp$45$,\bksp$47$,\bksp$58$\bksp$(10,22)$\end{minipage}&&&
\begin{minipage}[t]{26pt}\raggedright$58$\bksp$(6,10)$\end{minipage}&&\\\hline
\end{tabular}\label{tab:psi_topsum}
\end{table}
\egroup

Let $X=\{v,w,x,y,z\}$ and $\mathcal{T}$ be the topology on $X$ with base
$\{\{v\}$, $\{v,w\}$, $\{x,y\}$, $\{v,x,y,z\}\}$.
There exists $A\su X$ with $\psi A=35$.
The only other \sn s $<49$ that occur in $X$ are $37,42,44$.
It follows by Table~\ref{tab:subset_types} and Figure~\ref{fig:meet_semilattices} that
$k(X_1)=10$, $k(X_2)=14$, $\kf(X_1)=14$, and $\kf(X_2)=34$.
Hence the upper bounds are sharp in Corollaries~\ref{cor:increased_k-number}
and~\ref{cor:increased_kf-number}.
Since the sum space on two copies of the minimal indiscrete partition space only
increases the space's $k$- and \kfnum s by $2$ (see Table~\ref{tab:psi_topsum}),
the lower bounds are sharp.

Table~\ref{tab:psi_topsum} follows directly from Theorem~\ref{thm:GE_implications} and
Corollary~\ref{cor:topsum_phi_and_psi}.
It is complete since no further \sn s appear in $X_n$ for $n\geq4$.
Propositions~\ref{prop:topsum} and~\ref{prop:kf_topsum} follow directly from
Theorem~\ref{thm:GE_implications}, Lemma~\ref{lem:topsum}, its $\KF$ analogue, and
Table~\ref{tab:psi_topsum}.
The number of copies in each sum is sharp by Table~\ref{tab:psi_topsum}.

\begin{prop}\label{prop:kf_topsum}
Let \mth{$\XT$} have GE monoid \mth{$\KF$.}

\noindent\hangindent=20pt
\nit{\makebox[16pt]{\hfill(i)}}\phantom{t}If \mth{$\Kf(\XT)\in\{4,16\}$} then
\mth{$X_2$} is completely full with GE monoid \mth{$\KF$.}

\noindent\hangindent=20pt
\nit{\makebox[16pt]{\hfill(ii)}}\phantom{t}If \mth{$\Kf(\XT)\in\{10,20,22,28,34\}$} then
\mth{$X_3$} is completely full with GE monoid \mth{$\KF$.}
\end{prop}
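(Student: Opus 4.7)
The plan is to split the statement into two independent pieces---that $X_n$ has GE monoid $\KF$, and that $X_n$ is completely full---and to dispatch each by a direct appeal to machinery already established.

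For the monoid claim, I would invoke the $\KF$ analogue of Lemma~\ref{lem:topsum} (which holds verbatim, since its proof uses only the disjointness of the copies): two operators agree on $X_n$ iff they agree on each copy. Because each copy is homeomorphic to $\XT$, the set of equations satisfied by $X_n$ coincides with the set satisfied by $\XT$, so the quotient of $\monoid{O}$ by agreement on $X_n$ is isomorphic to $\KF$. This holds for every $n\geq 1$, giving the GE-monoid conclusion of both (i) and (ii).

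For complete fullness the aim is to produce some $A\su X_n$ with $\kf(A)=\Kf(\XT)$. Given subsets $A_i\su X$ placed in the $i$-th copy, Corollary~\ref{cor:topsum_phi_and_psi} identifies $\psi(A_1\ds\cdots\ds A_n)$ as the meet of the $\psi A_i$ in the semilattice of Figure~\ref{fig:meet_semilattices}. Realizing the maximum \sn\ in $X_n$ therefore reduces to choosing $n$ subsets of $\XT$ whose \sn s meet to the desired target, subject to each of those \sn s actually being instantiated in $\XT$. Instantiation is guaranteed by Theorem~\ref{thm:GE_implications}, which supplies, for each GE-monoid type, a canonical list of dual-pair classes of \sn s that must occur in the space. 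A short case analysis by $\Kf(\XT)$ then finishes: in (i), if $\Kf(\XT)=4$ (discrete with $|X|>1$, or indiscrete partition) the space $\XT$ already carries a subset of \sn\ $68$ or $61$ with $\kf=4$, so $X_2$ is completely full a fortiori; if $\Kf(\XT)=16$ then $\XT$ is EO and contains subsets of \sn s $65$ and $67$, whose meet in Figure~\ref{fig:meet_semilattices} is \sn\ $40$ with $\kf=16$, first realized in $X_2$ by Table~\ref{tab:psi_topsum}. For (ii), each of the GE-monoid types non-indiscrete partition, OU, ED, KD, and GE has, via Theorem~\ref{thm:GE_implications}, at least three distinct dual-pair classes of guaranteed \sn s in $\XT$, and Table~\ref{tab:psi_topsum} records exactly the triple meets that attain $\kf=10,20,22,28,34$ respectively in $X_3$.

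The main obstacle is purely bookkeeping: for each GE-monoid type appearing in (ii), one must verify that some triple chosen from Theorem~\ref{thm:GE_implications}'s guaranteed \sn s meets in Figure~\ref{fig:meet_semilattices} to a \sn\ of the correct $\kf$-value. Once the semilattice meets are confirmed, no further argument is needed: Corollary~\ref{cor:topsum_phi_and_psi} produces the subset, Lemma~\ref{lem:topsum}'s $\KF$ analogue certifies that the GE monoid is preserved, and the proposition follows. Sharpness of the stated number of copies drops out from inspecting Table~\ref{tab:psi_topsum} to see which \sn s first appear in $X_1$ versus $X_2$.
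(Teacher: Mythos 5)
Your overall route is the same as the paper's: the $\KF$ analogue of Lemma~\ref{lem:topsum} handles preservation of the GE monoid, and complete fullness is read off from Theorem~\ref{thm:GE_implications}, Corollary~\ref{cor:topsum_phi_and_psi}, and Table~\ref{tab:psi_topsum}. However, your case analysis contains a genuine misclassification. The hypothesis is on $\Kf(\XT)=|\KF|$, and every partition space -- indiscrete or not -- has $\Kf(\XT)=10$, so the indiscrete partition space belongs to part~(ii), not to the $\Kf(\XT)=4$ case of part~(i) where you placed it ($\Kf(\XT)=4$ holds only for discrete spaces). This matters because the indiscrete space is precisely the reason part~(ii) needs three copies when $\Kf(\XT)=10$: the non-indiscrete partition spaces already reach $\kf=10$ in $X_2$ (Table~\ref{tab:psi_topsum}), whereas the indiscrete space has only the single guaranteed class $\{61\}$ from Theorem~\ref{thm:GE_implications}, so your claim that each type in (ii) carries ``at least three distinct dual-pair classes of guaranteed \sn s'' fails for it. One must instead use the clopen sets: $61\wedge69\wedge70=58$ with $\kf=10$ in $X_3$, while all pairwise meets of $\{61,69,70\}$ land in $\{59,60,61,68,69,70\}$ with $\kf\leq6$, so $X_2$ is not completely full there. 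As written, your argument never establishes part~(ii) for this space.

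A secondary slip in part~(i): for the one-point discrete space we have $\Kf(\XT)=4$ but $\kf(X_1)=2$, so the ``$X_2$ is completely full a fortiori'' claim does not apply; you need the $X_2$ row of Table~\ref{tab:psi_topsum} (\sn\ $68$ first appears in $X_2$, giving $\kf(X_2)=4$). The rest of the bookkeeping -- EO via $65\wedge67=40$ in $X_2$, and the OU, ED, KD, GE columns of Table~\ref{tab:psi_topsum} for $X_3$ -- is correct.
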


\bgroup
\vspace{-2pt}
\begin{table}[!ht]
\caption{Intersections of $\phi$- and \sn s $\leq30$.\\\smallskip\small$\phi$-numbers are
below the diagonal, $\psi$-numbers above it.}
\centering
\renewcommand{\arraystretch}{1}
\renewcommand{\tabcolsep}{1.5pt}
\footnotesize
\begin{tabular}{c!{\vrule width1pt}cacacacacacacacacacacacacacaca!{\vrule width1pt}c}\multicolumn{1}{c!{\vrule width1pt}}{}&
\p{1}&\p{2}&\p{3}&\p{4}&\p{5}&\p{6}&\p{7}&\p{8}&\p{9}&\p{10}&\p{11}&\p{12}&\p{13}&\p{14}&\p{15}&\p{16}&\p{17}&\p{18}&\p{19}&\p{20}&\p{21}&\p{22}&\p{23}&\p{24}&\p{25}&\p{26}&\p{27}&\p{28}&\p{29}&\multicolumn{1}{a!{\vrule width1pt}}{\p{30}}&\emph{$\psi$}\\\Xhline{1pt}
\p{1}&\makebox[9pt]{\hfill\p{\mathbf{1}}\hfill}&\p{1}&\p{1}&\p{1}&\p{1}&\p{1}&\p{1}&\p{1}&\p{1}&\p{1}&\p{1}&\p{1}&\p{1}&\p{1}&\p{1}&\p{1}&\p{1}&\p{1}&\p{1}&\p{1}&\p{1}&\p{1}&\p{1}&\p{1}&\p{1}&\p{1}&\p{1}&\p{1}&\p{1}&\p{1}&\p{1}\\
\p{2}&\p{1}&\makebox[9pt]{\hfill\p{\mathbf{2}}\hfill}&\p{1}&\p{1}&\p{2}&\p{2}&\p{1}&\p{1}&\p{1}&\p{1}&\p{2}&\p{2}&\p{2}&\p{1}&\p{1}&\p{1}&\p{1}&\p{2}&\p{2}&\p{2}&\p{1}&\p{1}&\p{1}&\p{1}&\p{1}&\p{2}&\p{1}&\p{1}&\p{1}&\p{2}&\p{2}\\\hline
\p{3}&\p{1}&\p{1}&\makebox[9pt]{\hfill\p{\mathbf{3}}\hfill}&\p{1}&\p{3}&\p{1}&\p{1}&\p{3}&\p{1}&\p{1}&\p{1}&\p{3}&\p{1}&\p{1}&\p{1}&\p{1}&\p{3}&\p{1}&\p{3}&\p{1}&\p{1}&\p{3}&\p{1}&\p{1}&\p{1}&\p{1}&\p{1}&\p{3}&\p{1}&\p{3}&\p{3}\\
\p{4}&\p{1}&\p{1}&\p{1}&\makebox[9pt]{\hfill\p{\mathbf{4}}\hfill}&\p{4}&\p{1}&\p{1}&\p{1}&\p{1}&\p{4}&\p{1}&\p{4}&\p{1}&\p{1}&\p{4}&\p{1}&\p{1}&\p{1}&\p{4}&\p{1}&\p{4}&\p{1}&\p{1}&\p{1}&\p{1}&\p{1}&\p{1}&\p{1}&\p{4}&\p{4}&\p{4}\\\hline
\p{5}&\p{1}&\p{1}&\p{1}&\p{1}&\makebox[9pt]{\hfill\p{\mathbf{5}}\hfill}&\p{2}&\p{1}&\p{3}&\p{1}&\p{4}&\p{2}&\p{5}&\p{2}&\p{1}&\p{4}&\p{1}&\p{3}&\p{2}&\p{5}&\p{2}&\p{4}&\p{3}&\p{1}&\p{1}&\p{1}&\p{2}&\p{1}&\p{3}&\p{4}&\p{5}&\p{5}\\
\p{6}&\p{1}&\p{2}&\p{3}&\p{1}&\p{1}&\makebox[9pt]{\hfill\p{\mathbf{6}}\hfill}&\p{1}&\p{1}&\p{1}&\p{1}&\p{2}&\p{2}&\p{6}&\p{1}&\p{1}&\p{1}&\p{1}&\p{2}&\p{2}&\p{6}&\p{1}&\p{1}&\p{1}&\p{1}&\p{1}&\p{2}&\p{1}&\p{1}&\p{1}&\p{2}&\p{6}\\\hline
\p{7}&\p{1}&\p{1}&\p{3}&\p{4}&\p{1}&\p{3}&\makebox[9pt]{\hfill\p{\mathbf{7}}\hfill}&\p{7}&\p{7}&\p{7}&\p{7}&\p{7}&\p{7}&\p{1}&\p{1}&\p{1}&\p{1}&\p{1}&\p{1}&\p{1}&\p{1}&\p{1}&\p{7}&\p{7}&\p{7}&\p{7}&\p{7}&\p{7}&\p{7}&\p{7}&\p{7}\\
\p{8}&\p{1}&\p{2}&\p{1}&\p{1}&\p{5}&\p{2}&\p{1}&\makebox[9pt]{\hfill\p{\mathbf{8}}\hfill}&\p{7}&\p{7}&\p{7}&\p{8}&\p{7}&\p{1}&\p{1}&\p{1}&\p{3}&\p{1}&\p{3}&\p{1}&\p{1}&\p{3}&\p{7}&\p{7}&\p{7}&\p{7}&\p{7}&\p{8}&\p{7}&\p{8}&\p{8}\\\hline
\p{9}&\p{1}&\p{2}&\p{1}&\p{4}&\p{1}&\p{2}&\p{4}&\p{2}&\makebox[9pt]{\hfill\p{\mathbf{9}}\hfill}&\p{9}&\p{9}&\p{9}&\p{9}&\p{1}&\p{1}&\p{1}&\p{1}&\p{1}&\p{1}&\p{1}&\p{1}&\p{1}&\p{7}&\p{7}&\p{9}&\p{9}&\p{7}&\p{7}&\p{9}&\p{9}&\p{9}\\
\p{10}&\p{1}&\p{1}&\p{3}&\p{1}&\p{5}&\p{3}&\p{3}&\p{5}&\p{1}&\makebox[9pt]{\hspace{-.8pt}\p{\mathbf{10}}}&\p{9}&\p{10}&\p{9}&\p{1}&\p{4}&\p{1}&\p{1}&\p{1}&\p{4}&\p{1}&\p{4}&\p{1}&\p{7}&\p{7}&\p{9}&\p{9}&\p{7}&\p{7}&\p{10}&\p{10}&\p{10}\\\hline
\p{11}&\p{1}&\p{1}&\p{1}&\p{4}&\p{5}&\p{1}&\p{4}&\p{5}&\p{4}&\p{5}&\makebox[9pt]{\hspace{-.5pt}\p{\mathbf{11}}}&\p{11}&\p{11}&\p{1}&\p{1}&\p{1}&\p{1}&\p{2}&\p{2}&\p{2}&\p{1}&\p{1}&\p{7}&\p{7}&\p{9}&\p{11}&\p{7}&\p{7}&\p{9}&\p{11}&\p{11}\\
\p{12}&\p{1}&\p{1}&\p{1}&\p{1}&\p{1}&\p{1}&\p{1}&\p{1}&\p{1}&\p{1}&\p{1}&\makebox[9pt]{\hspace{-1pt}\p{\mathbf{12}}}&\p{11}&\p{1}&\p{4}&\p{1}&\p{3}&\p{2}&\p{5}&\p{2}&\p{4}&\p{3}&\p{7}&\p{7}&\p{9}&\p{11}&\p{7}&\p{8}&\p{10}&\p{12}&\p{12}\\\hline
\p{13}&\p{1}&\p{1}&\p{1}&\p{4}&\p{5}&\p{1}&\p{4}&\p{5}&\p{4}&\p{5}&\p{11}&\p{12}&\makebox[9pt]{\hspace{-.8pt}\p{\mathbf{13}}}&\p{1}&\p{1}&\p{1}&\p{1}&\p{2}&\p{2}&\p{6}&\p{1}&\p{1}&\p{7}&\p{7}&\p{9}&\p{11}&\p{7}&\p{7}&\p{9}&\p{11}&\p{13}\\
\p{14}&\p{1}&\p{2}&\p{3}&\p{4}&\p{1}&\p{6}&\p{7}&\p{2}&\p{9}&\p{3}&\p{4}&\p{1}&\p{4}&\makebox[9pt]{\hspace{-1pt}\p{\mathbf{14}}}&\p{14}&\p{14}&\p{14}&\p{14}&\p{14}&\p{14}&\p{1}&\p{1}&\p{14}&\p{14}&\p{14}&\p{14}&\p{14}&\p{14}&\p{14}&\p{14}&\p{14}\\\hline
\p{15}&\p{1}&\p{2}&\p{3}&\p{1}&\p{5}&\p{6}&\p{3}&\p{8}&\p{2}&\p{10}&\p{5}&\p{1}&\p{5}&\p{6}&\makebox[9pt]{\hspace{-1pt}\p{\mathbf{15}}}&\p{14}&\p{14}&\p{14}&\p{15}&\p{14}&\p{4}&\p{1}&\p{14}&\p{14}&\p{14}&\p{14}&\p{14}&\p{14}&\p{15}&\p{15}&\p{15}\\
\p{16}&\p{1}&\p{2}&\p{1}&\p{4}&\p{5}&\p{2}&\p{4}&\p{8}&\p{9}&\p{5}&\p{11}&\p{1}&\p{11}&\p{9}&\p{8}&\makebox[9pt]{\hspace{-1pt}\p{\mathbf{16}}}&\p{16}&\p{16}&\p{16}&\p{16}&\p{1}&\p{1}&\p{14}&\p{16}&\p{14}&\p{16}&\p{14}&\p{16}&\p{14}&\p{16}&\p{16}\\\hline
\p{17}&\p{1}&\p{1}&\p{3}&\p{4}&\p{5}&\p{3}&\p{7}&\p{5}&\p{4}&\p{10}&\p{11}&\p{1}&\p{11}&\p{7}&\p{10}&\p{11}&\makebox[9pt]{\hspace{-.8pt}\p{\mathbf{17}}}&\p{16}&\p{17}&\p{16}&\p{1}&\p{3}&\p{14}&\p{16}&\p{14}&\p{16}&\p{14}&\p{17}&\p{14}&\p{17}&\p{17}\\
\p{18}&\p{1}&\p{2}&\p{1}&\p{1}&\p{1}&\p{2}&\p{1}&\p{2}&\p{2}&\p{1}&\p{1}&\p{12}&\p{12}&\p{2}&\p{2}&\p{2}&\p{1}&\makebox[9pt]{\hspace{-1pt}\p{\mathbf{18}}}&\p{18}&\p{18}&\p{1}&\p{1}&\p{14}&\p{16}&\p{14}&\p{18}&\p{14}&\p{16}&\p{14}&\p{18}&\p{18}\\\hline
\p{19}&\p{1}&\p{1}&\p{3}&\p{1}&\p{1}&\p{3}&\p{3}&\p{1}&\p{1}&\p{3}&\p{1}&\p{12}&\p{12}&\p{3}&\p{3}&\p{1}&\p{3}&\p{12}&\makebox[9pt]{\hspace{-.9pt}\p{\mathbf{19}}}&\p{18}&\p{4}&\p{3}&\p{14}&\p{16}&\p{14}&\p{18}&\p{14}&\p{17}&\p{15}&\p{19}&\p{19}\\
\p{20}&\p{1}&\p{2}&\p{1}&\p{1}&\p{1}&\p{2}&\p{1}&\p{2}&\p{2}&\p{1}&\p{1}&\p{12}&\p{12}&\p{2}&\p{2}&\p{2}&\p{1}&\p{18}&\p{12}&\makebox[9pt]{\hspace{-.6pt}\p{\mathbf{20}}}&\p{1}&\p{1}&\p{14}&\p{16}&\p{14}&\p{18}&\p{14}&\p{16}&\p{14}&\p{18}&\p{20}\\\hline
\p{21}&\p{1}&\p{1}&\p{3}&\p{1}&\p{1}&\p{3}&\p{3}&\p{1}&\p{1}&\p{3}&\p{1}&\p{12}&\p{12}&\p{3}&\p{3}&\p{1}&\p{3}&\p{12}&\p{19}&\p{12}&\makebox[9pt]{\hspace{-.3pt}\p{\mathbf{21}}}&\p{1}&\p{1}&\p{1}&\p{1}&\p{1}&\p{1}&\p{1}&\p{4}&\p{4}&\p{21}\\
\p{22}&\p{1}&\p{2}&\p{1}&\p{4}&\p{5}&\p{2}&\p{4}&\p{8}&\p{9}&\p{5}&\p{11}&\p{12}&\p{13}&\p{9}&\p{8}&\p{16}&\p{11}&\p{18}&\p{12}&\p{18}&\p{12}&\makebox[9pt]{\hspace{-.6pt}\p{\mathbf{22}}}&\p{1}&\p{1}&\p{1}&\p{1}&\p{1}&\p{3}&\p{1}&\p{3}&\p{22}\\\hline
\p{23}&\p{1}&\p{1}&\p{3}&\p{4}&\p{5}&\p{3}&\p{7}&\p{5}&\p{4}&\p{10}&\p{11}&\p{12}&\p{13}&\p{7}&\p{10}&\p{11}&\p{17}&\p{12}&\p{19}&\p{12}&\p{19}&\p{13}&\makebox[9pt]{\hspace{-.6pt}\p{\mathbf{23}}}&\p{23}&\p{23}&\p{23}&\p{23}&\p{23}&\p{23}&\p{23}&\p{23}\\
\p{24}&\p{1}&\p{2}&\p{3}&\p{1}&\p{1}&\p{6}&\p{3}&\p{2}&\p{2}&\p{3}&\p{1}&\p{12}&\p{12}&\p{6}&\p{6}&\p{2}&\p{3}&\p{18}&\p{19}&\p{18}&\p{19}&\p{18}&\p{19}&\makebox[9pt]{\hspace{-.8pt}\p{\mathbf{24}}}&\p{23}&\p{24}&\p{23}&\p{24}&\p{23}&\p{24}&\p{24}\\\hline
\p{25}&\p{1}&\p{2}&\p{3}&\p{4}&\p{5}&\p{6}&\p{7}&\p{8}&\p{9}&\p{10}&\p{11}&\p{1}&\p{11}&\p{14}&\p{15}&\p{16}&\p{17}&\p{2}&\p{3}&\p{2}&\p{3}&\p{16}&\p{17}&\p{6}&\makebox[9pt]{\hspace{-.6pt}\p{\mathbf{25}}}&\p{25}&\p{23}&\p{23}&\p{25}&\p{25}&\p{25}\\
\p{26}&\p{1}&\p{2}&\p{3}&\p{1}&\p{1}&\p{6}&\p{3}&\p{2}&\p{2}&\p{3}&\p{1}&\p{12}&\p{12}&\p{6}&\p{6}&\p{2}&\p{3}&\p{18}&\p{19}&\p{18}&\p{21}&\p{18}&\p{19}&\p{24}&\p{6}&\makebox[9pt]{\hspace{-.8pt}\p{\mathbf{26}}}&\p{23}&\p{24}&\p{25}&\p{26}&\p{26}\\\hline
\p{27}&\p{1}&\p{2}&\p{3}&\p{1}&\p{1}&\p{6}&\p{3}&\p{2}&\p{2}&\p{3}&\p{1}&\p{12}&\p{12}&\p{6}&\p{6}&\p{2}&\p{3}&\p{18}&\p{19}&\p{20}&\p{19}&\p{18}&\p{19}&\p{24}&\p{6}&\p{24}&\makebox[9pt]{\hspace{-.8pt}\p{\mathbf{27}}}&\p{27}&\p{27}&\p{27}&\p{27}\\
\p{28}&\p{1}&\p{2}&\p{1}&\p{4}&\p{5}&\p{2}&\p{4}&\p{8}&\p{9}&\p{5}&\p{11}&\p{12}&\p{13}&\p{9}&\p{8}&\p{16}&\p{11}&\p{18}&\p{12}&\p{20}&\p{12}&\p{22}&\p{13}&\p{18}&\p{16}&\p{18}&\p{20}&\makebox[9pt]{\hspace{-.8pt}\p{\mathbf{28}}}&\p{27}&\p{28}&\p{28}\\\hline
\p{29}&\p{1}&\p{1}&\p{3}&\p{4}&\p{5}&\p{3}&\p{7}&\p{5}&\p{4}&\p{10}&\p{11}&\p{12}&\p{13}&\p{7}&\p{10}&\p{11}&\p{17}&\p{12}&\p{19}&\p{12}&\p{21}&\p{13}&\p{23}&\p{19}&\p{17}&\p{21}&\p{19}&\p{13}&\makebox[9pt]{\hspace{-.8pt}\p{\mathbf{29}}}&\p{29}&\p{29}\\
\p{30}&\p{1}&\p{2}&\p{3}&\p{4}&\p{5}&\p{6}&\p{7}&\p{8}&\p{9}&\p{10}&\p{11}&\p{12}&\p{13}&\p{14}&\p{15}&\p{16}&\p{17}&\p{18}&\p{19}&\p{20}&\p{21}&\p{22}&\p{23}&\p{24}&\p{25}&\p{26}&\p{27}&\p{28}&\p{29}&\makebox[9pt]{\hspace{-.8pt}\p{\mathbf{30}}}&\p{30}\\\Xhline{1pt}
\multicolumn{1}{c!{\vrule width1pt}}{$\phi$}&
\p{1}&\p{2}&\p{3}&\p{4}&\p{5}&\p{6}&\p{7}&\p{8}&\p{9}&\p{10}&\p{11}&\p{12}&\p{13}&\p{14}&\p{15}&\p{16}&\p{17}&\p{18}&\p{19}&\p{20}&\p{21}&\p{22}&\p{23}&\p{24}&\p{25}&\p{26}&\p{27}&\p{28}&\p{29}&\multicolumn{1}{a!{\vrule width1pt}}{\p{30}}\\
\end{tabular}\label{tab:disjoint_union_phi}
\end{table}
\egroup

Our final result requires a computer to verify.
See the footnote for details.

\begin{prop}\label{prop:all_psi}
There exists a topological space in which all \mth{$70$} \sn s occur\nit. 
\end{prop}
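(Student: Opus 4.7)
The plan is to exhibit the desired space as a topological sum of small ``local'' spaces realizing each $\psi$-number individually. By Table~\ref{tab:minimal_psi}, for each $n\in\{1,\ldots,70\}$ there exists a finite space $(X_n,\mathcal{T}_n)$ of cardinality at most~$8$ carrying a subset $A_n$ with $\psi A_n=n$ (these bounds coming from exhaustive search over small finite topologies). I would set
\[
(X,\mathcal{T}):=\sum_{n=1}^{70}(X_n,\mathcal{T}_n),
\]
and for each $n$ construct a realizing subset $B_n\su X$.

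The key technical input is the componentwise action of the generators $a,b$ (and hence $i,\f,g$) on clopen summands, as already exploited in Lemma~\ref{lem:topsum}: for $S=\dot\cup_j S_j\su X$ and $o\in\KFZ$, one has $oS=\dot\cup_j(oS_j)$ with each $oS_j$ computed inside $X_j$. For most $\psi$-numbers it therefore suffices to take $B_n:=A_n$, viewed as a subset of $X$ via the $n$-th inclusion; Table~\ref{tab:kf_akf} then forces $|\KF B_n|=2|\KFZ B_n|$ (since $bB_n\neq X$ and $iB_n\neq\es$), and the $\KFZ$-collapse of $B_n$ in $X$ matches that of $A_n$ in $X_n$, giving $\psi B_n=n$. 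The problematic cases are the $\psi$-numbers whose defining collapse contains a global equation such as $b\eqy 1$, $i\eqy 0$, $\f\eqy 1$, $\ident\eqy 1$, or $\ident\eqy 0$---these cannot survive a naive embedding. For these I would build $B_n$ by enlarging $A_n$ to absorb the extra summands (e.g.\ $B_n:=A_n\cup\dot\cup_{j\neq n}X_j$ to force $bB_n=X$), with the extremal cases $\psi=69,70$ handled by $B_{69}:=X$ and $B_{70}:=\es$, and $\psi=61$ requiring a resolvable $A_{61}\su X_{61}$ together with dense co-dense fillers inside every other component of the sum.

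The main obstacle is bookkeeping rather than any single deep step: one must verify, case by case, that each enlarged $B_n$ realizes \emph{precisely} the intended collapse of $\KF$---no equation should be lost by the enlargement and none gained. The componentwise action ensures that $\KFZ$-level equations on $A_n$ are preserved by $B_n$ in $X$; Table~\ref{tab:kf_akf} then governs which cross-parity equations hold globally; and the few remaining fine-grained checks (chiefly for $\psi=61$ and for the $\psi$-numbers high in the meet semilattice of Figure~\ref{fig:meet_semilattices}) can be dispatched by the computer calculation cited in the footnote to Table~\ref{tab:minimal_psi}, completing the proof.
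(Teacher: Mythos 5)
Your overall strategy is the same one the paper uses: form a topological sum, compute every operator componentwise via Lemma~\ref{lem:topsum} and Corollary~\ref{cor:topsum_phi_and_psi}, and pad the components not carrying $A_n$ with $X$, $\es$, or dense codense sets according to whether the target collapse lies below \sn~$69$, below \sn~$70$, or is \sn~$61$. The paper simply does this more economically, with four $11$-point summands instead of seventy, and with the dichotomy governed precisely by the meet semilattice of Figure~\ref{fig:meet_semilattices} rather than by an informal list of ``global'' equations (note, for instance, that $i\eqy0$ needs no special treatment at all under $\es$-fillers, since $\{i,0\}$ lies in the collapse of \sn~$70$).

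The genuine gap is in your treatment of \sn~$61$. Since $\f\bigl(\dot\cup_j S_j\bigr)=\dot\cup_j\f S_j$, the equation $\f B_{61}=X$ forces $\f(B_{61}\cap X_j)=X_j$ for \emph{every} summand $X_j$, i.e.\ every summand must be resolvable. But the minimal realizers you take from Table~\ref{tab:minimal_psi} are not all resolvable: the one-point space realizing \sn~$69$ and the two-point discrete space realizing \sn~$68$ admit no dense codense subset, so the ``dense co-dense fillers inside every other component'' you invoke simply do not exist there, and no choice of $B_{61}$ in your sum satisfies $\f\eqy1$. This is exactly why the paper's proof insists from the outset that its four topologies $\mathcal{T}_1,\dots,\mathcal{T}_4$ be \emph{resolvable} and then takes $U_{61}=Q_1\ds Q_2\ds Q_3\ds Q_4$ with $\psi Q_j=61$ in each summand. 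The repair is not hard --- replace each irresolvable summand by a resolvable space realizing the same \sn\ (e.g.\ \sn~$69$ is realized by $X$ in any space, and \sn~$68$ by any nonempty proper clopen set of a disconnected resolvable space) --- but it is an additional constraint on the choice of summands that your construction, as written, violates.
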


\begin{proof}
Let $X$ be the $11$-point set $\{p,q,r,s,t,u,v,w,x,y,z\}$.
Resolvable topologies $\mathcal{T}_1,\mathcal{T}_2,\mathcal{T}_3,\mathcal{T}_4$ exist on
$X$ with the following property: For each $1\leq n\leq68$ there exists $A_n\su X$ such
that $\psi A_n=n$ in $(X,\mathcal{T}_{m(n)})$ for some $1\leq m(n)\leq4$.\footnote{Bases
for $\mathcal{T}_1,\dots,\mathcal{T}_4$ are, respectively:
$\{\{p,q\}$, $\{r,s\}$, $\{t,u\}$, $\{v,w\}$, $\{p,q,r,s,x\}$, $\{v,w,y\}$,
$\{p,q,t,u,v,w,y,z\}\}$, $\{\{p,q\}$, $\{r,s\}$, $\{t,u\}$, $\{v,w\}$, $\{r,s,x\}$,
$\{v,w,y\}$, $\{t,u,v,w,y,z\}\}$, $\{\{p,q\}$, $\{r,s\}$, $\{t,u\}$, $\{p,q,r,s,t,u,v\}$,
$\{p,q,r,s,w\}$, $\{p,q,x\}$, $\{r,s,y\}$, $\{t,u,z\}\}$, $\{\{p,q\}$, $\{r,s\}$,
$\{t,u\}$, $\{p,q,r,s,v\}$, $\{p,q,t,u,w\}$, $\{p,q,x\}$, $\{r,s,t,u,y\}$, $\{r,s,z\}\}$.
We verified by computer that $\mathcal{T}_j$ generates \sn\ $61$ for $j=1,2,3,4$;
$\mathcal{T}_1$ generates \sn s $5$-$19$ and $24$-$30$ except $6$ and $13$;
$\mathcal{T}_2$ generates \sn s $6$, $13$, $20$-$22$, and $31$-$68$ except $57$;
$\mathcal{T}_3$ generates \sn s $2$-$4$ and $57$; and $\mathcal{T}_4$ generates \sn s $1$
and $23$.
For $j=4,\dots,11$, respectively, the largest number of \sn s satisfied in one space on
$j$ points is $12,17,25,32,38,43,52,59$.
The smallest two-step increase is $11$, thus the smallest cardinality admitting all $70$
is likely to be $13$.
Since there are approximately $16.5$ billion nonhomeomorphic non-$T_0$ spaces on $13$
points, the probability of finding such a space using a home computer is very low at
present (Kuratowski $14$-sets do not occur in finite $T_0$ spaces
\cite{1966_herda_metzler}).
Ruling out cardinality $12$ might be even more challenging.
The smallest cardinality admitting all $30$ \pn s is $10$.
One such $10$-space has base $\{\{q\}$, $\{r\}$, $\{s\}$, $\{t,u\}$, $\{v,w\}$,
$\{q,x\}$, $\{r,s,y\}$, $\{q,v,w,x,z\}\}$.}

For each $n$ such that \sn\ $n$ is a subset of \sn\ $69$, let $U_n$ be the disjoint
union of three copies of $X$ in positions $j\neq m(n)$ with $A_n$ in the $m(n)$th
position.
For all other $n$ except $n=61$ define $U_n$ similarly with $\es$ in place of $X$.
By resolvability there exist $Q_j\su X$ such that $\psi Q_j=61$ in $(X,\mathcal{T}_j)$
for $1\leq j\leq4$.
Let $U_{61}=Q_1\ds Q_2\ds Q_3\ds Q_4$.
Lemma~\ref{lem:topsum} implies $\psi U_n=n$ in $\sum_{j=1}^4(X,\mathcal{T}_j)$ for
$1\leq n\leq68$.
\end{proof}

\section*{Closing Remarks}

Section~4 of GJ deals with the \define{Kuratowski $\Y$-problem}: Does the set
$\{|\YA|$\,$:$\,all $\XT$ and all $A\su X\}$ have a finite supremum and if so what is it?
We generalize this problem by calling any optimization problem that involves a given
\KurZar{collection} $\Y$ of set operators on the power set of a \KurZar{space} $X$
defined in terms of a \KurZar{system} $\mathcal{S}\su2^X$ satisfying certain
properties a \define{Kuratowski\textendash Zarycki} (or \define{KZ}) \define{problem}.
A \KurZar{seed} $A$ and/or \KurZar{family} $\YA$ may also be involved.

Associated with every Kuratowski $\Y$-problem is the KZ~problem that asks to minimize
the cardinality of a \KurZar{space} that maximizes the \KurZar{family}.
GJ showed the answers are $14$ and $9$, respectively, for $\Y=\{b,i,\vee,\wedge\}$ and
$\{ia,\vee,\wedge\}$ (see Propositions~4.3 and~4.4) and cited Moser \cite{1977_moser},
Herda and Metzler \cite{1966_herda_metzler}, and Anusiak and Shum
\cite{1971_anusiak_shum} for proving the answers of $9$, $7$, and $6$ for $\Y=\{b,i,
\vee\}$, $\{a,b\}$, and $\{a,$ generalized closure$\}$, respectively.\footnote{KZ
problems that minimize the space can be solved by a present-day home computer when the
answer is $\leq11$.}

In closure spaces, Soltan \cite{1982_soltan} addressed the above KZ problem for various
$\Y\su\{a,b,i,\f\hspace{1pt}\}$ as well as the one that minimizes the cardinality of a
\KurZar{system} that maximizes the \KurZar{family} (for $\{a,b\}$ the answer is $14$ and
for $\{a,b,\f\hspace{1pt}\}$ it is $24$).
Bowron \cite{2012_bowron_burdick} minimized the cardinality of a \KurZar{seed} that
maximizes the \KurZar{family} for $\Y=\{a,b\}$ in topological spaces (the answer is $3$).

The following conjecture is supported by computer results: \emph{every ED space contains
a nonempty subset $A$ such that $bA=\ifA$}.
If this is true, then every connected ED space contains a set $A$ such that $\ifA=X$.

\FloatBarrier
\begin{table}
\renewcommand{\arraystretch}{.99}
\renewcommand{\tabcolsep}{1.5pt}
\scriptsize
\centering

\vspace{-60pt}\caption{Intersections of \sn s not both $\leq30$.}\label{tab:disjoint_union_psi}
\end{table}
\FloatBarrier

Our original goal was to prove or disprove that no Kuratowski space has \knum\ $6$.
Shortly after finding a proof,\footnote{Our first proof does not involve the
boundary operator.
It can be found at \url{https://mathoverflow.net/questions/325995}.}
the arXiv preprint of Canilang et~al\text{.} \cite{2021_canilang_cohen_graese_seong}
led us to investigate similar questions about $\KF$.

Figure~$11$ gives a graphical summary
of the first century of literature related to the closure-complement theorem.
In addition to the paper above, the following recent ones are especially noteworthy:
Banakh et~al\text{.} (2018)
\cite{2018_banakh_chervak_martynyuk_pylypovych_ravsky_simkiv},
Berghammer (2017) \cite{2017_berghammer},
Cohen et~al\text{.} (2020) \cite{2020_cohen_johnson_kral_li_soll},
Gupta and Sarma (2017) \cite{2017_gupta_sarma},
Santiago (2019) \cite{2019_santiago}, and
Schwiebert (2017) \cite{2017_schwiebert}.
The following papers cite the use of computers:
\citep{2012_al-hassani_mahesar_coen_sorge_a,
2018_banakh_chervak_martynyuk_pylypovych_ravsky_simkiv,
2017_berghammer,
2021_canilang_cohen_graese_seong,
2013_charlier_domaratzki_harju_shallit,
2021_fuenmayor,
2005_grabowski,
2006_grabowski,
2015_jirasek_jiraskova,
2021_ma,
2007_mccluskey_mcintyre_watson}.
Papers with at least one author from a computer science
department include:
\citep{2012_al-hassani_mahesar_coen_sorge_a,
2017_berghammer,
2011_brzozowski_grant_shallit,
2014_brzozowski_jiraskova_zou,
2016_campeanu_moreira_reis,
2013_charlier_domaratzki_harju_shallit,
2019_dassow,
1980_fischer_paterson,
1972_graham_knuth_motzkin,
2015_jirasek_jiraskova,
2017_jirasek_palmovsky_sebej,
2016_jirasek_sebej,
2012_jiraskova_shallit,
2014_mahesar,
2011_shallit_willard}.
Many papers besides those of Kuratowski~\citep{1922_kuratowski},
Zarycki~\citep{1927_zarycki}, and GE
have been authored or co-authored by graduate students
\citep{2012_al-hassani_mahesar_coen_sorge_a,
1966_herda_metzler,
1943_hewitt,
2014_mahesar,
2005_muhm,
1967_nelson,
2013_plewik_walczynska,
1935_sanders,
2019_santiago,
1963_staley,
1937_stopher_d} and undergraduates
\citep{2018_banakh_chervak_martynyuk_pylypovych_ravsky_simkiv,
2011_brzozowski_grant_shallit,
2021_canilang_cohen_graese_seong,
1962_chapman_a,
1962_chapman_b,
1967_coleman}.
The author of the present paper was a graduate student at the University of Virginia
when Buchman \citep{1986_buchman_ferrer} introduced him to the
closure-complement-boundary problem.
\phantom{\cite{1988_aull}}

Links in the bibliography are colored \textcolor{red}{red} (not free),
\textcolor{purple}{purple} (conditionally free), and \textcolor{blue}{blue} (free).

\section*{Acknowledgments}
Brendan~D.~McKay provided invaluable help with computing topological spaces.
Alexandre~Eremenko kindly passed along firsthand details about the history of GE.
Dave~Renfro sent many useful references.

\advance\baselineskip by -.7pt

\interlinepenalty=10000

\bibliography{kuratowski}

\bigskip
{\small
\textsc{Las Vegas, NV USA}

\textit{Email address}: \texttt{mathematrucker@gmail.com}
}

\end{document}